\theoremstyle{plain}
\newtheorem{theorem}{Theorem}[section]
\newtheorem{lemma}[theorem]{Lemma}
\newtheorem{corollary}[theorem]{Corollary}
\newtheorem{proposition}[theorem]{Proposition}
\newtheorem{prop}[theorem]{Proposition}
\newtheorem{conjecture}[theorem]{Conjecture}
\theoremstyle{definition}
\newtheorem{definition}[theorem]{Definition}
\newtheorem{defn}[theorem]{Definition}
\newtheorem{definition-theorem}[theorem]{Definition-Theorem}
\newtheorem{remark}[theorem]{Remark}
\newtheorem{question}[theorem]{Question}
\numberwithin{equation}{section} \setcounter{tocdepth}{1}
\newcommand{\la}{\langle} \newcommand{\ra}{\rangle}
\newcommand{\tr}{\operatorname{tr}}
\newcommand{\Id}{\operatorname{Id}}
\newcommand{\Hom}{\operatorname{Hom}}
\newcommand{\Ker}{\operatorname{Ker}}
\newcommand{\ad}{\operatorname{ad}}
\newcommand{\Aut}{\operatorname{Aut}}
\newcommand{\dbar}{\bar{\partial}}
\newcommand{\CC}{{\mathbb C}}
\newcommand{\RR}{{\mathbb R}}
\newcommand{\ZZ}{{\mathbb Z}}
\renewcommand{\(}{\left(}
\renewcommand{\)}{\right)}
\newcommand{\surj}{\to\kern-1.8ex\to}
\newcommand{\cM}{\mathcal{M}}
\newcommand{\cG}{\mathcal{G}}
\newcommand{\Lie}{\operatorname{Lie}}
\newcommand{\dt}{\frac{\partial}{\partial t}}
\newcommand{\del}{\partial}
\newcommand{\delb}{\bar{\partial}}
\newcommand{\brs}[1]{\left| #1 \right|}
\newcommand{\gD}{\Delta}
\newcommand{\gd}{\delta}
\newcommand{\gU}{\Upsilon}
\newcommand{\gL}{\Lambda}
\newcommand{\gl}{\lambda}
\newcommand{\gtot}{\bar{g}}
\newcommand{\Htot}{\bar{H}}
\newcommand{\Jtot}{\bar{J}}
\newcommand{\otot}{\bar{\omega}}
\newcommand{\gw}{\omega}
\newcommand{\ga}{\alpha}
\newcommand{\gb}{\beta}
\renewcommand{\ge}{\epsilon}
\newcommand{\N}{\nabla}
\newcommand{\FF}{\mathcal F}
\newcommand{\til}[1]{\widetilde{#1}}
\renewcommand{\bar}[1]{\overline{#1}}
\renewcommand{\i}{\sqrt{-1}}
\newcommand{\bga}{\bar{\alpha}}
\newcommand{\bgb}{\bar{\beta}}
\newcommand{\bi}{\bar{i}}
\newcommand{\bj}{\bar{j}}
\newcommand{\bk}{\bar{k}}
\newcommand{\bl}{\bar{l}}
\newcommand{\bp}{\bar{p}}
\newcommand{\IP}[1]{\left<#1\right>}
\newcommand{\bgg}{\bar{\gamma}}
\DeclareMathOperator{\Rc}{Rc}
\DeclareMathOperator{\Rm}{Rm}
\DeclareMathOperator{\inj}{inj}
\DeclareMathOperator{\osc}{osc}
\begin{document}

\title[Pluriclosed flow and the Hull-Strominger system]{Pluriclosed flow and the Hull-Strominger system}

\author{Mario Garcia-Fernandez}
\address{Instituto de Ciencias Matem\'aticas (CSIC-UAM-UC3M-UCM)\\ Nicol\'as Cabrera 13--15, Cantoblanco\\ 28049 Madrid, Spain}
\email{mario.garcia@icmat.es}

\author{Raul Gonzalez Molina}
\address{Instituto de Ciencias Matem\'aticas (CSIC-UAM-UC3M-UCM)\\ Nicol\'as Cabrera 13--15, Cantoblanco\\ 28049 Madrid, Spain}
\email{raul.gonzalez@icmat.es}

\author{Jeffrey Streets}
\address{Rowland Hall\\
        University of California, Irvine\\
         Irvine, CA 92617}
\email{\href{mailto:jstreets@uci.edu}{jstreets@uci.edu}}

\begin{abstract} 
We define a natural extension of pluriclosed flow aiming at constructing solutions of the Hull-Strominger system.  We give several geometric formulations of this flow, which yield a series of a priori estimates for the flow and also for the Hull-Strominger system. The evolution equations are derived using the theory of string algebroids, a class of Courant algebroids which occur naturally in higher gauge theory. Using this, we interpret the flow as generalized Ricci flow and also as a higher/coupled version of Hermitian-Yang-Mills flow, proving furthermore that it is compatible with symmetry reduction. Regarding our main analytical results, we prove a priori $C^{\infty}$ estimates for uniformly parabolic solutions.  This in particular settles the question of smooth regularity of uniformly elliptic solutions of the Hull-Strominger system, generalizing Yau's $C^3$ estimate for the complex Monge-Amp\`ere equation.  We prove global existence and convergence results for the flow on special backgrounds, and discuss a conjectural relationship of the flow to the geometrization of Reid's fantasy.
\end{abstract}


\thanks{
MGF and RGM are partially supported by the Spanish Ministry of Science and Innovation, through the `Severo Ochoa Programme for Centres of Excellence in R\&D' (CEX2019-000904-S) and under grants PID2022-141387NB-C22 and CNS2022-135784. JS gratefully acknowledges support from a Simons Fellowship and from the NSF via DMS-2203536.}

\maketitle

\section{Introduction}

Let $(M,J)$ be a compact complex manifold of dimension $n$ with a holomorphic volume form $\Omega$. Let $K^c$ be a complex reductive Lie group. Fix a holomorphic principal $K^c$-bundle $P^c$  over $M$. The Hull-Strominger system, for a Hermitian metric $g$ on $M$ and a reduction $h$ to a maximal compact subgroup $K \subset K^c$ of $P^c$, is
\begin{gather}\label{f:HS}
\begin{split}
 F_h \wedge \gw^{n-1} =&\ 0,\\
 d^{\star} \omega - d^c\log \|\Omega\|_\gw =&\ 0,\\
 d d^c \gw + \IP{F_h \wedge F_h}_{\mathfrak k} =&\ 0,
\end{split}
\end{gather}
where $\gw$ is the K\"ahler form of $g$ and $F_h$ is the Chern curvature of $h$. Here $\IP{,}_{\mathfrak k}$ denotes a fixed biinvariant inner product on the Lie algebra $\mathfrak{k}$ of $K$. The first line is the Hermitian-Einstein equation for $h$, while the second line is equivalent to the conformally balanced condition $d(\|\Omega\|_\gw \omega^{n-1}) = 0$, and implies that the holonomy of the Bismut connection of $g$ lies in $SU(n)$. The final line is known as the \emph{heterotic Bianchi identity} and has its roots in the physical origins of the system in the heterotic string theory \cite{Hull, StromingerSST}. In this setup, $P^c$ is the bundle of split frames of $T^{1,0} \oplus V$, for a choice of auxiliary holomorphic vector bundle $V$ related to the gauge degrees of freedom of the physical theory.

The Hull-Strominger system has generated strong interest in mathematics. As originally proposed in the seminal works \cite{FuYau2,FuYau1,LiYau2005}, it is expected that in complex dimension three the Hull-Strominger system plays a key role in the geometrization of Reid's fantasy \cite{ Collins2024stability,FuLiYau2012,Reid1987moduli}, whose key idea is to connect the moduli space of complex threefolds with trivial canonical bundle via conifold transitions.  A parabolic flow approach to constructing solutions of the Hull-Strominger system has been developed, called the anomaly flow \cite{PPZ0} (cf. the recent surveys \cite{phong2023geometric,picard2024strominger}). An alternative variational approach, using generalized geometry and the dilaton functional, has been proposed in \cite{garciafern2018canonical}. Observe that, due to its physical origins, in some references the choice of Chern connection in the tangent bundle is taken to be determined by $\omega$ (whenever $P^c$ is constructed from $T^{1,0} \oplus V$), but we shall not adopt this convention here (cf. \cite{GFGM,picard2024strominger}).

Building on \cite{de2017restrictions}, it was shown in \cite{GFGM} that the Hull-Strominger system implies the existence of a $SU(n)$-instanton on an associated holomorphic string algebroid, a class of Courant algebroids introduced in \cite{garciafern2018holomorphic} which occur naturally in higher gauge theory \cite{Tellez}.  A full analysis of this instanton condition reveals that it is equivalent to a more general system of equations, called the coupled Hermitian-Einstein system \cite{GFGM}:
\begin{gather} \label{f:CHE}
    \begin{split}
        \Lambda_\omega F_h =&\ z,\\
        \rho_B + \IP{z, F_h}_{\mathfrak k} =&\ 0,\\
        d d^c \gw + \IP{F_h \wedge F_h}_{\mathfrak k} = &\ 0,
    \end{split}
\end{gather}
where $z$ is a central element of $\mathfrak k$ and $\Lambda_\omega F_h \in \Gamma(\ad P^c)$ is the smooth section of the adjoint bundle $\ad P^c$ defined by $\Lambda_\omega F_h \omega^n = n F_h \wedge \omega^{n-1}$.  Notice that the conformally balanced condition has now been replaced by an equation for the Bismut-Ricci form $\rho_B$.

Our goal in this paper is to develop a new approach to understanding the existence and regularity properties of the Hull-Strominger/coupled Hermitian Einstein system via generalized Ricci flow and pluriclosed flow \cite{GRFBook,PCF}.  More specifically, we propose the study of the parabolic flow equations
\begin{gather} \label{f:PCFSA}
\begin{split}
    \dt \omega & = -\rho_B(\omega)^{1,1} + \i\IP{S^h_g, F_h}_{\mathfrak{k}},\\
h^{-1} \dt h  & = - S^h_g,
\end{split}
\end{gather}
for pairs $(g,h)$, where $S_g^h = \i \Lambda_\gw F_h$. In what follows we will motivate this flow through the geometry of string algebroids. We give several different geometric formulations of this system of equations, showing that it induces generalized Ricci flow on a smooth string algebroid as in \cite{GF14}, as well as a Hermitian-Yang-Mills (HYM) type flow on a holomorphic string algebroid, building on \cite{JFS, garciafern2020gauge, JordanStreets}, and finally that is a compatible with a symmetry reduction principle for generalized Ricci flow \cite{GRFBook} and pluriclosed flow \cite{PCF}.
These different geometric formulations lead to a series of a priori estimates for the flow and for the Hull-Strominger system.  These results point towards the use of this flow as a more general tool for geometrization problems, and we discuss this in the final section.

\subsection{Pluriclosed flow on string algebroids}

To begin we briefly recall fundamental aspects of generalized Ricci flow and pluriclosed flow.  Given a smooth manifold $M$ and closed three-form $H_0$, a one-parameter family of pairs $(g_t, b_t)$ of Riemannian metrics and two-forms is a solution to \emph{generalized Ricci flow} if
\begin{gather} \label{f:GRF}
    \begin{split}
    \dt g =&\ -2 \Rc^g + \tfrac{1}{2} H^2,\\
    \dt b =&\ - d^\star_g H, \qquad H = H_0 + db,    
    \end{split}
\end{gather}
where $H^2(X,Y) = \IP{i_X H, i_Y H}$.  This flow arises in mathematical physics as the leading order equations of the renormalization group flow for a certain sigma model \cite{Polchinski}.  Short-time existence and regularity properties were established in \cite{Streetsexpent}.  The flow can be interpreted as the canonical `Ricci flow' for generalized metrics on an exact Courant algebroid \cite{StreetsTdual,GF19,GRFBook}, which leads for instance to the proof that these equations are preserved by T-duality \cite{GF19, StreetsTdual}.  This equation is linked to complex geometry through the pluriclosed flow \cite{PCF} and generalized K\"ahler-Ricci flow \cite{GKRF}.  In particular, on a complex manifold $(M^{2n}, J)$ a one-parameter family $(\gw_t, \gb_t)$ of pluriclosed metrics and $(2,0)$-forms is a solution of pluriclosed flow if
\begin{gather} \label{f:PCF}
\begin{split}
    \dt \gw =&\ - \rho_B^{1,1}\\
    \dt \gb =&\ - \rho_B^{2,0},
\end{split}
\end{gather}
where $\rho_B$ is the Ricci form of the associated Bismut connection.  In \cite{PCFReg} it was shown that pluriclosed flow is gauge-equivalent to generalized Ricci flow, thus providing a link to the geometry of exact Courant algebroids.  Furthermore, in \cite{JFS,JordanStreets,SBIPCF} it was shown that pluriclosed flow admits an interpretation as a kind of Donaldson HYM flow \cite{DonaldsonHYM} on a holomorphic Courant algebroid, leading to global existence and convergence results \cite{JFS}.

Prior works \cite{GF14,GF19} shown that the Hull-Strominger system implies the vanishing of the Ricci curvature of an associated generalized metric on a real string algebroid. Motivated by this, the first named author defined a Ricci flow equation for generalized metrics on string algebroids \cite{GF19}.  This equation, which is our starting point, couples the generalized Ricci flow further to Yang-Mills flow.  In particular, fix $K$ a compact semisimple Lie group whose Lie algebra $\mathfrak k$ is endowed with a nondegenerate bilinear form $\IP{,}_{\mathfrak k}$ and fix $P \to M$ a principal $K$-bundle. We say that a one-parameter family of triples $(g_t, b_t, A_t)$ of Riemannian metrics $g_t$ on $M$, two-forms $b_t$ on $M$, and principal connections $A_t$ on $P$ satisfy the \emph{(string algebroid) generalized Ricci flow} if
\begin{equation}\label{eq:GRFgbAintro}
\begin{split}
\dt g & = -2\operatorname{Rc} + \frac{1}{2} H^2 + 2 F_A^2,\\
\dt b & = - d^{\star}H - \langle a \wedge (  d_A^\star F_A - F_A \lrcorner H ) \rangle_{\mathfrak{k}} ,\\
\dt A & = - d_A^{\star} F_A + F_A \lrcorner \ H,
\end{split}
\end{equation}
where $A_t = A_0 + a_t$ and $H_t$ is dictated by the \emph{anomally cancellation equation}
\begin{equation*}
H_t := H_0 + db + 2 \langle a_t \wedge F_{A_0} \rangle_{\mathfrak{k}} + \langle a_t  \wedge d_{A_0}a_t \rangle_{\mathfrak{k}} + \frac{1}{3}\langle a_t \wedge [a_t \wedge a_t] \rangle_{\mathfrak{k}}.
\end{equation*}
The delicate relationship between $H, b$ and $a$ is predicted by the action of natural morphisms on the Dorfman bracket of a string algebroid (cf. \S \ref{s:SA}).  We review and expand upon the fundamental theory of (\ref{eq:GRFgbAintro}) in \S \ref {s:SA} and \S \ref{s:GRFSA}.

We emphasize here an important point in our discussion: the inner product $\IP{,}_{\mathfrak k}$ is allowed to be \emph{indefinite}.  Indeed, in the original physical setup this inner product is chosen to have mixed signature, and this is related to the anomaly cancellation mechanism.  For our discussion here there will be important conceptual and analytic distinctions between the definite and indefinite cases.

In view of the discussion above we may expect \eqref{eq:GRFgbAintro} to be a tool for constructing solutions of the Hull-Strominger system.  However, it is not obvious that this evolution equation preserves any structure specific to the underlying complex geometry of the system.  We thus define a new parabolic flow for the coupled Hermitian-Einstein system motivated by the theory of pluriclosed flow, and eventually show it is gauge-equivalent to \eqref{eq:GRFgbAintro}.  Our initial derivation of this flow is based on a novel gauge-theoretical interpretation of pluriclosed flow in terms of generalized geometry.  As explained in \cite[\S 7.3.2]{GRFBook}, a pluriclosed metric can be understood as an integrable lifting of $T^{0,1}$ to a complexified Courant algebroid
$$
T^{0,1} \cong \overline{\ell} \subset E \otimes \CC.
$$
Building on this, we understand the relevant data of a complex structure on $M$, Hermitian metric $g$, and a principal connection $A$ in terms of an integrable lifting of $T^{0,1}$ to a complexified string algebroid. Crucially, the Bianchi identity equation for the pair $(\omega,A)$ is implied by the integrability of the lifting. Following recent developments on the relation between the Hull-Strominger system and the theory of vertex algebras \cite{AAG2}, in \S \ref{s:PCFSA} we associate a canonical infinitesimal complex symmetry to such a lifting,
$$
\zeta^{\overline{\ell}} = \i \rho_B + \i \IP{F_A,\Lambda_\omega F_A}_\mathfrak{k} + \frac{\i}{2} \Lambda_\omega F_A,
$$
which in turn defines a flow of liftings, which we refer to as \emph{(string algebroid) pluriclosed flow}
\begin{gather}\label{f:CHEabs}
\dt \overline{\ell} = -  \zeta^{\overline{\ell}} \cdot \overline{\ell}.
\end{gather}
These equations exhibit a striking structural parallel with Donaldson HYM flow for connections on a bundle \cite{DonaldsonHYM} (see Proposition \ref{p:PCFholU}), and can be indeed regarded as an extension of such flow to higher gauge theory following the recent work \cite{Tellez}. As in \cite{AAG2}, the quantity $\zeta^{\overline{\ell}}$ is defined in terms of a local potential solving the so called \emph{D-term equation} (see Lemma \ref{lem:braketsum0}). This potential is strongly reminiscent of \emph{Getzler's element} $\rho$ in the construction of representations of the $N=2$ superconformal vertex algebra from Manin pairs \cite{Getzler}, and hence we expect an interesting relationship between the flow and mirror symmetry \cite{AAG2}.

When the flow \eqref{f:CHEabs} is unwound in terms of classical tensors, the data is determined by a triple $(\gw_t,\beta_t, h_t)$, given by one-parameter families of Hermitian metrics and $(2,0)$ forms on $M$ and reductions on $P^c$, which satisfy $d d^c \gw + \IP{F_h \wedge F_h}_{\mathfrak k} = 0$ and evolve by
\begin{gather} \label{f:PCFSAbis}
\begin{split}
    \dt \omega & = -\rho_B(\omega)^{1,1} + \i\IP{S^h_g, F_h}_{\mathfrak{k}},\\
\dt \beta & = - \rho_B(\omega)^{2,0} - \frac{\i}{2} \langle \alpha \wedge \partial^h(S^h_g) \rangle_{\mathfrak{k}},\\
h^{-1} \dt h  & = - S^h_g,
\end{split}
\end{gather}
where $A^{h_t} = A^{h_0} + \alpha_t$. Informally, this is a generalization of pluriclosed flow further coupled to HYM flow, where instead of the pluriclosed condition the flow rather preserves the Bianchi identity equation.  We give several  further geometric interpretations of this equation.  First, to connect to the discussion above, we show in Proposition \ref{prop:UPCFRiemannian} that solutions to string algebroid pluriclosed flow are gauge-equivalent to solutions of string algebroid generalized Ricci flow, generalizing the result of \cite{PCFReg}.  

Furthermore, building on \cite{JFS} we show that solutions to string algebroid pluriclosed flow are equivalent to solutions of a coupled HYM flow of Hermitian metrics on an associated \emph{holomorphic string algebroid} \cite{garciafern2018holomorphic}.  As explained in \cite{garciafern2020gauge}, the data of $(\gw, h)$ satisfying the Bianchi identity determines a natural holomorphic structure on the bundle
\begin{align*}
    \mathcal Q = T^{1,0} \oplus \ad P^c \oplus T^*_{1,0}.
\end{align*}
We show in Proposition \ref{p:PCFhol} that the data of a solution $(\gw_t,\beta_t,h_t)$ to string algebroid pluriclosed flow is equivalent to a one-parameter family $\mathbf{G}_t$ of (pseudo)Hermitian metrics on the bundle $\mathcal Q$ which satisfies the coupled Donaldson HYM flow
\begin{equation}\label{eq:CDYHMintro}
    \mathbf{G}^{-1} \dt \mathbf{G} = - S_g^{\mathbf{G}}.
\end{equation}
This formulation of the flow is the main tool used for some of the analytic results below.

\subsection{Dimensional reduction}

In addition to the different faces of pluriclosed flow arising from generalized geometry, there is a dimensional reduction principle which directly relates flow lines of generalized Ricci/pluriclosed flow on string algebroids to corresponding flow lines on \emph{exact} Courant algebroids, i.e. the systems (\ref{f:GRF}) and (\ref{f:PCF}), defined on an associated principal bundle.  Beyond a formal construction, this correspondence allows us to immediately apply results from the study of (\ref{f:GRF}) and (\ref{f:PCF}), such as various monotonicity formulas, to equations (\ref{eq:GRFgbAintro}) and (\ref{f:PCFSAbis}).  To begin, fix the data $K \to P \to M$ as above and $(g_t, b_t, A_t)$ a solution to (\ref{eq:GRFgbAintro}).  Given this, define a metric and three-form on $P$ via
\begin{align*}
\gtot =&\ p^* g + g_K(A, A), \qquad \Htot = p^* H - CS(A),
\end{align*}
where $g_K$ is a bi-invariant metric on $K$ associated to the pairing on $\mathfrak k$, and $CS(A)$ is the Chern-Simons form associated to $A$.  Note that this metric will only be Riemannian in the case the pairing on $\mathfrak k$ is negative-definite.  We show in Proposition \ref{p:cGRFtoGRF} that $(\gtot_t, \Htot_t)$ is a solution to generalized Ricci flow. Said differently, the generalized Ricci flow preserves the ansatz for $(\gtot,\Htot)$ above, and reduces to a solution to (\ref{eq:GRFgbAintro}). A precursor of this result for generalized Ricci flat metrics can be found in \cite{BarHek}. Given the results described in the previous subsection, this shows that every string algebroid pluriclosed flow is naturally interpreted as gauge-fixed generalized Ricci flow on a principal bundle.

Directly interpreting string algebroid pluriclosed flow as a solution of pluriclosed flow on a principal bundle is more subtle.  Here we further assume that $K$ is a compact even dimensional Lie group endowed with an integrable left-invariant complex structure $K$.  Using this, the data $(\gw, h)$ defines a natural Hermitian structure $(\bar{\gw}, \bar{J})$ on the total space $P$.  We show that along a string algebroid pluriclosed flow $(\gw_t, \gb_t, h_t)$, the induced complex structure $\bar{J}$ is constant when the structure group is abelian.  Furthermore, any flow line canonically induces a solution to pluriclosed flow on $(P,\Jtot)$. In the case of non-abelian structure group, one still induces pluriclosed flow on $(P, \Jtot)$, although it will no longer be symmetric with respect to the given $K$-action (cf. Proposition \ref{prop:redPCF}).

\subsection{Regularity of uniformly parabolic solutions}

Using the connections between the Hull-Strominger system and pluriclosed flow described in the previous sections, we are able to prove a sharp regularity result for solutions to the Hull-Strominger system, as well as the associated parabolic flows described above.  More precisely, we prove that uniformly parabolic solutions of the string algebroid pluriclosed flow satisfy scale-invariant decay estimates of all higher order derivatives:

\begin{theorem} \label{t:EKthmintro} (cf. Theorem \ref{t:EKthm}) Let $(\gw_t, \gb_t, h_t)$ a solution to \eqref{f:PCFSAbis} defined on $[0, \tau), \tau \leq 1$, such that the initial condition satisfies the Bianchi identity 
\begin{equation}\label{eq:BIintro}
d d^c \gw_0 + \IP{F_{h_0} \wedge F_{h_0}}_{\mathfrak k} = 0.  
\end{equation}
Suppose there are background metrics $\til{\gw}$, $\til{h}$ and $\gl, \gL > 0$ such that for all $(x,t)$,
\begin{align*}
\gl^{-1} \til{\gw} \leq \gw \leq \gL \til{\gw}, \qquad \brs{\gb}_{\til{\gw}} \leq \gL, \qquad \gl^{-1} \til{h} \leq h \leq \gL \til{h}.
\end{align*}
Given $k \in N$ there exists a constant $C = C(n, k, \gl, \gL,\til{\gw},\til{h})$ such that
\begin{align*}
\sup_{M \times [0,\tau)} t \Phi_k (\gw,\til{\gw},h,\til{h}) \leq C.
\end{align*}
\end{theorem}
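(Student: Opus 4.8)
The plan is to prove the estimate through the holomorphic string algebroid formulation of the flow, which recasts \eqref{f:PCFSAbis} as a single parabolic equation for a Hermitian metric and thereby places us in a setting modeled on the regularity theory for Donaldson's heat flow \cite{DonaldsonHYM, JFS}. By Proposition \ref{p:PCFhol} the data $(\gw_t,\gb_t,h_t)$ is equivalent to a one-parameter family $\mathbf{G}_t$ of (pseudo)Hermitian metrics on $\mathcal Q = T^{1,0}\oplus \ad P^c \oplus T^*_{1,0}$ solving the coupled Donaldson HYM flow $\mathbf{G}^{-1}\partial_t\mathbf{G} = -S_g^{\mathbf{G}}$ of \eqref{eq:CDYHMintro}; the initial Bianchi identity \eqref{eq:BIintro} is precisely what allows the holomorphic structure on $\mathcal Q$ to be defined, and it is preserved along the flow. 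First I would check that the three metric-equivalence hypotheses assemble into a single uniform two-sided bound between $\mathbf{G}$ and a fixed background metric $\widetilde{\mathbf{G}}$ built from $(\til{\gw},\til{h})$, with constant depending only on $n,\gl,\gL$. This $C^0$ control is exactly the condition under which $\mathbf{G}^{-1}\partial_t\mathbf{G} = -S_g^{\mathbf{G}}$ is uniformly parabolic, since the background Laplacian and the contraction $\Lambda_\gw$ appearing in $S_g^{\mathbf{G}}$ become uniformly equivalent to their $\widetilde{\mathbf{G}}$-counterparts. Unwinding the scale-invariant weights, the desired bound $t\Phi_k \le C$ amounts to establishing $\sup_{M\times[0,\tau)} t^m e_m \le C_m$ for each $1\le m\le k$, where $e_m := |\nabla^m \mathbf{G}|^2$ is the $m$-th order energy density measured with the fixed background connection and metric.

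The heart of the argument is a Bernstein-type maximum principle bootstrap, carried out in the order $m=1,2,\dots,k$, in the spirit of the smoothing estimates for pluriclosed flow in \cite{PCFReg, JFS}. The base case is the gradient estimate: differentiating the flow once and commuting derivatives produces a Bochner inequality of the schematic form $(\partial_t-\Delta)e_1 \le -c\,e_2 + C(1+e_1)$, where the quartic contributions that would naively obstruct the maximum principle are controlled by the $C^0$ bound after passing to the endomorphism $\mathbf{G}^{-1}\nabla\mathbf{G}$. Pairing this with the zeroth-order quantity $e_0 = |\log(\mathbf{G}\,\widetilde{\mathbf{G}}^{-1})|^2$, which is bounded by the $C^0$ hypothesis and satisfies $(\partial_t-\Delta)e_0 \le -c\,e_1 + C$, I would apply the maximum principle to $t\,e_1 + A\,e_0$: the term $\partial_t(t e_1)$ contributes a bad multiple of $e_1$, which is absorbed by taking $A$ large enough that $-Ac\,e_1$ dominates, yielding $\sup_M t\,e_1 \le C$.

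For the inductive step I would assume $\sup_M t^m e_m \le C_m$ for all $m<k$ and estimate $e_k$. Differentiating the flow $k$ times and commuting $\nabla^k$ past $\partial_t-\Delta$ gives $(\partial_t-\Delta)e_k \le -c\,e_{k+1} + C\,R_k$, where the top-order term $e_{k+1}$ enters with a favorable sign from the Laplacian, and $R_k$ is a sum of products of derivatives $\nabla^j\mathbf{G}$ with $j\le k$. After Young's inequality and interpolation, every term of $R_k$ is controlled by the inductive bounds together with a small multiple of $e_{k+1}$. Running the maximum principle on $t^k e_k + B\,t^{k-1}e_{k-1}$, and using the already-established $t^{k-1}$-bound to absorb the $\partial_t(t^k) = k\,t^{k-1}$ contribution, closes the induction and gives $\sup_M t^k e_k \le C_k$. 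Since every derivative of $\gw$, $\gb$ and $h$ is recovered from derivatives of the block components of $\mathbf{G}$, the collection of bounds $t^m e_m \le C_m$ translates directly into the asserted estimate on $t\,\Phi_k(\gw,\til{\gw},h,\til{h})$.

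The main obstacle, and the feature distinguishing this from the classical Donaldson heat-flow regularity, is the coupling between base and fiber: the Hermitian metric $\gw$ defining both the Laplacian $\Delta$ and the contraction $\Lambda_\gw$ in $S_g^{\mathbf{G}}$ is itself the $T^{1,0}$-block of the evolving metric $\mathbf{G}$. The parabolic operator thus has time-dependent coefficients, and the commutators $[\,\partial_t-\Delta,\nabla^k\,]$ generate additional curvature-of-the-base terms that must be folded into the induction at each order. Compounding this, because the pairing $\langle\,,\rangle_{\mathfrak k}$ is allowed to be indefinite, none of the positivity underlying the usual Hermitian-Einstein estimates is available, so every reaction term must be dominated purely through the uniform equivalence hypotheses and the scalar structure of the combined test quantities. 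Verifying that the good negative top-order term genuinely dominates once all the indefinite cross-terms coming from the $\gb$-evolution and the Bianchi constraint have been accounted for is the technical crux of the proof.
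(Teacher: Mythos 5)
Your proposal follows the route of Theorem \ref{t:GGEKintro}/Theorem \ref{t:GGEK} rather than the paper's proof of Theorem \ref{t:EKthm}, and it breaks down at precisely the two points where those theorems differ. First, the hypotheses of the statement are only $C^0$ bounds on $(\gw,\gb,h)$, and these do \emph{not} assemble into a uniform two-sided equivalence $\gl\til{\mathbf{G}} \leq \mathbf{G} \leq \gL\til{\mathbf{G}}$: by Lemma \ref{t:Ggeneralized1} the block components of $\mathbf{G}$ involve $\alpha = A^{h}-A^{h_0}$, i.e.\ \emph{first} derivatives of $h$ (compare Lemma \ref{l:genmetricbounds}, where $\tr_{\mathbf{G}}\til{\mathbf{G}}$ contains the terms $\brs{\alpha}^2_g + \brs{\alpha}^2_{\til{g}}$). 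This is exactly why Theorem \ref{t:GGEK} must \emph{assume} the bound on $\mathbf{G}$ as a hypothesis rather than deduce it from bounds on the classical data; your opening step, ``check that the three metric-equivalence hypotheses assemble into a single uniform two-sided bound on $\mathbf{G}$,'' is false as stated. (Moreover, for indefinite pairing $\mathbf{G}$ is merely pseudo-Hermitian, cf.\ Remark \ref{rem:signature}, so such an equivalence does not even make sense without reformulation.)

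Second, and more fundamentally, the Bernstein-type maximum principle bootstrap cannot close in the generality of the statement, where $\IP{,}_{\mathfrak k}$ may have mixed signature. The Bochner identity you would rely on is the last equation of Lemma \ref{l:GGfloweqns} (equivalently Lemma \ref{l:hconnectionflow}), whose reaction term $-\IP{F^2,\gU^2}_g$ has a favorable sign only when the pairing is definite, so that $F^2 \geq 0$. With indefinite pairing this term is unsigned and is \emph{not} lower order: it is quartic in the connection, hence comparable in size to the good terms $-\brs{\N\gU}^2$, and no amount of $C^0$ equivalence, Young's inequality, or interpolation absorbs it --- this is the step you flag as ``the technical crux'' but for which you supply no mechanism, and it is precisely where the paper says the direct maximum principle fails. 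The paper's actual argument is structurally different: (i) an a priori $C^{\ga}$ estimate for $h$ alone via an Evans--Krylov-type argument (Proposition \ref{p:hosc}), exploiting the special scalar structure $\square \log\det h = 0$ together with $\square h \leq 0$ (so the weak Harnack inequality applies, with $h$ playing the role of the Hessian and the concavity of $\log\det$ supplying the algebraic relation \eqref{wrel}); (ii) a Liouville theorem for entire ancient solutions (Proposition \ref{p:ancientrigidity}), where the $C^{\ga}$ estimate forces $h_\infty$ to be flat under parabolic rescaling, after which the preserved Bianchi identity makes $\gw_\infty$ pluriclosed and the system collapses to ordinary pluriclosed flow, whose known regularity theory gives flatness of $\gw_\infty$; and (iii) a blowup/contradiction argument deducing the stated scale-invariant bounds. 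Your scheme, suitably repaired, proves a variant of Theorem \ref{t:GGEK} under its stronger hypothesis and negative-definite pairing, but not the theorem in question.
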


\noindent The quantity $\Phi_k$ is a scale-invariant measure of the $C^{k+1}$ norm of $\gw$ and $h$ (cf. Definition \ref{d:Phiquantity}).  As a corollary one obtains a priori estimates on all derivatives of uniformly elliptic solutions of the Hull-Strominger system.

To contextualize Theorem \ref{t:EKthmintro}, first note that for instance when there is no auxiliary bundle, and the underlying metric $\gw$ is K\"ahler, the result corresponds to higher regularity of uniformly parabolic solutions of K\"ahler-Ricci flow.  This estimate can be obtained by the method of Evans-Krylov \cite{EvansC2a,Krylov}, or using Yau's $C^3$-estimate \cite{YauCC}.  More generally, with no auxiliary bundle, $\gw$ is pluriclosed and the result corresponds to higher regularity for uniformly parabolic solutions of pluriclosed flow.  Already this is a genuine \emph{system} of equations, with no obvious convexity structure, so the prior methods do not apply, and in general such higher regularity for parabolic systems is known to fail \cite{DeGiorgiCE}.  The regularity in this setting was established in \cite{SBIPCF}, and in \cite{JFS,JordanStreets} the result was sharpened and given a precise geometric formulation.  In particular, in \cite{JFS} the authors gave a formulation of pluriclosed flow in as a HYM-type flow of a generalized Hermitian metric on a holomorphic Courant algebroid, building on fundamental observations of Bismut \cite{Bismut}.   The Chern connections associated to these generalized metrics satisfy a remarkably clean evolution equation (cf. \eqref{eq:CDYHMintro}) which leads directly to the higher regularity in a conceptually similar way to Yau's $C^3$-estimate.

In the more general setting of Theorem \ref{t:EKthmintro}, there are further issues to overcome. Given the relationship to pluriclosed flow discussed above, we may expect the arguments sketched in the preceding paragraph to extend to this setting.  Indeed, in the case the Lie algebra inner product is negative-definite a different approach to higher regularity is possible using a direct maximum principle argument, and this is explained in the next subsection.  However,
in many cases of interest the Lie algebra inner product is of mixed signature, and in these cases the direct maximum principle argument fails.  Here the proof proceeds by first establishing $C^{\ga}$ regularity of $h$.  We show that in local coordinates $h$ is a matrix subsolution of a uniformly parabolic equation, whilst its determinant precisely satisfies a heat equation.  These are the key ingredients of the Evans-Krylov proof of $C^{2,\ga}$ regularity of uniformly parabolic equations, with $h$ roughly playing the role of the Hessian of the solution, and so adapting this method gives the required $C^{\ga}$ estimate.  Using this, the proof concludes using a blowup/contradiction argument, where if the statement was false we produce a nontrivial uniformly parabolic solution $(\gw_{\infty}, h_{\infty})$ of the equation on $\mathbb C^n \times (-\infty, 0]$.  By the a priori regularity of $h$, it follows that the limiting metric $h_{\infty}$ is flat, and these terms drop out of the system.  Then, using that the Bianchi identity is preserved along the flow, it follows that the metrics $\gw_{\infty}$ are pluriclosed, and moreover satisfy pluriclosed flow.  By the prior regularity results for pluriclosed flow discussed above, it follows that $\gw_{\infty}$ is flat, giving the required contradiction.

\subsection{Global existence and convergence results}

The proof of Theorem \ref{t:EKthmintro} hints at the subtle role played by the Lie algebra inner product.  Furthermore, as indicated above, the corresponding geometric structures defined either on the holomorphic string algebroid $\mathcal{Q}$ or the principal bundle $P$ will also have mixed signature depending on that of $\IP{,}_{\mathfrak k}$.  We now turn our focus on the case when this Lie algebra pairing is negative-definite, and correspondingly the relevant geometric structures on $\mathcal{Q}$ or $P$ are positive-definite.  As we will see, in this setting the generalized Ricci flow and pluriclosed flow acquire significantly more structure, leading to a much deeper analytic theory.

The first main result is a different higher regularity theorem related to Theorem \ref{t:EKthmintro}. Recall that any solution $(\omega_0,\beta_0,h_0)$ of the Bianchi identity \eqref{eq:BIintro} determines a holomorphic string algebroid $\mathcal{Q}$, and that any flow line $(\gw_t,\beta_t,h_t)$ of \eqref{f:PCFSAbis} with this initial condition induces a family of Hermitian metrics $\mathbf{G}_t = \mathbf{G}(\omega_t,\beta_t,h_t)$ on $\mathcal{Q}$ (see Proposition \ref{p:PCFhol}). Here the quantity $\Phi_k$ is a scale-invariant measure of the $C^{k+1}$ norm of $\mathbf{G}_t$ (cf. Definition \ref{d:GGPhiquantity}). 

\begin{theorem} \label{t:GGEKintro} (cf. Theorem \ref{t:GGEK})
Assume that the pairing $\IP{,}_{\mathfrak k}$ is negative-definite. Let $(\gw_t,\beta_t,h_t)$ a solution to \eqref{f:PCFSAbis} defined on $[0, \tau), \tau \leq 1$, such that the initial condition satisfies the Bianchi identity \eqref{eq:BIintro}, with associated holomorphic string algebroid $\mathcal{Q}$. Suppose that $\mathcal{Q}$ admits a background metric $\til{\mathbf{G}}$ and constants $\gl,\gL > 0$ so that, for all $(x,t)$, the family of Hermitian metrics $\mathbf{G}_t = \mathbf{G}(\omega_t,\beta_t,h_t)$ satisfies
\begin{align*} 
\gl \til{\mathbf{G}} \leq \mathbf{G} \leq \gL \til{\mathbf{G}}.
\end{align*}
Given $k \in \mathbb N$ there exists $C = C(k,n,\gl,\gL,\til{\mathbf{G}})$ such that
\begin{align*}
    \sup_{M \times [0,\tau)} t \Phi_k(\mathbf{G},\til{\mathbf{G}}) \leq C.
\end{align*}
\end{theorem}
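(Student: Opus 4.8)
The plan is to exploit the hypothesis that $\IP{,}_{\mathfrak k}$ is negative-definite, which by Proposition \ref{p:PCFhol} makes $\mathbf{G}_t$ a family of genuine \emph{positive-definite} Hermitian metrics on the fixed holomorphic bundle $\mathcal Q = T^{1,0}\oplus\ad P^c\oplus T^*_{1,0}$, and the flow \eqref{eq:CDYHMintro} an honest Donaldson-type HYM flow $\mathbf{G}^{-1}\dt\mathbf{G} = -S_g^{\mathbf{G}}$. In this setting the leading operator is uniformly elliptic with ellipticity constants controlled by $\gl,\gL$, and the asserted estimate is a parabolic smoothing (Bernstein--Bando--Shi type) estimate proved by a direct maximum principle, exactly along the lines of the higher-regularity theory for pluriclosed flow in its holomorphic-Courant-algebroid formulation \cite{JFS, JordanStreets, SBIPCF}. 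Since $M$ is compact, no spatial cutoffs are required and the whole argument runs on $M\times[0,t_0]$ for $t_0<\tau$.

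First I would record the evolution equations. Writing $\mathbf{K} = \til{\mathbf{G}}^{-1}\mathbf{G}$, a $\til{\mathbf{G}}$-self-adjoint positive endomorphism of $\mathcal Q$ with $\gl\,\Id\le\mathbf{K}\le\gL\,\Id$ by hypothesis, the flow becomes a nonlinear parabolic equation for $\mathbf{K}$ driven by the Chern curvature $F_{\mathbf{G}}$ and the contraction $\Lambda_\omega$. The crucial structural input, inherited from the HYM nature of \eqref{eq:CDYHMintro}, is that a suitable zeroth-order quantity $u$ built from $\mathbf{K}$ (for instance $\tr\mathbf{K}+\tr\mathbf{K}^{-1}$, which is bounded in terms of $\gl,\gL$) is a subsolution whose evolution produces the first-order energy density $\Phi_0$ with a favorable negative sign, $(\dt-\Delta)u\le -c\,\Phi_0 + C$. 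This is the analogue of the monotonicity of the Donaldson functional, and it is precisely what the positive-definite case affords.

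Next I would run the smoothing induction. For the base step I consider $\Theta_0 = t\,\Phi_0 + B\,u$ and compute $(\dt-\Delta)\Theta_0$; using the inequality above to absorb the gradient terms created by differentiating the flow, together with the uniform bounds on the curvature and derivatives of the fixed reference $\til{\mathbf{G}}$, a maximum principle argument on compact $M$ yields $t\,\Phi_0\le C(n,\gl,\gL,\til{\mathbf{G}})$. For the inductive step, assuming control of $\Phi_0,\dots,\Phi_{k-1}$, the weighted quantity $\Phi_k$ of Definition \ref{d:GGPhiquantity} satisfies a parabolic inequality of the form $(\dt-\Delta)\Phi_k \le -c\,\Phi_{k+1} + C(1+\Phi_k)\Phi_k + C$, in which the negative term $-c\,\Phi_{k+1}$ comes from the rough Laplacian and the reaction terms are controlled by the already-established lower-order bounds together with the commutation of covariant derivatives past the Chern Laplacian (which generates only curvature terms of $\til{\mathbf{G}}$, bounded by construction). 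Feeding this into the maximum principle for a combination such as $t\,\Phi_k + B\,\Phi_{k-1}$ closes the induction and gives $t\,\Phi_k\le C(k,n,\gl,\gL,\til{\mathbf{G}})$.

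The main obstacle is that, in contrast to the classical Donaldson heat flow on a fixed K\"ahler base, here the base Hermitian form $\omega$ is itself part of $\mathbf{G}$ and evolves, so the contraction $\Lambda_\omega$ and hence the leading parabolic operator have solution-dependent, time-dependent coefficients, while the extension structure of $\mathcal Q$ feeds additional curvature terms into the evolution of $F_{\mathbf{G}}$. The heart of the argument is therefore to verify that all such terms are genuinely lower-order and absorbable using only the $C^0$ bound $\gl\til{\mathbf{G}}\le\mathbf{G}\le\gL\til{\mathbf{G}}$. Positive-definiteness is exactly what makes this possible: it keeps the symbol of the linearization uniformly parabolic with constants depending only on $\gl,\gL$, so the favorable negative terms dominate. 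This is also why the present maximum-principle route is unavailable in the indefinite case of Theorem \ref{t:EKthmintro}, where one must instead resort to Evans--Krylov estimates and a blow-up argument.
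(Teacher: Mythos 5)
Your base case is essentially the paper's own proof: using the evolution equations of Lemma \ref{l:GGfloweqns}, the paper applies the maximum principle to $t\brs{\gU(\mathbf{G},\til{\mathbf{G}})}^2_{g,\mathbf{G}^{-1},\mathbf{G}} + A\,\tr_{\til{\mathbf{G}}}\mathbf{G}$, which is exactly your $\Theta_0 = t\Phi_0 + B\,u$ (the extra summand $\tr \mathbf{K}^{-1}$ in your $u$ is harmless, since $\square \tr_{\mathbf{G}}\til{\mathbf{G}}$ also produces a good term). For $k \geq 1$, however, the paper does \emph{not} iterate the maximum principle: once $t\Phi_0 \leq C$ is established, it concludes by a standard blowup/rescaling argument with Schauder estimates, as in \cite[Theorem 4.2]{JordanStreets}. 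Your Bernstein--Bando--Shi induction is thus a genuinely different route for the higher derivatives. It is self-contained at the level of pointwise differential inequalities, but pays for this in bookkeeping; the blowup route gets all orders at once from parabolic regularity theory after the single hard estimate $k=0$, which is why the paper prefers it.

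Two points need repair. First, you mis-locate the role of negative-definiteness. Uniform parabolicity of the symbol is not what definiteness buys: the $C^0$ bound $\gl\til{\mathbf{G}} \leq \mathbf{G} \leq \gL\til{\mathbf{G}}$ already controls $g$ uniformly, so the Chern Laplacian is uniformly elliptic irrespective of the signature of $\IP{,}_{\mathfrak{k}}$ (the uniformly parabolic hypotheses of Theorem \ref{t:EKthmintro} make sense in the indefinite case too). What definiteness actually supplies, and what the paper's $k=0$ step pivots on, is a \emph{sign}: definiteness forces $F^2 \geq 0$, so the quadratic curvature term $-\IP{F^2,\gU^2}_g$ in the evolution of $\brs{\gU}^2$ in Lemma \ref{l:GGfloweqns} is nonpositive, and the a priori uncontrolled curvature of the \emph{evolving} metric drops out, leaving only the curvature of the fixed background $\til{\mathbf{G}}$. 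Your claim that all solution-dependent terms are ``lower-order and absorbable using only the $C^0$ bound'' silently uses this sign; without it, $\IP{F^2,\gU^2}$ is not controlled by $\gl,\gL$ at all, which is precisely why the maximum-principle route fails for indefinite pairings. Second, your inductive inequality $(\dt - \Delta)\Phi_k \leq -c\,\Phi_{k+1} + C(1+\Phi_k)\Phi_k + C$ is too coarse to close: a term quadratic in the top-order quantity with no smallness in its coefficient cannot be absorbed by the maximum principle applied to $t\Phi_k + B\,\Phi_{k-1}$, since at a large maximum of $t\Phi_k$ the contribution $tC\Phi_k^2 = (t\Phi_k)\,C\Phi_k$ dominates with the wrong sign. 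The standard BBS mechanism needs the finer structure: the reaction terms are of the form $\N^{i}\gU \star \N^{j}\gU \star \N^{k}\gU$ with $i,j < k$, whose coefficients are bounded (after weighting by appropriate powers of $t$) by the inductive hypothesis, with the remaining intermediate-order quadratic terms interpolated against the good term $-\brs{\N^{k+1}\gU}^2$. With that bookkeeping your induction does close; but at that point the paper's blowup-plus-Schauder shortcut is the more economical way to finish.
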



\noindent In contrast to the proof of Theorem \ref{t:EKthmintro}, the proof of Theorem \ref{t:GGEKintro} is comparatively simpler.  Whereas the method of Theorem \ref{t:EKthmintro} exploits the delicate method of Evans-Krylov, Theorem \ref{t:GGEKintro} is a generalization of Yau's $C^3$ estimate \cite{YauCC} for the complex Monge-Ampere equation.  The geometric idea behind Yau's estimate is to study the norm of the Chern connection, measured using the solution metric, as opposed to a background metric.  This delicately chosen quantity satisfies a particularly clean differential inequality which leads by a direct maximum principle argument to the $C^3$ estimate.  Here the main point, generalizing the main ideas of \cite{JFS,JordanStreets,SBIPCF}, is to study the evolution of the norm of Chern connection of the generalized metric $\mathbf{G}$, measured again using the solution metric $\mathbf{G}$ as opposed to a background metric.  The structure of the pluriclosed flow equation for $\mathbf{G}$ again yields a number of key cancellations which lead to a particularly clean evolution equation.  The maximum principle then yields an a priori estimate for the Chern connection, giving the case $k = 0$ of the theorem.  The higher $k$ estimates are treated by a standard blowup argument using Schauder estimates.

Going further, we are able to establish global existence and convergence of the flow for arbitrary initial data on some string algebroids admitting Chern-flat metrics. Using this Chern-flat background metric, one obtains favorable evolution equations for the evolving metric $\mathbf{G}_t$, giving a priori bounds by the maximum principle.


\begin{theorem} \label{t:flatspaceconvintro} (cf. Theorem \ref{t:flatspaceconv}) Let $\mathcal{Q} \to (M, J)$ denote a holomorphic string algebroid with negative-definite pairing $\IP{,}_{\mathfrak k}$. Suppose that $\mathcal{Q}$ admits a Hermitian metric $\mathbf{G}_F$ which is Chern-flat.  Then, given a Hermitian metric $\mathbf{G}_0 = \mathbf{G}(\omega_0,\beta_0,h_0)$ on $\mathcal{Q}$, the solution to pluriclosed flow \eqref{f:PCFSAbis} with this initial data exists on $[0, \infty)$ and converges to a Chern-flat limit $\mathbf{G}_{\infty}$.
\end{theorem}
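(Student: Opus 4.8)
The plan is to run the entire argument through the holomorphic formulation of the flow. By Proposition \ref{p:PCFhol}, a solution $(\omega_t,\beta_t,h_t)$ to \eqref{f:PCFSAbis} whose initial data satisfies the Bianchi identity is equivalent to the one-parameter family of Hermitian metrics $\mathbf{G}_t = \mathbf{G}(\omega_t,\beta_t,h_t)$ on the fixed holomorphic string algebroid $\mathcal{Q}$ solving the coupled Donaldson HYM flow
\begin{equation*}
\mathbf{G}^{-1}\dt\mathbf{G} = -S_g^{\mathbf{G}}.
\end{equation*}
Since this flow is gauge-equivalent to a strictly parabolic system, short-time existence holds and we may let $[0,T)$, $T \le \infty$, be its maximal interval of existence. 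The goal is to prove $T=\infty$ and to extract a Chern-flat limit. The overall strategy mirrors the proof of convergence of K\"ahler--Ricci and Hermitian--Yang--Mills flow on a flat background: use the Chern-flat metric $\mathbf{G}_F$ to obtain uniform a priori bounds by the maximum principle, feed these into the regularity Theorem \ref{t:GGEKintro} to rule out singularities, and finally use the dissipation of $S_g^{\mathbf{G}}$ to force convergence.

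The decisive step is a uniform $C^0$ estimate, i.e. the assertion that $\mathbf{G}_t$ remains uniformly equivalent to $\mathbf{G}_F$ on $[0,T)$. To obtain it I would measure $\mathbf{G}_t$ against the fixed background through the $\mathbf{G}_F$-self-adjoint positive endomorphism $H_t = \mathbf{G}_F^{-1}\mathbf{G}_t$ of $\mathcal{Q}$, and compute the evolution of the scalar quantities $\tr_{\mathbf{G}_F}\mathbf{G} = \tr H_t$ and $\tr_{\mathbf{G}}\mathbf{G}_F = \tr H_t^{-1}$ along the flow. The point, exactly as in Donaldson's heat flow, is that Chern-flatness of $\mathbf{G}_F$ annihilates every curvature contribution coming from the background, leaving differential inequalities of the schematic form $(\dt - \Delta)\tr H_t \le 0$ and $(\dt - \Delta)\tr H_t^{-1} \le 0$, where $\Delta$ is the time-dependent Chern Laplacian of $\mathbf{G}_t$. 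Negative-definiteness of $\IP{,}_{\mathfrak k}$ guarantees that $\mathbf{G}_t$ and $\mathbf{G}_F$ are genuine positive-definite Hermitian metrics, so these traces are coercive. The maximum principle on the compact $M$ then bounds $\sup_M \tr H_t$ and $\sup_M \tr H_t^{-1}$ by their initial values, producing constants $\lambda,\Lambda>0$ independent of $t$ with $\lambda\,\mathbf{G}_F \le \mathbf{G}_t \le \Lambda\,\mathbf{G}_F$. I expect this to be the main obstacle: although the scalar inequalities are modeled on the classical HYM case, here the contraction $\Lambda_\omega$ and the Laplacian $\Delta$ themselves depend on the evolving metric through its base component, so verifying the favorable sign requires carefully tracking the coupled base/bundle structure of $S_g^{\mathbf{G}}$ and exploiting that $\mathbf{G}_F$ is flat in all blocks of $\mathcal{Q} = T^{1,0}\oplus \ad P^c \oplus T^*_{1,0}$.

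With the uniform equivalence $\lambda\,\mathbf{G}_F \le \mathbf{G}_t \le \Lambda\,\mathbf{G}_F$ in hand, Theorem \ref{t:GGEKintro} applies with $\til{\mathbf{G}} = \mathbf{G}_F$ and yields, for each $k$, a bound $\sup_{M\times[0,T)} t\,\Phi_k(\mathbf{G},\mathbf{G}_F) \le C_k$. In particular all covariant derivatives of $\mathbf{G}_t$ are bounded on $M\times[\varepsilon,T)$ for any $\varepsilon>0$, so the flow cannot develop a singularity at finite time; standard continuation gives $T=\infty$, and the estimates become uniform on $[1,\infty)$.

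It remains to prove convergence. Along the flow the Donaldson-type energy adapted to \eqref{eq:CDYHMintro} is monotone, with time derivative equal, up to a positive constant, to $-\int_M |S_g^{\mathbf{G}}|^2\,dV$; since the uniform equivalence keeps this energy bounded, we obtain $\int_0^\infty\!\!\int_M |S_g^{\mathbf{G}}|^2\,dV\,dt < \infty$. Interpolating against the uniform higher-order bounds of the previous step gives $\sup_M |S_g^{\mathbf{G}_t}| \to 0$, so that any $C^\infty$-subsequential limit $\mathbf{G}_\infty$ satisfies $S_g^{\mathbf{G}_\infty} = 0$. To see that $\mathbf{G}_\infty$ is in fact Chern-flat, I would argue that the existence of the Chern-flat metric $\mathbf{G}_F$ forces the relevant characteristic classes of $\mathcal{Q}$ to vanish, whence a Chern--Weil identity upgrades the Hermitian--Einstein condition $S_g^{\mathbf{G}_\infty}=0$ to the vanishing of the full Chern curvature. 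Full (non-subsequential) convergence then follows either from uniqueness of the Chern-flat limit within the given class, or by showing that $H_t = \mathbf{G}_F^{-1}\mathbf{G}_t$ solves a heat-type equation relative to the flat connection of $\mathbf{G}_F$ whose bounded solutions converge to a $\mathbf{G}_F$-parallel endomorphism $H_\infty$, so that $\mathbf{G}_\infty = \mathbf{G}_F H_\infty$ is again Chern-flat. The two points I expect to require the most care are the uniform $C^0$ estimate above and this final identification of the limit as Chern-flat.
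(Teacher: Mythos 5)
Your first two steps track the paper's proof of Theorem \ref{t:flatspaceconv}: the uniform equivalence $\gl \mathbf{G}_F \leq \mathbf{G}_t \leq \gL \mathbf{G}_F$ does follow from the maximum principle because Chern-flatness of $\mathbf{G}_F$ kills the source term $\tr \mathbf{G}\til{\mathbf{G}}^{-1}S_g^{\til{\mathbf{G}}}$ in Lemma \ref{l:GGfloweqns} identically (the full curvature vanishes, so the $\omega_t$-dependence of $\Lambda_\omega$ that you worried about is harmless), and in fact by the algebraic symmetry $\tr_{\mathbf{G}}\mathbf{G}_F = \tr_{\mathbf{G}_F}\mathbf{G}$ of Lemma \ref{l:genmetricbounds} a single inequality $\square \tr_{\mathbf{G}_t}\mathbf{G}_F \leq 0$ already gives both eigenvalue bounds, so your two separate inequalities are redundant. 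Feeding this into Theorem \ref{t:GGEKintro} and the continuation criterion (via Lemma \ref{l:genmetricbounds}, which converts $\mathbf{G}$-bounds into bounds on $\omega,\beta,h$ as required by Corollary \ref{c:flowregularitycor}) gives $T = \infty$, as in the paper.

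The genuine gap is in your convergence step. You invoke a Donaldson-type energy whose dissipation is $-\int_M \brs{S_g^{\mathbf{G}}}^2\, dV$, but no such monotone functional is established here, and its existence is doubtful in this generality: unlike classical HYM flow, the base metric $g_t$ is itself a block of the evolving $\mathbf{G}_t$, so the contraction $\Lambda_{\omega_t}$ and the volume form $dV_{g_t}$ both move, and the paper's only gradient-flow interpretation (Section \ref{sec:dilaton}) requires a holomorphic volume form, which is not assumed in Theorem \ref{t:flatspaceconv}. Even granting $\sup_M \brs{S_g^{\mathbf{G}_t}} \to 0$, your upgrade from $S_g^{\mathbf{G}_\infty} = 0$ to Chern-flatness via a Chern--Weil identity is delicate over a possibly non-K\"ahler base and is nowhere justified. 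The paper avoids all of this with one more maximum principle: since $\mathbf{G}_F$ is Chern-flat and $\IP{,}_{\mathfrak k} < 0$, the last identity of Lemma \ref{l:GGfloweqns} gives $\square \brs{\gU(\mathbf{G}_t,\mathbf{G}_F)}^2 \leq 0$, whence $\square ( t\brs{\gU}^2 + A \tr_{\mathbf{G}_t}\mathbf{G}_F ) \leq 0$ and thus $\brs{\gU(\mathbf{G}_t,\mathbf{G}_F)}^2 \leq C/t$. The connection difference therefore decays pointwise, so \emph{every} subsequential limit has Chern connection equal to the flat connection of $\mathbf{G}_F$ and is automatically Chern-flat, with no energy integral or characteristic-class argument needed. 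Full convergence then follows not from your heat-equation heuristic for $H_t$ but from re-running the estimate $\square \tr_{\mathbf{G}_t}\mathbf{G}_\infty \leq 0$ with the background $\mathbf{G}_\infty$, using the rigidity statement of Lemma \ref{l:genmetricbounds} (equality $\tr_{\mathbf{G}}\til{\mathbf{G}} = 2\dim M + \rk (\ad P)$ forces $g = \til g$, $\ga = 0$, $\gb = 0$); one must also observe, as the paper does, that convergence of $\mathbf{G}_t$ forces the limiting connection to have holonomy in $K$, producing the reduction $h_\infty$ and identifying $\mathbf{G}_\infty$ as a generalized Hermitian metric.
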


An interesting question is to classify string algebroids $\mathcal{Q}$ admitting Chern-flat Hermitian metrics. Following \cite{SGFLS,GFGM}, the existence of such a metric reduces to find a solution of the explicit coupled system \eqref{eq:CoupledGinstantonExp}.
Given a solution of this system, a choice of generalized Hermitian metric $\mathbf{G}_0$ on $\mathcal{Q}$ is equivalent to a Dolbeault version of the anomaly cancellation equation \eqref{eq:structuralGHMintro} for a pair $(\omega_0,h_0)$, defined in terms of Bott-Chern secondary characteristic classes (see Section \ref{sec:Aeppli} and Remark \ref{rmk:twist}).
A basic example occurs when the base manifold admits a Bismut-flat metric and the bundle is flat and we show in Corollary \ref{c:Bismutflatconv} that in this case one obtains convergence to a geometry of this type.  

\subsection{Conjectures on geometrization}

Pluriclosed flow was originally conceived in \cite{PCF} to construct canonical geometric structures on compact complex surfaces, with the eventual goal of finishing the classification problem for Class $\mbox{VII}_0$ surfaces (cf. \cite{st-geom}).  A fundamental point in this strategy is that every compact complex surface admits pluriclosed metrics \cite{Gauduchon1form}.  In higher dimensions the existence of pluriclosed metrics is obstructed, so a more general approach is required.  Here we replace the pluriclosed condition with the anomaly cancellation equation associated to an auxiliary bundle. As we have seen, this corresponds in a formal sense, and literally in some special settings, to the pluriclosed condition for an associated metric on a higher dimensional manifold fibering over $(M,J)$.  By making an appropriate choice of the bundle, one expects pluriclosed flow on string algebroids to be a more flexible tool for approaching geometrization problems on complex manifolds.  In \S \ref{s:geometrization} we 
record a number of further corollaries, including a scalar curvature monotonicity result and two gradient flow interpretations in the case of definite Lie algebra pairing, related to geometrization. In particular, we show in the presence of a holomorphic volume form that the flow \eqref{f:PCFSA} is the gradient flow of the \emph{dilaton functional}, as introduced in \cite{garciafern2018canonical}. Finally, for a suitable choice of bundle related to the geometry of embedded holomorphic curves and indefinite Lie algebra pairing, we discuss a conjectural picture for string algebroid pluriclosed flow and its relationship to the classification problem for complex threefolds with holomorphically trivial canonical bundle.

\section{Geometry of string algebroids} \label{s:SA}

In this section we recall fundamental aspects of the geometry of string algebroids.  We first recall basic points on the geometry of principal bundles, then use this to describe string algebroids, a special class of Courant algebroids which capture the geometry of the Hull-Strominger system.  Given this we derive an explicit formula for the Ricci tensor and the scalar curvature of a generalized metric and a divergence operator on a string algebroid.

\subsection{Background on principal bundles and connections}\label{s:gpb}

We first review basic differential geometry of principal bundles to fix notation and conventions. The material here is standard and the reader can find the details on any classical reference \emph{e.g.} \cite{KN1}.

Let $K$ be a real Lie group. Let $p \colon P \to M$ be a principal $K$-bundle, with right $K$-action, over a smooth manifold $M$. The surjection $p$ gives rise to an involutive vertical distribution $VP=\mathrm{ker} \; dp$ which fits into the exact sequence:
$$
0\longrightarrow VP \longrightarrow TP \longrightarrow p^*T \longrightarrow 0,
$$
where $T = TM$ is the tangent bundle of $M$. A connection $A$ on $P$ is given by a $\mathfrak{k}$-valued equivariant $1$-form on $P$ which restricts to the identity on $VP$. Given $A$, there is a $K$-equivariant splitting
$$
TP = VP \oplus \mathrm{ker} \; A.
$$
We call $\mathrm{ker} \; A$ the horizontal distribution, and we have associated projection maps, which by abuse of notation we denote by
$$
A : TP \longrightarrow VP, \; \; \; \; \;  A^\perp : TP \longrightarrow \ker A.
$$
The horizontal distribution $\ker A$ maps isomorphically to $T$ under $dp$ and, for any vector field $X\in \Gamma (T)$, we define a ($K$-invariant) lifted horizontal vector field on $P$ by
$$
A^\perp X=dp|_{\ker A}^{-1}(X).
$$
The connection $A$ induces parallel transport and covariant derivatives the bundles associated to $P$, in particular on the adjoint bundle over $M$
$$
\mathrm{ad }\; P = VP/K \cong P\times_K\mathfrak{k}.
$$
The covariant derivative of an $\mathrm{ad} \, P$-valued $k$-form $\alpha\in \Lambda^k(\mathrm{ad} \, P)$ is given by:
\begin{align} \label{f:covderivative}
d_A\alpha = d\alpha (A^\perp \cdot) = d\alpha + [A\wedge \alpha].
\end{align}
The curvature of $A$ 
is defined by:
\begin{align}\label{f:curvature}
  F_A = dA(A^\perp \cdot) = dA+\tfrac{1}{2}[A\wedge A],  
\end{align}
and is naturally an element of $\Lambda^2(\mathrm{ad}\ P)$ which satisfies the Bianchi identity $d_A F_A =0$.

The gauge group of $P$ is defined as the subgroup of $K$-equivariant diffeomorphisms of $P$ which project to the identity on the base manifold
$$
\mathcal{G}(P)=\{ \overline{\phi} \in \operatorname{Diff}(P)^K \; | \; p \circ \overline{\phi} = p \}.
$$
Its Lie algebra is given by $K$-invariant vertical vector fields on $P$ and can be identified with the space of sections $\Gamma(\mathrm{ad} \, P)$.
By equivariance, $\langle\cdot, \cdot\rangle_\mathfrak{k}$ induces a pairing on sections of $\mathrm{ad} \, P$, which we shall denote in the same way. The group $\mathcal{G}(P)$ acts on the right on a connection:
$$
\overline{\phi} \cdot A = \overline{\phi}^*A = (d\overline{\phi})^{-1} \circ A \circ d\overline{\phi}.
$$
The infinitesimal right action of a section $s\in \Gamma(\mathrm{ad} \, P)$ on $A$, identified with the $K$-invariant field $X^s\in \Gamma(TP)$, is
$$
\mathcal{L}_{X^s} A  = d_A s.
$$

\subsection{String algebroids}\label{sec:background}

We recall here the necessary background material on Courant algebroids of string type, following \cite{garciafern2018holomorphic}. For the basic background on Courant algebroids we refer to \cite{GRFBook}.

Let $K$ be a real Lie group, endowed with a symmetric, adjoint-invariant, non-degenerate bilinear form on its Lie algebra $\mathfrak{k}$, which we denote
$$
\langle \cdot , \cdot \rangle_{\mathfrak{k}} \colon \mathfrak{k} \otimes \mathfrak{k} \to \RR.
$$
We fix a principal $K$-bundle $p \colon P \to M$ over a smooth manifold $M$ of dimension $m$. The key topological hypothesis is that the first Pontryagin class of $P$ with respect to $\langle \cdot , \cdot \rangle_{\mathfrak{k}} $ vanishes, that is,
$$
p_1(P):= [\langle F_A \wedge F_A \rangle_{\mathfrak{k}} ] = 0 \in H^4(M,\RR),
$$
where $A$ is a choice of connection on $P$. Given this, there exists a transitive Courant algebroid $(E,\left\langle\cdot,\cdot\right\rangle,\left[\cdot,\cdot\right],\pi)$ over $M$ which fits in a double extension of vector bundles
\begin{equation}\label{eq:Coustr}
\begin{split}
0 \longrightarrow T^* \overset{\pi^*}{\longrightarrow} E \overset{\rho}{\longrightarrow} A_P \longrightarrow 0,\\
0 \longrightarrow \ad P \longrightarrow A_P \overset{dp}{\longrightarrow} T \longrightarrow 0,
\end{split}
\end{equation}
where $A_{P} := TP/K$ denotes the Atiyah Lie algebroid of $P$, $\ad P:= \Ker dp \subset A_{P}$ is the adjoint bundle, and $T = TM$ is the tangent bundle of $M$. The map $\rho$ is bracket-preserving and induces an isomorphism of Lie algebroids
$$
\rho \colon A_E := E/(\Ker \pi)^\perp \to A_P
$$ 
such that its restriction to the subbundle $\ad E := \Ker \pi/(\Ker \pi)^\perp \subset A_E$ is an isomorphism of quadratic Lie algebras $\rho_{|\ad E} \colon \ad E \to (\ad P,\langle , \rangle_{\mathfrak{k}} )$. 

These Courant algebroids, associated to principal bundles with vanishing first Pontryagin class and enriched with the data of the morphism $\rho$, are called \emph{string algebroids} \cite{garciafern2018holomorphic}. More explicitly, given a string algebroid of the form \eqref{eq:Coustr}, which we will denote simply by $E$, a choice of isotropic splitting $\sigma \colon T \to E$ determines an isomorphism (see \cite[Section 3]{GF14})
\begin{equation}\label{eq:isomoEE0}
E \cong T \oplus \ad P \oplus T^*,
\end{equation}
a connection $A$ on $P$, and a three-form $H \in \Omega^3(M)$ such that
\begin{equation}\label{eq:bianchitrans}
dH = \langle F_A \wedge F_A \rangle_{\mathfrak{k}}.
\end{equation}
Via the isomorphism \eqref{eq:isomoEE0}, the pairing is given by
\begin{equation}\label{eq:pairing}
\langle X + r + \xi,Y + t + \eta\rangle = \frac{1}{2}(\eta(X) +
\xi(Y)) + \langle r,t \rangle_{\mathfrak{k}},
\end{equation}
the Dorfman bracket is 
\begin{equation}\label{eq:bracket10}
  \begin{split}
    [X+r+\xi,Y+t+\eta]  = {} & [X,Y] + L_{X}\eta - i_{Y}d\xi + i_{Y}i_{X}H\\
    & - [r,t] - F_A(X,Y) + (d_A)_Xt - (d_A)_Y r\\
    & + 2\langle d_A r,t \rangle_{\mathfrak{k}} + 2\langle i_X F_A ,t \rangle_{\mathfrak{k}} - 2\langle i_Y F_A,r \rangle_{\mathfrak{k}},
\end{split}
\end{equation}
the anchor map is the canonical projection $\pi(X + r + \xi) = X$, and the bracket-preserving morphism in \eqref{eq:Coustr} is  $\rho(X + r + \xi) = A^\perp X + X^r$.

The space of isotropic splittings on a string algebroid has a natural transitive action by the group of orthogonal bundle morphisms which restrict to the identity on $T^*$ and cover the identity on $A_P$. In the explicit presentation of $E$ above, such orthogonal transformations are given by \emph{$(b,a)$-transformations}, parameterized by pairs $(b,a)$, with $b \in \Omega^2(M)$ and $a \in \Omega^1(\ad P)$, and acting by
\begin{equation}\label{eq:BA}
e^{(b,a)}(X + r + \xi) = X + r + i_X a + \xi + i_X b - \langle i_X a,a \rangle_{\mathfrak{k}} - 2\langle a,r\rangle_{\mathfrak{k}}.
\end{equation}
Note that a $(b,a)$-transformation may not preserve the Dorfman bracket. In fact, if we denote by $[,]_{H,A}$ the bracket in \eqref{eq:bracket10}, one has
$$
e^{(b,a)}[e^{(-b,-a)}\cdot ,e^{(-b,-a)}\cdot ]_{H,A} = [\cdot,\cdot]_{H',A'},
$$
where
\begin{equation}\label{eq:AHnew}
\begin{split}
A' & = A + a,\\
H' & = H - db + 2 \langle a \wedge F_A \rangle_{\mathfrak{k}} + \langle a \wedge d_A a \rangle_{\mathfrak{k}} + \frac{1}{3}\langle a \wedge [a \wedge a] \rangle_{\mathfrak{k}}.
\end{split}
\end{equation}

\subsection{Generalized metrics and divergence operators}\label{sec:GRiem}

We next review some basic aspects of generalized Riemannian geometry on Courant algebroids of string type that we will need, following \cite{GF19,GRFBook,SeveraValach2}.  We fix a principal $K$-bundle $P$ over a smooth manifold $M$ and a string algebroid $E$ of the form \eqref{eq:Coustr}.

\begin{definition}\label{def:GmetricE}
A \emph{generalized metric} on $E$ is an orthogonal decomposition $E = V_+ \oplus V_-$, so that the restriction of $\langle \cdot,\cdot\rangle$ to $V_+$ is positive definite and that $\pi_{|V_+}:V_{+}\rightarrow T$ is an isomorphism.
\end{definition}

A generalized metric $V_+ \subset E$ is equivalent to a pair $(g,\sigma)$, where $g$ is a Riemann metric $g$ on $M$ and $\sigma \colon T \to E$ is an isotropic splitting (see e.g. \cite{GRFBook}). Alternatively, a generalized metric can be encoded in an orthogonal endomorphism $G \colon E \to E$ such that $G^2 = \Id$. The orthogonal decomposition $E = V_+ \oplus V_-$ is then recovered from the eigenbundles
$$
V_\pm = \Ker (G \mp \Id).
$$
We will use the following notation for the induced orthogonal projections 
\begin{align*}
\pi_\pm :=&\ \frac{1}{2}(G \pm \Id) \colon  E \longrightarrow V_\pm\\
a_{\pm} :=&\ \pi_{\pm} a.
\end{align*}

More explicitly, in the case of our interest, the isotropic splitting $\sigma \colon T \to E$ determined by $G$ induces an isomorphism $E \cong T \oplus \ad P \oplus T^*$ and hence an explicit string algebroid structure as in Section \ref{sec:background}, with bracket \eqref{eq:bracket10} (see \cite[Proposition 3.4]{GF14}), for a uniquely determined three-form $H \in \Omega^3$ and principal connection $A$ on $P$ satisfying \eqref{eq:bianchitrans}. Furthermore, via this identification we have
\begin{equation}\label{eq:Vpm}
V_+ = \{X + gX, X \in T\}, \quad V_- = \{X - gX + r, X \in T, r \in \ad P\},
\end{equation}
and also that
$$
G = \left( \begin{array}{ccc}
0 & 0& g^{-1} \\
0 & - \Id & 0 \\
g & 0 & 0 \end{array}\right),
$$
with orthogonal projections
$$
\pi_+(X + r + \xi) = \frac{1}{2}(X + gX), \qquad \pi_-(X + r + \xi) = \frac{1}{2}(X - gX) + r.
$$
It will be useful to introduce the notation $\sigma_\pm \colon T \to V_\pm \colon X \to \sigma_\pm(X):= X \pm gX.$

In order to introduce natural curvature quantities associated to a generalized metric $G$, the main difficulty is that there is no uniquely determined analogue of the Levi-Civita connection \cite{CSW,GF19}. Instead, there is a weak version of Koszul formula: a generalized metric $G$ on $E$ determines a pair of differential operators
\begin{equation}\label{eq:mixop}
D^+_- \colon \Gamma(V_+) \to \Gamma(V_-^* \otimes V_+), \qquad D^-_+ \colon \Gamma(V_-) \to \Gamma(V_+^* \otimes V_-),
\end{equation}
defined by
$$
D_{a_-}b_+ = [a_-,b_+]_+, \qquad D_{b_+}a_- = [b_+,a_-]_-,
$$
for any pair of sections $a_- \in \Gamma(V_-)$ and $b_+ \in \Gamma(V_+)$. They satisfy natural Leibniz rules with respect to the anchor map
\begin{equation}\label{eq:leibniz}
\begin{split}
D_{a_-}(fb_+) & = \pi(a_-)(f)b_+ + fD_{a_-}b_+,\\
D_{b_+}(fa_-) & = \pi(b_+)(f)a_- + fD_{a_+}a_-,
\end{split}
\end{equation}
for any smooth function $f \in C^\infty(M)$. 

To give an explicit formula for the operators in \eqref{eq:mixop}, we fix a generalized metric $G$ on $E$ and consider the associated isomorphism $E \cong T \oplus \ad P \oplus T^*$ and pair $(H,A)$ satisfying \eqref{eq:bianchitrans}. Define metric connections $\nabla^\pm$ on $(T,g)$ with totally skew-symmetric torsion by
\begin{equation}\label{eq:nablapm}
\nabla^+_XY = \nabla_XY + \frac{1}{2}g^{-1}H(X,Y,\cdot), \qquad \nabla^-_XY = \nabla_XY - \frac{1}{2}g^{-1}H(X,Y,\cdot),
\end{equation}
for $\nabla$ the Levi-Civita connection of $g$.

\begin{lemma}[\cite{GFRT17}]\label{lem:mixopexp} One has

\begin{equation}\label{eq:mixopexp}
  \begin{split}
    D_{b_+} a_- &= \sigma_- (\nabla^-_YX - g^{-1}\langle i_YF_A,r \rangle_{\mathfrak{k}} ) + (d_A)_Y r - F_A(Y,X),\\
    D_{a_-} b_+ &= \sigma_+ (\nabla^+_XY - g^{-1}\langle i_YF_A,r\rangle_{\mathfrak{k}} ),\\
  \end{split}
\end{equation}
where
\begin{equation}\label{eq:ab}
  \begin{split}
    a_- &= \sigma_-(X)+ r = X + r - gX,\\
    b_+ &= \sigma_+(Y) = Y + gY.
  \end{split}
\end{equation}
\end{lemma}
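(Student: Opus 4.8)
The plan is to verify both identities by direct computation, since by definition $D_{a_-}b_+ = \pi_+[a_-,b_+]$ and $D_{b_+}a_- = \pi_-[b_+,a_-]$, and we are handed the two sections explicitly as $a_- = X + r - gX$ and $b_+ = Y + gY$. So the first step is to substitute these into the Dorfman bracket \eqref{eq:bracket10} and read off the three components (in $T$, $\ad P$, and $T^*$) of $[a_-,b_+]$ and $[b_+,a_-]$. Many terms drop out immediately because $b_+$ has vanishing $\ad P$-component: for $[a_-,b_+]$ one finds tangent part $[X,Y]$, adjoint part $-F_A(X,Y) - (d_A)_Y r$, and cotangent part $L_X(gY) + i_Y d(gX) + i_Y i_X H - 2\langle i_Y F_A, r\rangle_{\mathfrak{k}}$, with the analogous sign-reversed expression for $[b_+,a_-]$.

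The second step is to apply the orthogonal projections $\pi_\pm = \tfrac12(G \pm \Id)$ using the matrix form of $G$. The key mechanism is that $\pi_+$ annihilates the $\ad P$-component and sends a section with tangent part $P$ and cotangent part $\eta$ to $\sigma_+\big(\tfrac12(P + g^{-1}\eta)\big)$, whereas $\pi_-$ retains the full $\ad P$-component and produces $\sigma_-\big(\tfrac12(P - g^{-1}\eta)\big)$. Applied to $[a_-,b_+]$ this kills the adjoint terms and yields $D_{a_-}b_+ = \sigma_+(Z_+)$ with $Z_+ = \tfrac12\big([X,Y] + g^{-1}(L_X(gY) + i_Y d(gX) + i_Y i_X H)\big) - g^{-1}\langle i_Y F_A, r\rangle_{\mathfrak{k}}$; applied to $[b_+,a_-]$ the adjoint term survives, contributing the summand $(d_A)_Y r - F_A(Y,X)$ alongside $\sigma_-(Z_-)$.

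The heart of the argument is recognizing the Levi-Civita connection inside these tangential expressions. I would invoke the Koszul formula to identify $\tfrac12[X,Y] + \tfrac12 g^{-1}\big(L_X(gY) + i_Y d(gX)\big) = \nabla_X Y$ (and its $X \leftrightarrow Y$ counterpart), while the contraction $\tfrac12 g^{-1} i_Y i_X H = \tfrac12 g^{-1} H(X,Y,\cdot)$ supplies exactly the skew torsion distinguishing $\nabla^+$ from $\nabla$ as in \eqref{eq:nablapm}. In the second computation the projection $\pi_-$ flips the sign of the $H$-contraction, instead producing the $\nabla^-$ torsion. This identifies $Z_+ = \nabla^+_X Y - g^{-1}\langle i_Y F_A, r\rangle_{\mathfrak{k}}$ and $Z_- = \nabla^-_Y X - g^{-1}\langle i_Y F_A, r\rangle_{\mathfrak{k}}$, which together with the surviving adjoint summand completes both formulas in \eqref{eq:mixopexp}.

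The main obstacle I anticipate is purely bookkeeping rather than conceptual: keeping consistent the several factors of $\tfrac12$ (those entering through the pairing \eqref{eq:pairing}, through the projections $\pi_\pm$, and through the normalizations of $\sigma_\pm$ and $\nabla^\pm$), and fixing the contraction and orientation conventions so that the terms $\mp 2\langle i_Y F_A, r\rangle_{\mathfrak{k}}$ in the bracket combine with the $\pm\tfrac12 g^{-1}$ from the two projections to produce in both cases precisely the coefficient $-g^{-1}\langle i_Y F_A, r\rangle_{\mathfrak{k}}$ appearing in the statement. Once the Koszul identification and these normalizations are pinned down, no further difficulty remains.
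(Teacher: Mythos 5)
Your proposal is correct, and it is precisely the verification one expects: the paper itself gives no proof of this lemma, citing \cite{GFRT17}, and your route --- substituting $a_- = X + r - gX$, $b_+ = Y + gY$ into the Dorfman bracket \eqref{eq:bracket10}, applying $\pi_\pm = \tfrac12(G\pm\Id)$ (correctly, as $\pi_+(X+r+\xi) = \tfrac12\sigma_+(X+g^{-1}\xi)$ and $\pi_-(X+r+\xi) = \tfrac12\sigma_-(X-g^{-1}\xi)+r$, which fixes the shorthand in the paper's displayed projection formulas), and identifying the Levi-Civita connection via the Koszul formula so that the $\pm\tfrac12 g^{-1}i_Yi_XH$ contraction produces the torsion of $\nabla^\pm$ in \eqref{eq:nablapm} --- is the same computation carried out in the cited reference. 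All your signs and normalizations check out, including the combination of the bracket terms $\mp 2\langle i_YF_A,r\rangle_{\mathfrak{k}}$ with the $\pm\tfrac12 g^{-1}$ from the two projections to yield the coefficient $-g^{-1}\langle i_YF_A,r\rangle_{\mathfrak{k}}$ in both formulas of \eqref{eq:mixopexp}.
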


Even though the right notion of curvature tensor for a generalized metric is still unknown (cf. \cite{jurco2016courant}), one can construct a pair of generalized Ricci tensors associated to $G$ (see Section \ref{sec:GRic}). For this, it is required to consider \emph{divergence operators} on the string algebroid $E$ \cite{GF19}, which will keep track of geometrically relevant measures along the generalized Ricci flow.  These operators will be useful to tackle gauge-fixed versions of generalized Ricci flow in Section \ref{sec:GRF}. For simplicity, in the sequel we will assume that the manifold $M$ is oriented.

\begin{definition}\label{def:divE}
A divergence operator on $E$ is a map $\operatorname{div} \colon \Gamma(E) \to C^\infty(M)$ satisfying
$$
\operatorname{div}(fa) = f \operatorname{div}(a) + \pi(a)(f)
$$
for any $f \in C^\infty(M)$.
\end{definition}

Note that a generalized metric $G$ on $E$ has an associated \emph{Riemannian divergence} defined by
\begin{equation}\label{eq:div0}
\operatorname{div}^G(X + r + \xi) = \frac{L_X \mu_g}{\mu_g},
\end{equation}
where $\mu_g$ is the volume element of $g$. Alternatively, using that the Levi-Civita connection $\nabla$ of $g$ is torsion free, one can also write
\begin{equation}\label{eq:divtr}
\operatorname{div}^G(X + r + \xi) = \tr \nabla X := g(v_i,\nabla_{v_i} X),
\end{equation}
where $\{v_i\}$ is a local orthonormal frame of $(T,g)$ and we use Einstein's summation convention. 

Following \cite{GRFBook,SeveraValach2}, we introduce next a compatibility condition for pairs $(V_+,\operatorname{div})$. Recall that the space of divergence operators on $E$ is affine, modelled on sections of $E^* \cong E$.

\begin{definition}\label{def:compatibleE}
Let $(G,\operatorname{div})$ be a pair given by a generalized metric $G$ and a divergence operator $\operatorname{div}$ on $E$. Define $\varepsilon \in \Gamma(E)$ by $\langle \varepsilon, \cdot \rangle := \operatorname{div}^G - \operatorname{div}$. We say that $(G,\operatorname{div})$ is a \emph{compatible pair} if $\varepsilon$ is an infinitesimal isometry of $G$, that is,
$$
[\varepsilon,V_+] \subset V_+.
$$ 
Furthermore, we say that $(G,\operatorname{div})$ is \emph{closed} if $\varepsilon \in \Gamma(T^*) \subset \Gamma(E)$.	
\end{definition}

In the following result we provide an explicit characterization of the previous conditions for the case of string algebroids. The proof is analogous to that of \cite[Lemma 2.50]{GRFBook} using the more complicated bracket expression \eqref{eq:bracket10} and is therefore omitted.

\begin{lemma}
A pair $(G,\operatorname{div})$ is compatible if and only
\begin{equation}\label{eq:infisometry}
L_Xg = 0, \qquad d_A z = - \iota_X F_A, \qquad d\varphi = i_XH - 2 \langle F_A,z \rangle,
\end{equation}
where $\varepsilon = X  + z + \varphi$ in the splitting determined by $G$. Furthermore, $\varepsilon$ is closed if and only if $X = z = 0$ and $d \varphi = 0$.
\end{lemma}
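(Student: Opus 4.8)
The plan is to test the compatibility condition $[\varepsilon, V_+] \subset V_+$ against the generators of $V_+$. Writing $\varepsilon = X + z + \varphi$ in the splitting determined by $G$, and recalling from \eqref{eq:Vpm} that $V_+ = \{\sigma_+(Y) = Y + gY : Y \in \Gamma(T)\}$, it suffices to impose $[\varepsilon, \sigma_+(Y)] \in V_+$ for every vector field $Y$. First I would expand this Dorfman bracket using \eqref{eq:bracket10}, taking the first slot to be $X + z + \varphi$ (so that $r = z$, $\xi = \varphi$) and the second to be $Y + gY$ (so that its $\ad P$-part vanishes and $\eta = gY$). Every term of \eqref{eq:bracket10} proportional to the $\ad P$-part of the second entry then drops out, and after collecting the surviving terms by type one finds
\[
[\varepsilon,\sigma_+(Y)] = \underbrace{[X,Y]}_{T} \;-\; \underbrace{\big(F_A(X,Y) + (d_A)_Y z\big)}_{\ad P} \;+\; \underbrace{\big(L_X(gY) - i_Y d\varphi + i_Y i_X H - 2\langle i_Y F_A, z\rangle_{\mathfrak k}\big)}_{T^*}.
\]

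Membership in $V_+ = \{Z + gZ\}$ then amounts to two requirements: the $\ad P$-component must vanish, and the $T^*$-component must equal $g$ applied to the $T$-component $[X,Y]$. The first requirement reads $(d_A)_Y z + F_A(X,Y) = 0$ for all $Y$; since $(d_A z)(Y) = (d_A)_Y z$ and $F_A(X,Y) = (\iota_X F_A)(Y)$, this is exactly $d_A z = -\iota_X F_A$. For the second requirement, the key algebraic step is the Leibniz identity $L_X(gY) = (L_X g)(Y,\cdot) + g([X,Y],\cdot)$; substituting it and cancelling the common term $g([X,Y],\cdot)$ leaves the bilinear condition $(L_X g)(Y,W) + \big(-d\varphi + i_X H - 2\langle F_A,z\rangle\big)(Y,W) = 0$ for all $Y,W$. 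Here $L_X g$ is symmetric, while $d\varphi$, $i_X H$ and $\langle F_A,z\rangle$ are two-forms and hence antisymmetric; a symmetric tensor plus an antisymmetric one vanishes if and only if both parts do, which yields simultaneously $L_X g = 0$ and $d\varphi = i_X H - 2\langle F_A,z\rangle$. This establishes the stated characterization of compatibility, and as each manipulation is an equivalence all the steps are reversible.

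For the final claim, $\varepsilon$ being closed means $\varepsilon \in \Gamma(T^*)$, i.e. $X = 0$ and $z = 0$; feeding this into the third compatibility equation $d\varphi = i_X H - 2\langle F_A, z\rangle$ forces $d\varphi = 0$, and conversely $X = z = 0$ together with $d\varphi = 0$ returns $\varepsilon = \varphi \in \Gamma(T^*)$ while satisfying the three conditions trivially. The only genuine obstacle is the bookkeeping in expanding \eqref{eq:bracket10} and correctly routing the curvature and $H$-terms into the symmetric versus antisymmetric parts; given the explicit bracket and the model computation of \cite[Lemma 2.50]{GRFBook} for exact Courant algebroids, this is routine.
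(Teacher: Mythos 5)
Your proof is correct and is precisely the computation the paper has in mind: the paper omits the proof, stating only that it is analogous to \cite[Lemma 2.50]{GRFBook} with the string algebroid bracket \eqref{eq:bracket10}, and you carry out exactly that expansion of $[\varepsilon,\sigma_+(Y)]$, split the $T^*$-condition into symmetric ($L_Xg=0$) and antisymmetric ($d\varphi = i_XH - 2\langle F_A,z\rangle_{\mathfrak k}$) parts, and read off $d_Az=-\iota_XF_A$ from the $\ad P$-component. The closedness claim is also handled as intended (closed within a compatible pair forces $d\varphi=0$ via the third equation).
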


\subsection{Generalized Ricci tensor}\label{sec:GRic}

The first general definition of Ricci tensor in generalized geometry was provided in \cite{GF19}, using torsion-free generalized connections associated to a  pair $(G,\operatorname{div})$. Here, we embrace the simpler and more elegant characterization introduced in \cite{SeveraValach2} (see also \cite{jurco2016courant,SCSV}). These a priori different notions have been recently proved to be equivalent in \cite{CaRuPe}. In the sequel, we will identify $V_\pm \cong V_\pm^*$ via the natural isomorphism provided by the pairing on $E$.

\begin{definition}\label{def:GRic}
The Ricci tensors associated to a pair $(G,\operatorname{div})$
$$
\operatorname{Rc}_{G,\operatorname{div}}^+ \in V_- \otimes V_+, \qquad \operatorname{Rc}_{G,\operatorname{div}}^- \in V_+ \otimes V_-
$$
are defined by
\begin{equation}\label{eq:Rcpm}
\begin{split}
\operatorname{Rc}_{G,\operatorname{div}}^+(a_-,b_+) & = \operatorname{div}([a_-,b_+]_+) - \pi(a_-)(\operatorname{div}(b_+)) - \operatorname{tr}_{V_+}[[\cdot,a_-]_-,b_+]_+,\\
\operatorname{Rc}_{G,\operatorname{div}}^-(b_+,a_-) & = \operatorname{div}([b_+,a_-]_-) - \pi(b_+)(\operatorname{div}(a_-)) - \operatorname{tr}_{V_-} [[\cdot,b_+]_+,a_-]_-.
\end{split}
\end{equation}
The total Ricci tensor of a pair $(G,\operatorname{div})$ is defined by
$$
\operatorname{Rc}_{G,\operatorname{div}}:= \operatorname{Rc}_{G,\operatorname{div}}^+ - \operatorname{Rc}_{G,\operatorname{div}}^- \in \Gamma(E \otimes E).
$$
For the natural choice $\operatorname{div} = \operatorname{div}^G$, we will denote $\operatorname{Rc}_{G}^\pm = \operatorname{Rc}_{G,\operatorname{div}^G}^\pm$ (resp. $\operatorname{Rc}_{G} = \operatorname{Rc}_{G,\operatorname{div}^G}$) and call it the $\pm$-Ricci tensor (resp. total Ricci tensor) of the generalized metric $G$.
\end{definition}

Using the Leibniz rules \eqref{eq:leibniz}, combined with the Leibniz rule for the divergence operator, it is not difficult to see that $\operatorname{Rc}_{G,\operatorname{div}}^\pm$ in \eqref{eq:Rcpm} define tensors in $V_\mp \otimes V_\pm$, as claimed. The following result provides a basic structural property of the total Ricci tensor. We note that our proof is valid for arbitrary Courant algebroids.

\begin{lemma}\label{lem:Ricciskew}
Let $(G,\operatorname{div})$ be a pair given by a generalized metric $G$ and a divergence operator $\operatorname{div}$ on a Courant algebroid $E$. Define $\varepsilon \in \Gamma(E)$ by $\la \varepsilon, \cdot \ra:= \operatorname{div}^G - \operatorname{div}$. Then, the total Ricci tensor of $G$ is skew symmetric, that is,
\begin{equation}\label{eq:SkewG}
\operatorname{Rc}_{G}:= \operatorname{Rc}_{G}^+ - \operatorname{Rc}_{G}^- \in \Gamma(\Lambda^2 E).
\end{equation}
Furthermore, $\operatorname{Rc}_{G,\operatorname{div}} \in \Gamma(\Lambda^2 E)$ if and only if $(G,\operatorname{div})$ is a compatible pair.
\end{lemma}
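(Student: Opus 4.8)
The plan is to reduce both assertions to a single identity. Write $\operatorname{Rc}_{G,\operatorname{div}} = \operatorname{Rc}_{G,\operatorname{div}}^+ - \operatorname{Rc}_{G,\operatorname{div}}^-$ and recall $\operatorname{Rc}_{G,\operatorname{div}}^+\in V_-\otimes V_+$ while $\operatorname{Rc}_{G,\operatorname{div}}^-\in V_+\otimes V_-$; hence the total Ricci tensor has no $V_+\otimes V_+$ or $V_-\otimes V_-$ component, and it lies in $\Gamma(\Lambda^2E)$ if and only if
\[
\operatorname{Rc}_{G,\operatorname{div}}^+(a_-,b_+) = \operatorname{Rc}_{G,\operatorname{div}}^-(b_+,a_-)\qquad\text{for all } a_-\in\Gamma(V_-),\ b_+\in\Gamma(V_+).
\]
So I would set $\Delta(a_-,b_+):=\operatorname{Rc}_{G,\operatorname{div}}^+(a_-,b_+)-\operatorname{Rc}_{G,\operatorname{div}}^-(b_+,a_-)$ and aim to prove $\Delta(a_-,b_+) = -\la[\varepsilon,a_-],b_+\ra$ on an arbitrary Courant algebroid. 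Granting this, both statements follow at once: taking $\operatorname{div}=\operatorname{div}^G$ gives $\varepsilon=0$ and hence \eqref{eq:SkewG}, while in general $\Delta\equiv 0$ will be seen to be equivalent to compatibility.

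The first step uses only $V_+\perp V_-$. From the symmetric-bracket axiom $\la[u,v]+[v,u],w\ra = \pi(w)\la u,v\ra$ and $\la a_-,b_+\ra=0$ one gets $[a_-,b_+] = -[b_+,a_-]$, so $[b_+,a_-]_- = -[a_-,b_+]_-$; since $\operatorname{div}$ is additive, the terms $\operatorname{div}([a_-,b_+]_+)$ and $-\operatorname{div}([b_+,a_-]_-)$ combine into $\operatorname{div}([a_-,b_+])$, giving
\[
\Delta = \operatorname{div}([a_-,b_+]) - \pi(a_-)\operatorname{div}(b_+) + \pi(b_+)\operatorname{div}(a_-) - \operatorname{tr}_{V_+}[[\cdot,a_-]_-,b_+]_+ + \operatorname{tr}_{V_-}[[\cdot,b_+]_+,a_-]_-.
\]
I would then write $\operatorname{div} = \operatorname{div}^G - \la\varepsilon,\cdot\ra$ and isolate the $\varepsilon$-linear part $\Theta := -\la\varepsilon,[a_-,b_+]\ra + \pi(a_-)\la\varepsilon,b_+\ra - \pi(b_+)\la\varepsilon,a_-\ra$ (the two trace terms do not involve the divergence). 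Applying the invariance axiom $\pi(a_-)\la\varepsilon,b_+\ra = \la[a_-,\varepsilon],b_+\ra + \la\varepsilon,[a_-,b_+]\ra$ collapses the first two terms of $\Theta$ to $\la[a_-,\varepsilon],b_+\ra$, and applying the symmetric-bracket axiom to $\la[a_-,\varepsilon],b_+\ra$ cancels the remaining $\pi(b_+)\la\varepsilon,a_-\ra$, leaving $\Theta = -\la[\varepsilon,a_-],b_+\ra$. These manipulations are purely formal and hold on any Courant algebroid.

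The crux is the divergence-free statement, namely that the part of $\Delta$ computed with $\operatorname{div}^G$ vanishes, i.e. $\operatorname{Rc}_G^+(a_-,b_+)=\operatorname{Rc}_G^-(b_+,a_-)$. This is the first assertion of the lemma and is where the genuine work lies: the trace terms $\operatorname{tr}_{V_\pm}$ carry no dependence on the divergence, so they must cancel against $\pi(a_-)\operatorname{div}^G(b_+)-\pi(b_+)\operatorname{div}^G(a_-)$ through the specific structure of the Riemannian divergence. In particular a naive "trace of the adjoint action" formula for $\operatorname{div}^G$ is frame-dependent, so no formal identity can close the argument, and I expect this to be the main obstacle. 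I would handle it by expanding the trace terms through the mixed operators $D^\pm$ of \eqref{eq:mixop}, using Lemma \ref{lem:mixopexp} to express $\operatorname{tr}_{V_+}[[\cdot,a_-]_-,b_+]_+$ and $\operatorname{tr}_{V_-}[[\cdot,b_+]_+,a_-]_-$ in terms of the Ricci curvatures of the Bismut connections $\nabla^\pm$ together with quadratic $F_A$-terms, and invoking the first Bianchi/Jacobi identity to match these against the divergence terms. Equivalently, one may reduce to the identification of $\operatorname{Rc}_G$ with the classical generalized Ricci tensor from \cite{SeveraValach2,GRFBook,CaRuPe}, whose explicit $(g,H,F_A)$-expression manifestly assembles into $\Gamma(\Lambda^2E)$.

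Combining the two parts yields $\Delta(a_-,b_+) = -\la[\varepsilon,a_-],b_+\ra$. Setting $\operatorname{div}=\operatorname{div}^G$ (so $\varepsilon=0$) proves \eqref{eq:SkewG}. For the final equivalence, $\operatorname{Rc}_{G,\operatorname{div}}\in\Gamma(\Lambda^2 E)$ holds iff $\Delta\equiv0$, iff $\la[\varepsilon,a_-],b_+\ra=0$ for all $a_-\in\Gamma(V_-)$ and $b_+\in\Gamma(V_+)$; since the pairing is nondegenerate on $V_+$ this means $[\varepsilon,a_-]_+=0$, that is $[\varepsilon,V_-]\subset V_-$. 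Finally, applying the invariance axiom with first slot $\varepsilon$ to a pair $v\in\Gamma(V_+)$, $w\in\Gamma(V_-)$ shows $[\varepsilon,V_-]\subset V_-$ is equivalent to $[\varepsilon,V_+]\subset V_+$, which is precisely the compatibility of $(G,\operatorname{div})$.
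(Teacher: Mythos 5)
Your formal scaffolding is correct and, in fact, coincides with the divergence-dependent part of the paper's own proof: the reduction of \eqref{eq:SkewG} to $\operatorname{Rc}^+_{G,\operatorname{div}}(a_-,b_+)=\operatorname{Rc}^-_{G,\operatorname{div}}(b_+,a_-)$, the use of $[b_+,a_-]=-[a_-,b_+]$ (from orthogonality of $V_\pm$ and the symmetric-bracket axiom) to assemble $\operatorname{div}([a_-,b_+])$, the computation $\Theta=-\la[\varepsilon,a_-],b_+\ra$ via the invariance axiom, and the final equivalence $[\varepsilon,V_-]\subset V_-\Leftrightarrow[\varepsilon,V_+]\subset V_+$ all check out and match the paper's closing display.

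However, the crux you yourself flag --- skewness for the Riemannian divergence, i.e.\ $\operatorname{Rc}^+_{G}(a_-,b_+)=\operatorname{Rc}^-_{G}(b_+,a_-)$ --- is left unproved, and neither fallback you propose fills the hole in the stated generality. Route (i), expanding the traces via Lemma \ref{lem:mixopexp} and matching Bismut--Ricci and $F_A$-terms, is specific to string algebroids, whereas the lemma (as the paper emphasizes) holds on an \emph{arbitrary} Courant algebroid; moreover, in the paper's logical order the explicit formula of Proposition \ref{prop:Ricciexp} is derived \emph{using} Lemma \ref{lem:Ricciskew} (only $\operatorname{Rc}_G^+$ is computed there), so this route would force you to recompute both $\operatorname{Rc}_G^\pm$ from scratch. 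Route (ii) amounts to citing \cite{SeveraValach2,CaRuPe}; the paper itself remarks that \cite[Theorem 18]{CaRuPe} is precisely this statement, so that is quoting the result, not proving it. More importantly, your claim that ``no formal identity can close the argument'' is mistaken: the paper closes it with two purely formal ingredients. First, \cite[Proposition 2.3]{SeveraValach2} gives
\begin{equation*}
\operatorname{div}^G([a_-,b_+]) = \pi(a_-)\big(\operatorname{div}^G(b_+)\big) - \pi(b_+)\big(\operatorname{div}^G(a_-)\big),
\end{equation*}
which is frame-independent despite your worry --- it follows from $\operatorname{div}^G(e)=L_{\pi(e)}\mu_g/\mu_g$ together with $L_{[X,Y]}=[L_X,L_Y]$ --- and it kills \emph{all} divergence terms in $\Delta$ at once; they self-cancel and never need to be matched against the traces. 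Second, a short dual-frame manipulation using only orthogonality, the invariance axiom, and the expansion $X_+=\la X,e^i_+\ra e_i^+$ shows the two trace terms cancel each other:
\begin{equation*}
-\la [b_+,e_+^i],[e_i^+,a_-]_-\ra = \la e_+^i,[b_+,e_j^-]\ra \la e^j_-,[e_i^+,a_-]\ra = \la e^j_-,[[b_+,e_j^-]_+,a_-]\ra = -\operatorname{tr}_{V_-}[[\cdot,b_+]_+,a_-]_-.
\end{equation*}
Without these two observations your proof does not go through on a general Courant algebroid, which is where the genuine content of the lemma lies.
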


\begin{proof}
Condition $\operatorname{Rc}_{G,\operatorname{div}} \in \Gamma(\Lambda^2 E)$ is equivalent to
$$
\operatorname{Rc}_{G,\operatorname{div}}^+(a_-,b_+) = \operatorname{Rc}_{G,\operatorname{div}}^-(b_+,a_-),
$$
for all for $a_- \in \Gamma(V_-)$ and $b_+ \in \Gamma(V_+)$. We first proof that this equality holds for $\operatorname{Rc}_{G}^\pm$.
By \cite[Proposition 2.3]{SeveraValach2}, we have
\begin{align*}
\operatorname{div}^G([a_-,b_+])  & = \pi(a_-)(\operatorname{div}^G(b_+)) - \pi(b_+)(\operatorname{div}^G(a_-)).
\end{align*}
Let $\{e_i^+\}$ (resp. $\{e_i^-\}$) a local frame of $V_+$ (resp. $V_-$), and let $\{e^i_+\}$ (resp. $\{e^i_-\}$) the dual frame, that is, such that $\langle e_i^+,e^j_+ \rangle = \delta_i^j$. Then, from the formula above, we have
\begin{equation*}
\begin{split}
\operatorname{Rc}_{G}^+&(a_-,b_+) - \operatorname{Rc}_{G}^-(b_+,a_-)\\
=&\ \operatorname{div}^G([a_-,b_+]_+) - \pi(a_-)(\operatorname{div}^G(b_+)) - \langle e_+^i,[[e_i^+,a_-]_-,b_+]\rangle\\
&\ - \operatorname{div}^G([b_+,a_-]_-) + \pi(b_+)(\operatorname{div}^G(a_-)) + \operatorname{tr}_{V_-} [[\cdot,b_+]_+,a_-]_-\\
=&\ \operatorname{div}^G([a_-,b_+]) - \pi(a_-)(\operatorname{div}^G(b_+)) + \pi(b_+)(\operatorname{div}^G(a_-)) - \langle [b_+,e_+^i],[e_i^+,a_-]_-\rangle + \operatorname{tr}_{V_-} [[\cdot,b_+]_+,a_-]_-\\
=&\ - \langle [b_+,e_+^i],[e_i^+,a_-]_-\rangle + \operatorname{tr}_{V_-} [[\cdot,b_+]_+,a_-]_-.
\end{split}
\end{equation*}
The first part of the statement follows now from
\begin{equation*}
\begin{split}
- \langle [b_+,e_+^i],[e_i^+,a_-]_-\rangle & =  - \langle [b_+,e_+^i],e_j^-\rangle \langle e^j_- ,[e_i^+,a_-]\rangle \\
& =  \langle e_+^i,[b_+,e_j^-]\rangle \langle e^j_- ,[e_i^+,a_-]\rangle \\
& = \langle e^j_- ,[[b_+,e_j^-],a_-]\rangle \\
& = - \operatorname{tr}_{V_-} [[\cdot,b_+]_+,a_-]_-.
\end{split}
\end{equation*}
Finally, the proof follows from
\begin{equation*}
\begin{split}
\operatorname{Rc}_{G,\operatorname{div}}^+(a_-,b_+) & = \operatorname{Rc}_{G}^+(a_-,b_+) - \langle\varepsilon_+,[a_-,b_+]\rangle + \pi(a_-) \langle \varepsilon_+,b_+\rangle\\
& = \operatorname{Rc}_{G}^-(b_+,a_-) - \langle[\varepsilon_+,a_-],b_+\rangle,\\
& = \operatorname{Rc}_{G,\operatorname{div}}^-(b_+,a_-) + \langle[\varepsilon_-,b_+],a_-\rangle + \langle  a_-,[\varepsilon_+,b_+]\rangle,\\
& = \operatorname{Rc}_{G,\operatorname{div}}^-(b_+,a_-) + \langle[\varepsilon,b_+],a_-\rangle.
\end{split}
\end{equation*}

\end{proof}

\begin{remark}
Note that Lemma \ref{lem:Ricciskew} was independently recently observed in \cite[Theorem 18]{CaRuPe}, answering a question in \cite{GF19}.
\end{remark}

We provide next an explicit characterization of the generalized Ricci tensors in the case of string algebroids, as originally calculated in \cite{GF14} using Levi-Civita connections. A proof of the following formula using Definition \ref{def:GRic} does not seem to appear in the literature, and here we provide one for completeness. We use the following notations
\begin{equation}\label{f:sqtens}
\begin{split}
H^2(X,Y) & = \langle i_X H, i_Y H \rangle, \qquad F_A^2 = -\langle i_X F_A, i_Y F_A \rangle_{\mathfrak{k}}\\
F_A \ \lrcorner \ H & =  \textstyle \sum_{i_1< i_2} F_A(v_{i_1},v_{i_2})H(v_{i_1},v_{i_2},\cdot)
\end{split}
\end{equation}
where $\{v_i\}$ is a $g$-orthonormal frame and we use the inner product induced by $g$ and $\langle \cdot,\cdot \rangle_{\mathfrak{k}}$ on the corresponding space of tensors.

\begin{proposition}\label{prop:Ricciexp}
Let $(G,\operatorname{div})$ be a pair given by a generalized metric $G$ and a divergence operator $\operatorname{div}$ on a string algebroid $E$.
Define $\varepsilon \in \Gamma(E)$ by $\la \varepsilon, \cdot \ra:= \operatorname{div}^G - \operatorname{div}$. Via the isomorphism $E \cong T \oplus \ad P \oplus T^*$ provided by $G$, we can uniquely write 
$$
\varepsilon = \sigma_+(\varphi_+^\sharp) + z + \sigma_-(\varphi_-^\sharp)
$$ 
for $\varphi_\pm \in \Gamma(T^*)$ and $z \in \Gamma(\ad P)$. Then, one has 
\begin{equation}\label{eq:Ric+exp}
\begin{split}
\operatorname{Rc}_{G,\operatorname{div}}^+(a_-,b_+) = & \ i_Yi_X \left(\operatorname{Rc} - \frac{1}{4}H^2 - F_A^2 + \frac{1}{2} L_{\varphi_+^\sharp}g\right)\\
& + i_Yi_X \left(- \frac{1}{2}d^{\star}H + \frac{1}{2} d \varphi_+ - \frac{1}{2} i_{\varphi_+^\sharp} H \right)\\
& - i_Y \Big{\langle} d_A^{\star}F_A - F_A \lrcorner \ H  + i_{\varphi_+^\sharp} F_A,r\Big{\rangle}_{\mathfrak{k}},
\end{split}
\end{equation}
\begin{equation}\label{eq:Ric-exp}
\begin{split}
\operatorname{Rc}_{G,\operatorname{div}}^-(b_+,a_-) = & \ i_Xi_Y \left(\operatorname{Rc} - \frac{1}{4}H^2 - F_A^2 - \frac{1}{2} L_{\varphi_-^\sharp}g \right)\\
& + i_Xi_Y \left(\frac{1}{2}d^{\star}H - \frac{1}{2} d \varphi_- - \frac{1}{2} i_{\varphi_-^\sharp} H + \langle F_A,z \rangle_{\mathfrak{k}} \right)\\
& - i_Y \Big{\langle} d_A^{\star}F_A - F_A \lrcorner \ H  - d_Az  - i_{\varphi_-^\sharp} F_A,r\Big{\rangle}_{\mathfrak{k}},
\end{split}
\end{equation}
where $a_-,b_+$ are as in \eqref{eq:ab}.
\end{proposition}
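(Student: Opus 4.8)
The plan is to evaluate the two expressions in \eqref{eq:Rcpm} directly, using the explicit description of the mixed operators from Lemma \ref{lem:mixopexp} together with the divergence formula \eqref{eq:divtr}. I would first reduce to the case $\operatorname{div} = \operatorname{div}^G$, i.e. $\varepsilon = 0$. Since the space of divergence operators is affine over $\Gamma(E)$ and each term of \eqref{eq:Rcpm} depends on $\operatorname{div}$ in a transparent way, the same bookkeeping carried out in the proof of Lemma \ref{lem:Ricciskew} yields
\begin{equation*}
\operatorname{Rc}_{G,\operatorname{div}}^+(a_-,b_+) = \operatorname{Rc}_{G}^+(a_-,b_+) - \langle \varepsilon_+,[a_-,b_+]\rangle + \pi(a_-)\langle \varepsilon_+,b_+\rangle,
\end{equation*}
and symmetrically for $\operatorname{Rc}^-$ with $\varepsilon_-$ in place of $\varepsilon_+$. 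Writing $\varepsilon_+ = \sigma_+(\varphi_+^\sharp)$ and expanding $[a_-,b_+]_+$ through \eqref{eq:mixopexp}, the metric part of these two correction terms collapses to $(\nabla^+_X\varphi_+)(Y)$, which separates into its symmetric, skew and torsion pieces as $\tfrac12 L_{\varphi_+^\sharp}g + \tfrac12 d\varphi_+ - \tfrac12 i_{\varphi_+^\sharp}H$, while the remaining gauge coupling reproduces the $i_{\varphi_+^\sharp}F_A$ term; the analogous expansion with $\varepsilon_- = z + \sigma_-(\varphi_-^\sharp)$ produces the $\varphi_-$, $z$ and $\langle F_A,z\rangle_{\mathfrak k}$ contributions of \eqref{eq:Ric-exp}. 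This isolates all genuine content in the case $\varepsilon = 0$.

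For $\operatorname{Rc}_G^+$ I would evaluate the three summands of \eqref{eq:Rcpm} on $a_- = \sigma_-(X)+r$ and $b_+ = \sigma_+(Y)$ as in \eqref{eq:ab}. Feeding the formula for $D_{a_-}b_+ = [a_-,b_+]_+$ from \eqref{eq:mixopexp} into \eqref{eq:divtr} gives the divergence term $\operatorname{div}^G([a_-,b_+]_+) = \operatorname{tr}\nabla(\nabla^+_XY - g^{-1}\langle i_YF_A,r\rangle_{\mathfrak k})$, and the anchor term is simply $\pi(a_-)(\operatorname{div}^G(b_+)) = X(\operatorname{tr}\nabla Y)$. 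The crux is the trace term $\operatorname{tr}_{V_+}[[\cdot,a_-]_-,b_+]_+$, which I would compute on the self-dual frame $\{\sigma_+(v_i)\}$ for $\{v_i\}$ a local $g$-orthonormal frame: a first application of \eqref{eq:mixopexp} gives $[\sigma_+(v_i),a_-]_- \in \Gamma(V_-)$, with $T$- and $\ad P$-components $\nabla^-_{v_i}X - g^{-1}\langle i_{v_i}F_A,r\rangle_{\mathfrak k}$ and $(d_A)_{v_i}r - F_A(v_i,X)$; a second application and pairing with $\sigma_+(v_i)$ then yields a sum of the form $\sum_i g(v_i,\nabla^+_{\,\cdot\,}Y)$ coupled to the gauge pieces.

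The heart of the argument, and the main obstacle, is the assembly of these three summands into the stated tensors. On the purely metric part, the second-order contributions of the divergence and trace terms combine into a commutator of covariant derivatives which must be recognized as the curvature of $\nabla^+$ and reorganized, through the standard identities relating the Ricci tensor of $\nabla^+$ to the Levi-Civita Ricci tensor and the torsion $H$, into $\operatorname{Rc} - \tfrac14 H^2 - \tfrac12 d^{\star}H$; this mirrors the generalized Ricci computation in the bundle-free (exact Courant algebroid) setting and is where the skew term $-\tfrac12 d^{\star}H$ and the sign of the $H^2$ correction must be tracked with care. Simultaneously the $F_A$-dependent terms, arising both from the mixed couplings $g^{-1}\langle i_YF_A,r\rangle_{\mathfrak k}$ and from the $\ad P$-component $(d_A)_{v_i}r - F_A(v_i,X)$, must be collected: the contributions quadratic in $F_A$ combine into $-F_A^2$ via \eqref{f:sqtens}, and those linear in $r$ assemble, after a Leibniz expansion over the frame and use of the Bianchi identity $d_AF_A = 0$, into the Yang-Mills divergence $-\langle d_A^{\star}F_A - F_A\lrcorner H,r\rangle_{\mathfrak k}$. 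The delicate points throughout are the bookkeeping of the $\nabla^+$ versus $\nabla^-$ torsion signs and of the cross terms mixing $T$ and $\ad P$, as these are exactly what distinguishes \eqref{eq:Ric+exp} from \eqref{eq:Ric-exp}. The computation of $\operatorname{Rc}_G^-$ is entirely parallel, with the roles of $V_+$ and $V_-$ interchanged (hence $\nabla^+ \leftrightarrow \nabla^-$ and a reversal of the skew signs), and I would finally cross-check the outcome against the formula of \cite{GF14} obtained there via Levi-Civita generalized connections.
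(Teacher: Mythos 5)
Your proposal follows the paper's own proof almost step for step through most of the argument: the reduction to the case $\operatorname{div}=\operatorname{div}^G$ via the correction term coming from $\varepsilon_\pm$ (the paper records this as $\operatorname{Rc}_{G,\operatorname{div}}^+(a_-,b_+) = \operatorname{Rc}_{G}^+(a_-,b_+) - \langle[\varepsilon_+,a_-],b_+\rangle$, equivalent to your two-term version by the derivation property of the bracket), the decomposition $\nabla^\pm \varphi = \tfrac{1}{2}L_{\varphi^\sharp}g + \tfrac{1}{2}d\varphi \mp \tfrac{1}{2}i_{\varphi^\sharp}H$, and the orthonormal-frame evaluation of $\operatorname{div}^G([a_-,b_+]_+) - \pi(a_-)(\operatorname{div}^G(b_+))$ and $\operatorname{tr}_{V_+}[[\cdot,a_-]_-,b_+]_+$ using Lemma \ref{lem:mixopexp}, the trace form \eqref{eq:divtr} of the Riemannian divergence, and the Bianchi identity $d_AF_A=0$ to assemble $\operatorname{Rc} - \tfrac{1}{4}H^2 - F_A^2 - \tfrac{1}{2}d^\star H$ and $-\langle d_A^\star F_A - F_A\lrcorner H, r\rangle_{\mathfrak{k}}$ are exactly the steps carried out there. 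The one genuine divergence is your treatment of $\operatorname{Rc}_G^-$: you propose a second, ``entirely parallel'' computation with the roles of $V_+$ and $V_-$ interchanged, whereas the paper invokes the skew-symmetry of the total Ricci tensor established in Lemma \ref{lem:Ricciskew} to conclude that $\operatorname{Rc}_G^-(b_+,a_-)$ is determined by $\operatorname{Rc}_G^+(a_-,b_+)$, so that only one direct computation is ever needed. Your route is viable, but strictly longer, and the word ``parallel'' conceals a real asymmetry you would have to confront: $V_-$ contains the $\ad P$ summand, so the trace $\operatorname{tr}_{V_-}[[\cdot,b_+]_+,a_-]_-$ runs over gauge directions on which the pairing may be indefinite (forcing dual rather than orthonormal frames, as in the proof of Lemma \ref{lem:Ricciskew}), and the operator $D_{b_+}a_-$ carries the extra components $(d_A)_Y r - F_A(Y,X)$ that have no counterpart in $D_{a_-}b_+$, so the minus-side computation is not a sign-flipped copy of the plus side. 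None of this is fatal --- all the needed ingredients are in Lemma \ref{lem:mixopexp} --- but the paper's shortcut both halves the work and explains structurally why \eqref{eq:Ric-exp} is forced by \eqref{eq:Ric+exp}, which your cross-check against \cite{GF14} would only verify a posteriori.
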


\begin{proof}
Firstly, using Lemma \ref{lem:mixopexp} and the proof of Lemma \ref{lem:Ricciskew}, we note that
\begin{equation*}
\begin{split}
\operatorname{Rc}_{G,\operatorname{div}}^+(a_-,b_+) & = \operatorname{Rc}_{G}^+(a_-,b_+) - \langle[\varepsilon_+,a_-],b_+\rangle\\
& = \operatorname{Rc}_{G}^+(a_-,b_+) + \nabla^+\varphi_+(X,Y) - i_Y\langle i_{\varphi_+^\sharp}F_A,r\rangle_{\mathfrak{k}}\\
\operatorname{Rc}_{G,\operatorname{div}}^-(b_+,a_-) & = \operatorname{Rc}_{G}^-(b_+,a_-) - \langle[\varepsilon_-,b_+],a_-\rangle\\
& = \operatorname{Rc}_{G}^-(b_+,a_-) + i_Xi_Y(- \nabla^-\varphi_-^\sharp + \langle F_A,z \rangle_{\mathfrak{k}}) + i_Y \langle (d_A)_Y z + i_{\varphi_-^\sharp}F_A, r\rangle_{\mathfrak{k}}.
\end{split}
\end{equation*}
Formulae \eqref{eq:Ric+exp} and \eqref{eq:Ric-exp} will follow from the general formula
\begin{equation*}
\nabla^\pm \varphi = \frac{1}{2}L_{\varphi^\sharp}g + \frac{1}{2} d\varphi \mp \frac{1}{2}i_{\varphi^\sharp}H,
\end{equation*}
for arbitrary $\varphi \in \Gamma(T^*)$, once we calculate $\operatorname{Rc}_{G}^\pm$. By Lemma \ref{lem:Ricciskew} we have that
$$
\operatorname{Rc}_{G}^+(a_-,b_+) = - \operatorname{Rc}_{G}^-(b_+,a_-),
$$
and hence $\operatorname{Rc}_{G}^-$ is uniquely determined by $\operatorname{Rc}_{G}^+$. Therefore, it suffices to calculate the latter. For this, we choose a local orthonormal frame $\{v_i\}$ for $(T,g)$ and 
observe that the classical Ricci tensor, satisfies
\begin{equation*}
\begin{split}
\operatorname{Rc}(X,Y) = g(v_i,\nabla_{v_i}\nabla_XY) - X(g(v_i,\nabla_{v_i}Y)) -  g\Big{(}v_i,\nabla_{\nabla_{v_i}X}Y \Big{)},
\end{split}
\end{equation*}
Now, we calculate
\begin{equation*}
\begin{split}
\operatorname{tr}_{V_+}[[\cdot,a_-]_-,b_+]_+ =&\ \langle \sigma_+(v_i),[[\sigma_+(v_i),a_-]_-,b_+] \rangle\\
=&\ \langle \sigma_+(v_i),[\sigma_- \left(\nabla^-_{v_i}X - g^{-1}\langle i_{v_i}F_A,r \rangle_{\mathfrak{k}}\right) + (d_A)_{v_i} r - F_A(v_i,X),b_+] \rangle\\
=&\ g\Big{(}v_i,\nabla_{\nabla^-_{v_i}X - g^{-1}\langle i_{v_i}F_A,r \rangle_{\mathfrak{k}}}^+Y \Big{)} - \langle F_A(Y,v_i), (d_A)_{v_i} r - F_A(v_i,X) \rangle_{\mathfrak{k}}\\
=&\ g\Big{(}v_i,\nabla_{\nabla_{v_i}X}Y \Big{)}  + \frac{1}{4}H(v_i,v_j,Y)H(v_i,v_j,X) - \langle F_A(v_i,Y), F_A(v_i,X) \rangle_{\mathfrak{k}} \\
&\ - \frac{1}{2} g(v_i,\nabla_{v_j} Y)H(v_i,X,v_j) + \frac{1}{2}H(\nabla_{v_i}X,Y,v_i) \\
&\ - g(v_i,\nabla_{v_j}^+Y)\langle F_A(v_i,v_j),r \rangle_{\mathfrak{k}} + \langle F_A(v_i,Y), (d_A)_{v_i} r \rangle_{\mathfrak{k}}\\
=&\ g\Big{(}v_i,\nabla_{\nabla_{v_i}X}Y \Big{)} + \frac{1}{4}H^2(X,Y) + F_A^2(X,Y) + \frac{1}{2} H(X,\nabla_{v_j} Y,v_j) + \frac{1}{2}H(\nabla_{v_i}X,Y,v_i)\\
&\ - \langle F_A(\nabla_{v_j}Y,v_j),r \rangle_{\mathfrak{k}} - \frac{1}{2}H(v_j,Y,v_i)\langle F_A(v_i,v_j),r \rangle_{\mathfrak{k}} + \langle F_A(v_i,Y), (d_A)_{v_i} r \rangle_{\mathfrak{k}}.
\end{split}
\end{equation*}
Using the alternative expression $\operatorname{div}^G(a) = \tr \nabla \pi(a)$ for the Riemannian divergence (see \eqref{eq:divtr}), we also have that
\begin{equation*}
\begin{split}
\operatorname{div}^G& ([a_-,b_+]_+) - \pi(a_-)(\operatorname{div}^G(b_+))\\
=&\  g\Big{(}v_i,\nabla_{v_i}(\nabla^+_XY - g^{-1}\langle i_YF_A,r\rangle_{\mathfrak{k}})\Big{)} - X(g(v_i,\nabla_{v_i}Y))\\
=&\  g(v_i,\nabla_{v_i}\nabla_XY) - X(g(v_i,\nabla_{v_i}Y)) + \frac{1}{2}i_{v_i} \nabla_{v_i}(i_Yi_XH) - i_{v_i} \nabla_{v_i}\langle i_YF_A,r\rangle_{\mathfrak{k}}\\
=&\  g(v_i,\nabla_{v_i}\nabla_XY) - X(g(v_i,\nabla_{v_i}Y)) + \frac{1}{2}v_i(H(X,Y,v_i)) - \frac{1}{2}H(X,Y,\nabla_{v_i}v_i)\\
& - \langle (d_A)_{v_i}(F_A(Y,v_i)),r\rangle_{\mathfrak{k}} - \langle F_A(Y,v_i),(d_A)_{v_i}r\rangle_{\mathfrak{k}} + \langle F_A(Y,\nabla_{v_i} v_i),r\rangle_{\mathfrak{k}}.
\end{split}
\end{equation*}
Collecting the previous expressions, the proof now follows:
\begin{equation*}
\begin{split}
\operatorname{Rc}_{G}^+ & (a_-,b_+)\\
=&\ \operatorname{Rc}(X,Y) - \frac{1}{4}H^2 (X,Y) - F_A^2(X,Y)\\
&\ + \frac{1}{2} \Big{(}v_i(H(X,Y,v_i)) - H(X,Y,\nabla_{v_i}v_i) - H(X,\nabla_{v_j} Y,v_j) - H(\nabla_{v_i}X,Y,v_i)\Big{)}\\
&\ - \langle (d_A)_{v_i}(F_A(Y,v_i)),r\rangle_{\mathfrak{k}} - \langle F_A(Y,v_i),(d_A)_{v_i}r\rangle_{\mathfrak{k}} + \langle F_A(Y,\nabla_{v_i} v_i),r\rangle_{\mathfrak{k}}\\
&\ + \langle F_A(\nabla_{v_j}Y,v_j),r \rangle_{\mathfrak{k}}  - \langle F_A(v_i,Y), (d_A)_{v_i} r \rangle_{\mathfrak{k}} + \frac{1}{2}H(v_i,v_j,Y)\langle F_A(v_i,v_j),r \rangle_{\mathfrak{k}}\\
=&\ \operatorname{Rc}(X,Y) - \frac{1}{4}H^2 (X,Y) - F_A^2(X,Y) + \frac{1}{2} i_Yi_X i_{v_i}\nabla_{v_i}H\\
&  + i_Y \langle i_{v_i}(d_A)_{v_i}F_A,r\rangle_{\mathfrak{k}} + i_Y \langle F \lrcorner \ H ,r \rangle_{\mathfrak{k}}\\
=&\ i_Yi_X\Big{(}\operatorname{Rc} - \frac{1}{4}H^2 - F_A^2 - \frac{1}{2}d^{\star}H\Big{)} - i_Y \Big{\langle} d_A^{\star}F_A - F_A \lrcorner \ H ,r\Big{\rangle}_{\mathfrak{k}}.
\end{split}
\end{equation*}
\end{proof}

\subsection{Generalized scalar curvature}\label{sec:GScalar}

We recall briefly the definition of the generalized scalar curvature, which we will use in Section \ref{sec:Scalarmon}. This quantity was originally introduced in \cite{CSW,SeveraValach2} and more recently studied in \cite{SGFLS,GRFBook,GMolina,SCSV} via different methods. Since we are only concerned with the case of string algebroids, we shall follow closely the recent work \cite[Section 2.4]{SGFLS}. As an application, we will prove that generalized Ricci flat metrics have constant generalized scalar curvature.

To fix conventions, note that we use the tensorial norm on $k$-forms, defined by
$$
\alpha \wedge \star \alpha = \frac{1}{k!}|\alpha|^2 dV_g.
$$
More explicitly, given a $g$-orthonormal coframe $\{e^i\}$, we can write
$$
\alpha = \sum_{1 \leq i_1 < \ldots < i_k \leq n} \alpha_{i_1 \ldots i_k} e^{i_1} \wedge \ldots \wedge e^{i_k}, \quad 
|\alpha|^2 := \sum_{i_1,\ldots i_k = 1}^n \alpha_{i_1 \ldots i_k}^2.
$$
Furthermore, for the curvature of a connection $F_A$ the quantity $\brs{F_A}^2$ is the tensor norm defined using $g$ and $\IP{,}_{\mathfrak k}$, with a sign i.e.
$$
|F_A|^2 := -\sum_{i_1,i_2 = 1}^n \IP{ (F_A)_{i_1 i_2}, (F_A)_{i_1 i_2}}_{\mathfrak k}.
$$

\begin{defn}\label{def:GScalar}
Let $(G,\operatorname{div})$ be a pair given by a generalized metric and a divergence operator on a string algebroid $E$. The \emph{generalized scalar curvature} of  $(G,\operatorname{div})$ is defined by
\begin{equation}\label{eq:Genscalar}
\mathcal{S}^+ = R_g-\frac{1}{12}|H|^2 - \frac{1}{2}|F_A|^2 - 2 d^\star \varphi_+ - \left|\varphi_+\right|^2,
\end{equation}
where $R_g$ is the scalar curvature of $g$ and we use the unique decomposition of $\varepsilon \in \Gamma(E)$, for $\la \varepsilon, \cdot \ra:= \operatorname{div}^G - \operatorname{div}$, in terms of $\varphi_\pm \in \Gamma(T^*)$ and $z \in \Gamma(\ad P)$, as in Proposition \ref{prop:Ricciexp}. 
\end{defn}

For generalized Ricci flat metrics we have natural identities for the generalized scalar curvature, proved originally by the second author in his PhD disertation \cite{GMolina}. To establish them, we first need some technical computations:

\begin{lemma}\label{lem:tech_SUGRA}
        Let $P\rightarrow M$ be a principal $K$-bundle over a smooth manifold $M$, where $K$ has quadratic Lie algebra $(\mathfrak{k},\langle \cdot,\cdot\rangle_{\mathfrak{k}})$. Let $(g,H,A)$ be a triple consisting of a Riemannian metric, a $3$-form and a connection on $P$. Let $X_p\in T_pM$ at a point $p\in M$. Then:
    \begin{enumerate}
        \item There exists a smooth vector field $X$ extending $X_p$ such that $(\nabla X)_p = 0$, where $\nabla$ stands for the Levi-Civita connection of $g$.
        \item The following formulas hold at $p$: 
        \begin{align}
            \label{eq:tech_SUGRA_1}
            X(\mathrm{div} \; Y) = & \; \mathrm{div}(\nabla_X Y)-\mathrm{Rc}(X,Y),\\ 
        \label{eq:tech_SUGRA_2}d^\star(i_X\mathrm{Rc})  = & \; -\tfrac{1}{2}X(R_g),\\
            \label{eq:tech_SUGRA_3}
            d^\star(i_X H^2)  = & \; -\tfrac{1}{6}X(|H|^2)+\tfrac{1}{3}\textstyle \sum_{i,j,k} dH(X,v_i,v_j,v_k)H(v_i,v_j,v_k) \nonumber\\
            &\ + \textstyle \sum_{j,k} d^\star H(v_j,v_k)H(X,v_j,v_k),\\
            \label{eq:tech_SUGRA_4}
            d^\star(i_X F_A^2)  = & \;  -\tfrac{1}{4}X(|F_A|^2)-\textstyle \sum_i \langle d_A^\star F_A(v_i),F_A(X,v_i)\rangle_{\mathfrak{k}},
            \end{align}
 where $X$ is as in (1), $Y$ is any vector field, and $\{ v_i \}$ is a $g$-orthonormal basis.
\end{enumerate}
\end{lemma}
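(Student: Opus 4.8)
The plan is to prove the four identities in part (2) by working at the distinguished point $p$ using the normal vector field $X$ from part (1), which kills all first-order terms involving $\nabla X$ and lets us treat $X$ as ``covariantly constant'' at $p$. For part (1) itself, I would simply take $X$ to be the vector field obtained by parallel transport of $X_p$ along radial geodesics from $p$ (equivalently, the coordinate expression $X = X_p^i \partial_i$ in geodesic normal coordinates centered at $p$); since Christoffel symbols vanish at $p$ in such coordinates, $(\nabla X)_p = 0$. The remaining four formulas are then contractions of the divergence operator $d^\star = -\divg$ applied to various $i_X$-contracted curvature tensors, evaluated at $p$.

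For the individual identities I would proceed as follows. First I would establish \eqref{eq:tech_SUGRA_1}, which is essentially the definition of the curvature tensor: expanding $X(\divg Y) = X(g(v_i, \nabla_{v_i} Y))$ and using $(\nabla X)_p = 0$ together with the Ricci identity $\nabla_X \nabla_{v_i} Y - \nabla_{v_i}\nabla_X Y = \Rm(X,v_i)Y + \nabla_{[X,v_i]}Y$ yields the stated formula after taking the trace. Identity \eqref{eq:tech_SUGRA_2} is the contracted second Bianchi identity $\divg \Rc = \tfrac{1}{2}\grad R_g$; I would contract $d^\star(i_X \Rc) = -\sum_i (\nabla_{v_i}\Rc)(v_i, X)$ at $p$ and recognize the right-hand side via the twice-contracted Bianchi identity. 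For \eqref{eq:tech_SUGRA_3} and \eqref{eq:tech_SUGRA_4}, the strategy is to expand $d^\star(i_X H^2)$ and $d^\star(i_X F_A^2)$ in the orthonormal frame, commute the covariant derivative $\nabla_{v_i}$ past the contractions, and then reorganize the resulting terms. The key algebraic step is to split each derivative $\nabla_{v_i} H$ (resp. $\nabla_{v_i} F_A$) into a piece that reconstructs $X(|H|^2)$ (resp. $X(|F_A|^2)$) and pieces that reassemble into $dH$ and $d^\star H$ (resp. $d_A^\star F_A$) using the definitions $dH = \sum \nabla_{[i} H_{jkl]}$ and $d^\star H = -\sum_i \nabla_{v_i} H(v_i,\cdot,\cdot)$, together with the Bianchi identity $d_A F_A = 0$ in the gauge-theoretic case.

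The main obstacle I anticipate is the bookkeeping in \eqref{eq:tech_SUGRA_3}: here the tensor $H^2(X,Y) = \langle i_X H, i_Y H\rangle$ is quadratic in $H$, so $d^\star(i_X H^2)$ produces two families of terms upon differentiating, and one must carefully antisymmetrize to produce the $dH$ term (with coefficient $\tfrac{1}{3}$) while separating out the $d^\star H$ contraction and the total-derivative $X(|H|^2)$ piece. The correct combinatorial coefficients $-\tfrac{1}{6}$, $\tfrac{1}{3}$, and $1$ emerge only after using the full skew-symmetry of $H$ and the normalization of $|H|^2$ fixed in the excerpt (where $|H|^2 = \sum_{i,j,k} H(v_i,v_j,v_k)^2$ summed over all indices, i.e. $3!$ times the strictly-increasing sum). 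The gauge-theoretic identity \eqref{eq:tech_SUGRA_4} is structurally analogous but cleaner, since $F_A^2$ carries the extra sign convention from \eqref{f:sqtens} and the relevant Bianchi identity $d_A F_A = 0$ eliminates what would be the analogue of the $dH$ term, leaving only the $d_A^\star F_A$ contraction and the $X(|F_A|^2)$ piece; I would handle it by the same frame computation, being careful that the covariant derivative acting on $\ad P$-valued forms is $d_A$ rather than $d$.
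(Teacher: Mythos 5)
Your proposal is correct and takes essentially the same route as the paper: normal coordinates for part (1), the twice-contracted Bianchi identity for \eqref{eq:tech_SUGRA_2}, and a direct orthonormal-frame expansion of $d^\star(i_X H^2)$ and $d^\star(i_X F_A^2)$ for \eqref{eq:tech_SUGRA_3}--\eqref{eq:tech_SUGRA_4}, where the paper simply cites the analogous computation in \cite[Lemma 3.19]{GRFBook} while noting the extra $dH$ term (since $H$ is not assumed closed) and the use of $d_A F_A = 0$, both of which you identify correctly. Your coefficient bookkeeping --- the factors $-\tfrac{1}{6}$ and $\tfrac{1}{3}$ arising from skew-symmetrizing the derivative terms against $dH$, the all-indices normalization of $|H|^2$, and the sign conventions of \eqref{f:sqtens} --- is exactly what the cited computation produces, so the proposal fills in the paper's sketch rather than deviating from it.
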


\begin{proof}
The existence of $X$ follows taking normal coordinates around the point $p$. Equation \eqref{eq:tech_SUGRA_2} is then a straightforward consequence of the Bianchi identity for $\Rm$. Formula \eqref{eq:tech_SUGRA_3} follows from the same computation as in \cite[Lemma 3.19]{GRFBook}. Note however the extra term in \eqref{eq:tech_SUGRA_3} since we are not assuming $H$ is closed. The last formula \eqref{eq:tech_SUGRA_4} follows again from analogous computations, using here that $d_A F_A = 0$. 
\end{proof}

We establish next the relationship between generalized Ricci flat metrics and the generalized scalar curvature.

\begin{proposition}\label{prop:RicciflatScalar}
Let $(G,\operatorname{div})$ be a pair given by a generalized metric and a divergence operator  such that $\operatorname{Rc}_{G,\operatorname{div}}^+ = 0$. 
Then, one has 
\begin{equation}
\label{eq:GonzalezMolinaformula}
    d \mathcal{S}^+ + d\varphi \ \lrcorner \ H = 0,
\end{equation}
where $\operatorname{div}^G - \operatorname{div} = \la \varepsilon, \cdot \ra $ and we set $\varphi := \varphi_+ = g(\pi \varepsilon_+, \cdot) \in \Lambda^1$. In particular, if $d\varphi = 0$ then $\mathcal{S}^+$ is locally constant and furthermore
\begin{equation}
\label{eq:GonzalezMolinaformulabis}
    d \left(  \frac{1}{6}|H|^2 + \frac{1}{2}|F_A|^2 - d^\star \varphi - |\varphi|^2 \right) = 0.
\end{equation}
\end{proposition}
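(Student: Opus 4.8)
The plan is to prove the identity $d\mathcal{S}^+ + d\varphi \lrcorner H = 0$ directly by computing the differential of each term in the definition \eqref{eq:Genscalar} of $\mathcal{S}^+$, using the hypothesis $\operatorname{Rc}_{G,\operatorname{div}}^+ = 0$ together with the technical formulas of Lemma \ref{lem:tech_SUGRA}. The key observation is that $\operatorname{Rc}_{G,\operatorname{div}}^+ = 0$, read off from \eqref{eq:Ric+exp}, decomposes into three vanishing pieces according to the components in $\operatorname{Sym}^2 T^*$, $\Lambda^2 T^*$, and $T^* \otimes \ad P$. The symmetric part gives $\operatorname{Rc} - \tfrac14 H^2 - F_A^2 + \tfrac12 L_{\varphi^\sharp} g = 0$; the skew part gives $-\tfrac12 d^\star H + \tfrac12 d\varphi - \tfrac12 i_{\varphi^\sharp} H = 0$; and the adjoint part gives $d_A^\star F_A - F_A \lrcorner H + i_{\varphi^\sharp} F_A = 0$. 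These are the heterotic/generalized Ricci-flat equations, and they will be the main algebraic input.

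Concretely, I would fix a point $p$, choose a vector field $X$ with $(\nabla X)_p = 0$ as in Lemma \ref{lem:tech_SUGRA}(1), and evaluate $X(\mathcal{S}^+)$ at $p$ by differentiating \eqref{eq:Genscalar} term by term. The idea is to rewrite each scalar derivative $X(R_g)$, $X(|H|^2)$, $X(|F_A|^2)$ using formulas \eqref{eq:tech_SUGRA_2}, \eqref{eq:tech_SUGRA_3}, \eqref{eq:tech_SUGRA_4} to convert them into divergences of contractions of the tensors $\operatorname{Rc}$, $H^2$, $F_A^2$ against $X$. The symmetric Ricci-flat equation then lets me substitute $\operatorname{Rc} - \tfrac14 H^2 - F_A^2 = -\tfrac12 L_{\varphi^\sharp} g$ inside the divergence, producing a term $d^\star(i_X L_{\varphi^\sharp} g)$ that should combine with the Laplacian-type term $-2 d^\star \varphi_+$ and the $-|\varphi_+|^2$ term already present in $\mathcal{S}^+$. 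The remaining pieces—the $dH$ and $d^\star H$ corrections in \eqref{eq:tech_SUGRA_3} and the $d_A^\star F_A$ contraction in \eqref{eq:tech_SUGRA_4}—are precisely where the skew and adjoint Ricci-flat equations enter: I would use $d_A^\star F_A = F_A \lrcorner H - i_{\varphi^\sharp} F_A$ to eliminate $d_A^\star F_A$, and use $d^\star H = d\varphi - i_{\varphi^\sharp} H$ together with the Bianchi identity $dH = \langle F_A \wedge F_A\rangle_{\mathfrak k}$ to handle the $H$-terms.

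I expect the main obstacle to be bookkeeping the numerous curvature and torsion contractions so that everything collapses cleanly into the single expression $-d\varphi \lrcorner H$ rather than leaving uncancelled residual terms. In particular, the $dH$-term in \eqref{eq:tech_SUGRA_3} is nonzero here (since $H$ is not closed but satisfies the Bianchi identity), and tracking how $\langle F_A \wedge F_A\rangle_{\mathfrak k}$ contracted against $H$ interacts with the $F_A \lrcorner H$ contractions coming from the adjoint equation will require care with the combinatorial factors and signs. The payoff is that the $\langle F_A,\cdot\rangle_{\mathfrak k}$ cross-terms should cancel pairwise, leaving the stated identity. Once $d\mathcal{S}^+ = -d\varphi \lrcorner H$ is established, the two consequences are immediate: if $d\varphi = 0$ then $d\mathcal{S}^+ = 0$, so $\mathcal{S}^+$ is locally constant, and since the $R_g$ term can be eliminated using the trace of the symmetric Ricci-flat equation (which expresses $R_g$ in terms of $\tfrac14|H|^2$, $|F_A|^2$, and $d^\star\varphi$), substituting back into the now-constant $\mathcal{S}^+$ yields \eqref{eq:GonzalezMolinaformulabis} after collecting coefficients. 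The last step—extracting \eqref{eq:GonzalezMolinaformulabis} from constancy of $\mathcal{S}^+$ by trading the scalar curvature term via the trace of the symmetric equation—is the one place I would double-check the numerical coefficients, since it determines the precise combination $\tfrac16|H|^2 + \tfrac12|F_A|^2 - d^\star\varphi - |\varphi|^2$ appearing in the statement.
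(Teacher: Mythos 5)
Your proposal is correct and follows essentially the same route as the paper's proof: decomposing $\operatorname{Rc}^+_{G,\operatorname{div}}=0$ into the three equations \eqref{eq:GRflat}, differentiating at a point with $(\nabla X)_p=0$ using Lemma \ref{lem:tech_SUGRA} together with the Bianchi identity $dH = \langle F_A\wedge F_A\rangle_{\mathfrak{k}}$, and closing via the trace identity \eqref{eq:tr_Einstein_SUGRA}. The only cosmetic difference is that the paper differentiates $R_g$ through the trace equation and recombines with \eqref{eq:pre_d_dilaton} at the end, whereas you differentiate $\mathcal{S}^+$ directly, which amounts to the same computation.
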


\begin{proof}
First, by \eqref{eq:Ric+exp}, the condition of $\mathrm{Rc}^+_{G,\mathrm{div}}=0$ is equivalent to the system:
\begin{equation}\label{eq:GRflat}
\operatorname{Rc} - \frac{1}{4}H^2 - F_A^2 + \frac{1}{2} L_{\varphi^\sharp}g = 0, \quad d^\star H - d \varphi +  i_{\varphi^\sharp} H = 0, \quad d_A^\star F_A - F_A \lrcorner \ H + i_{\varphi^\sharp} F_A = 0.
\end{equation}
Then, taking the trace of the first equation in \eqref{eq:GRflat}, we get:   \begin{align}\label{eq:tr_Einstein_SUGRA}
        R_g-\tfrac{1}{4}|H|^2-|F_A|^2-d^\star \varphi = 0.
    \end{align}
   In the sequel, let $X$ be a vector field such that $(\nabla X)_p=0$ for some $p\in M$. Then, using \eqref{eq:tr_Einstein_SUGRA}, at the point $p$ we have:
    \begin{align*}
        X(R_g) & = X(\tfrac{1}{4}|H|^2+|F_A|^2)-X(\mathrm{div}(\varphi^\sharp))\\
        & = X(\tfrac{1}{4}|H|^2+|F_A|^2)+\mathrm{Rc}(X,\varphi^\sharp)-d^\star\left(i_X(\mathrm{Rc}-\tfrac{1}{4}H^2-F_A^2)\right),
    \end{align*}
    where in the last step we have used again the first equation in \eqref{eq:GRflat}. Then, using the formulae in Lemma \ref{lem:tech_SUGRA}, we continue the above computation to get:
    
    \begin{align*}
        X(R_g)  = & \; X\left(\tfrac{1}{4}|H|^2+|F_A|^2\right)+\mathrm{Rc}(X,\varphi^\sharp)+\tfrac{1}{2}X(R_g)+d^\star\left(i_X(\tfrac{1}{4}H^2+F_A^2)\right)\\
        = & \; X\left(\tfrac{1}{2}|H|^2+2|F_A|^2\right)+2\mathrm{Rc}(X,\varphi^\sharp)-\tfrac{1}{12}X(|H|^2)\\
        & + \tfrac{1}{6}\textstyle \sum_{i,j,k} dH(X,v_i,v_j,v_k)H(v_i,v_j,v_k)+ \tfrac{1}{2}\textstyle \sum_{j,k} d^\star H(v_j,v_k)H(X,v_j,v_k)\\
        & -\tfrac{1}{2}X(|F_A|^2)-2\textstyle \sum_i \langle d_A^\star F_A(v_i),F_A(X,v_i)\rangle_{\mathfrak{k}}\\
        = & \; X\left(\tfrac{5}{12}|H|^2+\tfrac{3}{2}|F_A|^2 \right)+2\mathrm{Rc}(X,\varphi^\sharp)+\tfrac{1}{2}\textstyle \sum_{j,k}d\varphi(v_j,v_k)H(X,v_j,v_k)\\
        &-\tfrac{1}{2}\textstyle\sum_{j,k} H(\varphi^\sharp,v_j,v_k)H(X,v_j,v_k) +\tfrac{1}{6}\textstyle \sum_{i,j,k}\langle F_A\wedge F_A\rangle_{\mathfrak{k}}(X,v_i,v_j,v_k)H(v_i,v_j,v_k)\\
        & -2\textstyle \sum_i \langle d_A^\star F_A(v_i),F_A(X,v_i)\rangle_{\mathfrak{k}}\\
        = & \; X\left(\tfrac{5}{12}|H|^2+\tfrac{3}{2}|F_A|^2 \right) +2\left(\mathrm{Rc}-\tfrac{1}{4}H^2 \right)(X,\varphi^\sharp)+i_X (d\varphi \ \lrcorner \ H) \\
        & -2\langle (d_A^\star F_A - F_A \lrcorner \ H)(v_i),F_A(X,v_i)\rangle_{\mathfrak{k}}\\
        = & \; X\left(\tfrac{5}{12}|H|^2+\tfrac{3}{2}|F_A|^2 \right) + 2(\mathrm{Rc}-\tfrac{1}{4}H^2-F_A^2)(X,\varphi^\sharp)+i_X (d\varphi \ \lrcorner \ H)\\
        = & \; X\left(\tfrac{5}{12}|H|^2+\tfrac{3}{2}|F_A|^2 \right)-X(|\varphi|^2)+i_X (d\varphi \ \lrcorner \ H),
    \end{align*}
    where, in the second line we have collected the terms in $X(R_g)$, and from the fifth line onwards, we use the equations in \eqref{eq:GRflat} and the Bianchi identity. Since the above computation is tensorial in $X$, it follows that:
\begin{align}\label{eq:pre_d_dilaton}
    X \left(R_g-\tfrac{5}{12}|H|^2-\tfrac{3}{2}|F_A|^2+|\varphi|^2\right)= i_X (d\varphi \ \lrcorner \ H).
\end{align}
The first part of the statement follows substracting \eqref{eq:pre_d_dilaton} from twice the expression obtained by taking exterior derivative in \eqref{eq:tr_Einstein_SUGRA}, and comparing with \eqref{eq:GonzalezMolinaformula}. In case $d\varphi = 0$, it follows that $d\mathcal{S}^+=0$, as claimed. The second part of the statement follows now substracting \eqref{eq:pre_d_dilaton} from the exterior derivative applied to \eqref{eq:tr_Einstein_SUGRA}.
\end{proof}

\section{Generalized Ricci flow on string algebroids} \label{s:GRFSA}

The goal of this section is to study the generalized Ricci flow on string algebroids and its relationship to the case of exact Courant algebroids, following \cite{GF19,GRFBook}.  We first recall the basic definitions in terms of generalized geometry and then derive the explicit flow against a fixed background for the associated metric, principal connection, and $b$-field.  We also obtain explicit formulas for the flow put into a general gauge by the action of a time-dependent family of Courant automorphisms, which is needed in what follows.

\subsection{Generalized Ricci flow}\label{sec:GRF}

\begin{definition}\label{def:GRF}
Given $E$ a string algebroid over a smooth manifold $M$, a one-parameter family $G_t$ of generalized metrics satisfies \emph{generalized Ricci flow} if
\begin{align} \label{eq:CourantGRF}
G^{-1} \dt G =&\ -2 \operatorname{Rc}_G.
\end{align}
Elementary computations using the definition of $G$ and Lemma \ref{lem:Ricciskew} show that this is a well-defined equation of sections of $(\Hom(V_-,V_+) \oplus \Hom(V_+,V_-)) \cap \Lambda^2 E$.
\end{definition}

Our next goal is to give an explicit formula for the generalized Ricci flow on string algebroids. For this, we fix a reference generalized metric $G_0$, and induced presentation $E \cong T \oplus \ad P \oplus T^*$ and pair $(H_0,A_0)$ satisfying the Bianchi identity (see Section \ref{sec:background})
\begin{equation}\label{eq:bianchitrans0}
dH_0 = \langle F_{A_0} \wedge F_{A_0} \rangle_{\mathfrak{k}}.
\end{equation}
Let $G_t$ be a one-parameter family of generalized metrics on $E$. As mentioned before, $G_t$ is equivalent to a Riemannian metric $g_t$ on $M$ and a choice of isotropic splitting $\sigma_t \colon T \to E$, which can be identified with a uniquely determined $(b,a)$-transformation $(-b_t,a_t)$ (see \eqref{eq:BA}), such that $\sigma(T) = e^{(b_t,-a_t)}(T)$. Therefore, in our setup 
\begin{equation}\label{eq:Ggbaexp}
G_t = e^{(b_t,-a_t)}\left( \begin{array}{ccc}
0 & 0& g_t^{-1} \\
0 & - \Id & 0 \\
g_t & 0 & 0 \end{array}\right)e^{(-b_t,a_t)}.
\end{equation}
with eigenbundles
\begin{equation}\label{eq:Vpmba}
\begin{split}
V_{+}^t =&\ e^{(b_t,-a_t)}\{X + g_t(X), X \in T\},\\
V_{-}^t =&\ e^{(b_t,-a_t)} \{X - g_t(X) + r, X \in T, r \in \ad P\}.
\end{split}
\end{equation}

\begin{proposition}\label{prop:GRFexp}
A one-parameter family of generalized metrics $G_t$ on the string algebroid $E$ satisfies generalized Ricci flow if and only if $(g_t,b_t,A_t)$ satisfies
\begin{equation}\label{eq:GRFgbA}
\begin{split}
\dt g & = -2\operatorname{Rc} + \frac{1}{2} H^2 + 2 F_A^2,\\
\dt b & = - d^{\star}H - \langle a \wedge ( d_A^{\star}F_A - F_A \lrcorner H) \rangle_{\mathfrak{k}},\\
\dt A & = - d_A^{\star}F_A + F_A \lrcorner \ H,
\end{split}
\end{equation}
where $A_t = A_0 + a_t$ and
\begin{equation}\label{eq:Htdef}
H_t := H_0 + db + 2 \langle a_t \wedge F_{A_0} \rangle_{\mathfrak{k}} + \langle a_t  \wedge d_{A_0}a_t \rangle_{\mathfrak{k}} + \frac{1}{3}\langle a_t \wedge [a_t \wedge a_t] \rangle_{\mathfrak{k}}.
\end{equation}
\end{proposition}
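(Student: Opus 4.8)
The plan is to reduce the intrinsic equation $G^{-1}\dt G = -2\operatorname{Rc}_G$ of Definition \ref{def:GRF} to a single component and then match it against the explicit formula for $\operatorname{Rc}_G^+$ in Proposition \ref{prop:Ricciexp}. By Lemma \ref{lem:Ricciskew} the total Ricci tensor is skew, so it lies in $\big(\Hom(V_-,V_+)\oplus \Hom(V_+,V_-)\big)\cap \Lambda^2 E$; the two summands are interchanged by the pairing and carry the same information, so $G^{-1}\dt G$ (which is a section of the same bundle) is determined by its $V_-\to V_+$ part. Hence it suffices to verify
\[
\big(G^{-1}\dt G\big)(a_-,b_+) = -2\operatorname{Rc}_G^+(a_-,b_+)
\]
for $a_- = \sigma_-(X)+r$ and $b_+=\sigma_+(Y)$ as in \eqref{eq:ab}. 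Since the flow uses the natural choice $\operatorname{div}=\operatorname{div}^G$, I would set $\varepsilon=0$ in Proposition \ref{prop:Ricciexp}, so $\varphi_\pm=0$, $z=0$, and \eqref{eq:Ric+exp} collapses to
\[
-2\operatorname{Rc}_G^+(a_-,b_+) = -2\,i_Yi_X\Big(\operatorname{Rc}-\tfrac14 H^2 - F_A^2 - \tfrac12 d^\star H\Big) + 2\,i_Y\big\langle d_A^\star F_A - F_A\lrcorner H,\, r\big\rangle_{\mathfrak k}.
\]

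Next I would compute the left-hand side from the parametrization \eqref{eq:Ggbaexp}. Writing $\mathcal O_t = e^{(b_t,-a_t)}$ for the $t$-dependent orthogonal transformation and $\Theta_{g_t}$ for the diagonal generalized metric in \eqref{eq:Ggbaexp}, we have $G_t=\mathcal O_t\,\Theta_{g_t}\,\mathcal O_t^{-1}$, and since $G^{-1}=G$,
\[
G^{-1}\dt G = \mathcal O_t\,\Theta_{g_t}\Big(\dt \Theta_{g_t} + \big[\,\mathcal O_t^{-1}\dt\mathcal O_t,\,\Theta_{g_t}\,\big]\Big)\mathcal O_t^{-1}.
\]
The crucial structural point is that $(b,a)$-transformations do \emph{not} form an abelian group: their composition law \eqref{eq:BA} carries cross terms $\langle a\wedge a'\rangle_{\mathfrak k}$, so the Lie-algebra element $\mathcal O_t^{-1}\dt\mathcal O_t$ is an infinitesimal $(b',a')$-transformation with $a'=-\dt a_t$ but with $b'$ equal to $\dt b_t$ plus a correction quadratic in $a_t,\dt a_t$. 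It is precisely this correction, together with the fact that the pair $(H_t,A_t)$ attached to the moving splitting is itself fixed by $(b_t,a_t)$ through \eqref{eq:AHnew} (yielding $A_t=A_0+a_t$ and the anomaly formula \eqref{eq:Htdef}), that must be tracked carefully.

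With both sides assembled, I would evaluate on $a_-=\sigma_-(X)+r$, $b_+=\sigma_+(Y)$ and decompose the identity according to the grading $T\oplus \ad P\oplus T^*$ of \eqref{eq:Vpmba}. The part independent of $r$ and symmetric under $X\leftrightarrow Y$ reads off the metric equation $\dt g = -2\operatorname{Rc}+\tfrac12 H^2 + 2F_A^2$ (here $\operatorname{Rc}$, $H^2$, $F_A^2$ are symmetric while $d^\star H$ is a $2$-form); the part linear in $r$ matches the $\langle\,\cdot\,,r\rangle_{\mathfrak k}$ term above and yields the connection equation $\dt A = -d_A^\star F_A + F_A\lrcorner H$; and the part independent of $r$ and skew under $X\leftrightarrow Y$ produces the $b$-field equation. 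The metric and connection equations fall out essentially by reading coefficients from Proposition \ref{prop:Ricciexp}.

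The main obstacle is the $b$-field equation. Its extra term $-\langle a\wedge(d_A^\star F_A - F_A\lrcorner H)\rangle_{\mathfrak k}$ is invisible in $\operatorname{Rc}_G^+$ directly; it emerges only after combining the quadratic correction in $\mathcal O_t^{-1}\dt\mathcal O_t$ with the already-established connection equation $\dt A = -d_A^\star F_A + F_A\lrcorner H$. Equivalently, one must verify the consistency relation that the three-form $H_t$ of \eqref{eq:Htdef} evolves compatibly, i.e. that $\dt H$ obtained by differentiating \eqref{eq:Htdef} (producing $d\,\dt b + 2\langle \dt a\wedge F_{A}\rangle_{\mathfrak k}+\cdots$) agrees with the flow. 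Getting the bookkeeping of these cross terms right, so that the coefficient in $\dt b$ matches \eqref{eq:Htdef} and no anomalous pieces survive, is the delicate part; everything else is routine substitution into Proposition \ref{prop:Ricciexp}.
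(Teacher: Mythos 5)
Your proposal is correct and takes essentially the same approach as the paper's proof: the paper likewise reduces via Lemma \ref{lem:Ricciskew} to checking $\dot G_t \circ \pi_-^t = -2\operatorname{Rc}_{G_t}^+$ with $\varepsilon = 0$ in Proposition \ref{prop:Ricciexp}, computes $C_t = e^{(-b_t,a_t)}\big(\dt e^{(b_t,-a_t)}\big)$ whose lower-left entry carries exactly the non-abelian quadratic correction $\dot b_t - \langle a_t \wedge \dot a_t\rangle_{\mathfrak{k}}$ you flag, and uses that the pair $(H_t,A_t)$ attached to the moving splitting is dictated by \eqref{eq:AHnew}. In particular, the cross term in the $b$-equation arises there precisely as you predict, by substituting the already-derived equation for $\dt A$ into the $\langle a_t \wedge \dot a_t\rangle_{\mathfrak{k}}$ contribution.
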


\begin{proof}
By Lemma \ref{lem:Ricciskew}, for the first part of the statement it suffices to prove that
$$
\dot G_t \circ \pi_-^t = - 2 \operatorname{Rc}_{G_t}^+.
$$
where $\dot G_t := \dt G_t$. To see this, first we note that
\begin{align*}
C_t & := e^{(-b_t,a_t)} \left(\dt e^{(b_t,-a_t)}\right) = - \left(\dt e^{(-b_t,a_t)} \right) e^{(b_t,-a_t)} = \left( \begin{array}{ccc}
0 & 0& 0 \\
- \dot a_t & 0 & 0 \\
\dot b_t - \langle a_t\wedge \dot a_t \rangle_{\mathfrak{k}} &  2 \langle \dot a_t, \rangle_{\mathfrak{k}} & 0 \end{array}\right),
\end{align*}
where we set $\dot b_t = \dt b_t$ and $\dot a_t = \dt a_t$ for simplicity. We also have
\begin{align*}
\dot G_t  
& = [e^{(b_t,-a_t)} C_t e^{(-b_t,a_t)}, G_t] + e^{(b_t,-a_t)}\left( \begin{array}{ccc}
0 & 0& - g_t^{-1} \dot g_t g_t^{-1} \\
0 & 0 & 0 \\
\dot g_t & 0 & 0 \end{array}\right)e^{(-b_t,a_t)}.
\end{align*}
where $\dot g_t = \dt g_t$. Then, letting
\begin{align*}
  \begin{split}
    a_- &= e^{(b_t,-a_t)}(X + r - g_tX), \qquad b_+ = e^{(b_t,-a_t)}(Y + g_tY),
  \end{split}
\end{align*}
we conclude
\begin{align*}
\langle \dot G_t a_-, b_+ \rangle & = \langle \dot g_t X +  g_t^{-1} \dot g_t  X,Y + g_tY \rangle - 2 \langle C_t (X + r - g_t X),Y + g_tY \rangle  \\
& = \dot g_t(X,Y) + (-\dot b_t + \langle a_t \wedge \dot a_t \rangle_{\mathfrak{k}})(X,Y) - 2 i_Y\langle \dot a_t, r \rangle_{\mathfrak{k}}.
\end{align*}
Finally, applying Proposition \ref{prop:Ricciexp} and using that the pair $(H_t,A_t)$ determined by $G_t$ is given by \eqref{eq:AHnew} (cf. \eqref{eq:Ggbaexp}), we have 
\begin{equation*}
\begin{split}
-2 \operatorname{Rc}_{G_t}^+(a_-,b_+) =&\ i_Yi_X\left(-2 \operatorname{Rc} + \frac{1}{2}H_t^2 + 2 F_{A_t}^2 + d^{\star}H_t\right) + 2 i_Y \Big{\langle} d_{A_t}^{\star}F_{A_t} - F_{A_t} \lrcorner \ H_t,r\Big{\rangle}_{\mathfrak{k}}.
\end{split}
\end{equation*}
Comparing components of the previous two equations gives the result.
\end{proof}

In the next result we calculate the evolution equation for the three-form $H$ along the generalized Ricci flow \eqref{eq:GRFgbA}.  From this, we will deduce that the evolution equation \eqref{eq:GRFgbA} for triples $(g,b,A)$ preserves the Bianchi identity \eqref{eq:bianchitrans}. Therefore, generalized Ricci flow on string algebroids can be regarded as an evolution equation for classical tensors, which furthermore is compatible with generalized geometry. More precisely, we have the following:

\begin{lemma}\label{lem:GRFH}
Let $P$ be a principal $K$-bundle over a smooth manifold $M$.  Assume that there exists a three-form $H_0$ on $M$ and a connection $A_0$ on $P$ satisfying the Bianchi identity \ref{eq:bianchitrans0}. 
Suppose $(g_t,b_t,A_t)$ is a solution to \eqref{eq:GRFgbA}, where $H_t$ is defined by \eqref{eq:Htdef} and $a_t = A_t - A_0$. Then $H_t$ satisfies
\begin{equation}\label{eq:GRFH}
\dt H = - d d^{\star}H - 2 \langle (d_A^{\star}F_A - F_A \lrcorner \ H)\wedge F_A \rangle_{\mathfrak{k}}.
\end{equation}
Conversely, if $(g_t,A_t)$ solve their respective equations in \eqref{eq:GRFgbA} where $H_t$ is a one-parameter family of three-forms satisfying \eqref{eq:GRFH}, and $(H_0,A_0)$ satisfies the Bianchi identity \eqref{eq:bianchitrans0}, then $(H_t,A_t)$ satisfies the Bianchi identity for all $t$.
\end{lemma}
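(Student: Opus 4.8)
The plan is to handle the two directions separately, in both cases exploiting the Chern--Simons transgression structure built into the definition \eqref{eq:Htdef} of $H_t$. The starting observation is that the three terms of \eqref{eq:Htdef} beyond $H_0 + db$ are precisely the relative Chern--Simons form
\[
CS(A_0,A_t) = 2\langle a_t \wedge F_{A_0}\rangle_{\mathfrak{k}} + \langle a_t \wedge d_{A_0}a_t\rangle_{\mathfrak{k}} + \tfrac{1}{3}\langle a_t \wedge [a_t \wedge a_t]\rangle_{\mathfrak{k}},
\]
so that $H_t = H_0 + db_t + CS(A_0,A_t)$, and that the transgression formula gives $d\,CS(A_0,A_t) = \langle F_{A_t}\wedge F_{A_t}\rangle_{\mathfrak{k}} - \langle F_{A_0}\wedge F_{A_0}\rangle_{\mathfrak{k}}$.

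For the forward direction, I would differentiate $H_t$ in $t$. The core computation is the variational identity
\[
\dt CS(A_0,A_t) = 2\langle \dot a \wedge F_{A_t}\rangle_{\mathfrak{k}} - d\langle a \wedge \dot a\rangle_{\mathfrak{k}},
\]
which I would establish using the graded Leibniz rule for $d_{A_0}$ and the $\ad$-invariance of $\langle\cdot,\cdot\rangle_{\mathfrak{k}}$: the passage from $F_{A_0}$ to the full curvature $F_{A_t} = F_{A_0} + d_{A_0}a + \tfrac12[a\wedge a]$ absorbs the quadratic and cubic contributions, while the exact remainder $-d\langle a \wedge \dot a\rangle_{\mathfrak{k}}$ comes from integrating $d_{A_0}$ by parts in the term $\langle a \wedge d_{A_0}\dot a\rangle_{\mathfrak{k}}$. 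I would then substitute the flow equation for $b$, noting that since $\dot a = -(d_A^{\star}F_A - F_A \lrcorner H)$ one has $\dot b = -d^{\star}H + \langle a \wedge \dot a\rangle_{\mathfrak{k}}$, hence $d\dot b = -dd^{\star}H + d\langle a \wedge \dot a\rangle_{\mathfrak{k}}$. The exact term cancels the remainder above, leaving $\dt H = -dd^{\star}H + 2\langle \dot a \wedge F_{A_t}\rangle_{\mathfrak{k}}$, and inserting $\dot a = -(d_A^{\star}F_A - F_A \lrcorner H)$ produces exactly \eqref{eq:GRFH}.

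For the converse, I would set $\Phi_t := dH_t - \langle F_{A_t}\wedge F_{A_t}\rangle_{\mathfrak{k}}$ and show $\dt\Phi_t = 0$, so that $\Phi_t \equiv \Phi_0 = 0$ by the hypothesis at $t=0$. Applying $d$ to \eqref{eq:GRFH}, the contribution $d\,d\,d^{\star}H$ vanishes by $d^2=0$, leaving $\dt(dH_t) = -2\,d\langle (d_A^{\star}F_A - F_A \lrcorner H)\wedge F_A\rangle_{\mathfrak{k}}$. For the curvature term I would use the standard variation $\dot F_{A_t} = d_{A_t}\dot a$ together with the Bianchi identity $d_A F_A = 0$ and the Leibniz rule to get $\dt\langle F_A \wedge F_A\rangle_{\mathfrak{k}} = 2\langle d_A\dot a \wedge F_A\rangle_{\mathfrak{k}} = 2\,d\langle \dot a \wedge F_A\rangle_{\mathfrak{k}} = -2\,d\langle (d_A^{\star}F_A - F_A \lrcorner H)\wedge F_A\rangle_{\mathfrak{k}}$. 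The two expressions coincide, so $\dt\Phi_t = 0$.

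The main obstacle is the careful bookkeeping of Koszul signs and form-degrees in the graded wedge--pairing manipulations, specifically in establishing the variational Chern--Simons identity and in the integration-by-parts cancellation of the exact term, since the $\ad$-invariant pairing introduces degree-dependent signs that must be tracked across $a$, $\dot a$, $d_{A_0}a$, and $F_A$. Once that identity is secured, both directions reduce to direct substitution of the flow equations, and the conceptual heart is simply the recognition that the anomaly-cancellation ansatz \eqref{eq:Htdef} is a transgression whose variation is governed by the \emph{full} curvature $F_{A_t}$.
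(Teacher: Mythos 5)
Your proposal is correct and follows essentially the same route as the paper: the paper obtains your variational Chern--Simons identity $\dt CS(A_0,A_t) = 2\langle \dot a \wedge F_{A_t}\rangle_{\mathfrak{k}} - d\langle a\wedge\dot a\rangle_{\mathfrak{k}}$ by citing the proof of Lemma 3.23 of \cite{garciafern2018canonical} (where you propose to rederive it by hand, with correct sign bookkeeping), then cancels the exact term against $d\dot b$ exactly as you do. Your converse --- differentiating the Bianchi defect $\Phi_t = dH_t - \langle F_{A_t}\wedge F_{A_t}\rangle_{\mathfrak{k}}$ and using $\dot F_{A} = d_{A}\dot a$, $d_{A}F_{A}=0$ to show $\dt\Phi_t = 0$ --- is precisely the paper's argument.
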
  

\begin{proof}
To calculate the evolution of the three-form $H_t$, we note that the proof of \cite[Lemma 3.23]{garciafern2018canonical} implies that
\begin{align*}
\dot H_t & = d \dot b_t + 2 \langle \dot a_t\wedge F_{A_t} \rangle_{\mathfrak{k}} - d  \langle a_t \wedge \dot  a_t \rangle_{\mathfrak{k}} =  - d d^{\star}H_t + 2 \langle \dot a_t \wedge F_{A_t} \rangle_{\mathfrak{k}},
\end{align*}
where $\dot H_t := \dt H_t$, and the first part of the statement follows. For the last part, it suffices to check that the time derivative of the Bianchi identity vanishes, but this follows from \eqref{eq:GRFH} since
\begin{align*}
\dt \left( d H_t - \langle F_{A_t} \wedge F_{A_t} \rangle_{\mathfrak{k}} \right) & = d \dot H_t - 2 \langle d_{A_t}\dot a_t \wedge F_{A_t} \rangle_{\mathfrak{k}}\\
& =  2 d \langle \dot a_t \wedge F_{A_t} \rangle_{\mathfrak{k}} - 2 d \langle \dot a_t \wedge F_{A_t}\rangle_{\mathfrak{k}} = 0,
\end{align*}
where we have used the Bianchi identity $d_{A_t}F_{A_t} = 0$ for the connections $A_t$.
\end{proof}

\begin{remark} The system (\ref{eq:GRFgbA}) is thus a natural coupling of Ricci flow to the heat flow for a three-form and the Yang-Mills flow for a principal connection.  In the case $M$ is a Riemann surface, the three-form $H$ drops out, and the system reduces to the Ricci-Yang-Mills flow \cite{StreetsRYMsurfaces,SRYM2,StreetsThesis,YoungSRYM,YoungThesis}. Note that here the flow is accompanied by the nontrivial flow of the two-form $b$, which is determined by the other data but necessary for the generalized geometric description.
\end{remark}

\subsection{Gauge-fixing}\label{sec:gaugefix}

We introduce next a gauge-fixing mechanism for generalized Ricci flow, which plays an important role in the applications to complex geometry. Recall that an automorphism of a string algebroid $E$, defined by a double extension \eqref{eq:Coustr}, is given by a commutative diagram of the form
\begin{equation}\label{eq:defstringiso}
	\xymatrix{
		0 \ar[r] & T^* \ar[r] \ar[d]^{d\phi^{-1}} & E \ar[r]^\rho \ar[d]^{\Phi} & A_P \ar[r] \ar[d]^{d\bar{\phi}} & 0,\\
		0 \ar[r] & T^* \ar[r] & E \ar[r]^{\rho} & A_{P} \ar[r] & 0,
	}
\end{equation}
where $\Phi \colon E \to E$ is a Courant algebroid automorphism (that is, a vector bundle isometry preserving the bracket and the anchor map) and $\bar{\phi} \in \operatorname{Diff}(P)$ is a $K$-equivariant diffeomorphism of $P$ which covers a diffeomorphism $\phi \in \operatorname{Diff}(M)$ on the base manifold $M$. 

Following \cite{GRFBook}, we introduce next a gauge-fixed version of generalized Ricci flow.

\begin{definition} Given $E$ a string algebroid, a one-parameter family of generalized metrics $G_t$ satisfies the \emph{gauge-fixed generalized Ricci flow} if there exists a one-parameter family of string algebroid automorphisms $\Phi_t \in \Aut(E)$ such that
$$
\til G_t = (\Phi_t)_*G_t = \Phi_t G_t \Phi_t^{-1}
$$ 
is a solution of the generalized Ricci flow. 
\end{definition}

Arguing as in the proof of \cite[Proposition 4.22]{GRFBook}, $G = G_t$ is a solution of the gauge-fixed generalized Ricci flow if and only if there exists a one-parameter family of infinitesimal automorphisms $\zeta_t \in \operatorname{Lie} \Aut(E)$ such that
\begin{equation}\label{eq:RFgaugefixedabs}
G^{-1}\dt G = - 2 \operatorname{Rc}_G - G^{-1}\zeta_t\cdot G,
\end{equation}
where $\zeta_t\cdot G$ denotes the infinitesimal action of $\zeta_t$ on $G_t$. In our next result we give an explicit formula for \eqref{eq:RFgaugefixedabs}. As in Proposition \ref{prop:GRFexp}, we fix a reference generalized metric $G_0$, induced presentation $E \cong T \oplus \ad P \oplus T^*$, and pair $(H_0,A_0)$ satisfying the Bianchi identity \eqref{eq:bianchitrans0}. Then, a one-parameter family of generalized metrics $G_t$ on $E$ is equivalent to a one-parameter family of triples $(g_t,b_t,A_t)$ as in \eqref{eq:Ggbaexp}.

\begin{proposition}\label{prop:GRFgaugeexp}
The one-parameter family of generalized metrics $G_t$ on the string algebroid $E$ satisfies gauge-fixed generalized Ricci flow \eqref{eq:RFgaugefixedabs} if and only if there exists a family of vector fields $X_t$, sections $s_t \in \Gamma(\ad P)$, and two-forms $B_t$ on $M$, satisfying
\begin{equation}\label{eq:LieAutEeq}
d \left(B_t + i_{X_t} H_0 - 2 \langle s_t , F_{A_0} \rangle_\mathfrak{k} \right) = 0,
\end{equation}
such that $(g_t,b_t,A_t)$ satisfies
\begin{equation}\label{eq:GRFgbAgauge}
\begin{split}
\dt g & = -2\operatorname{Rc} + \frac{1}{2} H^2 + 2 F_A^2 + L_{X}g,\\
\dt b & = - d^{\star}H - \langle a \wedge ( d_{A}^{\star}F_{A} - F_A \lrcorner \ H) \rangle_{\mathfrak{k}} + L_{X}b - B - \langle a \wedge( d_{A_0}s + i_{X}F_{A_0})  \rangle_{\mathfrak{k}}\\
\dt A & = - d_A^{\star}F_A + F_A \lrcorner \ H + d_{A}(s+ i_Xa) + i_{X}F_{A_t},
\end{split}
\end{equation}
where $A_t = A_0 + a_t$ and $H_t$ is defined as in \eqref{eq:Htdef}. Furthermore, assuming \eqref{eq:GRFgbAgauge}, the evolution of $H_t$ is given by
\begin{equation}\label{eq:GRFHgauge}
\dt H = - d d^{\star}H - 2 \langle (d_A^{\star}F_A - F_A \lrcorner \ H) \wedge F_A \rangle_{\mathfrak{k}} + L_XH.
\end{equation}
\end{proposition}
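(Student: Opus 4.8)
The plan is to reduce the statement to two ingredients: an explicit parameterization of the Lie algebra $\operatorname{Lie}\Aut(E)$ of infinitesimal automorphisms of the string algebroid, and a computation of the infinitesimal action of such a $\zeta_t$ on a generalized metric. Once these are in hand, the desired system \eqref{eq:GRFgbAgauge} follows by adding $-G^{-1}\zeta_t\cdot G$ to the ungauged right-hand side already computed in Proposition \ref{prop:GRFexp}, and the stated equivalence becomes the assertion that the correspondence $\zeta_t \leftrightarrow (X_t, s_t, B_t)$ is a bijection onto triples satisfying the constraint \eqref{eq:LieAutEeq}.

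First I would characterize $\operatorname{Lie}\Aut(E)$ by differentiating the defining commutative diagram \eqref{eq:defstringiso}. An infinitesimal automorphism consists of an infinitesimal $K$-equivariant diffeomorphism of $P$ covering a vector field $X$ on $M$, together with an infinitesimal $B$-field transformation. Using the reference connection $A_0$, the equivariant vector field on $P$ decomposes as $A_0^\perp X + X^{s}$ for a section $s \in \Gamma(\ad P)$, so the data is recorded by a triple $(X, s, B)$. The requirement that $\zeta$ preserve the Dorfman bracket \eqref{eq:bracket10} then imposes a single constraint on $B$; I expect a direct bracket computation, tracking the curvature couplings $\langle i_X F_{A_0}, \cdot\rangle_{\mathfrak{k}}$ and the three-form $H_0$, to reduce this precisely to the closedness condition \eqref{eq:LieAutEeq}, namely that $B + i_X H_0 - 2\langle s, F_{A_0}\rangle_{\mathfrak{k}}$ is closed. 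This step generalizes the exact-Courant-algebroid computation behind \cite[Proposition 4.22]{GRFBook}.

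Next I would compute $\zeta_t \cdot G$ in the twisted splitting determined by $e^{(b_t,-a_t)}$, using the explicit form \eqref{eq:Ggbaexp} of $G$. The diffeomorphism part contributes the Lie-derivative terms $L_X g$, $L_X b$, and $L_X A$ (Cartan's formula rewriting $L_X A$ via $i_X F_A$ and $d_A(i_X a)$); the gauge part acts on the connection by $d_A s$ as recorded in \secref{s:gpb}, and feeds into the $b$-field through the $-2\langle a, r\rangle_{\mathfrak{k}}$ term of the bracket; and $B$ enters the $b$-equation directly. Reading off the $T\otimes T$, $T\otimes\ad P$, and $\Lambda^2 T^*$ components of $-2\operatorname{Rc}_G - G^{-1}\zeta_t\cdot G$ and regrouping should produce exactly the combinations $d_A(s + i_X a) + i_X F_A$ in the $A$-equation and the correction $-B - \langle a \wedge (d_{A_0}s + i_X F_{A_0})\rangle_{\mathfrak{k}}$ in the $b$-equation.

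Finally, for the evolution of $H_t$ I would differentiate the anomaly relation \eqref{eq:Htdef} as in Lemma \ref{lem:GRFH}, now substituting the gauged $A$-equation. Pairing the extra connection terms against $F_A$ via $2\langle \dot a \wedge F_A\rangle_{\mathfrak{k}}$ and using the Bianchi identity $d_A F_A = 0$, I expect the gauge contributions to collapse into the total Lie derivative $L_X H$, yielding \eqref{eq:GRFHgauge}. The main obstacle will be the first step: correctly identifying $\operatorname{Lie}\Aut(E)$ and deriving the constraint \eqref{eq:LieAutEeq}, since this requires careful bookkeeping of how the quadratic curvature couplings in the double extension interact with bracket-preservation. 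The computations in the remaining steps are delicate but follow the templates of Proposition \ref{prop:GRFexp} and the exact case in \cite{GRFBook}.
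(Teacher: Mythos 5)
Your plan is correct, and for the main equivalence it follows essentially the same route as the paper: parameterize infinitesimal automorphisms by triples $(X,s,B)$ subject to \eqref{eq:LieAutEeq}, compute $\zeta_t\cdot G_t$ in the splitting twisted by $e^{(b_t,-a_t)}$, and compare components against $-2\operatorname{Rc}_{G_t}^+$ (the paper reduces to the single equation $\dot G_t\circ\pi_-^t=-2\operatorname{Rc}^+_{G_t}-(\zeta_t\cdot G_t)\circ\pi_-^t$ via Lemma \ref{lem:Ricciskew}, which is your component-reading in disguise). Two localized differences are worth noting. First, where you propose to derive the characterization of $\operatorname{Lie}\Aut(E)$ and the constraint \eqref{eq:LieAutEeq} from scratch by imposing bracket-preservation, the paper simply imports the explicit form $\Phi=f_{\bar\phi}\circ e^{(B,a^{\bar\phi})}$ of automorphisms and the induced infinitesimal parameterization from \cite[Corollary 4.2]{GFRT17} (and cites \cite[Lemma 3.9]{garciafern2020gauge} for integrating a family $\zeta_t$ back to automorphisms in the converse direction); your direct derivation is feasible and consistent with Remark \ref{rem:Lieexchange}, but it is exactly the kind of bookkeeping the paper describes as tedious in Lemma \ref{lem:LieAut}, so be prepared for the curvature couplings there. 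Second, for \eqref{eq:GRFHgauge} the paper does not differentiate \eqref{eq:Htdef} with the gauged equations substituted, as you propose; instead it observes that $\til{H}_t=(\phi_t)_*H_t$ is the three-form of the ungauged solution $\til G_t=(\Phi_t)_*G_t$, applies Lemma \ref{lem:GRFH} to $\til H_t$, and pulls back, so the $L_XH$ term appears for free from naturality. Your direct computation does close — after using \eqref{eq:LieAutEeq} to replace $dB$ and $dH_0=\langle F_{A_0}\wedge F_{A_0}\rangle_{\mathfrak{k}}$, the remaining identity among $a$, $s$, $i_XF_{A_0}$ and the Chern--Simons-type terms is essentially the content of Lemma \ref{lem:LieAut} — but the paper's pushforward argument buys the same conclusion with no extra algebra, while your route has the small virtue of not requiring the finite-time automorphisms $\Phi_t$ at all.
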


\begin{proof}
In the fixed splitting of $G_0$, a string algebroid automorphism $\Phi \in \Aut(E)$ is explicitly given by (see \cite[Corollary 4.2]{GFRT17})
$$
\Phi = f_{\bar{\phi}} \circ e^{(B,a^{\bar{\phi}})},
$$ 
where
$$
f_{\bar{\phi}}  = \left( \begin{array}{ccc}
d \phi & 0& 0 \\
0 & d \bar{\phi} & 0 \\
0 & 0 & d \phi^{-1} \end{array}\right).
$$
Here, $\bar{\phi}$ and $\phi$ are as in \eqref{eq:defstringiso} and $a^{\bar{\phi}} = \bar{\phi}^*A_0 - A_0 \in \Gamma(T^* \otimes \ad P)$. Furthermore, this data satisfies
$$
\phi^*H_0 = H_0 - dB + 2 \langle a^{\bar{\phi}}\wedge F_{A_0} \rangle_{\mathfrak{k}} + \langle a^{\bar{\phi}}\wedge d_{A_0}a^{\bar{\phi}} \rangle_{\mathfrak{k}} + \frac{1}{3}\langle a^{\bar{\phi}}\wedge [a^{\bar{\phi}} \wedge a^{\bar{\phi}}] \rangle_{\mathfrak{k}}.
$$
Then, its action on a generalized metric $G = G(g,b,A)$ is given by (cf. also \cite[Proposition 4.3]{GFRT17})
\begin{align*}
\Phi \cdot G & = \Phi G \Phi^{-1}\\
& = f_{\bar{\phi}} e^{(b',-a')} \left( \begin{array}{ccc}
0 & 0& g^{-1} \\
0 & - \Id & 0 \\
g & 0 & 0 \end{array}\right) e^{(-b',a')} f_{\bar{\phi}}^{-1}\\
& = f_{\bar{\phi}} e^{(b',-a')} f_{\bar{\phi}}^{-1}\left( \begin{array}{ccc}
0 & 0& (\phi_*g)^{-1} \\
0 & - \Id & 0 \\
\phi_*g & 0 & 0 \end{array}\right) f_{\bar{\phi}} e^{(-b',a')} f_{\bar{\phi}}^{-1}\\
&  = e^{(\phi_*b',-\bar{\phi}_* a')} \left( \begin{array}{ccc}
0 & 0& (\phi_*g)^{-1} \\
0 & - \Id & 0 \\
\phi_*g & 0 & 0 \end{array}\right) e^{(-\phi_*b',\bar{\phi}_* a')} = G(\phi_* g,\phi_*b',\bar{\phi}_* A),
\end{align*}
where
\begin{equation}\label{eq:gauge_pars}
    b' = b + B  -\langle a^{\bar{\phi}} \wedge a\rangle_\mathfrak{k}, \qquad a' = a - a^{\bar{\phi}},
\end{equation}
and we have used that
$$
\bar{\phi}_* a' = \bar{\phi}_* A - \bar{\phi}_* A_0 - \bar{\phi}_*(\bar{\phi}^*A_0 - A_0) = \bar{\phi}_* A - A_0.
$$
Given now a one-parameter family of automorphisms $\Phi_t \equiv (\phi_t,\bar{\phi}_t,\til B_t) \in \Aut(E)$ it determines uniquely a triple $\zeta_t \equiv (X_t,s_t,B_t)$ as in the statement (see \cite[Corollary 4.2]{GFRT17}). Note here that, more canonically, $(X_t,s_t)$ can be identified with the following $K$-invariant vector field on the total space of $P$
$$
V_t = s_t + A_0^\perp X_t \in \Gamma(TP)^K,
$$
where $A_0^\perp X_t$ denotes the horizontal lift of $X_t$ with respect to the connection $A_0$. By Lemma \ref{lem:Ricciskew}, the calculation of \eqref{eq:RFgaugefixedabs} reduces to compute
\begin{equation}\label{eq:gaugefixproof}
(\dot G_t) \circ \pi_-^t  = - 2 \operatorname{Rc}_{G_t}^+ - (\zeta_t\cdot G_t)\circ \pi_-^t.
\end{equation}
Note that infinitesimal action of $V_t$ on a connection $A = A_ 0 + a$ is given by
$$
V_t \cdot A = - d_A(s_t + i_{X_t}a) - i_{X_t} F_A.
$$
Then, using the same notation as in the proof of Proposition \ref{prop:GRFexp} we have
\begin{align*}
\langle (\zeta_t & \cdot G_t) a_-, b_+ \rangle_{\mathfrak{k}}\\
=&\ - L_{X_t}g_t(X,Y) + (L_{X_t}b - B_t - \langle V_t \cdot A_0 \wedge a_t \rangle_{\mathfrak{k}} + \langle a_t \wedge V_t \cdot A_t \rangle_{\mathfrak{k}})(X,Y) - 2 \langle V_t \cdot A_t, r \rangle_{\mathfrak{k}}\\
=&\ - L_{X_t}g_t(X,Y) - (-L_{X_t}b_t + B_t + \langle a_t \wedge \left( d_{A_t}(s_t + i_{X_t}a_t) + d_{A_0}s_t + i_{X_t}(F_{A_t} + F_{A_0})\right) \rangle_{\mathfrak{k}})(X,Y)\\
&\ + 2 \langle d_{A_t}(s_t + i_{X_t}a_t) + i_{X_t}F_{A_t}, r \rangle_{\mathfrak{k}},
\end{align*}
and therefore \eqref{eq:gaugefixproof} is equivalent to
\begin{equation*}
\begin{split}
\dt g_t & = -2\operatorname{Rc}_t + \frac{1}{2} H_t^2 - 2 \langle i_{v_i}F_{A_t},i_{v_i}F_{A_t} \rangle_{\mathfrak{k}} + L_{X_t}g_t,\\
\dt b_t - \langle a_t\wedge \dot a_t \rangle_\mathfrak{k} & = -d^{\star}H_t + L_{X_t}b_t - B_t - \langle a_t \wedge \left( d_{A_t}(s_t + i_{X_t}a_t) + d_{A_0}s_t + i_{X_t}(F_{A_t} + F_{A_0}) \right)\rangle_{\mathfrak{k}},\\
\dt A_t & = - d_{A_t}^{\star}F_{A_t} + F_{A_t} \lrcorner \ H_t + d_{A_t}(s_t + i_{X_t} a_t) + i_{X_t}F_{A_t}.
\end{split}
\end{equation*}
From this, we get
\begin{equation*}
\begin{split}
\dt b_t & = - d^{\star}H_t + L_{X_t}b_t - B_t - \langle a_t \wedge \left( d_{A_t}(s_t + i_{X_t}a_t) + d_{A_0}s_t + i_{X_t}(F_{A_t} + F_{A_0}) - \dot a_t \right)\rangle_{\mathfrak{k}},\\
& = - d^{\star}H_t + L_{X_t}b_t - B_t - \langle a_t \wedge \left( d_{A_t}^{\star}F_{A_t} - F_{A_t} \lrcorner \ H_t \right) \rangle_{\mathfrak{k}} + \langle a_t \wedge \left( d_{A_0}s_t + i_{X_t}F_{A_0}\right) \rangle_{\mathfrak{k}},
\end{split}
\end{equation*}
which proves formula \eqref{eq:GRFgbAgauge} assuming \eqref{eq:RFgaugefixedabs}. For the converse, note that any one-parameter family of triples $\zeta_t \equiv (X_t,s_t,B_t)$ as in the statement integrates to a one-parameter family of automorphisms $\Phi_t \in \Aut(E)$, similarly as in \cite[Lemma 3.9]{garciafern2020gauge}), and hence the first part of the statement follows from the previous calculations. 

Finally, to prove \eqref{eq:GRFHgauge} we note that the three-form $\til H_t$ corresponding to $\til G_t = (\Phi_t)_*G_t$ is given by
$$
\til H_t = (\phi_t)_* H_t.
$$
Hence, the claimed evolution for $H_t$ follows from Lemma \ref{lem:GRFH}.
\end{proof}

To finish this section, we interpret the gauge-fixed generalized Ricci flow for a special class of infinitesimal automorphisms in terms of the generalized Ricci tensor $\operatorname{Rc}_{G}^+$ associated to a particular choice of divergence operator (see Proposition \ref{prop:Ricciexp}). We need the following technical lemma. The proof follows from a tedious but straightforward calculation, and is omitted.

\begin{lemma}\label{lem:LieAut}
Let $G = G(g,b,A)$ be a generalized metric on $E$, with associated three-form $H$ and connection $A = A_0 + a$. Given $X \in \Gamma(TM)$ and $s \in \Gamma(\ad P)$ arbitrary sections and $\kappa \in \Lambda^2$ a closed two-form, we set
$$
B = \kappa -i_{X} H_0 + 2 \langle s,F_{A_0}\rangle_{\mathfrak{k}}.
$$  
Then, if we define
\begin{equation}\label{eq:Btilde}
\til B := B - L_{X}b + \langle a \wedge \left( d_{A}(s + i_{X}a) + d_{A_0}s + i_{X}(F_{A} + F_{A_0})\right) \rangle_{\mathfrak{k}},
\end{equation}
the following equality holds:
\begin{align*}
\til B = \kappa + d(-i_Xb - 2 \langle a , s \rangle_{\mathfrak{k}}  - \langle a , i_X a \rangle_{\mathfrak{k}}) - i_{X} H + 2 \langle s + i_Xa,F_{A}\rangle_{\mathfrak{k}}.
\end{align*}

\end{lemma}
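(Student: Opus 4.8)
The plan is to verify the identity by a direct local computation, reducing both sides to expressions built only from $a$, $s$, $X$, $F_{A_0}$, $d_{A_0}$ and the Lie bracket, with all reference to $H_0$, $H$ and $b$ eliminated. First I would substitute $B = \kappa - i_X H_0 + 2\langle s, F_{A_0}\rangle_{\mathfrak{k}}$ into \eqref{eq:Btilde}, rewrite $L_X b = i_X db + d(i_X b)$ by Cartan's formula, and insert the explicit expression \eqref{eq:Htdef} for $H$, which after rearranging gives
\[
i_X H - i_X H_0 - i_X db = i_X\big(2\langle a \wedge F_{A_0}\rangle_{\mathfrak{k}} + \langle a \wedge d_{A_0} a\rangle_{\mathfrak{k}} + \tfrac13\langle a \wedge [a\wedge a]\rangle_{\mathfrak{k}}\big).
\]
After these substitutions the $\kappa$ and $d(i_Xb)$ terms match their counterparts on the right-hand side, and the claim becomes the vanishing of an explicit two-form $D := \til B - (\text{RHS})$ in which $\kappa$, $H_0$, $H$ and $b$ no longer appear.

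Next I would organize $D$ according to its dependence on $s$. The terms linear in $s$ are
\[
2\langle s, F_{A_0}\rangle_{\mathfrak{k}} - 2\langle s, F_A\rangle_{\mathfrak{k}} + \langle a \wedge d_A s\rangle_{\mathfrak{k}} + \langle a \wedge d_{A_0} s\rangle_{\mathfrak{k}} + 2\,d\langle a, s\rangle_{\mathfrak{k}}.
\]
Using $F_A = F_{A_0} + d_{A_0} a + \tfrac12[a\wedge a]$ and $d_A s = d_{A_0} s + [a, s]$, together with the graded Leibniz rule $d\langle a, s\rangle_{\mathfrak{k}} = \langle d_{A_0} a, s\rangle_{\mathfrak{k}} - \langle a \wedge d_{A_0} s\rangle_{\mathfrak{k}}$, all the $d_{A_0} s$ and $d_{A_0} a$ contributions cancel, leaving exactly $\langle a \wedge [a, s]\rangle_{\mathfrak{k}} - \langle s, [a\wedge a]\rangle_{\mathfrak{k}}$. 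A short pointwise computation shows that both of these evaluate to $2\langle s, [a(\cdot), a(\cdot)]\rangle_{\mathfrak{k}}$ by ad-invariance of $\langle\cdot,\cdot\rangle_{\mathfrak{k}}$, so the $s$-linear part of $D$ vanishes.

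For the part of $D$ independent of $s$ — which collects the $i_X$-contractions of the expression above together with $-2\langle i_X a, F_A\rangle_{\mathfrak{k}}$, the term $\langle a \wedge (d_A(i_X a) + i_X F_A + i_X F_{A_0})\rangle_{\mathfrak{k}}$ and $d\langle a, i_X a\rangle_{\mathfrak{k}}$ — I would expand each interior product with the graded rule $i_X\langle \alpha \wedge \beta\rangle_{\mathfrak{k}} = \langle i_X\alpha \wedge \beta\rangle_{\mathfrak{k}} + (-1)^{|\alpha|}\langle \alpha \wedge i_X\beta\rangle_{\mathfrak{k}}$, using $i_X[a\wedge a] = 2[i_X a, a]$ and the covariant Cartan identity relating $i_X d_{A_0} a$ to $d_{A_0}(i_X a)$. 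The resulting terms again regroup by type — those involving $F_{A_0}$, those involving $d_{A_0} a$, and the cubic bracket terms — and each group cancels using the same Leibniz rule for $d$, ad-invariance, and the Jacobi identity for the cubic piece. This confirms $D = 0$ and hence the stated identity.

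The computation is entirely routine, but the bookkeeping is delicate, and this is where I expect the only real difficulty to lie: one must track carefully the signs produced by the graded Leibniz rules for $d$ and $i_X$ on $\ad P$-valued forms and apply ad-invariance and Jacobi correctly to collapse the quadratic and cubic terms in $a$. A useful structural check at each stage is that $D$ must be independent of the closed form $\kappa$ and free of any $b$- or $H_0$-dependence, which isolates sign errors quickly; conceptually, the identity is just the reorganization of the two-form shift produced when a generalized metric is conjugated by the string algebroid automorphism associated with $(X,s,\kappa)$, so the cancellations are forced by the $(b,a)$-transformation group law of \eqref{eq:AHnew}.
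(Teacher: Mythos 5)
Your proposal is correct and takes essentially the same approach as the paper: the paper omits the proof of this lemma, describing it only as a ``tedious but straightforward calculation,'' and your direct expansion --- eliminating $H_0$, $H$ and $b$ via Cartan's formula and \eqref{eq:Htdef}, then cancelling the $s$-linear and $s$-independent parts using the graded Leibniz rules and ad-invariance --- is exactly that computation, and each cancellation you describe checks out. One minor remark: the cubic terms in $a$ already cancel by ad-invariance and antisymmetry alone, via $\langle a\wedge[i_Xa\wedge a]\rangle_{\mathfrak{k}} = -\langle i_Xa,[a\wedge a]\rangle_{\mathfrak{k}}$, so the Jacobi identity you invoke for that piece is not actually needed.
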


\begin{remark}\label{rem:Lieexchange}
More invariantly, Lemma \ref{lem:LieAut} can be interpreted as the existence of a natural isomorphism between the Lie algebras of infinitesimal automorphisms of a pair of isomorphic Courant algebroids. For instance, for any infinitesimal automorphisms $\zeta \equiv (V,B)$ of $E$ (with bracket determined by $(H_0,A_0)$), where 
$$
V = s + A_0^\perp X = s + i_{X}a + A^\perp X,
$$
the two-form $\til B$ defined by \eqref{eq:Btilde} satisfies
$$
d(\til B + i_{X}H - 2\langle s + i_{X}a,F_{A} \rangle_{\mathfrak{k}}) = 0.
$$
The previous identity means that $(V,\til B)$ is an infinitesimal symmetry of the Dorfman bracket on $T \oplus \ad P \oplus T^*$ determined by $(H,A)$ (see \eqref{eq:bracket10}).
\end{remark}

We are ready to prove the main result of this section.

\begin{proposition}\label{p:GFGRFinGG+}Let $G_t$ be a one-parameter family of generalized metrics on $E$ and $X_t$ a one-parameter family of vector fields on $M$.  Defining
\begin{equation}\label{f:Bstrgauge}
B_t = d (i_{X_t} g_t + i_{X_t}b_t - \langle a_t , i_{X_t} a_t \rangle_{\mathfrak{k}}) -i_{X_t} H_0 - 2 \langle i_{X_t}a_t,F_{A_0}\rangle_{\mathfrak{k}},
\end{equation}
the data $\zeta_t \equiv (X_t,- i_{X_t} a_t,B_t)$ defines a one-parameter family of infinitesimal automorphisms of $E$.  Furthermore, $G_t$ satisfies the $\zeta_t$-gauge-fixed generalized Ricci flow \eqref{eq:RFgaugefixedabs} if and only if
\begin{equation}\label{eq:GRFdiv+}
\dt G \circ \pi_-  = - 2 \operatorname{Rc}_{G}^+(G,\operatorname{div}),
\end{equation}
where $\operatorname{div} = \operatorname{div}^{G} + \langle 2i_{X_t} g_t, \rangle$. More explicitly, \eqref{eq:GRFdiv+} is given by
\begin{equation}\label{eq:GRFdiv+exp}
\begin{split}
\dt g & = -2\operatorname{Rc} + \frac{1}{2} H^2 + 2 F_A^2 + L_{X}g,\\
\dt b & = - d^{\star}H - \langle a \wedge \left( d_A^{\star}F_A - F_A \lrcorner \ H - i_{X} F_A \right) \rangle_{\mathfrak{k}} - d (i_X g) +  i_{X} H,\\
\dt A & = - d_A^{\star}F_A + F_A \lrcorner \ H + i_{X} F_A,
\end{split}
\end{equation}

\end{proposition}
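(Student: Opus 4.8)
The plan is to establish the three assertions in turn, reducing everything to the explicit gauge-fixed flow of Proposition \ref{prop:GRFgaugeexp} and the Ricci-tensor formula of Proposition \ref{prop:Ricciexp}. Throughout I denote by $X = X_t$ the gauge vector field, and I write $X_0,Y_0$ for the tangent arguments of the test sections $a_-,b_+$ (these are the symbols $X,Y$ appearing in \eqref{eq:Ric+exp}), to avoid a clash with the gauge field.

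First, to see that $\zeta_t \equiv (X_t,-i_{X_t}a_t,B_t)$ is an infinitesimal automorphism of $E$, I would simply verify the defining closedness condition \eqref{eq:LieAutEeq}. Substituting $s_t = -i_{X_t}a_t$ and the expression \eqref{f:Bstrgauge} for $B_t$ into $B_t + i_{X_t}H_0 - 2\langle s_t,F_{A_0}\rangle_{\mathfrak{k}}$, the two $i_{X_t}H_0$ terms cancel and the two $\langle i_{X_t}a_t,F_{A_0}\rangle_{\mathfrak{k}}$ terms cancel, leaving the manifestly exact form $d(i_{X_t}g_t + i_{X_t}b_t - \langle a_t,i_{X_t}a_t\rangle_{\mathfrak{k}})$, whose exterior derivative vanishes. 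This gives the first claim.

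Next I would specialize the explicit gauge-fixed flow \eqref{eq:GRFgbAgauge} to this $\zeta_t$. With $s_t = -i_{X_t}a_t$ the combination $d_A(s + i_X a)$ vanishes identically, which instantly collapses the connection equation to $\dt A = -d_A^\star F_A + F_A \lrcorner H + i_X F_A$, matching the third line of \eqref{eq:GRFdiv+exp}; the metric equation is unchanged and already agrees. The work is in the $b$-equation, where I would invoke Lemma \ref{lem:LieAut}: comparing \eqref{f:Bstrgauge} with the normal form $B = \kappa - i_X H_0 + 2\langle s,F_{A_0}\rangle_{\mathfrak{k}}$ identifies the closed two-form $\kappa = d(i_X g + i_X b - \langle a,i_X a\rangle_{\mathfrak{k}})$, and then the identity \eqref{eq:Btilde}, together with $s + i_X a = 0$, yields the clean cancellation $\til B = d(i_X g) - i_X H$. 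Feeding this back into the second line of \eqref{eq:GRFgbAgauge} and regrouping the $\langle a \wedge \cdot\rangle_{\mathfrak{k}}$ terms produces exactly the $b$-equation of \eqref{eq:GRFdiv+exp}.

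It then remains to identify this explicit system with $\dt G \circ \pi_- = -2\operatorname{Rc}_G^+(G,\operatorname{div})$ for $\operatorname{div} = \operatorname{div}^G + \langle 2i_X g,\cdot\rangle$. Here I would compute the associated $\varepsilon$ from $\langle \varepsilon,\cdot\rangle = \operatorname{div}^G - \operatorname{div} = -\langle 2i_X g,\cdot\rangle$, so that $\varepsilon = -2i_X g \in \Gamma(T^*)$; decomposing as in Proposition \ref{prop:Ricciexp} forces $z = 0$, $\varphi_+ = -i_X g$, and $\varphi_- = i_X g$. Plugging $\varphi_+^\sharp = -X$ into \eqref{eq:Ric+exp} and using $L_{-X}g = -L_X g$, $d\varphi_+ = -d(i_X g)$, $i_{\varphi_+^\sharp}H = -i_X H$, $i_{\varphi_+^\sharp}F_A = -i_X F_A$, I would read off $-2\operatorname{Rc}_G^+$ and match its symmetric, $\ad P$-, and two-form components against the decomposition $\langle \dot G a_-,b_+\rangle = \dot g(X_0,Y_0) + (-\dot b + \langle a \wedge \dot a\rangle_{\mathfrak{k}})(X_0,Y_0) - 2 i_{Y_0}\langle \dot a,r\rangle_{\mathfrak{k}}$ recorded in the proof of Proposition \ref{prop:GRFexp}. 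The three components reproduce precisely the $g$-, $A$-, and $b$-equations of \eqref{eq:GRFdiv+exp}, closing the equivalence.

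I expect the main obstacle to be the $b$-field computation: keeping the sign conventions straight through Lemma \ref{lem:LieAut} and verifying the cancellation $\til B = d(i_X g) - i_X H$, and then separately confirming that the divergence-operator route via \eqref{eq:Ric+exp} lands on the identical expression. Everything else is bookkeeping once these two independent simplifications are seen to agree.
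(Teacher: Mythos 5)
Your proposal is correct and follows essentially the same route as the paper's proof: the automorphism property via the trivial verification of \eqref{eq:LieAutEeq}, the specialization of \eqref{eq:GRFgbAgauge} with $s_t = -i_{X_t}a_t$ (so $d_A(s+i_Xa)=0$), the simplification of the $b$-equation through Lemma \ref{lem:LieAut} (your cancellation $\til{B} = d(i_Xg) - i_XH$ is exactly what the paper's displayed computation produces), and the identification $\varepsilon = -2i_{X}g = -\sigma_+(X) + \sigma_-(X)$, i.e. $\varphi_+^\sharp = -X$, $\varphi_-^\sharp = X$, $z=0$, fed into Proposition \ref{prop:Ricciexp} and matched against the decomposition of $\langle \dot{G}a_-, b_+\rangle$ from the proof of Proposition \ref{prop:GRFexp}. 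All signs and contractions check out, so nothing further is needed.
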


\begin{proof}
Given $\zeta_t \equiv (X_t,-i_{X_t}a_t,B_t)$ as in the statement, this defines a one-parameter family of infinitesimal automorphisms of $E$ since equation \eqref{eq:LieAutEeq} is trivially satisfied for all $t$.  Then, the $\zeta_t$-gauge-fixed generalized Ricci flow \eqref{eq:RFgaugefixedabs} is equivalent to
\begin{equation*}\label{eq:GRFgbAgaugeauxbis}
\begin{split}
\dt g_t & = -2\operatorname{Rc}_t + \frac{1}{2} H_t^2 + 2 F_{A_t}^2 + L_{X_t}g_t,\\
\dt b_t - \langle a_t \wedge \dot a_t \rangle_\mathfrak{k} & = - d^{\star}H_t + L_{X_t}b_t - B_t - \langle a_t \wedge \left( d_{A_0}s_t + i_{X_t}(F_{A_t} + F_{A_0}) \right) \rangle_{\mathfrak{k}},\\
\dt A_t & = - d_{A_t}^{\star}F_{A_t} + F_{A_t} \lrcorner \ H_t + i_{X_t}F_{A_t}.
\end{split}
\end{equation*}
From Lemma \ref{lem:LieAut}, it follows now that
\begin{align*}
\dt b_t & = - d^{\star}H_t - d (i_{X_t} g_t + i_{X_t}b_t - \langle a_t , i_{X_t} a_t \rangle_{\mathfrak{k}}) - d(-i_{X_t}b_t + \langle a_t , i_{X_t} a_t \rangle_{\mathfrak{k}}) + i_{X_t} H_t + \langle a_t \wedge\dot a_t \rangle_\mathfrak{k}\\
& = - d^{\star}H_t  - \langle a_t \wedge \left( d_{A_t}^{\star}F_{A_t} - F_{A_t} \lrcorner \ H_t - i_{X_t}F_{A_t} \right) \rangle_{\mathfrak{k}} -  d(i_{X_t}g_t) + i_{X_t} H_t.
\end{align*}
Thus, we conclude that, with the given choices, the $\zeta_t$-gauge-fixed generalized Ricci flow \eqref{eq:RFgaugefixedabs} is equivalent to \eqref{eq:GRFdiv+exp}. It remains to prove the equivalence between \eqref{eq:GRFdiv+exp} and \eqref{eq:GRFdiv+}. For this, writting $\operatorname{div} = \operatorname{div}^{G} - \langle \varepsilon, \rangle$, we have
$$
\varepsilon = - 2i_{X_t} g_t = 
- \sigma_+(X_t) + \sigma_-(X_t), 
$$
and therefore the desired equivalence follows from Proposition \ref{prop:Ricciexp} and the proof of Proposition \ref{prop:GRFexp}.
\end{proof}

\section{Pluriclosed flow on string algebroids}\label{s:PCFSA}

In this section we introduce the pluriclosed flow on string algebroids and study geometric aspects of this evolution equation. We take a gauge theoretic approach and introduce the flow as a higher version of HYM flow for connections on a bundle, and eventually show it is gauge-equivalent to \eqref{eq:GRFgbA}. We will then prove that solutions to \emph{string algebroid pluriclosed flow} are equivalent to solutions of a coupled HYM flow of Hermitian metrics on an associated
holomorphic string algebroid and study its interplay with the Aeppli cohomology of the complex manifold. 

\subsection{Pluriclosed flow in the unitary gauge}\label{sec:UPCF}

In this section we introduce pluriclosed flow on a smooth string algebroid $E$ over a smooth manifold $M$, with even real dimension $\dim_\RR M = 2n$, working in a `unitary gauge'.  Let $E^c = E \otimes_\RR \CC$ be the complexification of $E$, endowed with the natural induced structure of a smooth complex string algebroid over $M$ (see \cite[Section 2.2]{garciafern2020gauge})
\begin{equation}\label{eq:Coustrcx}
\begin{split}
0 \longrightarrow T^* \otimes \CC \longrightarrow E^c \overset{\rho^c}{\longrightarrow} A_{P^c} \longrightarrow 0,\\
0 \longrightarrow \ad P \otimes \CC \longrightarrow A_{P^c} \overset{dp}{\longrightarrow} T \otimes \CC \longrightarrow 0,
\end{split}
\end{equation}
with anchor map $\pi^c \colon E^c \to T \otimes \CC$, where $P^c = P \times_K K^c$ is the associated principal $K^c$-bundle, for $K^c$ the complexification of $K$.

\begin{definition}\label{def:lifting}
A $(0,1)$-lifting of $T \otimes \CC$ to $E$ is a rank-$n$ isotropic subbundle
$$
\overline{\ell} \subset E^c
$$
such that $\pi^c(\overline{\ell}) \subset T \otimes \CC$ is the $(0,1)$-eigenbundle of an almost complex structure $J$ on $M$, that is,
$$
\pi^c(\overline{\ell}) = T^{0,1}_J \subset T \otimes \CC.
$$
We will say that $\overline{\ell}$ is \emph{integrable} if $\overline{\ell}$ is involutive with respect to the Dorfman bracket on $E^c$.
\end{definition}

Notice that $J$ in the previous definition is uniquely determined by $\overline{\ell}$, as it follows from $T^{1,0}_J = \overline{T^{0,1}_J}$. Consider the rank-$n$ subbbundle 
\begin{equation}\label{eq:Whor}
W = \{a \in \ell \oplus \overline{\ell} \; | \; \overline{a} = a \} \subset E,
\end{equation}
where $\ell$ is the conjugate bundle to $\overline \ell$ with respect to the natural real structure on $E^c$. Then, it is not difficult to see that the anchor map 
$$
\pi_{|W} \colon W \to T 
$$
induces an isomorphism, that is, $W$ determines a \emph{horizontal lift} of $T$, in the sense of \cite[Definition 4.10]{garciafern2020gauge}. Recall that such $W$ is equivalent to a real symmetric two-tensor $g$ and an isotropic splitting $\sigma \colon T \to E$, such that
$$
W = \{\sigma(X) + g(X) \; | \; X \in T\}.
$$
When $W$ is determined by a $(0,1)$-lifting of $\overline{\ell}  \subset E^c$, the isotropic condition on $\overline{\ell}$ implies furthermore that $g$ is of type $(1,1)$ with respect to the induced almost complex structure $J$ on $M$. Using this, we can provide the following characterization of $(0,1)$-liftings of $T \otimes \CC$ to $E$. The proof is straightforward from \cite[Lemma 3.12]{AAG2}.

\begin{lemma}[\cite{AAG2}]\label{lem:01lift}

A $(0,1)$-lifting $\overline{\ell}  \subset E^c$ is equivalent to a triple $(J,\omega,\sigma)$, where $J$ is an almost complex structure on $M$, $\omega \in \Omega_J^{1,1}$ is a real $(1,1)$-form with respect to $J$, and $\sigma \colon T \to E$ is an isotropic splitting. From this data, $\overline{\ell}$ is recovered via
$$
\overline{\ell} = e^{\i \omega} \sigma(T^{0,1}_J) \subset E^c.
$$
Furthermore, $\overline{\ell}$ is integrable if and only if 
$$
N_J = 0, \qquad H = -d^c_J\omega, \qquad F_A^{0,2_J} = 0,
$$
where $H$ and $A$ are the real three-form on $M$ and the connection on $P$ detetermined by $\sigma$, respectively.
\end{lemma}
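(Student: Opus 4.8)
The plan is to first establish the bijective correspondence between $(0,1)$-liftings and triples $(J,\omega,\sigma)$, and only then compute the Dorfman bracket on $\overline{\ell}$ to read off the integrability conditions component by component. Given a $(0,1)$-lifting $\overline{\ell}$, the almost complex structure $J$ is defined by $\pi^c(\overline{\ell}) = T^{0,1}_J$. The real subbundle $W$ of \eqref{eq:Whor} is a horizontal lift of $T$, as recalled just before the lemma, and hence determines a symmetric two-tensor $g$ together with an isotropic splitting $\sigma$ with $W = \{\sigma(X) + g(X) : X \in T\}$; the isotropy of $\overline{\ell}$ forces $g$ to be of type $(1,1)$, so I set $\omega := g(J\cdot,\cdot) \in \Omega^{1,1}_J$, which is real by symmetry of $g$.

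To obtain the explicit formula I work in the splitting $\sigma$, where $E^c \cong (T \oplus \ad P \oplus T^*)\otimes\CC$ and $W \otimes \CC = \ell \oplus \overline{\ell}$. Since $\pi^c|_{\overline{\ell}}$ is an isomorphism onto $T^{0,1}_J$, the lift of $X \in T^{0,1}_J$ inside $\overline{\ell}$ is the unique element $\sigma(X) + g(X) \in W\otimes\CC$ with anchor $X$, which in particular carries no $\ad P$-component. Using $JX = -\i X$ and $\omega = g(J\cdot,\cdot)$ one checks $\i\, i_X\omega = g(X,\cdot)$, so that $\sigma(X) + g(X) = e^{\i\omega}\sigma(X)$ by \eqref{eq:BA}, giving $\overline{\ell} = e^{\i\omega}\sigma(T^{0,1}_J)$. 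The inverse construction, the verification that it is mutually inverse, and the check that $e^{\i\omega}\sigma(T^{0,1}_J)$ is isotropic of rank $n$ with anchor image $T^{0,1}_J$ are all routine; together these reduce the data correspondence to \cite[Lemma 3.12]{AAG2}.

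For integrability I compute $[s,s']$ for $s = \sigma(X) + \i\, i_X\omega$ and $s' = \sigma(Y) + \i\, i_Y\omega$ with $X,Y \in \Gamma(T^{0,1}_J)$, directly from \eqref{eq:bracket10} with vanishing $\ad P$-entries: the $T$-component is $[X,Y]$, the $\ad P$-component is $-F_A(X,Y)$, and the $T^*$-component is $\i L_X i_Y\omega - \i\, i_Y d\, i_X\omega + i_Y i_X H$. Involutivity $[s,s'] \in \Gamma(\overline{\ell})$ is equivalent, upon matching against $\overline{\ell} = \{Z + \i\, i_Z\omega : Z \in T^{0,1}_J\}$, to three conditions: the $T$-component must lie in $T^{0,1}_J$, forcing $[X,Y]^{1,0} = 0$ for all $X,Y \in \Gamma(T^{0,1}_J)$, i.e. $N_J = 0$; the $\ad P$-component must vanish, forcing $F_A(X,Y) = 0$, i.e. $F_A^{0,2_J} = 0$; and the $T^*$-component must equal $\i\, i_{[X,Y]}\omega$. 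Using the Cartan identities $[L_X,i_Y] = i_{[X,Y]}$ and $L_X = d\, i_X + i_X d$, the $T^*$-component rewrites as $\i\, i_{[X,Y]}\omega + i_Y i_X(H + \i\, d\omega)$, so the third condition becomes $i_Y i_X(H + \i\, d\omega) = 0$ for all $X,Y \in \Gamma(T^{0,1}_J)$; once $N_J = 0$, so that $d\omega$ has only $(2,1)$ and $(1,2)$ parts, a bidegree decomposition together with reality of $H$ turns this into $H = -d^c_J\omega$.

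The main obstacle is this last ($T^*$) step: carrying out the Cartan calculus cleanly and doing the bidegree bookkeeping that converts the one-form identity $i_Y i_X(H + \i\, d\omega) = 0$ into the three-form equation $H = -d^c_J\omega$, while verifying that the three conditions genuinely decouple (the $T$-condition depending only on $J$, the $\ad P$-condition only on $F_A$, and the $T^*$-condition only on $H$ and $\omega$ once $N_J = 0$). Since the $T$- and $T^*$-parts of this bracket agree with the exact-Courant-algebroid computation, this is exactly where \cite[Lemma 3.12]{AAG2} is invoked, and the only genuinely new contribution in the string-algebroid setting is the $\ad P$-component, which yields $F_A^{0,2_J} = 0$ immediately.
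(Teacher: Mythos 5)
Your proposal is correct and follows exactly the route the paper intends: the paper gives no proof of its own, stating only that the result is ``straightforward from \cite[Lemma 3.12]{AAG2}'', and your argument fills in precisely that verification --- the identity $\i\, i_X\omega = g(X,\cdot)$ for $X \in T^{0,1}_J$ reproducing the frame $\bar\epsilon_j = \dbar_j + g\dbar_j$ of \eqref{eq:localisoframe}, the component-wise bracket computation from \eqref{eq:bracket10}, and the Cartan-calculus reduction of the $T^*$-condition to $i_Y i_X(H + \i\, d\omega) = 0$, which under $N_J = 0$ and reality of $H$ yields $H = \i(\partial-\dbar)\omega = -d^c_J\omega$ together with $H^{3,0+0,3} = 0$. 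Your closing observation is also accurate: with $r = t = 0$ the string-algebroid correction terms in \eqref{eq:bracket10} drop out of the $T$- and $T^*$-components, so the only genuinely new content beyond the exact case is the $\ad P$-component $-F_A(X,Y)$, giving $F_A^{0,2_J} = 0$.
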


We introduce next a natural notion of positivity for $(0,1)$-liftings based on the previous characterization. 

\begin{definition}

A $(0,1)$-lifting $\overline{\ell}  \subset E^c$ is said to be \emph{positive} if $g = \omega(,J)$ is a positive-definite Hermitian metric on $(M,J)$.
\end{definition}

Equivalently, one can state the previous definition in terms of the horizontal subbundle $W \subset E$ determined by $\overline{\ell}$ by saying that $W = V_+$, for $E = V_+ \oplus V_-$ a generalized metric in the sense of Definition \ref{def:GmetricE}. Notice that, under the positivity assumption, the generalized metric is uniquely determined by $\overline{\ell}$.

Fix an integrable positive $(0,1)$-lifting $\overline{\ell} \subset E^c$. Our next goal is to show that $\overline{\ell}$ has an associated infinitesimal symmetry 
$$
\zeta^{\overline{\ell}} \in \Lie \Aut E^c
$$
of $E^c$. We follow closely  \cite[Section 3.4]{AAG2}. We start by describing a local potential for $\zeta^{\overline{\ell}}$, that is, a section $\varepsilon^{\overline{\ell}} \in \Gamma(E^c)$ such that
$$
\zeta^{\overline{\ell}}  = [\varepsilon^{\overline{\ell}},],
$$
where $[,]$ denotes the Dorfman bracket on $E^c$. Given a section $e \in \Gamma(E^c)$, we will denote by
$$
a = a_\ell + a_{\overline \ell} + a_-
$$
the different pieces in the decomposition
$$
E^c = \ell \oplus \overline \ell \oplus V_-^c,
$$
where $V_+ \cong W$ is the generalized metric induced by $\overline \ell$ and $V_-^c:= V_- \otimes \CC$. Let $(z_1, \ldots, z_n)$ be holomorphic local coordinates for the integrable complex structure $J$ induced by $\overline{\ell}$. We consider the canonical holomorphic volume form
$$
\Omega_0 = dz_1 \wedge \ldots \wedge dz_n.
$$
Using the isomorphism $E \cong T \oplus \ad P \oplus T^*$ induced by $V_+$, we construct local isotropic frames
\begin{equation}\label{eq:localisoframe}
\begin{split}
\epsilon_j & = e^{-\i\omega}g^{-1}d\overline{z}_j  = g^{-1}d\overline{z}_j + d\overline{z}_j \in \ell,\\
\overline \epsilon_j & = e^{\i\omega}\dbar_j = \dbar_j + g \dbar_j  \in \overline \ell,
\end{split}
\end{equation}
where we use the compact notation $\dbar_j : = \frac{\partial}{\partial\overline{z}_j}$. Notice that the frames above satisfy, for all $j,k = 1, \ldots, n$, the following abstract properties
$$
[\overline \epsilon_j, \overline \epsilon_k ] = 0, \qquad \langle \epsilon_j, \overline \epsilon_k \rangle  = \delta_{jk}, \qquad \langle \epsilon_j, \epsilon_k \rangle  = \langle \overline \epsilon_j, \overline \epsilon_k \rangle = 0,
$$
which are key to the following result.

\begin{lemma}[\cite{AAG2}]\label{lem:braketsum0}
There exists a uniquely determined smooth local section $\varepsilon^{\overline{\ell}}$ of $\Ker \pi^c \subset E^c$ satisfying the \emph{D-term equation}
\begin{equation}
\label{eq:globalsumlocal}
\frac{1}{2} \sum_{j=1}^n\left[\overline{\epsilon}_j,\epsilon_j\right] = \varepsilon^{\overline{\ell}}_\ell - \varepsilon^{\overline{\ell}}.
\end{equation}
More explicitly, 
\begin{equation}\label{eq:varepsilonDterm}
\varepsilon^{\overline{\ell}} = d \log \|\Omega\|_\omega + \i \left(d^{\star}\omega - d^c\log \|\Omega\|_\omega \right) - \frac{\i}{2} \Lambda_\omega F_A.
\end{equation}
\end{lemma}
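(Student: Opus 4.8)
The plan is to separate the statement into two essentially independent parts: first the \emph{abstract} existence and uniqueness of a section $\varepsilon^{\overline{\ell}} \in \Gamma(\Ker \pi^c)$ solving the D-term equation \eqref{eq:globalsumlocal}, which uses only the structural properties of the frames $\{\epsilon_j, \overline{\epsilon}_j\}$ recorded just before the statement; and second the identification of this section with the explicit expression \eqref{eq:varepsilonDterm}, which is a direct computation with the Dorfman bracket \eqref{eq:bracket10}, using the integrability conditions $N_J = 0$, $H = -d^c_J\omega$, $F_A^{0,2} = 0$ of Lemma \ref{lem:01lift}.

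For existence and uniqueness, write $S := \tfrac{1}{2}\sum_j [\overline{\epsilon}_j, \epsilon_j]$ and decompose every section according to $E^c = \ell \oplus \overline{\ell} \oplus V_-^c$. Since $\varepsilon^{\overline{\ell}}_\ell - \varepsilon^{\overline{\ell}} = -\varepsilon^{\overline{\ell}}_{\overline{\ell}} - \varepsilon^{\overline{\ell}}_-$ has vanishing $\ell$-component, the equation is solvable only if $S_\ell = 0$. I would detect the $\ell$-component by pairing against $\overline{\epsilon}_k$: because $\overline{\ell}$ is isotropic and $V_-^c$ is orthogonal to $V_+^c \supset \overline{\ell}$, one has $\langle S, \overline{\epsilon}_k\rangle = \langle S_\ell, \overline{\epsilon}_k\rangle$, and invariance of the pairing under the Dorfman bracket, together with $\langle \epsilon_j, \overline{\epsilon}_k\rangle = \delta_{jk}$ constant and involutivity $[\overline{\epsilon}_j, \overline{\epsilon}_k] = 0$, gives $\langle [\overline{\epsilon}_j, \epsilon_j], \overline{\epsilon}_k\rangle = \pi^c(\overline{\epsilon}_j)\langle \epsilon_j, \overline{\epsilon}_k\rangle - \langle \epsilon_j, [\overline{\epsilon}_j, \overline{\epsilon}_k]\rangle = 0$, hence $S_\ell = 0$. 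Then $\varepsilon^{\overline{\ell}}_{\overline{\ell}} := -S_{\overline{\ell}}$ and $\varepsilon^{\overline{\ell}}_- := -S_-$ are forced, while the remaining $\ell$-part is pinned down uniquely by the constraint $\varepsilon^{\overline{\ell}} \in \Ker \pi^c$, i.e. $\pi^c(\varepsilon^{\overline{\ell}}_\ell) = -\pi^c(\varepsilon^{\overline{\ell}}_{\overline{\ell}}) - \pi^c(\varepsilon^{\overline{\ell}}_-)$, which determines $\varepsilon^{\overline{\ell}}_\ell$ since $\pi^c|_\ell \colon \ell \to T^{1,0}$ is an isomorphism. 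Uniqueness is then immediate, as any difference of two solutions lies in $\ell \cap \Ker \pi^c = 0$.

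For the explicit formula \eqref{eq:varepsilonDterm}, I would expand $[\overline{\epsilon}_j, \epsilon_j]$ with $X = \dbar_j$, $Y = g^{-1}d\overline{z}_j$ and covector parts $g\dbar_j$, $d\overline{z}_j$; since these frames carry no $\ad P$-component, the bracket \eqref{eq:bracket10} collapses to $[\dbar_j, g^{-1}d\overline{z}_j] + L_{\dbar_j}(d\overline{z}_j) - i_{g^{-1}d\overline{z}_j}\, d(g\dbar_j) + i_{g^{-1}d\overline{z}_j} i_{\dbar_j} H - F_A(\dbar_j, g^{-1}d\overline{z}_j)$, and I would match components in the splitting $T \oplus \ad P \oplus T^*$ induced by $V_+$, in which $\varepsilon^{\overline{\ell}}$ has zero anchor. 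The $\ad P$-block sums to $-\tfrac{1}{2}\sum_j F_A(\dbar_j, g^{-1}d\overline{z}_j)$, which by reality and $F_A^{0,2} = 0$ (so $F_A$ is of type $(1,1)$) is the trace contraction $-\tfrac{\i}{2}\Lambda_\omega F_A$. The remaining $T$- and $T^*$-blocks, after imposing the $\Ker \pi^c$ constraint to cancel the anchor, must reassemble into the complex one-form $d\log \|\Omega\|_\omega + \i(d^{\star}\omega - d^c\log\|\Omega\|_\omega)$. This last step is the main obstacle: one must convert the coordinate expressions $\sum_j \dbar_j g^{k\bar{j}}$ and $\sum_j L_{\dbar_j}(d\overline{z}_j)$ into intrinsic Hermitian data, using $\log\|\Omega_0\|_\omega = -\tfrac{1}{2}\log\det(g_{i\bar{j}}) + \mathrm{const}$ to produce the real part $d\log\|\Omega\|_\omega$, and using the integrability identity $H = -d^c\omega$ to rewrite the torsion term $i_{g^{-1}d\overline{z}_j} i_{\dbar_j} H$ so that the imaginary part organizes into $d^{\star}\omega - d^c\log\|\Omega\|_\omega$ via the standard Chern-torsion and Lee-form identities. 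Tracking the $(1,0)/(0,1)$ bidegrees carefully throughout is the delicate bookkeeping; once done, comparison of the three blocks yields \eqref{eq:varepsilonDterm}.
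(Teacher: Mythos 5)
Your strategy is the natural one, and since the paper states this lemma as a quotation from \cite{AAG2} without reproducing a proof, there is no in-paper argument to compare against; the question is whether your blind attempt is complete. The abstract half is essentially correct: detecting $S_\ell$ (for $S := \tfrac12\sum_j[\overline{\epsilon}_j,\epsilon_j]$) by pairing with $\overline{\epsilon}_k$, using compatibility of the Dorfman bracket with the pairing, $\langle \epsilon_j,\overline{\epsilon}_k\rangle = \delta_{jk}$, $[\overline{\epsilon}_j,\overline{\epsilon}_k]=0$, isotropy of $\overline{\ell}$, and $V_-^c \perp V_+^c = \ell\oplus\overline{\ell}$, is fine, as is uniqueness via $\ell\cap\Ker\pi^c = 0$. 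But your existence step elides a real point: solving $\pi^c(\varepsilon^{\overline{\ell}}_\ell) = -\pi^c(\varepsilon^{\overline{\ell}}_{\overline{\ell}}) - \pi^c(\varepsilon^{\overline{\ell}}_-)$ inside $\ell$ requires the right-hand side to lie in $T^{1,0}$, and since $\pi^c(V_-^c)$ is all of $T\otimes\CC$ this is a nontrivial rank-$n$ consistency condition, namely $[\pi^c(S)]^{0,1}=0$ (given $S_\ell=0$). It does hold, by a one-line check you never make: in holomorphic coordinates $\pi^c(S) = \tfrac12\sum_j [\dbar_j, g^{-1}d\overline{z}_j] = \tfrac12\sum_{j,k}(\dbar_j g^{\bar{j}k})\partial_k \in T^{1,0}$, since coordinate fields commute. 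Without this, the overdetermined system you set up need not be solvable, so the argument as written is incomplete at exactly the point where integrability of $J$ enters.

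The more serious gap is in the second half: the explicit identity \eqref{eq:varepsilonDterm} is the substantive content of the lemma, and your proposal asserts it rather than proves it. The phrases ``must reassemble into'' and ``once done, comparison of the three blocks yields'' defer precisely the computation that constitutes the proof: converting $\tfrac12\sum_j\bigl\{[\dbar_j,g^{-1}d\overline{z}_j] - i_{g^{-1}d\overline{z}_j}d(g\dbar_j) + i_{g^{-1}d\overline{z}_j}i_{\dbar_j}H\bigr\}$, after subtracting the $\varepsilon^{\overline{\ell}}_\ell$-contribution, into $d\log\|\Omega\|_\omega + \i(d^\star\omega - d^c\log\|\Omega\|_\omega)$, which is where $H=-d^c\omega$, the identity $\log\|\Omega_0\|_\omega = -\tfrac12\log\det(g_{i\bar{j}}) + \mathrm{const}$, and the torsion/Lee-form identities are actually used. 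Moreover, in the one block you do compute there is a sign conflation of exactly the kind this bookkeeping must control: you identify the $\ad P$-block of the bracket sum $S$ with $-\tfrac{\i}{2}\Lambda_\omega F_A$, but that is the $\ad P$-block of $\varepsilon^{\overline{\ell}}$; since $\epsilon_k$ has no $\ad P$-component, neither does $\varepsilon^{\overline{\ell}}_\ell$, so \eqref{eq:globalsumlocal} forces $(\varepsilon^{\overline{\ell}})_{\ad P} = -S_{\ad P}$, a relative minus sign that is independent of any convention for $\Lambda_\omega$. In summary: correct skeleton and correct list of ingredients, but the proof of \eqref{eq:varepsilonDterm} is not carried out, and the portions of the component-matching you do write contain the kind of sign error that shows the deferred computation cannot be waved through.
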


We are ready to prove the first main technical result of this section, building on \cite{AAG2}. Recall from the proof of Proposition \ref{prop:GRFgaugeexp} that an element $\zeta \in \Lie \Aut E^c$ which induces the zero vector field on $M$ can be identified, in the splitting of $E^c$ determined by $\overline{\ell}$, with a section $\zeta = s + B \in \Gamma((\ad P \oplus \Lambda^2) \otimes \CC)$ satisfying  
$$
d(B - 2 \langle s, F_A \rangle_{\mathfrak{k}}) = 0.
$$
Its action on sections $e = Y + t + \eta \in \Gamma(E)$ is explicitly given by
\begin{equation}\label{eq:actLieE}
\zeta \cdot e =    i_{Y}B + [s,t] + (d_A)_Y s - 2\langle d_A s,t \rangle_{\mathfrak{k}}.
\end{equation}

\begin{proposition}\label{prop:zetaell}
Associated to an integrable positive $(0,1)$-lifting $\overline{\ell} \subset E^c$ there is a well-defined infinitesimal symmetry 
$$
\zeta^{\overline{\ell}} \in \Lie \Aut E^c,
$$
given locally by $\zeta^{\overline{\ell}}  = [\varepsilon^{\overline{\ell}},]$. More explictly,
\begin{equation}\label{eq:zetaell}
\zeta^{\overline{\ell}} = \i \rho_B(\omega) + \i \langle F_A,\Lambda_\omega F_A \rangle_{\mathfrak{k}} + \frac{\i}{2} \Lambda_\omega F_A
\end{equation}
in the splitting of $E^c$ determined by $\overline{\ell}$.
\end{proposition}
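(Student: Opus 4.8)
The plan is to establish the two assertions separately: that $[\varepsilon^{\overline{\ell}},\cdot]$ is independent of the local choices entering Lemma~\ref{lem:braketsum0} and hence globally well defined, and that it agrees with the right-hand side of \eqref{eq:zetaell}. Throughout I work in the splitting of $E^c$ induced by $\overline{\ell}$, and I use that $\varepsilon^{\overline{\ell}} \in \Gamma(\Ker \pi^c)$ induces the zero vector field on $M$, so that $[\varepsilon^{\overline{\ell}},\cdot]$ is an infinitesimal automorphism of the Courant algebroid $E^c$ of the type described in the proof of Proposition~\ref{prop:GRFgaugeexp}, determined by a pair $(s,B)$ acting through \eqref{eq:actLieE}.

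For well-definedness, I would observe that the only place a choice enters \eqref{eq:varepsilonDterm} is through the norm $\|\Omega\|_\omega$ of the local holomorphic volume form $\Omega = dz_1 \wedge \cdots \wedge dz_n$. Passing to another holomorphic chart replaces $\Omega$ by $f\Omega$ with $f$ holomorphic and nowhere vanishing, so $\log\|\Omega\|_\omega$ changes by the pluriharmonic function $\log|f|$, and therefore $\varepsilon^{\overline{\ell}}$ changes by the $1$-form $d\log|f| - \i\, d^c\log|f|$. This form is closed, since $d(d\log|f|)=0$ and $d(d^c\log|f|)=dd^c\log|f|=0$ by pluriharmonicity. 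As closed $1$-forms are central for the Dorfman bracket (from \eqref{eq:bracket10}, $[\xi,\cdot]=-i_{(\cdot)}d\xi$ vanishes identically when $d\xi = 0$), the operator $[\varepsilon^{\overline{\ell}},\cdot]$ is unchanged, so the local expressions glue to a global $\zeta^{\overline{\ell}} \in \Lie \Aut E^c$.

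To obtain \eqref{eq:zetaell}, I would decompose $\varepsilon^{\overline{\ell}} = t_0 + \eta_0$ with $t_0 = -\tfrac{\i}{2}\Lambda_\omega F_A \in \Gamma(\ad P \otimes \CC)$ and $\eta_0 = d\log\|\Omega\|_\omega + \i(d^\star\omega - d^c\log\|\Omega\|_\omega) \in \Gamma(T^*\otimes\CC)$, and evaluate $[\varepsilon^{\overline{\ell}}, Y+t+\eta]$ directly from \eqref{eq:bracket10}. Comparing the $\ad P$- and $T^*$-components with \eqref{eq:actLieE} reads off the pair
\begin{equation*}
s = -t_0 = \tfrac{\i}{2}\Lambda_\omega F_A, \qquad B = -d\eta_0 - 2\langle F_A, t_0\rangle_{\mathfrak{k}} = -d\eta_0 + \i\langle F_A, \Lambda_\omega F_A\rangle_{\mathfrak{k}}.
\end{equation*}
Since $d\log\|\Omega\|_\omega$ is exact, $-d\eta_0 = \i\big(dd^c\log\|\Omega\|_\omega - dd^\star\omega\big)$. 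The remaining, and genuinely geometric, input is the Hermitian identity for the Bismut--Ricci form,
\begin{equation*}
\rho_B(\omega) = dd^c\log\|\Omega\|_\omega - dd^\star\omega ,
\end{equation*}
which expresses $\rho_B$ through the Chern--Ricci form $\rho_C = dd^c\log\|\Omega\|_\omega$ and $dd^\star\omega$, and which I would verify by a local computation (it reduces to $\rho_B = \rho_C$ in the K\"ahler case, where $d^\star\omega = 0$). Granting this, $-d\eta_0 = \i\rho_B(\omega)$ and hence $B = \i\rho_B(\omega) + \i\langle F_A, \Lambda_\omega F_A\rangle_{\mathfrak{k}}$; together with $s$ this is exactly \eqref{eq:zetaell}. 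As a consistency check one finds that $B - 2\langle s, F_A\rangle_{\mathfrak{k}} = -d\eta_0$ is exact, confirming the closedness condition $d(B - 2\langle s, F_A\rangle_{\mathfrak{k}}) = 0$ that identifies $\zeta^{\overline{\ell}}$ with a genuine element of $\Lie\Aut E^c$.

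I expect the main obstacle to be the Bismut--Ricci identity in the last step: reconciling the abstract Dorfman-bracket output with the classical tensor $\rho_B(\omega)$ requires the correct normalization of $d^c$, $d^\star$ and $\rho_B$, and care that no torsion correction (involving $H = -d^c\omega$ and the Bianchi identity $dd^c\omega + \langle F_A \wedge F_A\rangle_{\mathfrak{k}} = 0$ valid under integrability) has been dropped. The bracket computation in the middle step is routine but bookkeeping-heavy, and the well-definedness argument is the easiest part.
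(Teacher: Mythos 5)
Your proposal is correct and follows essentially the same route as the paper's proof: evaluate $[\varepsilon^{\overline{\ell}}, Y+t+\eta]$ directly from the Dorfman bracket \eqref{eq:bracket10}, read off the pair $(s,B)$ via \eqref{eq:actLieE}, and conclude with the local identity $\rho_B(\omega) = -d\bigl(d^{\star}\omega - d^c\log\|\Omega\|_\omega\bigr)$, which is precisely the identity the paper invokes (your version $\rho_B = dd^c\log\|\Omega\|_\omega - dd^{\star}\omega$ is the same formula expanded, and your extracted $s = \tfrac{\i}{2}\Lambda_\omega F_A$ and $B = \i\rho_B(\omega) + \i\langle F_A,\Lambda_\omega F_A\rangle_{\mathfrak{k}}$ match \eqref{eq:zetaell}). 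Your explicit chart-independence argument --- that under $\Omega \mapsto f\Omega$ the potential changes by the closed one-form $d\log|f| - \i\, d^c\log|f|$, which acts trivially under the Dorfman bracket --- is a correct supplement to what the paper leaves implicit, since there well-definedness follows from the manifestly global right-hand side of \eqref{eq:zetaell}.
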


\begin{proof}
The proof follows easily combining Lemma \ref{lem:braketsum0} with the explicit formula for the Dorfman bracket \eqref{eq:bracket10}, which gives (cf. \eqref{eq:actLieE})
\begin{equation*}
  \begin{split}
    [\varepsilon^{\overline{\ell}},Y+t+\eta]  = {} & - i_{Y}d(\i\left(d^{\star}\omega - d^c\log \|\Omega\|_\omega \right)) + \frac{\i}{2}[\Lambda_\omega F_A,t] + \frac{\i}{2}(d_A)_Y (\Lambda_\omega F_A)\\
    & - \langle d_A (\i\Lambda_\omega F_A),t \rangle_{\mathfrak{k}} + \langle i_Y F_A,\i\Lambda_\omega F_A \rangle_{\mathfrak{k}},\\
    = {} & i_{Y}(\i \rho_B(\omega) + \i\langle F_A,\Lambda_\omega F_A \rangle_{\mathfrak{k}}) \\
    & + \frac{\i}{2}[\Lambda_\omega F_A,t] + \frac{\i}{2}(d_A)_Y (\Lambda_\omega F_A) - \i\langle d_A (\Lambda_\omega F_A),t \rangle_{\mathfrak{k}}.
\end{split}
\end{equation*}
For the last equality we have used the following formula for the Bismut Ricci form of $\omega$ in local holomorphic coordinates:
$$
\rho_B(\omega) = - d(d^{\star}\omega - d^c \log \|\Omega_0\|_\omega).
$$
\end{proof}

Notice that, when $\overline{\ell}$ is non-integrable, it still makes sense to define a symmetry $\zeta^{\overline{\ell}}$ associated to $\overline{\ell}$ by formula \eqref{eq:zetaell}. This follows from the existence of the Bismut connection, and hence of the Bismut Ricci form, for almost Hermitian structures, as studied by Gauduchon in \cite{Gauduchonconn}. The main difference in the non-integrable case is that $\zeta^{\overline{\ell}}$ may not admit a local potential with respect to the Dorfman bracket. 

We next introduce the pluriclosed flow through a universal gauge theoretic construction.  Even though it is stated on a string algebroid $E$, it can be easily generalized to a much larger class of Courant algebroids.  Recall that a variation $\dot{\overline{\ell}}$ of a positive $(0,1)$-lifting $\overline{\ell}$ can be regarded as a map
\begin{equation}\label{eq:inftaction}
\dot{\overline{\ell}} \in \operatorname{Hom}(\overline{\ell}, \ell \oplus V_-^c).
\end{equation}
Furthermore, given $\zeta \in \Lie \Aut E^c$, we denote by $\zeta \cdot \overline{\ell}$ the infinitesimal action of $\zeta$ on the lifting $\overline{\ell}$, regarded as an element in \eqref{eq:inftaction}. The flow in the next definition shall be understood in the previous sense.

\begin{definition}\label{def:UPCF}
Given $E$ a string algebroid over an even dimensional smooth manifold $M$, a one-parameter family $\overline{\ell}_t \subset E^c$ of positive $(0,1)$-liftings of $T \otimes \CC$ to $E$ satisfies \emph{pluriclosed flow} if
\begin{align} \label{eq:UPCF}
\dt \overline{\ell}_t =&\  - \zeta^{\overline{\ell}} \cdot \overline{\ell}_t.
\end{align}
\end{definition}

We start our study of the flow with an interesting structural property, namely, that it preserves both integrability of the lifting and the induced complex structure on $M$ via Lemma \ref{lem:01lift}.

\begin{lemma}\label{lem:UPCF}
Let $\overline{\ell}_t$ be a flow line for \eqref{eq:UPCF}, defined on some interval $[0,t_0]$. Let $(J_t,\omega_t,\sigma_t)$ be the associated one-parameter family of triples as in Lemma \ref{lem:01lift}. Then, the following hold

\begin{enumerate}

\item if $\overline{\ell}_0$ is integrable, then so is $\overline{\ell}_t$ for all $t$,

\item $J_t = J_0$ for all $t$.
 
\end{enumerate}
 
\end{lemma}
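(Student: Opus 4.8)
The plan is to integrate the flow \eqref{eq:UPCF} by a one-parameter family of Courant algebroid automorphisms and then read off both statements from the structure of the generator $\zeta^{\overline\ell}$. The decisive observation is that $\zeta^{\overline\ell}$ induces the \emph{zero} vector field on $M$: from \eqref{eq:zetaell} one reads $\zeta^{\overline\ell} = s + B$ with $s = \tfrac{\i}{2}\Lambda_\omega F_A \in \Gamma(\ad P \otimes \CC)$ and $B = \i\big(\rho_B(\omega) + \langle F_A, \Lambda_\omega F_A\rangle_{\mathfrak k}\big) \in \Lambda^2 \otimes \CC$, so that $\zeta^{\overline\ell} \in \Gamma(\Ker \pi^c)$. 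In particular the infinitesimal action \eqref{eq:actLieE} satisfies $\pi^c(\zeta^{\overline\ell}\cdot e) = 0$ for every $e \in \Gamma(E^c)$, since each of its four terms lies in $(\ad P \oplus T^*)\otimes\CC = \Ker\pi^c$.

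First I would freeze the generator along the given solution, setting $\zeta_t := \zeta^{\overline\ell_t}$, and integrate the time-dependent family $-\zeta_t$ to a smooth path $\Phi_t \in \Aut(E^c)$ with $\Phi_0 = \Id$, characterized by $\dt \Phi_t = -\zeta_t \circ \Phi_t$. This requires $\zeta_t \in \Lie\Aut(E^c)$ for all $t$; by Proposition \ref{prop:zetaell} and the discussion following it this holds along the flow, the only structural condition being $d\rho_B(\omega_t) = 0$, which is automatic here because the induced complex structure is integrable (established below) and the Bismut--Ricci form of a Hermitian metric on a complex manifold is closed. Granting this, both $\overline\ell_t$ and $\Phi_t\overline\ell_0$ solve the \emph{same} first-order equation $\dt \overline m_t = -\zeta_t\cdot \overline m_t$ with the common initial value $\overline\ell_0$, so by uniqueness of solutions $\overline\ell_t = \Phi_t \cdot \overline\ell_0$.

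Given this, claim (1) is immediate: $\Phi_t$ preserves the Dorfman bracket, so if $\overline\ell_0$ is involutive then $[\overline\ell_t,\overline\ell_t] = \Phi_t[\overline\ell_0,\overline\ell_0] \subset \Phi_t\overline\ell_0 = \overline\ell_t$, and $\overline\ell_t$ is integrable for all $t$ by Lemma \ref{lem:01lift}. For claim (2), the generator $\zeta_t$ has vanishing anchor, so the diffeomorphism $\phi_t \in \Diff(M)$ covered by $\Phi_t$ as in \eqref{eq:defstringiso} solves $\dt\phi_t = 0$ with $\phi_0 = \operatorname{id}_M$, whence $\phi_t = \operatorname{id}_M$. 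Complexifying \eqref{eq:defstringiso} then gives $\pi^c \circ \Phi_t = d\phi_t \circ \pi^c = \pi^c$, so that $T^{0,1}_{J_t} = \pi^c(\overline\ell_t) = \pi^c(\Phi_t\overline\ell_0) = \pi^c(\overline\ell_0) = T^{0,1}_{J_0}$, i.e. $J_t = J_0$.

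The main obstacle is the integration step, and specifically confirming that $\zeta^{\overline\ell_t}$ remains an honest infinitesimal automorphism throughout rather than merely the formal expression \eqref{eq:zetaell}; once the closedness $d\rho_B = 0$ is in hand, this is the standard integration of a time-dependent element of a Lie algebra of symmetries over a (compact) base. I note that claim (2) can alternatively be proven without integrating: writing the variation $\dt\overline\ell_t \in \Hom(\overline\ell_t, \ell_t \oplus V_-^c)$ and using $\zeta^{\overline\ell_t}\cdot e \in \Ker\pi^c$, one checks that the induced motion of the subbundle $\pi^c(\overline\ell_t) = T^{0,1}_{J_t}$ inside $(T\otimes\CC)/T^{0,1}_{J_t}$ vanishes, so that this subbundle, and hence $J_t$, is constant. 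Since this argument never uses integrability, it is the natural route for the general, possibly non-integrable, case of (2).
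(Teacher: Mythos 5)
Your proposal is correct and follows essentially the same route as the paper's own proof: integrating the time-dependent generator $\zeta^{\overline{\ell}_t}$ to a family $\Phi_t \in \Aut(E^c)$ so that $\overline{\ell}_t$ is the image of $\overline{\ell}_0$ under (the inverse of) $\Phi_t$, deducing (1) from bracket-preservation and (2) from the fact that $\pi^c(\zeta^{\overline{\ell}_t})=0$ forces $\Phi_t$ to cover the identity on $M$, hence $\pi^c\circ\Phi_t=\pi^c$. Your extra care about $\zeta^{\overline{\ell}_t}\in\Lie\Aut(E^c)$ is legitimate, though the needed closedness $d\rho_B(\omega_t)=0$ holds without invoking integrability of $J_t$ (avoiding the slight circularity you flirt with), since $\rho_B$ is a Chern--Weil representative of $c_1$ even in the almost Hermitian setting, which is also why (2) holds for non-integrable flow lines as your alternative variational argument indicates.
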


\begin{proof}
Let $\Phi_t$ be the one-parameter family of automorphisms of $E^c$ generated by $\zeta^{\overline{\ell}_t}$, with $\Phi_0 = \Id_{E^c}$. Then, one has
$$
\dt \Phi_t(\overline{\ell}_t) = \Phi_t \Bigg{(}\zeta^{\overline{\ell}_t} \cdot \overline{\ell}_t + \dt \overline{\ell}_t\Bigg{)} = 0,
$$
by definition of the flow in $\eqref{eq:UPCF}$. Hence, it follows that
$$
\overline{\ell}_t = \Phi_t^{-1}(\overline{\ell}_0)
$$
for all $t$ and, since $\Phi_t$ preserves the Dorfman bracket and $\overline{\ell}_0$ is integrable by hypothesis, the first part of the statement follows. As for the second part, we note that, since $\pi^c(\zeta^{\overline{\ell}_t}) = 0$ (see Lemma \ref{lem:braketsum0}), $\Phi_t$ fits into a diagram of the form
\begin{equation*}
	\xymatrix{
		0 \ar[r] & T^* \ar[r] \ar[d]^{\Id} & E \ar[r]^\rho \ar[d]^{\Phi_t} & A_P \ar[r] \ar[d]^{d\bar{\phi}_t} & 0,\\
		0 \ar[r] & T^* \ar[r] & E \ar[r]^{\rho} & A_{P'} \ar[r] & 0,
	}
\end{equation*}
where $\bar{\phi}_t$ is in the gauge group of $P$ for all $t$ and hence, in particular, it covers the identity diffeomorphism on the base manifold $M$. From this, it follows that $\pi \Phi_t = \pi$ and hence
$$
T^{0,1}_{J_t} := \pi \overline{\ell}_t = \pi \Phi_t^{-1}(\overline{\ell}_0) = \pi(\overline{\ell}_0) = T^{0,1}_{J_0}
$$
for all $t \in [0,t_0]$.
\end{proof}

Our next goal is to give an explicit formula for pluriclosed flow \eqref{eq:UPCF}. For this, we fix a reference positive $(0,1)$-lifting $\bar \ell_0$ of $T \otimes \CC$ to $E$. The associated horizontal subspace $W_0$ via \eqref{eq:Whor} determines a generalized metric $G_0$, an induced presentation $E \cong T \oplus \ad P \oplus T^*$ and pair $(H_0,A_0)$ satisfying the Bianchi identity \eqref{eq:bianchitrans0} (see Section \ref{sec:background}). Let $\overline{\ell}_t$ be a one-parameter family of positive $(0,1)$-liftings. Let $(J_t,\omega_t,\sigma_t)$ be the associated one-parameter family of triples as in Lemma \ref{lem:01lift}. In the fixed isotropic splitting determined by $\bar \ell_0$, the isotropic splitting $\sigma_t$ can be identified with a uniquely determined $(b,a)$-transformation $(-b_t,a_t)$, such that $\sigma(T) = e^{(b_t,-a_t)}(T)$. Therefore, in our setup 
\begin{equation}\label{eq:elltba}
\bar{\ell}^t =\ e^{(\i\omega_t+b_t,-a_t)}(T^{0,1}_{J_t}), \qquad \ell^t = \ e^{(-\i\omega_t+b_t,-a_t)} (T^{1,0}_{J_t}).
\end{equation}
while the corresponding generalized metric is given by \eqref{eq:Vpmba}. Notice that, by Lemma \ref{lem:UPCF}, it follows that $J_t$ is constant in $t$.

\begin{proposition}\label{prop:UPCFexp}
The one-parameter family of positive $(0,1)$-liftings $\overline{\ell}_t \subset E^c$ satisfies pluriclosed flow \eqref{eq:UPCF} if and only if $(J_t,\omega_t,b_t,A_t)$ satisfies the following evolution equations
\begin{equation}\label{eq:CPFunitary}
\begin{split}
\dt \omega & = -\rho_B(\omega)^{1,1} - \langle F_A^{1,1}, \Lambda_\omega F_A \rangle_{\mathfrak{k}},\\
\dt b^{1,1} & = \frac{1}{2} \langle a \wedge  Jd_A(\Lambda_\omega F_A) \rangle_{\mathfrak{k}}^{1,1},\\
\dt b^{0,2} & = - \i \rho_B(\omega)^{0,2} - \i \langle F_A^{0,2}, \Lambda_\omega F_A \rangle_{\mathfrak{k}} + \frac{\i}{2}\langle a^{0,1} \wedge  \dbar^A(\Lambda_\omega F_A) \rangle_{\mathfrak{k}},\\
\dt A & = \frac{1}{2}Jd_A(\Lambda_\omega F_A),\\
\dt J & = 0,
\end{split}
\end{equation}
where $A_t = A_0 + a_t$. Furthermore, assuming that the initial condition is integrable, one has that $F_{A_t}^{0,2} = 0$ and $-d^c\omega = H_t$ for all $t$, where $H_t$ is given by 
\eqref{eq:Htdef}.
\end{proposition}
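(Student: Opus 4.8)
The plan is to evaluate both sides of \eqref{eq:UPCF} as homomorphisms in $\operatorname{Hom}(\overline{\ell}_t, \ell \oplus V_-^c) \cong \operatorname{Hom}(\overline{\ell}_t, E^c/\overline{\ell}_t)$ and to match them slot by slot. The equation $\dt J = 0$ is immediate from Lemma \ref{lem:UPCF}, which also gives $J_t \equiv J_0$; hence throughout we may regard $\overline{\ell}_t$ as parametrized by $(\omega_t, b_t, a_t)$ via \eqref{eq:elltba} with $J$ fixed, and it suffices to test the two homomorphisms on the local frame $\overline{\epsilon}_j = \partial_{\bar z_j} + g\,\partial_{\bar z_j}$ of \eqref{eq:localisoframe}.

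For the left-hand side I would set $\Psi_t = e^{(\i\omega_t + b_t, -a_t)}$, so that $\overline{\ell}_t = \Psi_t(T^{0,1}_J)$, and compute the logarithmic derivative $C_t := \Psi_t^{-1}\dot\Psi_t$ exactly as in the proof of Proposition \ref{prop:GRFexp}, the only modification being the replacement $\dot b_t \rightsquigarrow \i\dot\omega_t + \dot b_t$ forced by the complex two-form in \eqref{eq:elltba}; then $\dt\overline{\ell}_t$ sends $\overline{\epsilon}_j \mapsto \Psi_t(C_t \partial_{\bar z_j})$, whose $\ad P$- and $T^*$-slots are $-i_{\partial_{\bar z_j}}\dot a_t$ and $i_{\partial_{\bar z_j}}(\i\dot\omega_t + \dot b_t - \langle a_t \wedge \dot a_t\rangle_{\mathfrak{k}})$. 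For the right-hand side I would read off $\zeta^{\overline{\ell}} \cdot \overline{\epsilon}_j$ from the bracket computation in Proposition \ref{prop:zetaell}, namely $i_{\partial_{\bar z_j}}\big(\i\rho_B(\omega) + \i\langle F_A, \Lambda_\omega F_A\rangle_{\mathfrak{k}}\big)$ in the $T^*$-slot and $\tfrac{\i}{2}(d_A)_{\partial_{\bar z_j}}(\Lambda_\omega F_A)$ in the $\ad P$-slot. Matching $\dt\overline{\ell}_t = -\zeta^{\overline{\ell}}\cdot\overline{\ell}_t$ then proceeds by pairing, as in Proposition \ref{prop:GRFexp}: testing against $\overline{\ell}$ isolates the $\ell$-component, hence the $(0,2)$-type data, and produces the $\dt b^{0,2}$ equation (here $\omega$ contributes nothing, being of type $(1,1)$, while the correction $\langle a_t\wedge\dot a_t\rangle^{0,2}$, after substituting the connection equation $\dot a_t = \tfrac12 Jd_A(\Lambda_\omega F_A)$, reassembles into $\tfrac{\i}{2}\langle a^{0,1}\wedge\dbar^A(\Lambda_\omega F_A)\rangle_{\mathfrak{k}}$); testing against $V_-^c$ isolates the remaining data, a symmetric tensor together with an $\ad P$-valued piece, whose symmetric part recovers $\dot g$, equivalently $\dt\omega = -\rho_B(\omega)^{1,1} - \langle F_A^{1,1}, \Lambda_\omega F_A\rangle_{\mathfrak{k}}$ (the purely imaginary two-form part of $\zeta^{\overline{\ell}}$ feeding exactly $\i\dt\omega$), whose antisymmetric part carries only the real correction and yields $\dt b^{1,1} = \tfrac12\langle a\wedge Jd_A(\Lambda_\omega F_A)\rangle_{\mathfrak{k}}^{1,1}$, and whose $\ad P$-part yields $\dt A = \tfrac12 Jd_A(\Lambda_\omega F_A)$ after rewriting $\tfrac{\i}{2}d_A$ as $\tfrac12 Jd_A$. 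This mirrors precisely the way $\dot g$, $\dot b$, $\dot a$ are separated in the formula for $\langle\dot G_t a_-, b_+\rangle$ in Proposition \ref{prop:GRFexp}.

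The final assertion is then soft: if $\overline{\ell}_0$ is integrable, Lemma \ref{lem:UPCF}(1) shows $\overline{\ell}_t$ is integrable for all $t$, and Lemma \ref{lem:01lift} translates integrability (with $J$ fixed) into exactly $F_{A_t}^{0,2} = 0$ and $H_t = -d^c\omega_t$, with $H_t$ the three-form \eqref{eq:Htdef} attached to the moving splitting. The main obstacle is the bookkeeping underlying the second paragraph: one must track how the complex two-form $\i\omega + b$ distributes over the $(2,0)$, $(1,1)$ and $(0,2)$ types, carry the $\langle a\wedge\cdot\rangle_{\mathfrak{k}}$ cross-terms that arise both from the non-commutativity of the $(b,a)$-transformations entering $C_t$ and from the quadratic terms in the Dorfman bracket \eqref{eq:bracket10}, and fix the $J$-versus-$\partial/\dbar$ sign conventions so that $\tfrac{\i}{2}d_A$, $\tfrac12 Jd_A$ and the $(0,1)$-projection $\dbar^A$ align; it is exactly these reductions that collapse otherwise bulky expressions into the clean system \eqref{eq:CPFunitary}.
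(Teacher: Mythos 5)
Your proposal is correct and follows essentially the same route as the paper's proof: you reuse the $C_t$ computation of Proposition \ref{prop:GRFexp} for the variation of the lifting, pair against $\overline{\ell}$ and $V_-^c$, match against the $\zeta^{\overline{\ell}}$-action from Proposition \ref{prop:zetaell}, and sort by type and reality, closing with Lemma \ref{lem:UPCF} and Lemma \ref{lem:01lift} exactly as the paper does. The only (harmless) deviations are that you differentiate the moving frame $\Psi_t(\partial_{\bar z_j})$ directly where the paper differentiates the projection $P_t \colon \overline{\ell} \to \overline{\ell}_t$, and that you compute the $\zeta^{\overline{\ell}}$-action in the moving splitting where the paper transports it to the fixed background splitting via Lemma \ref{lem:LieAut} and Remark \ref{rem:Lieexchange} --- both choices are equivalent since the pairing is splitting-independent.
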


\begin{proof}
The last equation in \eqref{eq:CPFunitary} follows from Lemma \ref{lem:UPCF}. To prove the remaining equations, given $t_0$ in the interval of definition of the flow line, we will denote $\ell = \ell_{t_0}$, for simplicity, and similarly for the corresponding classical tensors. Consider the map
\begin{equation}\label{eq:projectionellt}
P_t \colon \bar \ell \rightarrow \bar \ell_t: a_{\bar \ell} \rightarrow (a_{\bar \ell})_{\bar \ell_t}.
\end{equation}
More explicitly, regarding $P_t$ as a map $P_t \colon \bar \ell \to E^c$ and denoting by $\mathcal{J}_t$ the almost complex structure on the horizontal subspace $W_t$ of $\ell_t$ defined by \eqref{eq:Whor},  we can write
\begin{align*}
P_t(a_{\bar \ell}) & = \frac{1}{4} \Bigg{(} \Id_{W_t^c} + \i \mathcal{J}_t\Bigg{)}(G_t + \Id_{E^c}) a_{\bar \ell}\\
& = \frac{1}{4} \Bigg{(} \Id_{E^c} + \i e^{(b_t,-a_t)} (\Id + g_t) J \pi^c\Bigg{)} (G_t + \Id_{E^c}) a_{\bar \ell}
\end{align*}
where we have used Lemma \ref{lem:UPCF}, which implies $J := J_{t_0} = J_t$ for all $t$. From this, we can identify 
$$
\dot{\overline{\ell}} := \dt_{|t = t_0} \overline{\ell}_t \in \operatorname{Hom}(\overline{\ell}, \ell \oplus V_-^c)
$$
with
$$
\dot{\overline{\ell}} = \pi_\ell \oplus \pi_- \circ \dt_{|t = t_0} P_t,
$$
where $\pi_\ell \colon E^c \to \ell$ and $\pi_- \colon E^c \to V_-^c$ denote the orthogonal projections. Then, letting
\begin{align*}
    a_{\bar{\ell}} = e^{(\i\omega+b,-a)}X^{0,1},\quad b_{\bar{\ell}} = e^{(\i\omega+b,-a)}Y^{0,1},
\end{align*}
for $X^{0,1},Y^{0,1} \in T^{0,1}_J$, using the notation in Proposition \ref{prop:GRFexp}, we have
\begin{align*}
\langle \dot{\overline{\ell}} a_{\bar{\ell}}, b_{\bar{\ell}} \rangle =&\ \frac{1}{2} \langle C_{t_0} (X^{0,1} + g X^{0,1}),Y^{0,1} + gY^{0,1} \rangle + \frac{1}{4}\dot g(X^{0,1},Y^{0,1})\\
&\ + \frac{1}{4} \langle \dot{G} a_{\bar{\ell}}, b_{\bar{\ell}} \rangle + \frac{\i}{4}\langle (\Id + g) J\pi^c \dot G a_{\bar{\ell}},(\Id + g)Y^{0,1}\rangle\\
=&\ \frac{1}{2} \langle C_{t_0} (X^{0,1} + g X^{0,1}),Y^{0,1} + gY^{0,1} \rangle + \frac{1}{4}\dot g(X^{0,1},Y^{0,1}) + \frac{\i}{4} g(J\pi^c \dot G a_{\bar{\ell}},Y^{0,1}),
\end{align*}
where we have used that $\dot G a_{\bar{\ell}} \in V_-^c$. Now, similarly as in the proof of Proposition \ref{prop:GRFexp}, we have
\begin{align*}
\dot G a_{\bar{\ell}} & =  e^{(b,-a)} \left( g^{-1} i_{X^{0,1}}(-\dot b + \langle a \wedge \dot a \rangle_{\mathfrak{k}}) - i_{X^{0,1}}(-\dot b + \langle a \wedge \dot a \rangle_{\mathfrak{k}}-2\dot{a}(X^{0,1}) \right)\\
&\ + e^{(b,-a)}(- g^{-1} \dot g(X^{0,1}) + \dot g(X^{0,1}) )
\end{align*}
and therefore 
$$
\pi^c \dot G a_{\bar{\ell}} = g^{-1} i_{X^{0,1}}(-\dot b + \langle a \wedge \dot a \rangle_{\mathfrak{k}})  - g^{-1} \dot g(X^{0,1}).
$$
Finally, this implies
\begin{align*}
\langle \dot{\overline{\ell}} a_{\bar{\ell}}, b_{\bar{\ell}} \rangle &  = \frac{1}{2}\dot g(X^{0,1},Y^{0,1}) - \frac{1}{2}(-\dot b + \langle a \wedge \dot a \rangle_{\mathfrak{k}})(X^{0,1},Y^{0,1}) = \frac{1}{2}(\dot b - \langle a \wedge \dot a \rangle_{\mathfrak{k}})(X^{0,1},Y^{0,1}),
\end{align*}
where we have used that $J_t$ is constant and hence $\dot g^{0,2} = 0$ for all $t$. Similarly, denoting
$$
b_- = e^{(b,-a)}(Y + r - g(Y))
$$
we have
\begin{align*}
\langle \dot{\overline{\ell}} a_{\bar{\ell}}, b_- \rangle & = \frac{1}{2} \langle C_{t_0} (X^{0,1} + g X^{0,1}),Y + r - gY \rangle + \frac{1}{2} \langle \dot g(X^{0,1}),Y + r - g(Y) \rangle + \frac{1}{4} \langle \dot{G} a_{\bar{\ell}}, b_- \rangle\\
& = - \langle \dot a(X^{0,1}),r \rangle_{\mathfrak{k}} - \frac{1}{2}(-\dot b + \langle a \wedge \dot a \rangle_{\mathfrak{k}} )(X^{0,1},Y) + \frac{1}{2} \dot g (X^{0,1},Y).
\end{align*}
Next, we calculate the infinitesimal action of 
$\zeta^{\overline{\ell}}$ on $\overline{\ell}$. By Lemma \ref{lem:LieAut} and Remark \ref{rem:Lieexchange}, the formula for $\zeta^{\overline{\ell}}$ in the fixed isotropic splitting of $\overline \ell_0$ is given by (cf. Proposition \ref{prop:zetaell})
$$
\zeta^{\overline{\ell}} = B - \langle a \wedge (d_{A}  + d_{A_0}) s \rangle_{\mathfrak{k}} + s
$$
where 
$$
B:= \i \rho_B(\omega) + \i \langle F_A,\Lambda_\omega F_A \rangle_{\mathfrak{k}}, \qquad s := \frac{\i}{2} \Lambda_\omega F_A.
$$
Then (see \eqref{eq:actLieE}),
\begin{align*}
\zeta^{\overline{\ell}} \cdot a_{\bar{\ell}}
& = i_{X^{0,1}}(B - \langle a \wedge (d_{A}  + d_{A_0}) s \rangle_{\mathfrak{k}}) + 2  \langle d_{A_0}s, a(X^{0,1}) \rangle_{\mathfrak{k}} + i_{X^{0,1}}d_{A}s 
\end{align*}
which implies
\begin{align*}
\langle \zeta^{\overline{\ell}} \cdot a_{\bar{\ell}}, b_{\overline\ell} \rangle =&\ \frac{1}{2}(B - \langle a \wedge (d_{A}  + d_{A_0}) s \rangle_\mathfrak{k}) (X^{0,1},Y^{0,1})
 + \langle i_{Y^{0,1}}d_{A_0}s, a(X^{0,1}) \rangle_{\mathfrak{k}} - \langle i_{X^{0,1}}d_{A}s, a(Y^{0,1})\rangle_\mathfrak{k}\\
=&\ \frac{1}{2}B(X^{0,1},Y^{0,1}) - \frac{1}{2}\langle a(X^{0,1}), i_{Y^{0,1}}d_{A}s\rangle_\mathfrak{k} + \frac{1}{2}\langle a(X^{0,1}), i_{Y^{0,1}}d_{A_0}s\rangle_\mathfrak{k}\\
&\ + \frac{1}{2}\langle a(Y^{0,1}), i_{X^{0,1}}d_{A_0}s\rangle_\mathfrak{k} - \frac{1}{2}\langle a(Y^{0,1}), i_{X^{0,1}}d_{A}s\rangle_\mathfrak{k}\\
=&\ \frac{1}{2}B(X^{0,1},Y^{0,1}) - \frac{1}{2}\langle a(X^{0,1}), [a(Y^{0,1}),s]\rangle_\mathfrak{k} - \frac{1}{2}\langle a(X^{0,1}), [s,a(Y^{0,1})]\rangle_\mathfrak{k}\\
=&\ \frac{\i}{2}(\rho_B(\omega) + \langle F_A,\Lambda_\omega F_A \rangle_{\mathfrak{k}})(X^{0,1},Y^{0,1}),\\
\langle \zeta^{\overline{\ell}} \cdot a_{\bar{\ell}}, b_- \rangle =&\ \frac{1}{2}(B - \langle a \wedge (d_{A}  + d_{A_0}) s \rangle_\mathfrak{k}) (X^{0,1},Y)
 + \langle i_{Y}d_{A_0}s, a(X^{0,1}) \rangle_{\mathfrak{k}} - \langle i_{X^{0,1}}d_{A}s, a(Y)\rangle_\mathfrak{k} + \langle i_{X^{0,1}}d_{A}s, r \rangle_\mathfrak{k} \\
=&\ \frac{\i}{2}(\rho_B(\omega) + \langle F_A,\Lambda_\omega F_A \rangle_{\mathfrak{k}})(X^{0,1},Y) + \langle i_{X^{0,1}}\dbar_{A}s, r \rangle_\mathfrak{k}.
\end{align*}
Collecting the previous calculations, it follows that the flow \eqref{eq:UPCF} is equivalent to
\begin{equation*}
\begin{split}
(-\dot b + \langle a \wedge \dot a \rangle_\mathfrak{k})^{0,2}  & = \i(\rho_B + \langle F_A, \Lambda_\omega F_A \rangle_{\mathfrak{k}})^{0,2},\\
(\i\dot\omega + \dot b - \langle a \wedge \dot a \rangle_\mathfrak{k})^{1,1 + 0,2}  & = -\i (\rho_B + \langle F_A, \Lambda_\omega F_A \rangle_{\mathfrak{k}})^{1,1 + 0,2},\\
\dot a^{0,1} & =  \frac{\i}{2}\dbar_A (\Lambda_\omega F_A).
\end{split}
\end{equation*}
The first part of the statement follows now by adding and substracting the first two evolution equations, taking into account that $\dot b + \langle a \wedge \dot a \rangle_\mathfrak{k}$ and $\dot \omega$ are real two-forms, and
$$
\dot a = \dot a^{0,1} + (\dot a^{0,1})^* = \frac{\i}{2}(\dbar_A - \partial_A) (\Lambda_\omega F_A) = \frac{1}{2}J d_A (\Lambda_\omega F_A),
$$
where $\mathfrak{k}^c \to \mathfrak{k}^c \colon s \mapsto s^*$ denotes the $\CC$-antilinear Cartan involution, fixing $\mathfrak{k} \subset \mathfrak{k}^c := \mathfrak{k} \otimes_\RR \CC$. Assuming now that the initial lifting is integrable, the second part of the statement follows from Lemma \ref{lem:UPCF} and Lemma \ref{lem:01lift}.
\end{proof}

To finish this section, we establish the relation between the pluriclosed flow \eqref{eq:UPCF} and the string algebroid generalized Ricci flow \eqref{eq:CourantGRF} (see also \eqref{eq:GRFgbA}).

\begin{proposition}\label{prop:UPCFRiemannian}
Let $\overline{\ell}_t \subset E^c$ be a one-parameter family of positive, integrable, $(0,1)$-liftings satisfying pluriclosed flow \eqref{eq:UPCF}. Then, $(g_t,b_t,A_t)$ satisfies 
\begin{equation}\label{eq:UPCFGRF}
\begin{split}
\dt g & = -\operatorname{Rc} + \frac{1}{4} H^2 + F_A^2 - \frac{1}{2}L_{\theta_\omega^\sharp}g,\\
\dt b & = - \frac{1}{2}\(d^{\star}H + \IP{a \wedge \left( d_A^{\star}F_A - F_A \lrcorner \ H + i_{\theta_\omega^\sharp} F_A \right) }_{\mathfrak{k}} - d \theta_\omega +  i_{\theta_\omega^\sharp} H\),\\
\dt A & = - \frac{1}{2}(d_A^{\star}F_A - F_A \lrcorner \ H + i_{\theta_\omega^\sharp} F_A),
\end{split}
\end{equation}
where $g_t = \omega_t(,J)$, $\theta_{\omega_t}$ denotes the associated Lee form, and $H_t = -d^c\omega$ satisfies \eqref{eq:Htdef} for all $t$. Consequently, the one-parameter family of generalized metrics $G_t$ on $E$ associated to $\overline{\ell}_t$ satisfies $\zeta_t$-gauge-fixed generalized Ricci flow \eqref{eq:RFgaugefixedabs},  for the one-parameter family of infinitesimal automorphisms of $E$ given by 
$$
\zeta_t \equiv (- \theta_{\omega_t}^\sharp,i_{\theta_{\omega_t}^\sharp}a_t,B_t),
$$
where
$$
B_t = d (- \theta_{\omega_t} - i_{\theta_{\omega_t}^\sharp}b_t + \langle a_t , i_{\theta_{\omega_t}^\sharp} a_t \rangle_{\mathfrak{k}}) + i_{\theta_{\omega_t}^\sharp} H_0 + 2 \langle i_{\theta_{\omega_t}^\sharp}a_t,F_{A_0}\rangle_{\mathfrak{k}}.
$$
\end{proposition}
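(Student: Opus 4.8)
The plan is to take the explicit description of pluriclosed flow in the unitary gauge from Proposition \ref{prop:UPCFexp} as the starting point and proceed in two stages: first rewrite the system \eqref{eq:CPFunitary} entirely in terms of the Riemannian data $(g,b,A)$ so as to obtain \eqref{eq:UPCFGRF}, and then recognize the resulting evolution as a gauge-fixed generalized Ricci flow by direct comparison with Proposition \ref{p:GFGRFinGG+}. Since the initial lifting is integrable, Lemma \ref{lem:01lift} guarantees $H_t = -d^c\omega_t$ and $F_{A_t}^{0,2}=0$ throughout, and this $H_t$ coincides with \eqref{eq:Htdef}; in particular the Bianchi identity persists, so all the identities below take place on a genuine Hermitian structure with skew torsion.

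For the conversion stage I would treat the three evolution equations in turn. The connection equation $\dt A = \tfrac12 Jd_A(\Lambda_\omega F_A)$ is recast using the Bochner--Kodaira identity on $(M,J,\omega)$ with torsion $H=-d^c\omega$, namely $Jd_A(\Lambda_\omega F_A) = -\bigl(d_A^\star F_A - F_A\lrcorner H + i_{\theta_\omega^\sharp}F_A\bigr)$, in which the torsion term $F_A\lrcorner H$ and the Lee-form term $i_{\theta_\omega^\sharp}F_A$ are precisely the corrections to the Kähler commutation relation $\dbar_A^\star = -\i[\Lambda_\omega,\partial_A]$; this immediately yields the stated $\dt A$. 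For the metric equation I would use $\dt g(\cdot,\cdot) = (\dt\omega)(\cdot,J\cdot)$ together with the Riemannian expression for the Bismut--Ricci form: starting from the formula $\rho_B(\omega) = -d(d^\star\omega - d^c\log\|\Omega\|_\omega)$ recorded in the proof of Proposition \ref{prop:zetaell} and decomposing $\rho_B^{1,1}(\cdot,J\cdot)$ into $\operatorname{Rc}$, $H^2$, $d^\star H$ and the Lie derivative of $g$ along $\theta_\omega^\sharp$, the symmetric part produces $-\operatorname{Rc}+\tfrac14 H^2 - \tfrac12 L_{\theta_\omega^\sharp}g$, while $\langle F_A^{1,1},\Lambda_\omega F_A\rangle_{\mathfrak k}$ reassembles into the $F_A^2$ term (with its antisymmetric pieces feeding $\dt b$). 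The $b$-field equation is then obtained by assembling $\dt b = \dt b^{1,1}+\dt b^{0,2}+\overline{\dt b^{0,2}}$ from \eqref{eq:CPFunitary}, where the same two identities organize the Lee-form contributions into $-d\theta_\omega + i_{\theta_\omega^\sharp}H$ and the connection contributions into $\langle a\wedge(d_A^\star F_A - F_A\lrcorner H + i_{\theta_\omega^\sharp}F_A)\rangle_{\mathfrak k}$.

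For the identification stage I would compare \eqref{eq:UPCFGRF} with the explicit gauge-fixed generalized Ricci flow \eqref{eq:GRFdiv+exp} of Proposition \ref{p:GFGRFinGG+}, specialized to the gauge vector field $X_t = -\theta_{\omega_t}^\sharp$. With this choice the $\dt g$ and $\dt A$ equations match term by term (up to the standard overall factor relating the two flow normalizations), using $i_{\theta_\omega^\sharp}g = \theta_\omega$; for the $\dt b$ equation I would invoke Lemma \ref{lem:LieAut} to rewrite the two-form of the automorphism exactly as the asserted $B_t$, which is \eqref{f:Bstrgauge} evaluated at $X=-\theta_\omega^\sharp$. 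Remark \ref{rem:Lieexchange} then certifies that $\zeta_t\equiv(-\theta_{\omega_t}^\sharp,\,i_{\theta_{\omega_t}^\sharp}a_t,\,B_t)$ is an honest infinitesimal automorphism, i.e.\ that the closedness condition \eqref{eq:LieAutEeq} holds, completing the gauge-fixed generalized Ricci flow interpretation \eqref{eq:RFgaugefixedabs}.

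The main obstacle lies entirely in the first stage, and concretely in the two Hermitian-geometry-with-torsion identities: the decomposition of $\rho_B(\omega)^{1,1}$ into $\operatorname{Rc}$, $H^2$, $d^\star H$ and $L_{\theta_\omega^\sharp}g$, and the Bochner--Kodaira identity expressing $Jd_A\Lambda_\omega F_A$ through $d_A^\star F_A - F_A\lrcorner H + i_{\theta_\omega^\sharp}F_A$. Both demand careful tracking of the contributions of the nonzero torsion $H=-d^c\omega$ and of the nonvanishing Lee form, and it is in correctly matching these correction terms---rather than in the essentially formal gauge bookkeeping of the second stage---that the computation is delicate.
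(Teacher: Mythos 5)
Your proposal is correct and follows essentially the same route as the paper: the paper likewise deduces \eqref{eq:UPCFGRF} by substituting into Proposition \ref{prop:UPCFexp} the three torsion identities \eqref{eq:rhoBdecomp} --- the decompositions of $\rho_B^{1,1}(\cdot,J\cdot)$ and $\rho_B^{2,0+0,2}(\cdot,J\cdot)$ together with $Jd_A(\Lambda_\omega F_A) = -d_A^\star F_A + F_A\lrcorner H - i_{\theta_\omega^\sharp}F_A$ --- and then obtains the gauge-fixed interpretation directly from Proposition \ref{p:GFGRFinGG+} with $X_t = -\theta_{\omega_t}^\sharp$, exactly as you do. The only difference is that you propose deriving these Hermitian-geometry-with-torsion identities via Bochner--Kodaira-type computations, whereas the paper simply cites them from \cite[Lemma 5.3, Proposition 5.6]{GFGM}; your parenthetical care about the overall normalization factor between the two flows, and the exactness of $B_t + i_X H_0 + 2\langle i_X a_t, F_{A_0}\rangle_{\mathfrak{k}}$ making \eqref{eq:LieAutEeq} automatic, are both consistent with the paper's argument.
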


\begin{proof}
Note that for any Hermitian metric $\omega$ on $(M,J)$ and any connection $A$ on $P$ solving the Bianchi identity \eqref{eq:bianchitrans}, with $H = - d^c\omega$ and $F_A^{0,2} = 0$, one has the following
formulae (see the proof of \cite[Lemma 5.3]{GFGM} and \cite[Proposition 5.6]{GFGM}) 
\begin{equation}\label{eq:rhoBdecomp}
\begin{split}
    \rho_B^{1,1}(\cdot,J\cdot) & = \operatorname{Rc} - \frac{1}{4}H^2 -F_A^2+\IP{\Lambda_\omega F_A,F_A(J,)} + \frac{1}{2}L_{\theta_\omega^\sharp} g,\\
    \rho_B^{2,0+0,2}(\cdot,J\cdot) & = -\frac{1}{2}( d^{\star} H - d \theta_\omega +  i_{\theta_\omega^\sharp}H),\\
    J d_A(\Lambda_\omega F_A) & = - d_A^{\star}F_A + F_A \lrcorner \ H - i_{\theta_\omega^\sharp} F_A.
\end{split}
\end{equation}
Consequently, the first part of the statement follows from Proposition \ref{prop:UPCFexp}. The second part of the statement is a straightforward consequence of Proposition \ref{p:GFGRFinGG+}.
\end{proof}

\subsection{Generalized Hermitian metrics on holomorphic string algebroids}\label{sec:GHstring}

Our next goal is to give a description of the pluriclosed flow \eqref{def:UPCF} in the holomorphic gauge. For this we will use that an integrable $(0,1)$-lifting (see Definition \ref{def:lifting}) determines uniquely a string algebroid in the holomorphic category \cite{garciafern2020gauge}. In this way, $(0,1)$-liftings relate to the complex Courant algebroid $E^c = E \otimes \mathbb{C}$ as Dolbeault operators relate
to a smooth complex vector bundle. Given this, we will introduce the basic unknowns of the pluriclosed flow in the holomorphic gauge: generalized Hermitian metrics.

We fix a complex manifold $(M,J)$ with integrable complex structure $J$ and denote its anti-holomorphic tangent bundle by $T^{0,1} = T^{0,1}_J$. In the sequel, we consider integrable $(0,1)$-liftings $\overline{\ell}$ of $T \otimes \CC$, such that
$$
\pi^c(\overline{\ell}) = T^{0,1}.
$$
Given such a lifting $\overline{\ell}$, following \cite{GualtieriGKG} we consider the reduction of $E \otimes \mathbb{C}$ by $\overline{\ell}$ given by the orthogonal bundle 
$$
\mathcal{Q}_{\overline{\ell}} := \overline{\ell}^\perp/\overline{\ell},
$$
where $\overline{\ell}^\perp$ is the orthogonal complement of $\overline{\ell}$ with respect to the symmetric pairing on $E \otimes \mathbb{C}$,
which fits in a vector bundle complex of the form
\begin{equation}\label{eq:holCouseqaux}
T^*_{1,0} \overset{\pi^*}{\longrightarrow} \mathcal{Q} \overset{\pi}{\longrightarrow} T^{1,0}.
\end{equation}
The bundle $\mathcal{Q}_{\overline{\ell}}$ carries a natural Dolbeault operator such that the previous sequence is holomorphic, defined as follows: given $s$ a smooth section of $\mathcal{Q}_{\overline{\ell}}$, we define
$$
\overline{\partial}^{\overline{\ell}}_V s = [\til V,\til s] \quad \textrm{mod}\ \overline{\ell}
$$
where $V \in \Gamma(T^{0,1})$, $\til V$ is the unique lift of $V$ to $\overline{\ell}$, and $\til s$ is any lift of $s$ to a section of $\overline{\ell}^\perp$. The Jacobi identity for the Dorfman bracket on $E \otimes \mathbb{C}$ implies that
$$
\overline{\partial}^{\overline{\ell}} \circ \overline{\partial}^{\overline{\ell}} = 0,
$$
that is, $\overline{\partial}^{\overline{\ell}}$ is integrable, and that it induces a Dorfman bracket on the holomorphic sections of $\mathcal{Q}_{\overline{\ell}}$. It is not difficult to prove that $\mathcal{Q}_{\overline{\ell}}$ satisfies axioms analogue to those of a Courant algebroid in the holomorphic category \cite{garciafern2018holomorphic}. Furthermore, it was proved in \cite[Proposition 2.18]{garciafern2020gauge}
that this \emph{holomorphic Courant algebroid} is of string type, that is, it fits in an exact sequence of holomorphic vector bundles
\begin{equation*}
\begin{split}
0 \longrightarrow T^*_{1,0} \overset{\pi^*}{\longrightarrow} \mathcal{Q} \overset{\rho}{\longrightarrow} A_{P^c}^{1,0}, \longrightarrow 0,
\end{split}
\end{equation*}
where $A_{P^c}^{1,0}$ denotes the holomorphic Atiyah Lie algebroid of a holomorphic principal bundle $(P^c,\overline{J})$ over $(M,J)$, uniquely determined by $\bar \ell$. The map $\rho$ is bracket preserving and considered as part of the data of the string algebroid.

The holomorphic string algebroids of our interest can be always put in a special normal form, as follows: let $K^c$ be the complex reductive Lie group given by the complexification of the compact group $K$. We endow the Lie algebra $\mathfrak{k}^c = \mathfrak{k} \otimes \CC$ with the $\CC$-linear extension of $\langle \cdot,\cdot \rangle_{\mathfrak{k}}$, for which we use the same notation
$$
\langle \cdot , \cdot \rangle_{\mathfrak{k}} \colon \mathfrak{k}^c \otimes \mathfrak{k}^c \to \CC.
$$
Let $(P^c,\overline{J})$ be a holomorphic principal $K^c$-bundle over $(M,J)$. Let $\tau \in \Lambda^{3,0+2,1}$ be a complex three-form on $M$ of type $(3,0)+(2,1)$ and let $A^c$ be a smooth principal connection on $P^c$ inducing the given holomorphic structure, that is, 
$$
\overline{J} = (A^c)^\perp J dp + \sqrt{-1} A^c,
$$
for $dp \colon TP^c \to T$ the canonical projection. In particular, since $\overline{J}$ is integrable, it follows that
$$
F_{A^c}^{0,2} = 0.
$$
Assume further that the following Bianchi identity is satisfied
\begin{equation}\label{eq:bianchitranshol}
d\tau = \langle F_{A^c} \wedge F_{A^c} \rangle_{\mathfrak{k}}.
\end{equation}
Consider the bundle
\begin{equation}\label{eq:Q0}
\mathcal{Q}_{\tau,A^c} = T^{1,0} \oplus \ad P^c \oplus T^*_{1,0},
\end{equation}
with Dolbeault operator
\begin{equation*}\label{eq:DolQ}
\dbar_{\tau,A^c} (V + r + \xi)  = \dbar V + i_V F_{A^c}^{1,1} + \dbar^{A^c} r + \dbar \xi - i_{V}\tau^{2,1} + 2\langle F_{A^c}^{1,1}, r\rangle_{\mathfrak{k}},
\end{equation*}
non-degenerate symmetric bilinear form
$$
\IP{V + r + \xi , V + r + \xi}  = \xi(V) + \IP{r,r}_{\mathfrak{k}},
$$
bracket,
\begin{equation*}
	\begin{split}
	[V+ r + \xi,W + t + \eta]_0   = {} & [V,W] - F^{2,0}_{A^c}(V,W) + \partial^{A^c}_V t - \partial^{A^c}_W r - [r,t]\\
	& {} + i_V \partial \eta + \partial (\eta(V)) - i_W\partial \xi + i_Wi_V \tau^{3,0},\\
	& {} + 2\langle\partial^{A^c} r, t\rangle_{\mathfrak{k}} + 2\langle i_V F_{A^c}^{2,0}, t\rangle_{\mathfrak{k}} - 2\langle i_W F_{A^c}^{2,0}, r\rangle_{\mathfrak{k}},	
	\end{split}
\end{equation*}
and anchor map $\pi_0(V+ r + \xi) = V$. It is an exercise to show that $\mathcal{Q}_{\tau,A^c}$ defines a holomorphic Courant algebroid in the sense of \cite[Definition 2.1]{garciafern2018holomorphic}. Furthermore, the map (see Section \ref{s:gpb})
$$
\rho_0 \colon \mathcal{Q}_{\tau,A^c} \to A_{P^c}^{1,0} \colon V + r + \xi \to (A^c)^\perp V + X^r
$$
is holomorphic and it endows $\mathcal{Q}_{\tau,A^c}$ with a structure of holomorphic string algebroid \cite[Proposition 2.4]{garciafern2020gauge}.

In the next result we establish the relation between the abstract construction of holomorphic string algebroids via reduction and the explicit normal form above. We fix a smooth string algebroid $E$ over a complex manifold $(M,J)$ and a reference integrable $(0,1)$-lifting $\overline{\ell}_0 \subset E^c = E \otimes \mathbb{C}$ such that $\pi^c(\overline{\ell}_0) = T^{0,1}$. Then, the horizontal subspace $W_0$ in \eqref{eq:Whor} determines an isotropic splitting of $E$ and hence a presentation 
$$
E^c \cong T \otimes \CC \oplus \ad P^c \oplus T^* \otimes \CC
$$ 
and a pair $(H_0,A_0)$ satisfying the Bianchi identity \eqref{eq:bianchitrans0} (see Section \ref{sec:background}). Let $\overline{\ell} = \overline{\ell}(\omega,\sigma)$ be another $(0,1)$-lifting of $T^{0,1}$ (see Lemma \ref{lem:01lift}). In the fixed isotropic splitting determined by $\bar \ell_0$, we can write (cf. \eqref{eq:elltba})
\begin{equation*}
\bar{\ell} =\ e^{(\i\omega + b,-a)}(T^{0,1}),
\end{equation*}
for a uniquely determined $(b,a)$-transformation. The next result follows from \cite[Proposition 2.16]{garciafern2020gauge} and gives an explicit formula for the holomorphic Courant algebroid associated to $\bar \ell(\omega,b,a)$. For this, we use the identity
$$
\bar \ell(\omega,b,a)^\perp = \bar \ell(\omega,b,a) \oplus e^{(\i\omega + b,-a)}(T^{1,0}  \oplus \ad P^c  \oplus T^*_{1,0}).
$$

\begin{lemma}[\cite{garciafern2020gauge}]\label{l:liftingQ} 
There is a canonical isomorphism
$$
\mathcal{Q}_{\bar \ell(\omega,b,a)} \cong \mathcal{Q}_{2 \i\partial \omega,A},
$$
given by
$$
\Big{[}e^{(\i\omega+b,-a)}(X+r+\xi^{1,0})\Big{]}\mapsto X^{1,0}+r+\xi^{1,0},
$$
where $\mathcal{Q}_{2 i\partial \omega,A}$ is a holomorphic string algebroid with holomorphic principal $K^c$-bundle $(P^c,\overline{J})$, where $\overline{J}$ is the holomorphic principal $K^c$-bundle structure induced by $A = A_0 + a$.
\end{lemma}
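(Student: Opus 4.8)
The plan is to reduce the statement to the general reduction-to-normal-form result \cite[Proposition 2.16]{garciafern2020gauge} and then to pin down the two pieces of data, the connection and the complex three-form, that specify the normal form in our situation. First I would set up the underlying vector bundle isomorphism. Using the orthogonal splitting
$$
\bar\ell(\omega,b,a)^\perp = \bar\ell(\omega,b,a) \oplus e^{(\i\omega + b,-a)}\big(T^{1,0}\oplus\ad P^c\oplus T^*_{1,0}\big),
$$
every class in the reduction $\mathcal{Q}_{\bar\ell} = \bar\ell^\perp/\bar\ell$ has a unique representative of the form $e^{(\i\omega+b,-a)}(X^{1,0}+r+\xi^{1,0})$, so the stated assignment $[e^{(\i\omega+b,-a)}(X+r+\xi^{1,0})]\mapsto X^{1,0}+r+\xi^{1,0}$ is well defined and is a smooth isomorphism of complex vector bundles onto $\mathcal{Q}_{2\i\partial\omega,A} = T^{1,0}\oplus\ad P^c\oplus T^*_{1,0}$. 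Because $(b,a)$-transformations are orthogonal and fix the anchor, compatibility with the pairing \eqref{eq:pairing} and with the anchor map $\pi_0$ is immediate from the explicit form of the pairing and bracket on $E^c$ in the splitting determined by $\bar\ell_0$.

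Next I would identify the holomorphic data. The connection $A = A_0 + a$ is the one determined by the isotropic splitting underlying $\bar\ell$; integrability of the lifting gives $F_A^{0,2}=0$ by Lemma \ref{lem:01lift}, so $A$ indeed defines the holomorphic structure $\overline{J}$ on $P^c$ appearing in the normal form, and the sign conventions must be tracked so that the $(b,a)$-parameter $(-a)$ of the splitting corresponds to the connection $A_0+a$, consistently with \eqref{eq:Ggbaexp} and \eqref{eq:elltba}. The real torsion of the splitting is $H$ as in \eqref{eq:Htdef}, which by integrability equals $-d^c\omega$ (Lemma \ref{lem:01lift}). The substantive computation is then to evaluate the induced Dolbeault operator $\dbar^{\bar\ell}_V s = [\tilde V,\tilde s]\bmod\bar\ell$, with $\tilde V = e^{(\i\omega+b,-a)}V$ for $V\in T^{0,1}$ and $\tilde s = e^{(\i\omega+b,-a)}(X^{1,0}+r+\xi^{1,0})$, by conjugating the Dorfman bracket \eqref{eq:bracket10} through $e^{(\i\omega+b,-a)}$ using the transformation rule \eqref{eq:AHnew} and reducing modulo $\bar\ell$. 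Collecting the resulting terms by type and substituting $H=-d^c\omega$ should reproduce the Dolbeault operator of the normal form $\mathcal{Q}_{2\i\partial\omega,A}$: the curvature contributions $i_{X^{1,0}}F_A^{1,1}$, $\dbar^A r$ and $2\langle F_A^{1,1},r\rangle_{\mathfrak{k}}$ arise from the connection-dependent parts of \eqref{eq:bracket10}, while the torsion contribution, once the $\i\omega$-twist of the lifting is combined with $H=-d^c\omega$, collapses to $-i_V\tau^{2,1}$ with $\tau = 2\i\partial\omega$. An entirely analogous computation on two holomorphic sections matches the reduced Dorfman bracket with the normal-form bracket $[\,,\,]_0$, where the $\tau^{3,0}$ and $F_A^{2,0}$ terms appear; together with the already-checked anchor this shows the map intertwines the full string algebroid structure, including the bracket-preserving morphism $\rho$.

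I expect the main obstacle to be precisely this bracket/Dolbeault computation: the bookkeeping of $(p,q)$-types through the conjugated Dorfman bracket, and in particular verifying the constant in $\tau = 2\i\partial\omega$, where the real torsion $H=-d^c\omega$ must be combined with the imaginary $\i\omega$ entering the lifting $e^{(\i\omega+b,-a)}$. Once the sign and $d^c$ conventions are fixed, the remaining identities are routine applications of \eqref{eq:AHnew} and the orthogonality of $(b,a)$-transformations, so the heart of the proof is this single type-by-type identification establishing that the reduced structure is exactly the normal form $\mathcal{Q}_{2\i\partial\omega,A}$.
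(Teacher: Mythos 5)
Your proposal is correct and takes essentially the same route as the paper, which proves this lemma simply by invoking \cite[Proposition 2.16]{garciafern2020gauge} together with the orthogonal decomposition $\bar\ell(\omega,b,a)^\perp = \bar\ell(\omega,b,a) \oplus e^{(\i\omega+b,-a)}(T^{1,0}\oplus\ad P^c\oplus T^*_{1,0})$ that you also use to define the map. Your explicit verification — in particular the identification $\tau = H + \i\,d\omega = 2\i\partial\omega$ once $H=-d^c\omega$ from Lemma \ref{lem:01lift} is substituted — correctly reconstructs the content of the cited result, including the consistency check that $d\tau = \IP{F_A\wedge F_A}_{\mathfrak k}$ follows from the Bianchi identity for $(\omega,A)$.
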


We are ready to introduce the notion of generalized Hermitian metric on holomorphic string algebroids  (cf. \cite{garciafern2018canonical,garciafern2020gauge}). For our applications, we need to fix a holomorphic string algebroid $\mathcal{Q}$ with underlying holomorphic principal $K^c$-bundle $(P^c,\overline{J})$, and vary a smooth string algebroid $E$ with underlying principal $K$-bundle
$$
P_h = h(M)\cdot K \subset P^c, 
$$
where $h \in \Gamma(P^c/K)$ denotes a reduction to the maximal compact subgroup $K \subset K^c$.

\begin{defn}\label{def:metricQ}
Let $(M,J)$ be a complex manifold endowed with a holomorphic string algebroid $\mathcal{Q}$. A \emph{generalized Hermitian metric} on $\mathcal{Q}$ is given by a tuple $\mathbf{h} = (h,E,\bar \ell,\varphi)$, where
\begin{enumerate}

\item $h \in \Gamma(P^c/K)$ is a reduction of $P^c$ to $K \subset K^c$,

\item $\rho \colon E \to A_{P_h}$ is a smooth real string algebroid over $M$ with underlying principal $K$-bundle $P_h$,

\item $\bar \ell \subset E^c = E \otimes \CC$ is a positive, integrable, $(0,1)$-lifting such that $\pi^c(\overline{\ell}) = T^{0,1}$,

\item $\varphi \colon \mathcal{Q}_{\bar \ell} \to \mathcal{Q}$ is an isomorphism of holomorphic Courant algebroids in a commutative diagram of the form
\begin{equation}\label{eq:defGHM}
	\xymatrix{
		0 \ar[r] & T^*_{1,0} \ar[r] \ar[d]^{\Id} & \mathcal{Q}_{\bar \ell} \ar[r]^\rho \ar[d]^{\varphi} & A_{P^c}^{1,0} \ar[r] \ar[d]^{\Id} & 0,\\
		0 \ar[r] & T^*_{1,0} \ar[r] & \mathcal{Q} \ar[r]^{\rho_0} & A_{P^c}^{1,0} \ar[r] & 0.
	}
\end{equation}
\end{enumerate}
\end{defn}

In the next result we choose a model $\mathcal{Q} = \mathcal{Q}_{\tau,A_
0}$ and give an explicit description of the degrees of freedom of a generalized Hermitian metric, following \cite[Proposition 4.13]{garciafern2020gauge}. 
By the last item in the previous definition, the connection $A = A_0 + \alpha$ determined by $\bar \ell$ induces the \emph{same} holomorphic structure on $P^c$ as $A_0$, and hence in particular
$$
\alpha \in \Gamma(T^*_{1,0} \otimes \ad P^c).
$$

\begin{proposition}\label{propo:Chernclassic}
Let $(M,J)$ be a complex manifold endowed with a holomorphic string algebroid $\mathcal{Q} = \mathcal{Q}_{\tau,A_0}$. A generalized Hermitian metric $\mathbf{h}$ on $\mathcal{Q}$ is equivalent to a triple $(\omega,\beta,h)$, where $\omega \in \Gamma(\Lambda^{1,1}_\mathbb{R})$ and $\beta \in \Gamma(\Lambda^{2,0})$ are, respectively, a real, positive, $(1,1)$ form and a $(2,0)$ form on $(M,J)$, and $h \in \Gamma(P^c/K)$ is a reduction of $P^c$ to $K \subset K^c$, satisfying
\begin{equation}\label{eq:structuralGHM}
2\i\partial \omega = \tau - 2\i d\beta + 2 \langle \alpha \wedge F_{A_0} \rangle_{\mathfrak{k}} + \langle \alpha \wedge d_{A_0}\alpha \rangle_{\mathfrak{k}} + \frac{1}{3}\langle \alpha \wedge [\alpha \wedge \alpha] \rangle_{\mathfrak{k}},
\end{equation}
for $A^h = A_0 + \alpha$ the Chern connection of $h$ on $(P^c, \bar J)$.
\end{proposition}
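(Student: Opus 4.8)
The plan is to unwind Definition \ref{def:metricQ} and reduce the statement to \cite[Proposition 4.13]{garciafern2020gauge}, using Lemma \ref{lem:01lift} and Lemma \ref{l:liftingQ} as the two translation devices. Given a generalized Hermitian metric $\mathbf{h} = (h,E,\bar{\ell},\varphi)$, the reduction $h$ is carried over directly to the triple and determines the Chern connection $A^h$ of $h$ on $(P^c,\bar{J})$. By Lemma \ref{lem:01lift}, the positive integrable $(0,1)$-lifting $\bar{\ell}$ with $\pi^c(\bar{\ell}) = T^{0,1}$ is equivalent to a real positive $(1,1)$-form $\omega$ together with an isotropic splitting $\sigma$ of $E$; the latter determines a real connection $A$ on $P_h$ and a three-form $H$, with integrability forcing $F_A^{0,2} = 0$ and $H = -d^c\omega$. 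This already extracts $\omega$ and $h$ from the data.

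The core of the argument is to exploit the isomorphism $\varphi$ together with Lemma \ref{l:liftingQ}, which identifies the reduced algebroid as $\mathcal{Q}_{\bar{\ell}} \cong \mathcal{Q}_{2\i\partial\omega,A}$. The commutative diagram \eqref{eq:defGHM} forces $\varphi$ to cover the identity on $A_{P^c}^{1,0}$, so the holomorphic structure induced by $A$ agrees with $\bar{J}$; since $A$ is a $K$-connection on $P_h$ inducing $\bar{J}$, uniqueness of the Chern connection yields $A = A^h$, and then $\alpha := A^h - A_0 \in \Gamma(T^*_{1,0} \otimes \ad P^c)$ because $A_0$ likewise induces $\bar{J}$. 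Thus $\varphi$ becomes an isomorphism $\mathcal{Q}_{2\i\partial\omega,A^h} \to \mathcal{Q}_{\tau,A_0}$ covering the identity on both $T^*_{1,0}$ and $A_{P^c}^{1,0}$. I would then invoke the classification of such isomorphisms between normal-form holomorphic string algebroids: passing from the connection $A^h$ to $A_0$ shifts the defining $(3,0)+(2,1)$-form by the anomaly expression $2\langle \alpha \wedge F_{A_0} \rangle_{\mathfrak{k}} + \langle \alpha \wedge d_{A_0}\alpha \rangle_{\mathfrak{k}} + \frac{1}{3}\langle \alpha \wedge [\alpha \wedge \alpha] \rangle_{\mathfrak{k}}$ exactly as in \eqref{eq:AHnew}, while the residual freedom is an isotropic $B$-field transform by a $(2,0)$-form $\beta$, contributing the term $-2\i d\beta$. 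Equating the two defining forms gives precisely \eqref{eq:structuralGHM}, with $\beta$ the parameter of the $B$-field part of $\varphi$.

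Conversely, given a triple $(\omega,\beta,h)$ satisfying \eqref{eq:structuralGHM}, one reconstructs $\mathbf{h}$ by reading these steps backwards: $h$ supplies the reduction and the Chern connection $A^h = A_0 + \alpha$, the pair $(\omega, A^h)$ builds the positive integrable lifting $\bar{\ell}$ via Lemma \ref{lem:01lift}, and \eqref{eq:structuralGHM} with the chosen $\beta$ produces the isomorphism $\varphi$. The main obstacle I expect is the bookkeeping in the classification step: tracking how the data $(\tau, A^c)$ defining a normal-form holomorphic string algebroid transforms under the combined operation of a connection change from $A^h$ to $A_0$ and a $(2,0)$-form $B$-transform, while preserving the Dolbeault operator $\dbar_{\tau,A^c}$, the bracket, and the pairing simultaneously, and pinning down the precise type decomposition and normalization constants (the factor $-2\i d\beta$, and the requirement that the $B$-field parameter be a genuine $(2,0)$-form rather than a closed correction, which is where holomorphicity of $\varphi$ enters). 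This is the technical content already established in \cite[Proposition 4.13]{garciafern2020gauge}, so the work here reduces to matching conventions with the present normal form.
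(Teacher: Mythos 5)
Your proposal is correct and follows essentially the same route as the paper's proof: Lemma \ref{lem:01lift} extracts $(\omega,A)$ with $F_A^{0,2}=0$ and $H=-d^c\omega$, the classification of isomorphisms of holomorphic string algebroids (the paper cites \cite[Lemma 2.7]{garciafern2018holomorphic}, which is the technical content behind the \cite[Proposition 4.13]{garciafern2020gauge} you invoke) identifies $\varphi$ with a pair $(\beta,\bar{\phi})$ satisfying \eqref{eq:structuralGHM}, and the diagram \eqref{eq:defGHM} forces the gauge transformation $\bar{\phi}$ to be the identity, whence $A = A^h$. The only cosmetic difference is ordering: you pin down $A = A^h$ before applying the classification, while the paper deduces it afterwards from the triviality of $\bar{\phi}$.
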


\begin{proof}
Given a generalized Hermitian metric $\mathbf{h} = (h,E,\bar \ell,\varphi)$, the tuple $(h,E,\bar \ell)$ is equivalent to a tuple $(P_h,\omega,A)$ satisfying
$$
dd^c \omega + \la F_A \wedge F_{A} \ra_\mathfrak{k} = 0, \qquad F_A^{0,2} = 0,
$$
by Lemma \ref{lem:01lift}, where $\omega \in \Gamma(\Lambda^{1,1}_\mathbb{R})$ and $A$ is a connection on $P_h$. The string algebroid $E$ is recovered as in Section \ref{sec:background} taking $H = -d^c\omega$. By \cite[Lemma 2.7]{garciafern2018holomorphic}, the isomorphism $\varphi$ is equivalent to a pair $(\beta,\bar \phi)$, where $\beta \in \Gamma(\Lambda^{2,0})$ and $\bar \phi \colon (P^c,\bar J_A) \to (P^c,\bar J)$ is a holomorphic gauge transformation for
$$
\overline{J}_A = A^\perp J dp + \sqrt{-1} A,
$$
satisfying \eqref{eq:structuralGHM} with $\alpha = g A - A_0$. The last item in Definition \ref{def:metricQ} forces $g = \Id$ and hence the connection $A = A_0 + \alpha$ induces the \emph{same} holomorphic structure as $A_0$, and therefore $A = A^h$, the Chern connection of $h$ on $(P^c,\bar J)$. The converse follows trivially from the previous discussion.    
\end{proof}

\begin{remark}\label{rem:BIdentity}
In the sequel, we will often identify generalized Hermitian metrics $\mathbf{h}$ on $\mathcal{Q}$ with triples $(\omega,\beta,h)$ as in the statement of Proposition \ref{propo:Chernclassic}. By the structural equation \eqref{eq:structuralGHM}, any such triple satisfies the \emph{Bianchi identity}
\begin{equation}\label{eq:BIGHM}
dd^c \omega + \la F_{h} \wedge F_{h} \ra_\mathfrak{k} = 0,
\end{equation}
for $F_h$ the curvature of the Chern connection of $h$.
\end{remark}

Given a generalized Hermitian metric $\mathbf{h} = (h,E,\bar \ell,\varphi)$, one can naturally associate a pseudo-Hermitian metric on the holomorphic vector bundle underlying $\mathcal{Q}$, which motivates the previous definition. To see this, note that $\bar \ell$ defines a generalized metric $G$ on $E$ by
$$
W \oplus W^\perp = V_+ \oplus V_-, 
$$
where $W$ is the horizontal lift of $T$ to $E$ defined by \eqref{eq:Whor}. This implies that
$$
\bar \ell^\perp = (V_- \otimes \mathbb{C}) \oplus \bar \ell.
$$
Therefore, as a smooth orthogonal bundle $\mathcal{Q}_\ell$ is canonically isomorphic to 
$$
\mathcal{Q}_{\bar \ell}  \cong V_- \otimes \mathbb{C}.
$$

\begin{defn}\label{d:generalizedmetriHerm}
Given a generalized Hermitian metric $\mathbf{h} =(h,E,\bar \ell,\varphi)$ on $\mathcal{Q}$, the induced Hermitian metric is
$$
\mathbf{G} = \mathbf{G}_{\mathbf{h}}:= \varphi_* \mathbf{G}_{\bar \ell}
$$
where $\mathbf{G}_{\bar \ell}$ is the Hermitian metric on $\mathcal{Q}_{\bar \ell}$ defined by
$$
\mathbf{G}_{\bar \ell}([s_1],[s_2]) = - \IP{\pi_- s_1, \overline{\pi_- s_2}}
$$
for $[s_j] \in \bar \ell^\perp/\bar \ell$ and $\pi_- \colon \bar \ell^\perp \to V_- \otimes \mathbb{C}$ the orthogonal projection.
\end{defn}

In our next result we calculate an explicit formula for the generalized Hermitian metric $\mathbf{G}$ in terms of the isomorphism $\mathcal{Q}_{\bar \ell} \cong \mathcal{Q}_{2\i\partial \omega,A^h}$ in Lemma \ref{l:liftingQ}.

\begin{lemma}\label{t:Ggeneralized1} 
Let $\mathcal{Q}$ be a holomorphic string algebroid with a generalized Hermitian metric $\mathbf{h} =(h,E,\bar \ell,\varphi)\equiv (\omega,\beta,h)$. The complex isometry $\psi \colon \mathcal{Q}_{2\i\partial \omega,A^h} \to V_-\otimes \mathbb{C}$ induced by Lemma \ref{l:liftingQ} is given by
$$
\psi(q) = e^{\i\omega}V + r - \tfrac{1}{2} e^{-\i\omega} g^{-1}\xi.
$$
for $q = V + r + \xi$. Consequently, the induced Hermitian metric $\mathbf{G} = \mathbf{G}_{\mathbf{h}}$ on $\mathcal{Q}$ is
\begin{equation*}
\begin{split}
\mathbf{G}(q,q) & = g(V,\overline{V}) - \IP{r + i_V \alpha,\overline{r}^h + i_{\overline{V}} \overline{\alpha}^h}_\mathfrak{k}\\
& + \tfrac{1}{4}\IP{\xi + 2\i i_V \beta - \langle i_V \alpha,\alpha \rangle_{\mathfrak{k}} - 2\langle \alpha,r\rangle_{\mathfrak{k}},\overline{\xi} - 2 \i i_{\overline{V}} \overline{\beta} - \overline{\langle i_{V} \alpha,\alpha \rangle}_{\mathfrak{k}} - 2\overline{\langle \alpha,r\rangle}_{\mathfrak{k}}}_g,
\end{split}
\end{equation*}
where $r \to \overline{r}^h$ is the $\CC$-antilinear involution determined by $\ad P^c \cong \ad P_h \oplus \i \ad P_h$.

\begin{proof}
The formula for $\psi$ follows as in \cite[Lemma 4.5]{GFGM}, which readily implies
$$
\mathbf{G}_{\bar \ell}(q,q) = g(V,\overline{V}) + \tfrac{1}{4}\IP{\xi,\overline{\xi}}_g -  \IP{r,\overline{r}^h}_\mathfrak{k}.
$$
Arguing now as in the proof of \cite[Lemma 2.7]{garciafern2018holomorphic}
\begin{align*}
\varphi^{-1}(V + r + \xi) & = e^{(2\i\beta,\alpha)}(V + r + \xi)\\
& = V + r + i_V \alpha + \xi + 2\i i_V \beta - \langle i_V\alpha,\alpha \rangle_{\mathfrak{k}} - 2\langle \alpha,r\rangle_{\mathfrak{k}}
\end{align*}
and the proof follows by a direct calculation.
\end{proof}
\end{lemma}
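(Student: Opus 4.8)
The plan is to factor the entire computation through the intrinsic metric $\mathbf{G}_{\overline{\ell}}$ and reduce it to two explicit maps: the fiberwise isometry $\psi$ coming from Lemma~\ref{l:liftingQ}, and the $(b,a)$-transform realizing $\varphi$. By Definition~\ref{d:generalizedmetriHerm} we have $\mathbf{G} = \varphi_* \mathbf{G}_{\overline{\ell}}$ with $\mathbf{G}_{\overline{\ell}}([s],[s]) = -\IP{\pi_- s, \overline{\pi_- s}}$, so for $q \in \mathcal{Q} = \mathcal{Q}_{\tau, A_0}$ one has $\mathbf{G}(q,q) = \mathbf{G}_{\overline{\ell}}(\varphi^{-1}(q), \varphi^{-1}(q)) = -\IP{\psi(\varphi^{-1}(q)), \overline{\psi(\varphi^{-1}(q))}}$, where $\psi$ denotes the composite of the identification $\mathcal{Q}_{\overline{\ell}} \cong \mathcal{Q}_{2\i\partial\omega, A^h}$ with the projection onto $V_- \otimes \CC$. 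It therefore suffices to (i) compute $\psi$, (ii) evaluate $-\IP{\psi(\cdot), \overline{\psi(\cdot)}}$ on $\mathcal{Q}_{2\i\partial\omega, A^h}$, and (iii) precompose with $\varphi^{-1}$.

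For (i), write $q = V + r + \xi$ with $V \in T^{1,0}$, $r \in \ad P^c$, $\xi \in T^*_{1,0}$. By Lemma~\ref{l:liftingQ} the corresponding class in $\mathcal{Q}_{\overline{\ell}} = \overline{\ell}^\perp/\overline{\ell}$ is represented by $e^{(\i\omega+b,-a)}(V+r+\xi)$, and the identification $\mathcal{Q}_{\overline{\ell}} \cong V_- \otimes \CC$ is the orthogonal projection $\pi_-$. The key simplification is the factorization $e^{(\i\omega+b,-a)} = e^{(b,-a)} \circ e^{(\i\omega,0)}$: since $V_-$ is itself the $e^{(b,-a)}$-transform of the reference negative eigenbundle (cf.\ \eqref{eq:Vpmba}), this common factor cancels against $\pi_-$ and reduces $\psi$ to $e^{(\i\omega,0)}$ followed by the standard projection onto $\{X - gX + r\}$ described in \eqref{eq:Vpm}. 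Carrying out this projection and using $g = \omega(\cdot, J\cdot)$ — whence $\i i_V\omega = -gV$ for $V \in T^{1,0}$ — produces $\psi(q) = e^{\i\omega}V + r - \tfrac12 e^{-\i\omega} g^{-1}\xi$, exactly as in \cite[Lemma~4.5]{GFGM}; one verifies that its cotangent component is $-g$ applied to its tangent component, confirming $\psi(q) \in V_- \otimes \CC$. For (ii), I insert $\psi(q)$ and its $h$-conjugate $\overline{\psi(q)} = e^{-\i\omega}\overline{V} + \overline{r}^h - \tfrac12 e^{\i\omega}g^{-1}\overline{\xi}$ into the pairing \eqref{eq:pairing}; the mixed tangent--cotangent terms coupling $V$ and $\xi$ vanish because $\omega$ is of type $(1,1)$, and the overall sign turns the adjoint contribution into $-\IP{r, \overline{r}^h}_{\mathfrak{k}}$, leaving $\mathbf{G}_{\overline{\ell}}(q,q) = g(V, \overline{V}) + \tfrac14\IP{\xi, \overline{\xi}}_g - \IP{r, \overline{r}^h}_{\mathfrak{k}}$.

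Step (iii) is then a direct substitution. By the proof of Proposition~\ref{propo:Chernclassic} (following \cite[Lemma~2.7]{garciafern2018holomorphic}), $\varphi^{-1}$ is the transform $e^{(2\i\beta, \alpha)}$, so by \eqref{eq:BA}
\begin{equation*}
\varphi^{-1}(V + r + \xi) = V + (r + i_V\alpha) + \big(\xi + 2\i i_V\beta - \langle i_V\alpha, \alpha\rangle_{\mathfrak{k}} - 2\langle \alpha, r\rangle_{\mathfrak{k}}\big).
\end{equation*}
Feeding the components $r' = r + i_V\alpha$ and $\xi' = \xi + 2\i i_V\beta - \langle i_V\alpha, \alpha\rangle_{\mathfrak{k}} - 2\langle \alpha, r\rangle_{\mathfrak{k}}$ (the tangent part $V$ being unchanged) into the formula from (ii) produces precisely the stated expression for $\mathbf{G}(q,q)$. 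I expect the only genuinely delicate point to be steps (i)--(ii): keeping the $(b,a)$-transforms and their cancellation against $\pi_-$ straight, tracking the $T^{1,0}/T^{0,1}$ type decompositions through $g = \omega(\cdot, J\cdot)$, and correctly implementing the reduction-dependent conjugation $r \mapsto \overline{r}^h$ on $\ad P^c$ so that $-\IP{\cdot, \overline{\cdot}}$ defines a genuine (pseudo)Hermitian form. Once $\psi$ is pinned down, the remaining algebra in (ii)--(iii) is routine.
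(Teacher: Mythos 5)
Your proposal is correct and follows essentially the same route as the paper's proof: you factor $\mathbf{G} = \varphi_* \mathbf{G}_{\bar \ell}$ through the isometry $\psi$ and then precompose with the $(2\i\beta,\alpha)$-transform realizing $\varphi^{-1}$, exactly as the paper does. The only difference is that you explicitly derive $\psi$ and the formula for $\mathbf{G}_{\bar \ell}$ (via the factorization $e^{(\i\omega+b,-a)} = e^{(b,-a)}\circ e^{(\i\omega,0)}$, the cancellation of $e^{(b,-a)}$ against $\pi_-$, and the type-$(1,1)$ vanishing of the mixed terms) where the paper simply cites \cite[Lemma 4.5]{GFGM}, and these computations check out.
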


\begin{remark}\label{rem:signature}
By the previous lemma, the signature of $\mathbf{G}$ is $(2n + l_2,l_1)$, where $(l_1,l_2)$ is the signature of $\IP{,}_\mathfrak{k}$ and $2n = \dim_\RR M$.
\end{remark}

\subsection{Pluriclosed flow on holomorphic string algebroids}\label{sec:PCFstring}

Building on Proposition \ref{propo:Chernclassic} we next introduce a version of pluriclosed flow in the holomorphic gauge in terms of classical data. Let $(M,J)$ be a complex manifold endowed with a holomorphic principal $K^c$-bundle $(P^c,\bar J)$. We will consider triples
$$
(\omega,\beta,h) \in \Gamma(\Lambda^{1,1}_\mathbb{R} \oplus \Lambda^{2,0}) \times \Gamma(P^c/K)
$$
where $\omega$ and $\beta$ are, respectively, a real, positive, $(1,1)$ form and a $(2,0)$ form on $(M,J)$, and $h \in \Gamma(P^c/K)$ is a reduction of $P^c$ to $K \subset G$. We define the second Ricci curvature of $h$ with respect to $\omega$ by
$$
S^h_g := \i \Lambda_{\omega} F_h,
$$
where $F_h$ denotes the curvature of the Chern connection of $h$ on the holomorphic principal bundle $(P^c,\bar J)$. In the sequel, we will use `vector bundle notation' in the holomorphic gauge, namely, we choose a faifhful representation for $K^c$ and regard a reduction $h \in \Gamma(P^c/K)$ as a Hermitian metric in the associated holomorphic vector bundle. This convention is convenient for obtaning more homogeneous formulae in the evolution for $h$ and the generalized Hermitian metrics in place.

\begin{definition}\label{def:PCFQ}
We say that a one-parameter family of triples $(\omega_t,\beta_t,h_t)$ is a solution of \emph{pluriclosed flow} if
\begin{equation}\label{eq:CPFhol}
\begin{split}
\dt \omega & = -\rho_B(\omega)^{1,1} + \i\langle S^h_g, F_h \rangle_{\mathfrak{k}},\\
\dt \beta & = - \rho_B(\omega)^{2,0} - \frac{\i}{2} \langle \alpha \wedge \partial^h S^h_g \rangle_{\mathfrak{k}},\\
h^{-1} \dt h  & = - S^h_g,
\end{split}
\end{equation}
where $\alpha_t = A^{h_t} - A^{h_0}$.
\end{definition}

\begin{remark}\label{rem:omegahflow} Observe that  a solution of pluriclosed flow \eqref{eq:CPFhol} is actually determined by the pair $(\omega_t,h_t)$.  For this reason we will at times refer to a family of pairs $(\omega_t,h_t)$ alone as a solution to pluriclosed flow. Nonetheless, the equation
for $\beta$ plays an important role in deriving estimates and determining the relationship
to generalized geometry below.
\end{remark}

Let $\mathcal{Q} = \mathcal{Q}_{\tau,A^c}$ be a holomorphic string algebroid over $(M,J)$ with underlying principal bundle $(P^c,\bar J)$. Given a generalized Hermitian metric $\mathbf{h} \equiv (\omega,\beta,h)$ on $\mathcal{Q}$, we define its associated second Ricci curvature by
$$
S^{\mathbf{G}}_g := \i \Lambda_{\omega} F_{\mathbf{G}},
$$
where $F_{\mathbf{G}}$ denotes the curvature of the Chern connection of $\mathbf{G} = \mathbf{G}(\omega,\beta,h)$ on the holomorphic vector bundle $\mathcal{Q}$. In the next result we show that the flow \ref{eq:CPFhol} is compatible with our notion of generalized Hermitian metric via Proposition \ref{propo:Chernclassic}, furnishing a coupled version of Donaldson HYM flow for metrics on a bundle \cite{DonaldsonHYM}. 

\begin{prop}\label{p:PCFhol}
Let $(\omega_t,\beta_t,h_t)$ be a solution of the pluriclosed flow \ref{eq:CPFhol}. If the initial condition $(\omega_0,\beta_0,h_0)$ satisfies equation \eqref{eq:structuralGHM} then so does $(\omega_t,\beta_t,h_t)$ for all $t$. Furthermore, if this is the case, then the associated family of Hermitian metrics $\mathbf{G}_t = \mathbf{G}(\omega_t,\beta_t,h_t)$ on $\mathcal{Q}$ satisfies
\begin{equation}\label{eq:DFlow}
\mathbf{G}^{-1} \dt \mathbf{G}  =\ - S_g^{\mathbf{G}}.
\end{equation}
Conversely, provided that $K$ is semisimple, given a family of generalized Hermitian metrics $\mathbf{h}_t \equiv (\omega_t,\beta_t,h_t)$ on $\mathcal{Q}$ satisfying \eqref{eq:DFlow}, the triple $(\omega_t,\beta_t,h_t)$ is a solution of the pluriclosed flow \eqref{eq:CPFhol}.

\begin{proof}
Setting
$$
C_t = 2 \langle \alpha \wedge F_{A_0} \rangle_{\mathfrak{k}} + \langle \alpha \wedge d_{A_0}\alpha \rangle_{\mathfrak{k}} + \frac{1}{3}\langle \alpha \wedge [\alpha \wedge \alpha] \rangle_{\mathfrak{k}},
$$
by the proof of \cite[Lemma 3.23]{garciafern2018canonical} it follows that
$$
\dot C_t = 2 \langle \dot\alpha_t \wedge F_{h_t} \rangle_{\mathfrak{k}} + d \langle \dot \alpha_t \wedge \alpha_t \rangle_{\mathfrak{k}},
$$
and therefore
\begin{align*}
2\i \partial \dot \omega_t - \dot C_t & = -2\i \partial \rho_B(\omega_t)^{1,1} - 2 \partial\langle S_t,F_{h_t}  \rangle_{\mathfrak{k}} - 2 \langle \dot\alpha_t \wedge F_{h_t} \rangle_{\mathfrak{k}} + d \langle \alpha_t \wedge \dot \alpha_t \rangle_{\mathfrak{k}}\\
& = 2\i d \rho_B(\omega_t)^{2,0} - 2 \langle \partial^{h_t}S_t \wedge F_{h_t}\rangle_{\mathfrak{k}} -  2 \langle \partial^{h_t}(h_t^{-1} \dot h_t)  \wedge F_{h_t} \rangle_{\mathfrak{k}} + d \langle \alpha_t \wedge \partial^{h_t}(h_t^{-1}\dot h_t ) \rangle_{\mathfrak{k}}\\
& = 2\i d \rho_B(\omega_t)^{2,0} - d \langle \alpha_t \wedge \partial^{h_t}S_t \rangle_{\mathfrak{k}}\\
& = - 2\i d \dot \beta_t,
\end{align*}
where $S_t := S^{h_t}_{g_t}$ and we have used that $d\rho_B(\omega_t) = 0$ and
$$
\dot \alpha_t = \partial^{h_t}(h_t^{-1}\dot h_t ).
$$
This proves the first part of the statement.

Fix $t>0$. After possibly modifying $\mathcal{Q}$ by an automorphism (see \cite[Lemma 2.7]{garciafern2018holomorphic}) we may assume without loss of generality that $\gb_t =0 = \alpha_t$. Notice that the Hermitian metric $\mathbf{G}_t = \mathbf{G}(\omega_t,\beta_t,h_t)$ as well as its Chern connection are compatible with the $\CC$-antilinear involution determined by the isomorphism 
$$
\psi_t \varphi^{-1}_t \colon \mathcal{Q} \to V^t_- \otimes \CC
$$
(see Lemma \ref{t:Ggeneralized1} and \cite[Proposition 4.4]{GFGM}). Hence,  using Lemma \ref{t:Ggeneralized1}, it suffices to calculate the variations
\begin{align*}
\dt \mathbf{G}(V,V) & = \dot g_t (V,\overline{V}), &  \dt \mathbf{G}(V,r) &= - \IP{i_V \dot \alpha_t ,\overline{r}^{h_t}}_\mathfrak{k},\\
\dt \mathbf{G}(V,\xi) & = \tfrac{\sqrt{-1}}{2}\IP{i_V \dot \beta_t,\overline{\xi}}_g, &  \dt \mathbf{G}(r,r) & = - \IP{r,[\overline{r}^{h_t},h_t^{-1}\dot h_t ]}  = - \IP{[h_t^{-1}\dot h_t,r],\overline{r}^{h_t}},
\end{align*}
for $q = V + r + \xi$ a smooth section of $\mathcal{Q}$, since the others will contain redundant information.
On the other hand, applying \cite[Proposition 4.9]{GFGM} we have
$$
S_{\mathbf{G}_t} = \psi_t^{-1}\left(\begin{array}{cc}
    - g^{-1}(\i\rho_B(\omega_t) + \IP{S_t,F_{h_t}}) & - (d^{h_t}S_t)^{\dagger}\\
    d^{h_t}S_t & [S_t,]
\end{array}\right) \psi_t
$$
where
$$
\psi_t \colon \mathcal{Q} \cong T^{1,0} \oplus \ad P^c \oplus T^*_{1,0} \to  V_-\otimes \mathbb{C}  \cong TM \otimes \CC \oplus \ad P^c
$$
is defined by
$$
\psi(V + r + \xi) = V - \tfrac{1}{2} g^{-1}\xi + r.
$$
Note that
$$
(\psi_*\mathbf{G}_t)(W + r,W + r)
= g(W,\overline{W}) -  \IP{r,\overline{r}^h}_\mathfrak{k}
$$
and therefore
\begin{equation*}
\begin{split}
\mathbf{G}_t(S_{\mathbf{G}_t} V,V) & = - (\psi_*\mathbf{G}_t)(g^{-1}(\i\rho_B(\omega_t) + \IP{S_t,F_{h_t}}))V,V)\\
& =  -(\i\rho_B(\omega_t) + \IP{S_t,F_{h_t}}))^{1,1}(V,\overline{V}),\\
\mathbf{G}_t(S_{\mathbf{G}_t} V,r) & =(\psi_*\mathbf{G}_t)(d^{h_t}S_t(V),r) = - \IP{\partial^{h_t}S_t(V),\overline{r}^{h_t}}_\mathfrak{k}\\
\mathbf{G}_t(S_{\mathbf{G}_t} V,\xi) & = \tfrac{1}{2}(\psi_*\mathbf{G}_t)(g^{-1}(\i\rho_B(\omega_t) + \IP{S_t,F_{h_t}}))V,g^{-1}\xi) = \tfrac{\i}{2}\rho_B(\omega_t)^{2,0}(V,g^{-1}\overline{\xi}),\\
\mathbf{G}_t(S_{\mathbf{G}_t} r,r) & = (\psi_*\mathbf{G}_t)([S_t,r],\overline{r}^{h_t}) = - \IP{[S_t,r],\overline{r}^{h_t}}_\mathfrak{k}.
\end{split}
\end{equation*}
The fact that $\mathbf{G}_t$ satisfies the evolution equation \ref{eq:DFlow} follows by simple inspection of the previous formulae.

Conversely, given $\mathbf{G}_t = \mathbf{G}(\omega_t,\beta_t,h_t)$ on $\mathcal{Q}$ satisfying \eqref{eq:DFlow}, by the previous calculation (in arbitrary gauge), the triple $(\omega_t,\beta_t,h_t)$ is a solution of
\begin{equation}\label{eq:CPFholaux}
\begin{split}
\dt \omega & = -\rho_B(\omega)^{1,1} + \i\langle S^h_g, F_h \rangle_{\mathfrak{k}},\\
\dt \beta & = - \rho_B(\omega)^{2,0} - \frac{\i}{2} \langle \alpha \wedge  \partial^h(S^h_g) \rangle_{\mathfrak{k}},\\
\dt A_h & = - \partial^h S^h_g\\
[h^{-1} \dt h,]  & = - [S^h_g,].
\end{split}
\end{equation}
Assuming that $K$ is semisimple, the last equation implies that $h^{-1} \dt h  = - S^h_g$ and the third equation is therefore redundant, and the proof follows.
\end{proof}
\end{prop}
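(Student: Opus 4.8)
The plan is to treat the three assertions in turn: preservation of the structural identity \eqref{eq:structuralGHM}, the forward implication that \eqref{eq:CPFhol} induces the HYM-type flow \eqref{eq:DFlow}, and the converse under semisimplicity.

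For the first assertion I would differentiate \eqref{eq:structuralGHM} in time. Writing $C_t = 2 \langle \alpha \wedge F_{A_0} \rangle_{\mathfrak{k}} + \langle \alpha \wedge d_{A_0}\alpha \rangle_{\mathfrak{k}} + \frac{1}{3}\langle \alpha \wedge [\alpha \wedge \alpha] \rangle_{\mathfrak{k}}$, the structural equation reads $2\i\partial\omega = \tau - 2\i d\beta + C_t$, so its preservation amounts to the identity $2\i\partial\dt\omega_t - \dt C_t = -2\i d\,\dt\beta_t$ holding for every $t$. The variation of the Chern connection is $\dt\alpha_t = \partial^{h_t}(h_t^{-1}\dt h_t)$, and the variation of the Chern--Simons-type term $C_t$ can be computed following the argument of \cite[Lemma 3.23]{garciafern2018canonical}, giving $\dt C_t = 2\langle (\dt\alpha_t) \wedge F_{h_t}\rangle_{\mathfrak{k}} + d\langle \alpha_t\wedge \dt\alpha_t\rangle_{\mathfrak{k}}$. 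Substituting the three flow equations \eqref{eq:CPFhol} and using the two structural facts $d\rho_B(\omega_t)=0$ (so that $-2\i\partial\rho_B^{1,1} = 2\i d\rho_B^{2,0}$) together with the Bianchi identity $\partial^{h_t}F_{h_t}=0$ for the Chern curvature (so that $\partial\langle S^h_g, F_{h_t}\rangle_{\mathfrak{k}} = \langle \partial^{h_t}S^h_g\wedge F_{h_t}\rangle_{\mathfrak{k}}$), the terms involving $F_{h_t}$ cancel in pairs once one inserts $h_t^{-1}\dt h_t = -S^h_g$, and one is left precisely with $-2\i d\,\dt\beta_t$. This establishes the first assertion.

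For the forward implication, once \eqref{eq:structuralGHM} holds the triple $(\omega_t,\beta_t,h_t)$ determines a generalized Hermitian metric $\mathbf{G}_t$ on $\mathcal{Q}$ by Proposition \ref{propo:Chernclassic} and Lemma \ref{t:Ggeneralized1}. The key simplification is that \eqref{eq:DFlow} transforms naturally under automorphisms of $\mathcal{Q}$; hence, fixing an arbitrary time and applying the automorphism of \cite[Lemma 2.7]{garciafern2018holomorphic}, I would reduce to the gauge $\beta_t=0=\alpha_t$ at that instant, in which the formula for $\mathbf{G}$ in Lemma \ref{t:Ggeneralized1} collapses to its diagonal blocks. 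It then suffices to match, component by component, the time derivative $\dt\mathbf{G}(q,q')$ computed from Lemma \ref{t:Ggeneralized1} and the flow equations \eqref{eq:CPFhol} against $-\mathbf{G}(S^{\mathbf{G}}_g q, q')$, where the Chern curvature $S^{\mathbf{G}}_g$ of $\mathbf{G}$ on $\mathcal{Q}$ is given by the explicit block expression of \cite[Proposition 4.9]{GFGM} in terms of $\rho_B(\omega_t)$, $S^h_g$ and $\partial^{h_t}S^h_g$. The four independent components, namely $(V,V)$, $(V,r)$, $(V,\xi)$ and $(r,r)$, then match by direct inspection, using $h_t^{-1}\dt h_t = -S^h_g$ and $\dt\alpha_t = \partial^{h_t}(h_t^{-1}\dt h_t)$ (the cross-term cancellations are exactly what the gauge reduction $\beta_t=\alpha_t=0$ is designed to produce).

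For the converse, the same componentwise computation read now in an arbitrary gauge shows that a family $\mathbf{G}_t = \mathbf{G}(\omega_t,\beta_t,h_t)$ solving \eqref{eq:DFlow} satisfies the system \eqref{eq:CPFholaux}, whose last equation is only the bracket identity $[h_t^{-1}\dt h_t,\cdot] = -[S^h_g,\cdot]$. When $K$ is semisimple the center of $\mathfrak{k}$ vanishes, so this bracket identity upgrades to $h_t^{-1}\dt h_t = -S^h_g$; the connection equation $\dt A_h = -\partial^h S^h_g$ is then the variation formula for the Chern connection and becomes redundant, recovering \eqref{eq:CPFhol}. I expect the main obstacle to be the forward implication: the bookkeeping identifying $\dt\mathbf{G}$ with $-\mathbf{G}(S^{\mathbf{G}}_g\,\cdot,\cdot)$ is delicate, and everything hinges on the cancellations produced by the gauge reduction $\beta_t=\alpha_t=0$ together with the precise block form of the Chern curvature from \cite[Proposition 4.9]{GFGM}; getting the holomorphic/Hermitian conventions and the $\beta$-component correct is where the computation is most likely to go astray.
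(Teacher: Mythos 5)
Your proposal is correct and follows essentially the same route as the paper's proof: differentiate the structural equation using the variation of the Chern--Simons-type term from the cited Lemma 3.23 of \cite{garciafern2018canonical} together with $d\rho_B=0$ and $\partial^{h}F_{h}=0$; for the forward implication reduce at a fixed time to the gauge $\beta_t=\alpha_t=0$ by an automorphism and match the components $(V,V)$, $(V,r)$, $(V,\xi)$, $(r,r)$ of $\dt\mathbf{G}$ against the block form of $S^{\mathbf{G}}_g$ from \cite[Proposition 4.9]{GFGM}; and for the converse use semisimplicity to upgrade the bracket identity to $h^{-1}\dt h=-S^h_g$. The only slip is the sign of the exact term in your formula for $\dt C_t$, which should read $d\langle \dot\alpha_t\wedge\alpha_t\rangle_{\mathfrak{k}}=-d\langle\alpha_t\wedge\dot\alpha_t\rangle_{\mathfrak{k}}$ (as the pairing is symmetric and both entries are one-forms); with the correct sign the cancellation leaves exactly $-2\i d\dt\beta_t$, as in the paper.
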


The previous result establishes an interesting formal parallel between the pluriclosed flow \ref{eq:CPFhol} and Donaldson HYM flow for metrics on a bundle \cite{DonaldsonHYM}. In the following result we prove that this analogy actually carries over to the present setup, establishing a correspondence between flow lines for  \eqref{eq:CPFhol}, in the holomorphic gauge, and flow lines for \eqref{eq:UPCF} (cf. Proposition \ref{prop:UPCFexp}), in the unitary gauge. To fix ideas, assume that $h_t$ is a family of Hermitian metrics on a holomorphic vector bundle over $(M,J)$ satisfying the HYM flow equation
$$
h^{-1} \dt h = - S^h,
$$
where we have fixed a background Hermitian metric $g$. Note that $h_t = \overline \phi_t^*h_0$, where $\overline \phi_t$ is the one-parameter family of complex gauge transformations induced by $- \tfrac{1}{2}S^{h_t}$.
Denote by $A_0$ the Chern connection of $h_0$. Then, via the Chern correspondence we obtain a family of $h_0$-unitary connections $A_t = \overline \phi_t \cdot A_0$, which satisfies
\begin{equation}\label{eq:HYMflow}
\dt A_t = \frac{1}{2}\overline \phi_t(\dbar_{A _0}S^{h_t} - \partial_{A_0} S^{h_t})\overline \phi_t^{-1} = \frac{1}{2} J d_{A_t} (\Lambda_\omega F_{A_t}),
\end{equation}
where we have used that
\begin{equation}\label{eq:naturalS}
\overline \phi_t S^{h_t}\overline \phi_t^{-1} =  \i \Lambda_\omega F_{A_t}.
\end{equation}
Conversely, given a solution to \eqref{eq:HYMflow}, it determines a solution $h_t = \overline \phi_t^*h_0$ of the HYM flow equation, where $\overline \phi_t$ is the family of complex gauge transformations integrating $-\tfrac{\i }{2}\Lambda_\omega F_{A_t}$.

To prove the analogue of the previous correspondence in our setup, we fix a holomorphic string algebroid over $(M,J)$, which we assume to be of the form $\mathcal{Q} = \mathcal{Q}_{2\i\partial \omega_0,A_0}$, for $A_0 = A_{h_0}$ the Chern connection of $h_0$. In particular, we have
$$
dd^c\omega_0 + \IP{F_{A_0} \wedge F_{A_0}}_\mathfrak{k} = 0.
$$
Let $E_0$ denote the smooth string algebroid determined by $(P_{h_0},-d^c\omega_0,A_0)$ and $E^c_0$ its complexification. Given a generalized Hermitian metric $\mathbf{h} = (h,E,\bar \ell,\varphi)$ on $\mathcal{Q}$, by direct application of \cite[Lemma 3.6]{garciafern2020gauge}, it determines uniquely an isomorphism of complex string algebroids $\Phi_{\mathbf{h}} \colon E^c \to E^c_0$ given by a commutative diagram
\begin{equation*}\label{eq:gluingbricks2OLD}
\xymatrix{
\ar@{-->}[dr]  E^c \ar[rrrr]^{\Phi_{\mathbf{h}}} & &  & & E_0^c.  \ar@{-->}[dl]\\\
 & \mathcal{Q}_{\bar \ell} \ar[r]^{\varphi}  \ar@/^2pc/[rr]^{\varphi_0^{-1}\circ \varphi} & \mathcal{Q}  & \ar[l]_{\varphi_0} \mathcal{Q}_{\bar \ell_0}  &
  }
\end{equation*}
where $\varphi_0 = e^{-\i \omega_0}$ (see Lemma \ref{l:liftingQ}), and such that
$$
\Phi_{\mathbf{h}}(\bar{\ell}) = \bar \ell_0:= e^{\i \omega_0} T^{0,1} \subset E^c_0.
$$
Via the Chern correspondence for string algebroids \cite[Lemma 4.11]{garciafern2020gauge}, the \emph{complex gauge group} $\cG(E_0^c) \subset \Aut E^c_0$, given by automorphisms of $E^c_0$ covering the identity on $M$ (cf. Section \ref{sec:gaugefix}), acts on the left on the space of generalized Hermitian metrics on $\mathcal{Q}$ (see \cite[Appendix A.1]{garciafern2020gauge}). This action can be implicitly defined via the identity
$$
\Phi_{\mathbf{h}'} = \Phi \circ \Phi_{\mathbf{h}}, \qquad \mathbf{h}' := \Phi\cdot \mathbf{h}.
$$
More explicitly, we can identify $\Phi$ with a pair $(\bar{\phi},B)$ where $\bar{\phi} \colon P^c \to P^c$ is a complex gauge transformation and $B \in \Gamma(\Lambda^2_\CC)$ is a complex two form on $M$, satisfying (see \cite[Corollary 4.2]{GFRT17})
$$
dB = 2 \langle a^{\bar{\phi}}\wedge F_{A_0} \rangle_{\mathfrak{k}} + \langle a^{\bar{\phi}} \wedge d_{A_0}a^{\bar{\phi}} \rangle_{\mathfrak{k}} + \frac{1}{3}\langle a^{\bar{\phi}}\wedge [a^{\bar{\phi}} \wedge a^{\bar{\phi}}] \rangle_{\mathfrak{k}}.
$$
Then, in terms of the parameters $\mathbf{h} \equiv (\omega,\beta,h)$ (see Proposition \ref{propo:Chernclassic}), we have
\begin{equation*}\label{eq:PicQactexp}
\Phi \cdot \mathbf{h} \equiv (\omega',\beta',\bar{\phi}h),
\end{equation*}
where (for $a^{\bar{\phi}} = \bar{\phi}^*A_0 - A_0$) (see \cite[Lemma A.2]{garciafern2020gauge})
\begin{equation}\label{eq:PicQactexp2}
\begin{split}
\omega' & = \omega - \operatorname{Im} \(B + \IP{ a^{\bar{\phi}} \wedge (A_0 - A^h) }_\mathfrak{k} + \IP{ (\bar{\phi} A^h - A_0 )\wedge (A^{\bar{\phi}h} - \bar{\phi}A^h)}_\mathfrak{k} \)^{1,1},\\
2\i\beta' & = 2\i \beta - \(B + \IP{ a^{\bar{\phi}} \wedge (A_0 - A^h) }_\mathfrak{k} + \IP{ (\bar{\phi} A^h - A_0) \wedge (A^{\bar{\phi}h} - \bar{\phi}A^h) }_\mathfrak{k}\)^{2,0} \\
&  \phantom{ {} = } + \overline{\(B + \IP{(\bar{\phi} A^h - A_0) \wedge (A^{\bar{\phi}h} - \bar{\phi}A^h)}_\mathfrak{k}\)^{0,2}}.
\end{split}
\end{equation}

To establish the desired equivalence between the pluriclosed flow in the holomorphic gauge \ref{eq:CPFhol} and the pluriclosed flow in the unitary gauge \eqref{eq:UPCF}, we first prove an abstract characterization of the former (cf. Definition \ref{def:UPCF}).

\begin{lemma}\label{lem:PCFholabs}
Let $\mathbf{h}_t \equiv (h_t,E_t,\bar \ell_t,\varphi_t)$ be a one-parameter of generalized Hermitian metrics on $\mathcal{Q}$. Consider the associated family of infinitesimal complex gauge transformations
$$
\widetilde{\zeta}_t =(\Phi_{\mathbf{h}_t})_*\zeta_t \in \Lie \cG(E^c_0),
$$
where $\zeta_t := \zeta^{\bar \ell_t} \in \Lie \cG(E^c_t)$ is defined by (cf. Proposition \ref{prop:zetaell})
$$
\zeta_t = \i \rho_B(\omega_t) + \langle F_{h_t},S^{h_t}_{g_t} \rangle_{\mathfrak{k}} + \frac{1}{2} S^{h_t}_{g_t}.
$$
Then, $\mathbf{h}_t \equiv (\omega_t,\beta_t,h_t)$ solves the pluriclosed flow \ref{eq:CPFhol} if and only if
\begin{equation}\label{eq:CPCFholabs}
\dt \mathbf{h}_t = \widetilde{\zeta}_t\cdot \mathbf{h}_t.
\end{equation}
\end{lemma}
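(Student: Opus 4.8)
The plan is to reduce the stated equivalence to an equation for the defining isomorphisms $\Phi_{\mathbf{h}_t}\colon E_t^c \to E_0^c$, and then to unwind that into the classical tensors $(\omega_t,\beta_t,h_t)$ via the explicit gauge action \eqref{eq:PicQactexp2}. First I would exploit the cocycle property $\Phi_{\Phi\cdot\mathbf{h}} = \Phi\circ\Phi_{\mathbf{h}}$ defining the left action of $\cG(E_0^c)$: since $\mathbf{h}\mapsto\Phi_{\mathbf{h}}$ has injective differential and the infinitesimal action of $\widetilde{\zeta}_t$ corresponds to post-composition, the equation $\dt\mathbf{h}_t = \widetilde{\zeta}_t\cdot\mathbf{h}_t$ holds if and only if $\dt\Phi_{\mathbf{h}_t} = \widetilde{\zeta}_t\circ\Phi_{\mathbf{h}_t}$. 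Because $\Phi_{\mathbf{h}_t}$ preserves the Dorfman bracket and $\widetilde{\zeta}_t = (\Phi_{\mathbf{h}_t})_*\zeta_t = \Phi_{\mathbf{h}_t}\circ\zeta_t\circ\Phi_{\mathbf{h}_t}^{-1}$ is pushforward by conjugation, this is in turn equivalent to the logarithmic-derivative identity
$$
\Phi_{\mathbf{h}_t}^{-1}\dt\Phi_{\mathbf{h}_t} = \zeta_t = \zeta^{\overline{\ell}_t}.
$$
This is the conceptual core: the holomorphic-gauge flow says precisely that the moving trivialization $\Phi_{\mathbf{h}_t}$ rotates by the canonical infinitesimal symmetry of Proposition \ref{prop:zetaell}.

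Next I would extract the three scalar equations. Applying this identity to the relation $\Phi_{\mathbf{h}_t}(\overline{\ell}_t) = \overline{\ell}_0$ and differentiating in $t$ yields $\dt\overline{\ell}_t = -\zeta^{\overline{\ell}_t}\cdot\overline{\ell}_t$, i.e. $\overline{\ell}_t$ solves the unitary-gauge pluriclosed flow \eqref{eq:UPCF}; by Proposition \ref{prop:UPCFexp} this already produces the $\omega$- and $A^{h}$-components, hence, after the Chern correspondence of Proposition \ref{propo:Chernclassic}, the first and third lines of \eqref{eq:CPFhol}. For the explicit form of $\widetilde{\zeta}_t$ I would use Proposition \ref{prop:zetaell}, which gives $\zeta_t$ in the $\overline{\ell}_t$-splitting with $\ad P^c$-component $\tfrac{\i}{2}\Lambda_{\omega_t}F_{h_t} = \tfrac12 S^{h_t}_{g_t}$ and two-form component $\i\rho_B(\omega_t) + \i\langle F_{h_t},\Lambda_{\omega_t}F_{h_t}\rangle_{\mathfrak{k}}$. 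By Remark \ref{rem:Lieexchange} and Lemma \ref{lem:LieAut} the pushforward $(\Phi_{\mathbf{h}_t})_*$ to the fixed $\overline{\ell}_0$-splitting leaves the $\ad P^c$-part unchanged, since the underlying vector field vanishes, and replaces the two-form part by its $\alpha_t$-corrected expression \eqref{eq:Btilde}. The $\ad P^c$-part integrates to the complex gauge transformations acting on $h$, and by the normalization fixed around \eqref{eq:naturalS} it produces exactly $h^{-1}\dt h = -S^h_g$.

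Finally I would obtain the $\omega$- and $\beta$-equations simultaneously by linearizing the finite action \eqref{eq:PicQactexp2} at the identity in the direction $\widetilde{\zeta}_t$. Writing $\dot B$ for the two-form generator and $u = \tfrac12 S^{h_t}_{g_t}$ for the $\ad P^c$-generator, the $(1,1)$- and $(2,0)$-parts of $\omega'$ and $\beta'$ give $\dt\omega = -\operatorname{Im}(\dot B + \cdots)^{1,1}$ and $2\i\,\dt\beta = -(\dot B + \cdots)^{2,0} + \overline{(\dot B + \cdots)^{0,2}}$, where the dots are the linearizations of the connection-quadratic terms in \eqref{eq:PicQactexp2}. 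Substituting $\i\rho_B + \i\langle F_h,\Lambda_\omega F_h\rangle_{\mathfrak{k}}$ for the leading two-form and using $S^h_g = \i\Lambda_\omega F_h$ collapses the $(1,1)$-part to $-\rho_B^{1,1} + \i\langle S^h_g,F_h\rangle_{\mathfrak{k}}$. The converse direction is read off the same chain of equivalences backwards, using that every triple $(\omega_t,\beta_t,h_t)$ satisfying the Bianchi identity arises as a generalized Hermitian metric by Proposition \ref{propo:Chernclassic}.

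The hard part will be the $(2,0)$/$\beta$-component: it is the one piece \emph{not} seen by the unitary lifting $\overline{\ell}_t$ alone, which only records $(\omega,h)$ and the splitting but not $\beta$, so it must be produced entirely from the full operator identity rather than from $\dt\overline{\ell}_t = -\zeta^{\overline{\ell}_t}\cdot\overline{\ell}_t$. Concretely, one must verify that the linearized connection-quadratic terms of \eqref{eq:PicQactexp2}, combined with the $\alpha_t$-dependence of the pushed-forward two-form \eqref{eq:Btilde} and the relation $\dot\alpha_t = \partial^{h}(h^{-1}\dt h)$, assemble exactly into the coupling $-\tfrac{\i}{2}\langle \alpha\wedge\partial^h S^h_g\rangle_{\mathfrak{k}}$ in the second line of \eqref{eq:CPFhol}. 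This is the same cancellation already engineered in the first part of the proof of Proposition \ref{p:PCFhol}, and I would lean on that computation to organize the bookkeeping.
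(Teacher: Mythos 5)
Your proposal is correct, and its computational core --- linearizing the finite gauge action \eqref{eq:PicQactexp2} at the identity in the direction $\widetilde{\zeta}_t$, with the two-form component corrected as in \eqref{eq:Btilde} and Remark \ref{rem:Lieexchange} --- is essentially the paper's proof: the paper computes $\widetilde{\zeta}_t = s_t + B_t - \IP{\alpha_t \wedge (d_{A_0}s + d_{A^{h_t}}s)}_{\mathfrak{k}}$ and then simply cites \cite[Lemma A.4]{garciafern2020gauge} for the infinitesimal action of such an element on the parameters $(\omega,\beta,h)$, after which substituting $s_t = \tfrac{1}{2}S^{h_t}_{g_t}$ and $B_t = \i\rho_B(\omega_t) + \i\IP{F_{h_t},\Lambda_{\omega_t}F_{h_t}}_{\mathfrak{k}}$ produces all three lines of \eqref{eq:CPFhol} at once, in particular $2\i\dot\beta = -2\i\rho_B^{2,0} + \IP{\alpha\wedge\partial^{h}S^h_g}_{\mathfrak{k}}$ and $\dot h = -h(s - s^{*_{h}}) = -hS^h_g$, making the equivalence with \eqref{eq:CPCFholabs} immediate. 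So the ``hard part'' you isolate at the end is in fact the entire proof, and your preliminary scaffolding --- the operator identity $\Phi_{\mathbf{h}_t}^{-1}\dt\Phi_{\mathbf{h}_t} = \zeta_t$ and the detour through the unitary-gauge flow of Proposition \ref{prop:UPCFexp} --- is redundant: the linearized action also delivers the $\omega$- and $h$-equations, so nothing needs to be routed through \eqref{eq:CPFunitary}. Two cautions if you keep that detour. First, $\overline{\ell}_t$ sits in the varying algebroid $E_t^c$ (the underlying bundle $P_{h_t}$ moves with $t$), so $\dt\overline{\ell}_t$ only makes sense after transport to $E_0^c$, and converting the unitary-gauge equations \eqref{eq:CPFunitary} into the first and third lines of \eqref{eq:CPFhol} requires the conjugation bookkeeping of \eqref{eq:naturalS}; this is precisely the content of Proposition \ref{p:PCFholU} and the remark following it, which the paper deduces \emph{from} the present lemma, so invoking it here would invert the intended logical order (avoidable, but only by redoing that argument from scratch, as you partially sketch). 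Second, the ``only if'' half of your first equivalence uses injectivity of the differential of $\mathbf{h}\mapsto\Phi_{\mathbf{h}}$, which you assert rather than prove; it does hold, since $\Phi_{\mathbf{h}}$ recovers $\overline{\ell} = \Phi_{\mathbf{h}}^{-1}(\overline{\ell}_0)$ and the remaining data, but it deserves a line. What your framing buys is the transparent slogan that the moving trivialization rotates by the canonical symmetry of Proposition \ref{prop:zetaell}; what the paper's version buys is brevity and no risk of circularity.
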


\begin{proof}
Denoting $\zeta_t = s_t + B_t$, by Remark \ref{rem:Lieexchange} we have
$$
\widetilde{\zeta}_t = s_t + B_t -\IP{\alpha_t \wedge (d_{A_0} s 
 + d_{A^{h_t}} s)}_{\mathfrak{k}} \in \Lie \cG(E^c_0),
$$
where $\alpha_t = A^{h_t} - A_0$. The proof follows now by direct application of \cite[Lemma A.4]{garciafern2020gauge}, which implies that the infinitesimal action of $\widetilde{\zeta}_t = \widetilde{s}_t + \widetilde B_t$ on $\mathbf{G}_t \equiv \mathbf{G}(\omega_t,\beta_t,h_t)$ is given by
$$
\widetilde{\zeta}_t \cdot (\omega_t,\beta_t,h_t) = (\dot \omega,\dot \beta,\dot h) 
$$
where
\begin{align*}
\dot \omega & =  - \operatorname{Im}\(\widetilde B^{1,1}_t + 2 \IP{\alpha_t \wedge \dbar \widetilde s_t}\) =  - \operatorname{Im} B^{1,1}_t =  - \rho_B(\omega_t)^{1,1} + \i \langle S^{h_t}_{g_t}, F_{h_t} \rangle_{\mathfrak{k}}, \\
2\i \dot \beta & = - \widetilde B^{2,0}_t + \overline{\widetilde B^{0,2}_t} - \IP{\alpha_t \wedge (\partial^{h_0}\widetilde s_t - \partial^{h_t} \widetilde s_t^{*_{h_t}})}\\
& = - B^{2,0}_t + \overline{ B^{0,2}_t} + \IP{\alpha_t \wedge (\partial^{h_t}s_t - \partial^{h_t} s_t^{*_{h_t}})} = - 2 \i \rho_B(\omega_t)^{2,0} + \IP{\alpha_t \wedge \partial^{h_t}S^{h_t}_{g_t}},\\
\dot h & = - h_t(s_t -  s_t^{*_{h_t}}) = - h_t S^{h_t}_{g_t}.
\end{align*}
\end{proof}

Having established the abstract formula \eqref{eq:CPCFholabs}, the equivalence between \eqref{eq:CPFhol} and \eqref{eq:UPCF} follows now by a purely formal argument, using the natural dependence of the infinitesimal gauge symmetry $\zeta^{\bar \ell}$, constructed in Proposition \ref{prop:zetaell}), on the parameters.

\begin{prop}\label{p:PCFholU}
Let $\mathbf{h}_t$ be a one-parameter family of generalized Hermitian metrics on $\mathcal{Q}$ solving \eqref{eq:CPFhol}. Let $\widetilde{\Phi}_t  \in \cG(E^c_0)$ be the family of complex gauge transformations integrating (see Lemma \ref{lem:PCFholabs})
$$
\widetilde{\zeta}_t = (\Phi_{\mathbf{h}_t})_*\zeta_t \in \Lie \cG(E^c_0).
$$
Then, 
$$
\widetilde {\bar \ell}_t := \widetilde \Phi_t^{-1} (\bar \ell_0)
$$
is a solution of \eqref{eq:UPCF}.    Conversely, given ${\bar \ell}_t \subset E_0^c$ a solution of \eqref{eq:UPCF}, denote $\mathbf{h}_0 \equiv (\omega_0,0,h_0)$ and let $\widetilde{\Phi}_t  \in \cG(E^c_0)$ be the one-parameter family of complex gauge transformation integrating
$\zeta^{\bar \ell_t} \in \Lie \cG(E^c_0)$. Then, 
$\mathbf{h}_t = \widetilde{\Phi}_t \mathbf{h}_0$ solves \eqref{eq:CPFhol}.
\end{prop}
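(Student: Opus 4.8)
The plan is to deduce both implications from the two abstract descriptions already in place: Lemma \ref{lem:PCFholabs}, which recasts \eqref{eq:CPFhol} as the abstract flow $\dt \mathbf{h}_t = \widetilde{\zeta}_t \cdot \mathbf{h}_t$ with $\widetilde{\zeta}_t = (\Phi_{\mathbf{h}_t})_* \zeta^{\overline{\ell}_t}$ (equation \eqref{eq:CPCFholabs}), and Lemma \ref{lem:UPCF} together with Definition \ref{def:UPCF}, which realizes a flow line of \eqref{eq:UPCF} as $\overline{\ell}_t = \Phi_t^{-1}(\overline{\ell}_0)$, for $\Phi_t \in \cG(E_0^c)$ the family integrating the intrinsic symmetry $\zeta^{\overline{\ell}_t}$. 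The only structural inputs I need are that the pushforward of an infinitesimal symmetry along an automorphism is its adjoint action, $(\Phi)_* \eta = \Ad_{\Phi} \eta$ for $\eta \in \Lie\Aut$, and that the infinitesimal action on liftings and on generalized Hermitian metrics is equivariant, $(\Ad_{\Phi}\eta)\cdot(\Phi\cdot x) = \Phi\cdot(\eta\cdot x)$. With these, the whole argument is the bookkeeping of a single one-parameter family of complex gauge transformations, read in the unitary gauge as acting on liftings and in the holomorphic gauge as acting on generalized Hermitian metrics. Throughout I normalize so that $\Phi_{\mathbf{h}_0} = \Id$; this is legitimate because for $\mathbf{h}_0 \equiv (\omega_0,\beta_0,h_0)$ the defining isomorphism $\Phi_{\mathbf{h}_0}$ is the $(2,0)$-form $B$-shift $e^{(2\i\beta_0,0)}$, which fixes $\overline{\ell}_0 = e^{\i\omega_0}T^{0,1}$ and leaves the lifting $\overline{\ell}^{\mathbf{h}_0} = \overline{\ell}_0$ unchanged.

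I would treat the converse first, as it is cleaner. Starting from a solution $\overline{\ell}_t$ of \eqref{eq:UPCF}, Lemma \ref{lem:UPCF} gives $\overline{\ell}_t = \widetilde{\Phi}_t^{-1}(\overline{\ell}_0)$ with $\widetilde{\Phi}_t$ integrating $\zeta^{\overline{\ell}_t}$ and $\widetilde{\Phi}_0 = \Id$. Setting $\mathbf{h}_t = \widetilde{\Phi}_t \cdot \mathbf{h}_0$ and differentiating the gauge action exactly as in the proof of Lemma \ref{lem:UPCF} yields $\dt\mathbf{h}_t = \widetilde{\Phi}_t(\zeta^{\overline{\ell}_t}\cdot \mathbf{h}_0)$, and equivariance turns this into $\dt\mathbf{h}_t = (\Ad_{\widetilde{\Phi}_t}\zeta^{\overline{\ell}_t})\cdot\mathbf{h}_t$. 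Since $\Phi_{\mathbf{h}_0} = \Id$, the defining isomorphism of $\mathbf{h}_t$ is $\Phi_{\mathbf{h}_t} = \widetilde{\Phi}_t$, its lifting equals $\Phi_{\mathbf{h}_t}^{-1}(\overline{\ell}_0) = \overline{\ell}_t$, and hence $\Ad_{\widetilde{\Phi}_t}\zeta^{\overline{\ell}_t} = (\Phi_{\mathbf{h}_t})_*\zeta^{\overline{\ell}_t} = \widetilde{\zeta}_t$. Thus $\dt\mathbf{h}_t = \widetilde{\zeta}_t\cdot\mathbf{h}_t$, which by Lemma \ref{lem:PCFholabs} is precisely \eqref{eq:CPFhol}.

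For the forward implication I run the same computation backwards. Given $\mathbf{h}_t$ solving \eqref{eq:CPFhol}, Lemma \ref{lem:PCFholabs} gives $\dt\mathbf{h}_t = \widetilde{\zeta}_t\cdot\mathbf{h}_t$, so the gauge flow $\widetilde{\Phi}_t$ realizing $\mathbf{h}_t = \widetilde{\Phi}_t\cdot\mathbf{h}_0$ is exactly the family integrating $\widetilde{\zeta}_t$, and $\Phi_{\mathbf{h}_t} = \widetilde{\Phi}_t$ satisfies $\dt\Phi_{\mathbf{h}_t}\circ\Phi_{\mathbf{h}_t}^{-1} = \widetilde{\zeta}_t$. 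Writing $\overline{\ell}^{\mathbf{h}}_t := \Phi_{\mathbf{h}_t}^{-1}(\overline{\ell}_0)$, which is the claimed $\widetilde{\overline{\ell}}_t$, I would differentiate the constant identity $\Phi_{\mathbf{h}_t}(\overline{\ell}^{\mathbf{h}}_t) = \overline{\ell}_0$ as in the proof of Lemma \ref{lem:UPCF}, obtaining $\widetilde{\zeta}_t\cdot\overline{\ell}_0 + \Phi_{\mathbf{h}_t}\big(\dt\overline{\ell}^{\mathbf{h}}_t\big) = 0$. Substituting $\widetilde{\zeta}_t = \Ad_{\Phi_{\mathbf{h}_t}}\zeta^{\overline{\ell}^{\mathbf{h}}_t}$ and using equivariance to rewrite $\widetilde{\zeta}_t\cdot\overline{\ell}_0 = (\Ad_{\Phi_{\mathbf{h}_t}}\zeta^{\overline{\ell}^{\mathbf{h}}_t})\cdot(\Phi_{\mathbf{h}_t}\cdot\overline{\ell}^{\mathbf{h}}_t) = \Phi_{\mathbf{h}_t}\big(\zeta^{\overline{\ell}^{\mathbf{h}}_t}\cdot\overline{\ell}^{\mathbf{h}}_t\big)$, I may cancel the invertible $\Phi_{\mathbf{h}_t}$ to conclude $\dt\overline{\ell}^{\mathbf{h}}_t = -\zeta^{\overline{\ell}^{\mathbf{h}}_t}\cdot\overline{\ell}^{\mathbf{h}}_t$, which is \eqref{eq:UPCF}.

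The step requiring the most care, and the main conceptual obstacle, is the reconciliation of the two ways the \emph{same} family of gauge transformations is generated: by the intrinsic symmetry $\zeta^{\overline{\ell}_t}$ acting on the right (the unitary-gauge, Lemma \ref{lem:UPCF} viewpoint) and by its pushforward $\widetilde{\zeta}_t$ acting on the left (the holomorphic-gauge, Lemma \ref{lem:PCFholabs} viewpoint). These are matched by the single identity $\Phi\circ\eta = (\Ad_{\Phi}\eta)\circ\Phi$, which is exactly the content of $\widetilde{\zeta}_t = \Ad_{\Phi_{\mathbf{h}_t}}\zeta^{\overline{\ell}^{\mathbf{h}}_t}$; it is the natural dependence of $\zeta^{\overline{\ell}}$ on $\overline{\ell}$ established in Proposition \ref{prop:zetaell} that makes this consistent along the flow. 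A secondary point to verify is the normalization $\Phi_{\mathbf{h}_0} = \Id$ above, ensuring $\widetilde{\overline{\ell}}_t = \overline{\ell}^{\mathbf{h}}_t$ rather than a harmless fixed $B$-shift of it. Notably, no naturality of the \emph{assignment} $\overline{\ell}\mapsto\zeta^{\overline{\ell}}$ under complex gauge transformations is required, so the real-structure subtleties in the D-term equation of Lemma \ref{lem:braketsum0} never enter.
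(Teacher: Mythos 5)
Your skeleton matches the paper's proof in outline: both directions run through Lemma \ref{lem:PCFholabs} (the recasting of \eqref{eq:CPFhol} as $\dt \mathbf{h}_t = \widetilde{\zeta}_t\cdot\mathbf{h}_t$), the gauge-orbit description of \eqref{eq:UPCF} from Lemma \ref{lem:UPCF}, and the ODE argument yielding $\mathbf{h}_t = \widetilde{\Phi}_t\,\mathbf{h}_0$ (your normalization $\Phi_{\mathbf{h}_0} = \Id$ and the formal equivariance of the infinitesimal actions are both fine). But there is a genuine gap at the one step carrying all the content: the substitution $\widetilde{\zeta}_t = \Ad_{\Phi_{\mathbf{h}_t}}\zeta^{\overline{\ell}^{\mathbf{h}}_t}$ (and its mirror $\Ad_{\widetilde{\Phi}_t}\zeta^{\overline{\ell}_t} = \widetilde{\zeta}_t$ in the converse) is \emph{not} an instance of the tautology $\Phi\circ\eta = (\Ad_{\Phi}\eta)\circ\Phi$, and your closing claim that no naturality of the assignment $\overline{\ell}\mapsto\zeta^{\overline{\ell}}$ under complex gauge transformations is required is exactly wrong. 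The symmetry $\zeta^{\overline{\ell}}$ is not a function of the lifting alone: by Proposition \ref{prop:zetaell} and Lemma \ref{lem:braketsum0} it is built from the generalized metric that $\overline{\ell}$ induces on the \emph{real} algebroid, i.e. it is a function $\zeta(\overline{\ell},E)$ of the lifting and the compact form. In Lemma \ref{lem:PCFholabs} the generator $\widetilde{\zeta}_t = (\Phi_{\mathbf{h}_t})_*\zeta(\overline{\ell}_t,E_t)$ is computed with respect to the real form $E_t$ carried by $\mathbf{h}_t$, whereas the generator needed to recognize \eqref{eq:UPCF} on $E_0$ is $\zeta(\widetilde{\overline{\ell}}_t,E_0)$, computed with respect to $E_0$. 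Since $\Phi_{\mathbf{h}_t}$ is a genuinely complex gauge transformation it does not preserve the compact form, so identifying the two sides of your $\Ad$ identity is precisely a naturality statement for $\zeta$ in \emph{both} arguments; group-theoretic bookkeeping and equivariance of the actions cannot produce it.

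The paper's proof supplies exactly the two ingredients you omit. First, the naturality $\Phi_*\zeta(\overline{\ell},E) = \zeta(\Phi(\overline{\ell}),\Phi(E))$ for any isomorphism of complex string algebroids, which holds because $\zeta$ is constructed by solving the D-term equation \eqref{eq:globalsumlocal} using only the Dorfman bracket, the pairing, and the decomposition induced by the pair $(\overline{\ell},E)$ --- so the real-structure dependence you dismiss is precisely what must be tracked. Second, the identity $\widetilde{\Phi}_t^{-1}\Phi_{\mathbf{h}_t}E_t = E_0$, deduced from $\mathbf{h}_t = \widetilde{\Phi}_t\,\mathbf{h}_0$, which lets one trade the moving compact form $\Phi_{\mathbf{h}_t}(E_t)$ for $\widetilde{\Phi}_t(E_0)$ and then, by naturality, conclude $\bigl(\widetilde{\Phi}_t^{-1}\bigr)_*\widetilde{\zeta}_t = \zeta\bigl(\widetilde{\overline{\ell}}_t, \widetilde{\Phi}_t^{-1}\Phi_{\mathbf{h}_t}E_t\bigr) = \zeta^{\widetilde{\overline{\ell}}_t}$. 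With these two facts inserted where you assert the $\Ad$ identity, your computation closes and coincides with the paper's; without them the step fails, since there is no a priori relation between the D-term potentials taken with respect to $E_t$ and those taken with respect to $E_0$.
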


\begin{proof}
Note that the element $\zeta^{\bar\ell} \in \Lie \Aut E^c$ defined in Proposition \ref{prop:zetaell} can be regarded as a function on two parameters
$$
\zeta^{\bar\ell} = \zeta(\bar \ell,E),
$$
where $E \subset E^c$ is regarded as a compact form of $E^c$ (see \cite[Definition 4.4]{garciafern2020gauge}). Here, the reduction $h$ is thought of as part of the data determined by the real string algebroid $E$. Recall that $\zeta(\bar \ell,E)$ is constructed by taking suitable local frames of $\bar \ell$ and its conjugate on $E^c = E \otimes \CC$, and solving the D-term equation \eqref{eq:globalsumlocal}, which involves the Dorfman bracket and orthogonal projection on $E^c$. Consequently, this quantity is natural, in the sense that given an isomorphism of complex string algebroids $\Phi \colon E^c \to \tilde E^c$ one has
$$
\Phi_*\zeta(\bar \ell,E) = \zeta(\Phi(\bar \ell),\Phi(E)).
$$
For the first part of the statement, without loss of generality we assume that $\mathbf{h}_0 \equiv (\omega_0,0,h_0)$. By Lemma \ref{lem:PCFholabs}, it follows that \eqref{eq:CPFhol} implies
$$
\dt \widetilde{\Phi}_t^{-1} \mathbf{h}_t = \widetilde{\Phi}_t^{-1}\(- \widetilde{\zeta}_t \cdot  \mathbf{h}_t + \dt \mathbf{h}_t\) = 0,
$$
and therefore $\mathbf{h}_t  = \widetilde{\Phi}_t \mathbf{h}_0$ for all $t$. Consequently, it follows that
$$
\widetilde \Phi_t^{-1}  \Phi_{\mathbf{h}_t}E_t = E_0
$$
for all $t$. We also have 
$$
\widetilde{\zeta}_t = \zeta(\Phi_{\mathbf{h}_t} \bar \ell_t,\Phi_{\mathbf{h}_t} E_t) = \zeta(\bar \ell_0,\Phi_{\mathbf{h}_t} (E_t)), \qquad \zeta^{\widetilde {\bar \ell}_t} := \zeta(\widetilde {\bar \ell}_t,E_0), 
$$
and therefore we conclude 
$$
\dt \widetilde {\bar \ell}_t = - \widetilde \Phi_t^{-1} (\widetilde{\zeta}_t \cdot \bar \ell_0) = - (\widetilde \Phi_t^{-1} \widetilde{\zeta}_t) \cdot \widetilde {\bar \ell}_t = -  \zeta(\widetilde {\bar \ell}_t,\widetilde \Phi_t^{-1}  \Phi_{\mathbf{h}_t}E_t) \cdot \widetilde {\bar \ell}_t = -  \zeta^{\widetilde {\bar \ell}_t} \cdot \widetilde {\bar \ell}_t.
$$
Conversely, given ${\bar \ell}_t \subset E_0^c$ a solution of  \eqref{eq:UPCF}, we consider the family $\mathbf{h}_t = \widetilde{\Phi}_t \mathbf{h}_0$, where $\widetilde{\Phi}_t  \in \Aut E^c_0$ is as in the statement. Then, $\mathbf{h}_t$ satisfies
$$
\dt \mathbf{h}_t = \widetilde{\Phi}_t (\zeta(\bar \ell_t,E_0)\mathbf{h}_0) = \zeta(\widetilde{\Phi}_t (\bar \ell_t),\widetilde{\Phi}_t (E_0))\mathbf{h}_t = \zeta(\bar \ell_0,E_t)\cdot \mathbf{h}_t = \widetilde{\zeta}_t \cdot \mathbf{h}_t,
$$
where we have used that $\widetilde{\Phi}_t^{-1} (\bar \ell_0) = \bar \ell_t$ by definition of the flow \ref{eq:UPCF}. The proof follows by application of Lemma \ref{lem:PCFholabs}.
\end{proof}

\begin{remark}
Regarding \ref{eq:CPFhol} as an evolution equation for $(\omega_t,h_t)$ (see Remark \ref{rem:omegahflow}), it is not difficult to see, using the identity \eqref{eq:naturalS}, that $(\omega_t,A_t)$ is a solution of the first and fourth equations in \eqref{eq:CPFunitary}, where $A_t = \overline \phi_t \cdot A_0$ for $\overline \phi_t$ the one-parameter family of complex gauge transformations induced by $- \tfrac{1}{2}S^{h_t}_{g_t}$. Conversely, given a solution $(\omega_t,A_t)$ of the first and fourth equations in \eqref{eq:CPFunitary}, it follows that $(\omega_t,\overline \phi_t^*h_0)$ is a solution of the first and third equations in \ref{eq:CPFhol}, where $\overline \phi_t$ is the family of complex gauge transformations integrating $-\tfrac{\i }{2}\Lambda_\omega F_{A_t}$. The non-trivial observation made in Proposition \ref{p:PCFholU} is that this simple correspondence can be lifted to the natural flows for the $b$-fields, and that it is furthermore realized via the complicated action \eqref{eq:PicQactexp2} of the complex gauge group of the string algebroid $E_0^c$ on the space of generalized Hermitian metrics.
\end{remark}

\subsection{Aeppli classes and one-form reduction}\label{sec:Aeppli}

Let $\mathcal{Q}$ be a holomorphic string algebroid over $(M,J)$ with underlying principal bundle $(P^c,\bar J)$. We will assume that the space of generalized Hermitian metrics on $\mathcal{Q}$, which we denote $B_{\mathcal{Q}}^+$, is non-empty. By Proposition \ref{propo:Chernclassic}, we can therefore assume that $\mathcal{Q} = \mathcal{Q}_{2\i\partial \omega_0,A^{h_0}}$, for a pair $(\omega_0,h_0)$ satisfying the Bianchi identity
$$
dd^c \omega_0 + \la F_{h_0} \wedge F_{h_0} \ra_\mathfrak{k} = 0.
$$
It was observed in \cite[Section 3.3]{garciafern2018canonical} that any generalized Hermitian metric has an associated Aeppli class, that is, there is a map
\begin{equation}\label{eq:Aclass}
\mathfrak{a} \colon B_{\mathcal{Q}}^+ \to H^{1,1}_A(M,\RR),
\end{equation}
were $H^{1,1}_A(M,\RR)$ denotes the real points of the degree $(1,1)$ Aeppli cohomology group, defined by
\begin{equation}\label{eq:A-cohomology}
\begin{split}
H^{p,q}_{A}(M) & = \frac{\Ker(dd^c\colon \Lambda^{p,q} \to \Lambda^{p+1,q+1})}{\operatorname{Im}(\partial \oplus \dbar \colon \Lambda^{p-1,q}\oplus \Lambda^{p,q-1} \to \Lambda^{p,q})}.
\end{split}
\end{equation}
To recall the construction of $\mathfrak{a}$ we need first to consider secondary characteristic classes introduced by Bott and Chern \cite{BottChern} (see also \cite{BGS,DonaldsonHYM}).

\begin{proposition}[\cite{BGS,DonaldsonHYM}]\label{prop:Donaldson}
For any pair of reductions $h_0,h_1 \in \Gamma(P^c/K)$ there is a secondary characteristic class
\begin{equation}\label{eq:BCinvariant}
R(h_1,h_0) \in \Lambda^{1,1}_\RR/\operatorname{Im}(\partial \oplus \dbar)
\end{equation}
with the following properties:
\begin{enumerate}

\item $R(h_0,h_0) = 0$, and, for any third reduction $h_2$,
$$
R(h_2,h_0) = R(h_2,h_1) + R(h_1,h_0),
$$

\item if $h$ varies in a one-parameter family $h_t$, then
\begin{equation}\label{eqref:BCinvariantder}
\frac{d}{dt}R(h_t,h_0) = \i \IP{h_t^{-1} \dot h_t ,F_{h_t}}_\mathfrak{k}
\end{equation}

\item the following identity holds
$$
dd^c R(h_1,h_0) = \la F_{h_1}\wedge F_{h_1}\ra_\mathfrak{k} - \la F_{h_0}\wedge F_{h_0}\ra_\mathfrak{k}.
$$
\end{enumerate}
\end{proposition}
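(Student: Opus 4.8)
The plan is to construct $R(h_1,h_0)$ by transgression along a path of reductions and then read off the three properties from the resulting variational formula. First I would observe that the fibre $K^c/K$ is a symmetric space of non-compact type, hence contractible, so the space $\Gamma(P^c/K)$ of reductions is path-connected; concretely, any two reductions are joined by $h_t = h_0\exp(ts)$, where $s \in \Gamma(\ad P^c)$ is the $h_0$-Hermitian endomorphism determined by $h_0\exp(s)=h_1$. For any smooth path $h_t$, $t\in[0,1]$, from $h_0$ to $h_1$ I would set
$$
R(h_1,h_0) := \Big[\int_0^1 \i\,\langle h_t^{-1}\dot h_t,\, F_{h_t}\rangle_{\mathfrak{k}}\,dt\Big] \in \Lambda^{1,1}_\RR/\operatorname{Im}(\partial\oplus\dbar),
$$
noting that $h_t^{-1}\dot h_t$ is $h_t$-Hermitian and $F_{h_t}$ is an $\ad P_{h_t}$-valued $(1,1)$-form, so the integrand is a genuine real $(1,1)$-form, the factor $\i$ ensuring reality.

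The central point, and the step I expect to be the main obstacle, is to show that this class is independent of the chosen path. I would take a two-parameter family $h_{t,u}$ with fixed endpoints and compute the $u$-derivative of the $t$-integral. Writing $s_t=h^{-1}\partial_t h$ and $s_u=h^{-1}\partial_u h$, the two ingredients are the variation of the Chern curvature $\partial_u F_h = \dbar\partial^h s_u$ (which is exactly the identity $\dot\alpha=\partial^h(h^{-1}\dot h)$ used in the proof of Proposition \ref{p:PCFhol}, combined with $\dot F = d_A\dot A$ and $F_h^{0,2}=0$) and the commutator relation $\partial_u s_t - \partial_t s_u = [s_t,s_u]$. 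Integrating by parts in the quotient $\Lambda^{1,1}/\operatorname{Im}(\partial\oplus\dbar)$ — in which $\dbar$ of a $(1,0)$-form and $\partial$ of a $(0,1)$-form are declared trivial — and using the Bianchi identity $d_A F_h = 0$, so that $\partial^h F_h = \dbar F_h = 0$ by type, together with the ad-invariance of $\langle\cdot,\cdot\rangle_{\mathfrak{k}}$, the $u$-derivative of $\int_0^1 \i\langle s_t,F_h\rangle_{\mathfrak{k}}\,dt$ is seen to vanish modulo $\operatorname{Im}(\partial\oplus\dbar)$. This ``transgression of the transgression'' identity is the technical heart; once it is in hand, $R(h_1,h_0)$ depends only on its endpoints.

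Granted path-independence, property (1) is formal: the constant path gives $R(h_0,h_0)=0$, and concatenating a path from $h_0$ to $h_1$ with one from $h_1$ to $h_2$ yields the cocycle relation $R(h_2,h_0)=R(h_2,h_1)+R(h_1,h_0)$. For property (2), given a family $h_t$ I would evaluate $R(h_t,h_0)$ using the truncated path $\{h_\tau\}_{\tau\le t}$, legitimate by path-independence, so that $R(h_t,h_0)=\big[\int_0^t \i\langle h_\tau^{-1}\dot h_\tau,F_{h_\tau}\rangle_{\mathfrak{k}}\,d\tau\big]$, whence the fundamental theorem of calculus gives $\frac{d}{dt}R(h_t,h_0) = \i\langle h_t^{-1}\dot h_t,F_{h_t}\rangle_{\mathfrak{k}}$ directly.

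Finally, for property (3) I would apply $dd^c$ to the defining integral. Since $dd^c\partial\gamma = dd^c\dbar\eta = 0$, the operator $dd^c$ annihilates $\operatorname{Im}(\partial\oplus\dbar)$ and descends to the quotient, so $dd^c R(h_1,h_0) = \int_0^1 dd^c\big(\i\langle h_t^{-1}\dot h_t,F_{h_t}\rangle_{\mathfrak{k}}\big)\,dt$. The key pointwise identity, consistent with the convention $dd^c=2\i\partial\dbar$ forced by (2) and (3), is
$$
dd^c\big(\i\langle s,F_h\rangle_{\mathfrak{k}}\big) = 2\,\langle \dbar\partial^h s\wedge F_h\rangle_{\mathfrak{k}} = \frac{d}{dt}\langle F_h\wedge F_h\rangle_{\mathfrak{k}}, \qquad s=h^{-1}\dot h.
$$
To establish the left equality I would expand $\partial\dbar\langle s,F_h\rangle_{\mathfrak{k}}$ by Leibniz, repeatedly invoke $\partial^h F_h = \dbar F_h = 0$ and the curvature relation $\partial^h\dbar s + \dbar\partial^h s = [F_h,s]$, and then note that $\langle [F_h,s]\wedge F_h\rangle_{\mathfrak{k}} = 0$, which follows from ad-invariance together with $[F_h\wedge F_h]=0$ for the even-degree form $F_h$. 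The right equality is simply $\dot F_h = d_A\partial^h s = \dbar\partial^h s$. Integrating in $t$ from $0$ to $1$ and using $R(h_0,h_0)=0$ then yields $dd^c R(h_1,h_0) = \langle F_{h_1}\wedge F_{h_1}\rangle_{\mathfrak{k}} - \langle F_{h_0}\wedge F_{h_0}\rangle_{\mathfrak{k}}$, completing the argument.
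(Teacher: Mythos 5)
Your proposal is correct and takes essentially the same route as the paper: the paper defines $R(h_1,h_0)$ by precisely the transgression integral \eqref{eq:Rtilde} along a path of reductions, deferring to \cite{BGS,DonaldsonHYM} the fact that different paths change it only by an element of $\operatorname{Im}(\partial \oplus \dbar)$ — the step you verify directly via the two-parameter variation, using $\partial_u F_h = \dbar\partial^h s_u$, the commutator identity, and ad-invariance. Properties (1)--(3) then follow exactly as you describe, and your $dd^c$ computation (with $dd^c = 2\i\partial\dbar$, $\partial^h F_h = \dbar F_h = 0$, and $\langle [F_h,s]\wedge F_h\rangle_{\mathfrak k} = 0$) is the standard verification of item (3).
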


The \emph{Bott-Chern class} \eqref{eq:BCinvariant} can be defined by integration of \eqref{eqref:BCinvariantder} along a path in the space of reductions of $P^c$. More precisely, given $h_0$ and $h_1$, one defines
\begin{equation}\label{eq:Rtilde}
\tilde R(h_1,h_0) = \i \int_0^1 \IP{h_t^{-1}\dot h_t ,F_{h_t}}_{\mathfrak{k}} dt \in \Lambda^{1,1}_\RR,
\end{equation}
for a choice of path $h_t$ joining $h_0$ and $h_1$. For a different choice of path, $\tilde R(h_1,h_0)$ differs by an element in $\operatorname{Im}(\partial \oplus \dbar)$, and hence one obtains a well-defined class in \eqref{eq:BCinvariant}. 
Following Remark \ref{rem:BIdentity}, any generalized Hermitian metric $(\omega,\beta,h)$ satisfies the Bianchi identity \ref{eq:BIGHM} and therefore it follows from Proposition \ref{prop:Donaldson} that the following map is well-defined.

\begin{definition}[\cite{garciafern2018canonical}]\label{def:Aeppli}
The Aeppli class of a generalized Hermitian metric $\mathbf{h} \equiv (\omega,\beta,h) \in B_{\mathcal{Q}}^+$ is defined by
$$
\mathfrak{a}(\omega,\beta,h) = [\omega - \omega_0 + R(h,h_0)] \in H^{1,1}_A(M,\RR).
$$
\end{definition}

\begin{remark}
More invariantly, Aeppli classes of elements in $B_{\mathcal{Q}}^+$ can be defined without a choice of reference point $(\omega_0,0,h_0)$, as in \cite[Definition 3.20]{garciafern2018canonical}, by means of a natural equivalence relation induced by property (3) in Proposition \ref{prop:Donaldson}. An alternative definition of this quantity has been proposed in \cite{picard2024strominger}.
\end{remark}

To have some control on the Aeppli classes of generalized Hermitian metrics it is convenient to consider an extension of the map \eqref{eq:Aclass}. For this, we set
$$
B_{\mathcal{Q}} := \{(\gamma,\beta,h) \; | \; \textrm{ satisfying } \eqref{eq:structuralGHM}\} \subset \Lambda^{1,1}_\mathbb{R} \oplus \Lambda^{2,0} \oplus \Gamma(P^c/K).
$$
Note that $B_{\mathcal{Q}}^+ \subset B_{\mathcal{Q}}$ corresponds to the open set where the $(1,1)$-form $\gamma$ is positive. Then, we have a well-defined map
\begin{equation}\label{eq:Aclassext}
\mathfrak{a} \colon B_{\mathcal{Q}} \to H^{1,1}_A(M,\RR),
\end{equation}
defined by the same formula as in Definition \ref{def:Aeppli}, replacing $\omega$ by $\gamma$. Consider the first \v Cech cohomology $H^1(\Lambda^{2,0}_{cl})$ of the sheaf of closed $(2,0)$-forms on $(M,J)$, which can be identified with
\begin{equation*}\
H^1(\Lambda^{2,0}_{cl}) \cong \frac{\Ker \; d \colon \Lambda^{3,0} \oplus \Lambda^{2,1} \to \Lambda^{4,0} \oplus \Lambda^{3,1} \oplus \Lambda^{2,2}}{ \operatorname{Im} \; d \colon \Lambda^{2,0} \to\Lambda^{3,0} \oplus \Lambda^{2,1}}.
\end{equation*}
Using \eqref{eq:A-cohomology} we define a map 
\begin{equation}\label{eqannex:partialmap}
\partial \colon H^{1,1}_A(M) \to H^1(\Lambda^{2,0}_{cl})
\end{equation}
induced by the $\partial$ operator on forms acting on representatives. 

\begin{lemma}[\cite{garciafern2018canonical}]
The image of \eqref{eq:Aclassext} is given by
$$
\operatorname{Im} \mathfrak{a} = \Ker \partial \cap H^{1,1}_A(M,\RR).
$$
\end{lemma}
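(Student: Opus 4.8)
The plan is to prove the two inclusions separately. Throughout I use the normalisation $\mathcal{Q} = \mathcal{Q}_{2\i\partial\omega_0,A^{h_0}}$ from the setup, so that $\tau = 2\i\partial\omega_0$ and $A_0 = A^{h_0}$, and I first record that the map $\partial$ of \eqref{eqannex:partialmap} is well defined: if $\eta \in \Lambda^{1,1}$ satisfies $dd^c\eta = 0$ then $\partial\eta$ is $d$-closed (since $\partial\partial\eta = 0$ and $\dbar\partial\eta = -\partial\dbar\eta = 0$ by $dd^c = 2\i\partial\dbar$), while $\partial(\partial a + \dbar b) = \partial\dbar b = -d(\partial b) \in d(\Lambda^{2,0})$, so the class is unchanged under the Aeppli relation. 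The inclusion $\operatorname{Im}\mathfrak{a} \subseteq H^{1,1}_A(M,\RR)$ is immediate because every representative $\gamma - \omega_0 + \tilde R(h,h_0)$ is real.

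For $\operatorname{Im}\mathfrak{a} \subseteq \Ker\partial$, fix $(\gamma,\beta,h) \in B_{\mathcal{Q}}$, set $\alpha = A^h - A_0$, and write $C(\alpha) = 2\IP{\alpha\wedge F_{A_0}}_{\mathfrak{k}} + \IP{\alpha\wedge d_{A_0}\alpha}_{\mathfrak{k}} + \tfrac13\IP{\alpha\wedge[\alpha\wedge\alpha]}_{\mathfrak{k}}$ for the Chern--Simons term in \eqref{eq:structuralGHM}. Using \eqref{eq:structuralGHM} and $\tau = 2\i\partial\omega_0$ gives $\partial(\gamma-\omega_0) = -d\beta + \tfrac{1}{2\i}C(\alpha)$, and since $d\beta \in d(\Lambda^{2,0})$ the representative $\eta = \gamma-\omega_0+\tilde R(h,h_0)$ satisfies
\[
[\partial\eta] = \Big[\tfrac{1}{2\i}\big(C(\alpha) + 2\i\partial\tilde R(h,h_0)\big)\Big] \in H^1(\Lambda^{2,0}_{cl}).
\]
Everything therefore reduces to the identity $\Theta := C(\alpha) + 2\i\partial\tilde R(h,h_0) \in d(\Lambda^{2,0})$. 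By property (3) of Proposition \ref{prop:Donaldson} together with $dC(\alpha) = \IP{F_h\wedge F_h}_{\mathfrak{k}} - \IP{F_{h_0}\wedge F_{h_0}}_{\mathfrak{k}}$, a type decomposition shows $\Theta$ is $d$-closed of type $(3,0)+(2,1)$; the content is that it is $d(\Lambda^{2,0})$-exact. To see this I would use \eqref{eqref:BCinvariantder} to write $\tilde R(h,h_0) = \i\int_0^1 \IP{h_t^{-1}\dot h_t,F_{h_t}}_{\mathfrak{k}}\,dt$ and deduce $2\i\partial\tilde R(h,h_0) = -CS_{\mathrm{Chern}}$, where $CS_{\mathrm{Chern}} = 2\int_0^1\IP{\dot A_t \wedge F_{A_t}}_{\mathfrak{k}}\,dt$ is the Chern--Simons transgression of $\IP{F\wedge F}_{\mathfrak{k}}$ along the path of Chern connections $A_t = A^{h_t}$, whereas $C(\alpha)$ is the same transgression along the straight line $A_0 + t\alpha$. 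Any two transgressions joining the fixed endpoints $A_0,A^h$ differ by $d$ of the double transgression $\Xi = -2\int_{[0,1]^2}\IP{\partial_u B \wedge \partial_v B}_{\mathfrak{k}}$ of a homotopy $B(u,v)$ of the two paths; taking $B(u,v)-A_0$ to be of type $(1,0)$ throughout — possible since Chern connections of reductions of $(P^c,\bar J)$ differ from $A_0$ by $(1,0)$-forms — forces $\Xi \in \Lambda^{2,0}$. Hence $\Theta = C(\alpha) - CS_{\mathrm{Chern}} = -d\Xi \in d(\Lambda^{2,0})$, so $\partial\mathfrak{a}(\gamma,\beta,h) = 0$.

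For the reverse inclusion $\Ker\partial \cap H^{1,1}_A(M,\RR) \subseteq \operatorname{Im}\mathfrak{a}$, take a real class $\mathfrak{c}$ with $\partial\mathfrak{c} = 0$ and choose a real representative $\eta_0 \in \Lambda^{1,1}_{\RR}$ with $dd^c\eta_0 = 0$. The hypothesis $\partial\mathfrak{c} = [\partial\eta_0] = 0$ means $\partial\eta_0 = d\beta'$ for some $\beta' \in \Lambda^{2,0}$. Setting $\gamma := \omega_0 + \eta_0$, $\beta := -\beta'$ and $h := h_0$ (so $\alpha = 0$), the structural equation \eqref{eq:structuralGHM} collapses to $2\i\partial(\gamma-\omega_0) = -2\i d\beta$, i.e. $\partial\eta_0 = d\beta'$, which holds by construction; thus $(\gamma,\beta,h_0) \in B_{\mathcal{Q}}$. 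Since $R(h_0,h_0) = 0$ by Proposition \ref{prop:Donaldson}(1), Definition \ref{def:Aeppli} gives $\mathfrak{a}(\gamma,\beta,h_0) = [\eta_0] = \mathfrak{c}$, which establishes the inclusion.

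The hard part is the exactness $\Theta \in d(\Lambda^{2,0})$ in the first inclusion: the two transgressions $C(\alpha)$ and $CS_{\mathrm{Chern}}$ are a priori only equal up to a $d$-exact term, and the crucial point is the type bookkeeping showing that the secondary (double) transgression bounding their difference is of pure type $(2,0)$, which is exactly what the Aeppli/$H^1(\Lambda^{2,0}_{cl})$ bookkeeping requires. (Equivalently, one can avoid the double transgression by differentiating $\partial\mathfrak{a}$ along a path $(\gamma_t,\beta_t,h_t)$ in $B_{\mathcal{Q}}$ from $(\omega_0,0,h_0)$ and checking, via $\dot\alpha_t = \partial^{h_t}(h_t^{-1}\dot h_t)$ and the derivative of \eqref{eq:structuralGHM}, that $\partial\frac{d}{dt}\big(\gamma_t - \omega_0 + \tilde R(h_t,h_0)\big) \in d(\Lambda^{2,0})$, so that $\partial\mathfrak{a}$ stays at its value $0$ at the base point; here the analogous obstacle is the path-connectedness of $B_{\mathcal{Q}}$.)
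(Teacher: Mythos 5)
The paper gives no in-text proof of this lemma --- it is imported verbatim from \cite{garciafern2018canonical} --- so there is nothing to compare line by line; assessed on its own merits, your argument is correct. The surjectivity half is the expected construction: with $h = h_0$ (so $\alpha = 0$) the structural equation \eqref{eq:structuralGHM} collapses to $\partial \eta_0 = d\beta'$, which is exactly the hypothesis $[\partial\eta_0] = 0$ in $H^1(\Lambda^{2,0}_{cl})$, and $R(h_0,h_0) = 0$ closes the loop. The substantive half is your reduction to $\Theta := C(\alpha) + 2\i\partial \tilde R(h,h_0) \in d(\Lambda^{2,0})$, and both ingredients check out, in fact exactly and not merely modulo exact terms: expanding $F_{A_0 + t\alpha} = F_{A_0} + t\, d_{A_0}\alpha + \tfrac{t^2}{2}[\alpha\wedge\alpha]$ shows that $C(\alpha)$ \emph{equals} the transgression $2\int_0^1 \IP{\dot A_t \wedge F_{A_t}}_{\mathfrak{k}}\,dt$ along the straight line, while $2\i\partial\tilde R(h,h_0) = -2\int_0^1 \IP{\partial^{h_t}(h_t^{-1}\dot h_t)\wedge F_{h_t}}_{\mathfrak{k}}\,dt$ follows from the Bianchi identity $\partial^{h_t}F_{h_t} = 0$ together with the Chern-connection variation $\dot A^{h_t} = \partial^{h_t}(h_t^{-1}\dot h_t)$, the same formula the paper invokes in the proof of Proposition \ref{p:PCFhol}. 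Your homotopy formula is the standard Stokes computation on the $(u,v)$-square, with the boundary contributions $\IP{\partial_u B, F_B}\big|_{v=0}^{v=1}$ vanishing because an interpolation such as $B(u,v) = (1-u)(A_0 + v\alpha) + u A^{h_v}$ fixes both endpoints; and since every connection in sight lies in the affine subspace $A_0 + \Lambda^{1,0}(\ad P^c)$ (all induce the same Dolbeault operator on $P^c$), the double transgression is of pure type $(2,0)$ --- you have correctly identified this type bookkeeping as the crux. As far as comparison with the cited source goes: \cite{garciafern2018canonical} obtains the statement through the gauge-theoretic description of string algebroid morphisms (isomorphism classes of twists are controlled by $\partial$-exactness, in the spirit of \cite[Lemma 2.7]{garciafern2018holomorphic} and Lemma \ref{l:liftingQ} here), whereas your proof is a self-contained Chern--Simons transgression computation; the two hinge on the same $(1,0)$-type mechanism, but yours has the advantage of being hands-on and independent of the classification machinery, at the cost of having to verify the double-transgression identity directly. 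Your parenthetical alternative via differentiating along a path in $B_{\mathcal{Q}}$ is indeed weaker as stated, since it presupposes path-connectedness of $B_{\mathcal{Q}}$, which you rightly flag and avoid.
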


We prove next the desired evolution of the Aeppli class along the pluriclosed flow \ref{eq:CPFhol}.

\begin{proposition}\label{lem:PCFAeppli}
Let $\mathbf{h}_t \equiv (\omega_t,\beta_t,h_t) \in B_{\mathcal{Q}}^+$ be a solution of pluriclosed flow \ref{eq:CPFhol}. Denoting by $\mathfrak{a}_t = \mathfrak{a}(\omega_t,\beta_t,h_t)$ the associated one-parameter family of Aeppli classes, we have
$$
\dt \mathfrak{a}_t = - c_1(M,J) \in H^{1,1}_A(M,\RR),
$$
where $c_1(M,J)$ denotes Aeppli class induced by the first Chern class of $(M,J)$.
\end{proposition}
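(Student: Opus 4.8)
The plan is to differentiate the definition of the Aeppli class directly and reduce everything to the first and third equations of \eqref{eq:CPFhol}. By Definition \ref{def:Aeppli} we have $\mathfrak{a}_t = [\omega_t - \omega_0 + R(h_t,h_0)]$, and since $H^{1,1}_A(M,\RR)$ is the quotient of $\Ker dd^c$ by the \emph{fixed} subspace $\operatorname{Im}(\partial\oplus\dbar)$, the path $t\mapsto\mathfrak{a}_t$ is differentiable with derivative represented by the $t$-derivative of any smooth family of representatives. First I would compute $\dt R(h_t,h_0)$ using property (2) of Proposition \ref{prop:Donaldson} together with the evolution $h^{-1}\dt h = -S^h_g$, obtaining $\dt R(h_t,h_0) = \i\IP{h_t^{-1}\dot h_t, F_{h_t}}_{\mathfrak{k}} = -\i\IP{S^h_g,F_h}_{\mathfrak{k}}$.

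Combining this with the first equation of \eqref{eq:CPFhol} produces the key cancellation: $\dt\omega_t + \dt R(h_t,h_0) = -\rho_B(\omega_t)^{1,1} + \i\IP{S^h_g,F_h}_{\mathfrak{k}} - \i\IP{S^h_g,F_h}_{\mathfrak{k}} = -\rho_B(\omega_t)^{1,1}$, so that $\dt\mathfrak{a}_t = -[\rho_B(\omega_t)^{1,1}] \in H^{1,1}_A(M,\RR)$. Note that the $\beta$-equation plays no role, consistent with Remark \ref{rem:omegahflow}, since the Aeppli class depends only on $\omega$ and $h$.

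It then remains to identify $[\rho_B(\omega)^{1,1}]$ with $c_1(M,J)$ in Aeppli cohomology, and I expect this cohomological identification to be the only substantive step. Here I would use the local expression $\rho_B(\omega) = -d(d^{\star}\omega - d^c\log\|\Omega_0\|_\omega)$ recorded in the proof of Proposition \ref{prop:zetaell} and split it as $\rho_B(\omega) = dd^c\log\|\Omega_0\|_\omega - d\,d^{\star}\omega$. The term $dd^c\log\|\Omega_0\|_\omega$ is globally well defined, because a holomorphic change of frame alters $\log\|\Omega_0\|_\omega$ by a pluriharmonic function annihilated by $dd^c$; it is of pure type $(1,1)$, and since $\log\|\Omega_0\|_\omega = -\tfrac12\log\det(g_{i\bar j})$ it coincides with the Chern--Ricci form and hence represents $c_1(M,J)$. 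For the remaining term, writing $d^{\star}\omega = (d^{\star}\omega)^{1,0} + (d^{\star}\omega)^{0,1}$, its $(1,1)$-component is $(d\,d^{\star}\omega)^{1,1} = \partial(d^{\star}\omega)^{0,1} + \dbar(d^{\star}\omega)^{1,0}$, which lies in $\operatorname{Im}(\partial\oplus\dbar)$ and is therefore Aeppli-trivial. This yields $[\rho_B(\omega)^{1,1}] = c_1(M,J)$ and hence $\dt\mathfrak{a}_t = -c_1(M,J)$.

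The main obstacle is precisely this final cohomological identification: one must carefully track the sign and normalization conventions relating $\|\Omega_0\|_\omega$, $\det(g_{i\bar j})$ and the Chern--Ricci form (which fix the overall sign of the answer), as well as justify the global well-definedness of $dd^c\log\|\Omega_0\|_\omega$. Once the splitting of $\rho_B(\omega)^{1,1}$ into a Chern--Ricci piece and an $\operatorname{Im}(\partial\oplus\dbar)$-piece is in place, the conclusion is immediate, and the remaining steps are formal consequences of the definitions and the flow equations.
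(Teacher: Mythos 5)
Your proposal is correct and follows essentially the same route as the paper: differentiate the representative $\omega_t-\omega_0+R(h_t,h_0)$, use property (2) of Proposition \ref{prop:Donaldson} together with $h^{-1}\dt h=-S^h_g$ to cancel the $\i\IP{S^h_g,F_h}_{\mathfrak k}$ terms, and conclude $\dt\mathfrak{a}_t=-[\rho_B(\omega_t)^{1,1}]=-c_1(M,J)$. The only difference is that you spell out the identification $[\rho_B^{1,1}]_A=c_1(M,J)$ via the splitting $\rho_B=dd^c\log\|\Omega_0\|_\omega-d\,d^{\star}\omega$ with $(d\,d^{\star}\omega)^{1,1}\in\operatorname{Im}(\partial\oplus\dbar)$, a standard fact the paper's proof asserts without comment, and your verification of it (including the sign conventions) is accurate.
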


\begin{proof}
The proof is straightforward from the defining formula of the flow \ref{eq:CPFhol}, combined with property (2) in Proposition \ref{prop:Donaldson}, which implies 
\begin{align*}
\dt \mathfrak{a}_t & = \Bigg{[}-\rho_B(\omega_t)^{1,1} + \i\langle S^{h_t}_{g_t}, F_{h_t} \rangle_{\mathfrak{k}} + \i \IP{h^{-1}_t\dot h_t , F_{h_t}} \Bigg{]}\\
& = - [\rho_B(\omega_t)^{1,1}] = -c_1(M,J).
\end{align*}
\end{proof}

Using the previous result, we define next a `one-form reduction' of the flow \ref{eq:CPFhol}, which will be key for the proof of well-posedness in Proposition \ref{p:STE}. Let $\mathbf{h}_t \equiv (\omega_t,\beta_t,h_t) \in B_{\mathcal{Q}}^+$ be a solution of pluriclosed flow \ref{eq:CPFhol} and choose a background Hermitian metric $\til{\gw}$. By Proposition \ref{lem:PCFAeppli}, there exists a one-parameter family $\xi_t \in \Lambda^1_\RR$ such that
$$
\omega_t =  \omega_0 + (d\xi_t)^{1,1} -  \tilde R(h_t,h_0) - t \rho_C(\til{\gw}),
$$
where $\rho_C(\til{\gw}) \in \Lambda^{1,1}_\RR$ is the representative of $c_1(M,J) \in H^{1,1}_A(M,\RR)$ induced by the volume $\til{\gw}^n/n!$. Arguing similarly as in the proof of \cite[Lemma 5.7]{garciafern2018canonical}, it can be proved that $\xi_t$ can be taken to be a smooth function of $t$, and therefore 
$$
\dt(d\xi_t)^{1,1} = - \rho_B(\omega)^{1,1} + \rho_C(\til{\gw}) =  (dd^{\star_t}\omega_t)^{1,1} + \frac{1}{2}dd^c\log \frac{\omega_t^n}{ \til{\gw}^n}.
$$
This motivates our next result.

\begin{lemma}\label{lem:oneformred} 
Let $(\xi_t,h_t)$ be a one-parameter family of pairs, where  $\xi_t \in \Lambda^{1,0}$ is a complex one-form and $h_t \in \Gamma(P^c/K)$, solving the evolution equation
\begin{equation*}
\begin{split}
\dt \xi =&\ \delb^{\star}_{\gw} \gw - \frac{\i}{2} \del \log \frac{\gw^n}{\til{\gw}^n},\\
h^{-1}\dt h =&\ - S^h_g,\\
\dt \hat{\omega} =&\ - \rho_C(\til{\gw}) + \i \IP{S^h_g, F_h}_{\mathfrak k}, \qquad \gw = \hat{\gw} + \delb \xi + \del \overline{\xi}.
\end{split}
\end{equation*}
Then, $(\omega_t,h_t)$ solves the pluriclosed flow \ref{eq:CPFhol}.

\begin{proof} 
It suffices to prove that $\omega_t$ satisfies the first equation in \ref{eq:CPFhol}, but this follows trivially from
\begin{align*}
\dt \omega_t & = \delb \delb^{\star}_{\gw} \gw + \del \del^{\star}_{\gw} \gw + \frac{1}{2} dd^c \log \frac{\gw^n}{\til{\gw}^n} - \rho_C(\til{\gw}) + \i \IP{S^h_g, F_h}_{\mathfrak k}\\
& = (dd^{\star_t}\omega_t)^{1,1} - \rho_C(\omega_t)  + \i \IP{S^h_g, F_h}_{\mathfrak k}\\
& = - \rho_B(\omega_t)^{1,1}  + \i \IP{S^h_g, F_h}_{\mathfrak k}.
\end{align*}
\end{proof}
\end{lemma}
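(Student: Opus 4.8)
The plan is to reduce the statement to a single identity for the evolving Hermitian form $\gw_t$. By Remark \ref{rem:omegahflow}, a solution of pluriclosed flow \ref{eq:CPFhol} is completely determined by the pair $(\gw_t,h_t)$, the $\beta$-equation being auxiliary; hence it suffices to verify the first and third equations of \ref{eq:CPFhol}. The third equation $h^{-1}\dt h = -S^h_g$ is literally one of the hypotheses, so the entire content lies in checking the $\gw$-equation $\dt\gw = -\rho_B(\gw)^{1,1} + \i\IP{S^h_g, F_h}_{\mathfrak k}$.

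First I would differentiate the defining relation $\gw_t = \hat\gw_t + \delb\xi_t + \del\overline{\xi}_t$ in $t$, obtaining $\dt\gw = \dt\hat\gw + \delb\dt\xi + \del\overline{\dt\xi}$, and substitute the two prescribed evolution equations. Using that $\gw$ is real (so that $\overline{\delb^{\star}_\gw\gw} = \del^{\star}_\gw\gw$ and the logarithmic density $\log\frac{\gw^n}{\til{\gw}^n}$ is real), the conjugate term $\del\overline{\dt\xi}$ pairs with $\delb\dt\xi$ to produce $\delb\delb^{\star}_\gw\gw + \del\del^{\star}_\gw\gw$ together with the combination $\tfrac{\i}{2}(\del\delb - \delb\del)\log\frac{\gw^n}{\til{\gw}^n}$ of the density terms.

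Next I would simplify each piece. The two $\del^{\star}/\delb^{\star}$ terms assemble into $(dd^{\star_t}\gw)^{1,1}$ via the Hodge-type decomposition $(dd^{\star}\gw)^{1,1} = \del\del^{\star}\gw + \delb\delb^{\star}\gw$ (the remaining bidegrees being $(2,0)$ and $(0,2)$). The density terms combine through $\delb\del = -\del\delb$ and the convention $dd^c = 2\i\del\delb$ into $\tfrac{1}{2}dd^c\log\frac{\gw^n}{\til{\gw}^n}$. After adding $\dt\hat\gw = -\rho_C(\til{\gw}) + \i\IP{S^h_g, F_h}_{\mathfrak k}$ this yields $\dt\gw = (dd^{\star_t}\gw)^{1,1} + \tfrac{1}{2}dd^c\log\frac{\gw^n}{\til{\gw}^n} - \rho_C(\til{\gw}) + \i\IP{S^h_g, F_h}_{\mathfrak k}$.

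Finally I would invoke two standard identities to close the argument: the change-of-reference formula for the Chern-Ricci form, $\rho_C(\gw) = \rho_C(\til{\gw}) - \tfrac{1}{2}dd^c\log\frac{\gw^n}{\til{\gw}^n}$, which collapses the density and $\rho_C(\til{\gw})$ terms into $-\rho_C(\gw)$; and the Bismut-versus-Chern Ricci identity $-\rho_B(\gw)^{1,1} = (dd^{\star}\gw)^{1,1} - \rho_C(\gw)$, of the same type already used in Proposition \ref{prop:zetaell}, giving the desired $\dt\gw = -\rho_B(\gw)^{1,1} + \i\IP{S^h_g, F_h}_{\mathfrak k}$. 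The computation is essentially routine and carries no substantive analytic obstacle; the only points requiring care are the bookkeeping of complex conjugation when recombining $\delb\dt\xi$ with $\del\overline{\dt\xi}$, and ensuring that the sign conventions in the two Ricci-form identities match those fixed earlier in the paper. The genuine significance of this reduction, namely its role in well-posedness and its compatibility with the Aeppli-class evolution of Proposition \ref{lem:PCFAeppli}, is addressed elsewhere.
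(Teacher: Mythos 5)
Your proof is correct and is essentially identical to the paper's own argument: the paper's proof is precisely the three-line computation you outline, differentiating $\gw_t = \hat{\gw}_t + \delb \xi_t + \del \overline{\xi}_t$, assembling $\delb\delb^{\star}_{\gw}\gw + \del\del^{\star}_{\gw}\gw = (dd^{\star_t}\gw)^{1,1}$ and $\tfrac{1}{2}dd^c\log\tfrac{\gw^n}{\til{\gw}^n}$, and then collapsing via $\rho_C(\gw) = \rho_C(\til{\gw}) - \tfrac{1}{2}dd^c\log\tfrac{\gw^n}{\til{\gw}^n}$ and $-\rho_B(\gw)^{1,1} = (dd^{\star}\gw)^{1,1} - \rho_C(\gw)$. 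You merely spell out the conjugation bookkeeping (using that $\gw$ and the logarithmic density are real, and $\delb\del = -\del\delb$) that the paper compresses into the words ``follows trivially.''
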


\begin{remark}
When $(M,J)$ admits a global holomorphic volume form $\Omega$, the evolution for the (real) one-form potential $\xi_t \in \Lambda^1_\RR$ can be more elegantly written as 
$$
\dt \xi - \frac{\i}{2} h^{-1}\dt h  = \operatorname{Im} \varepsilon
$$
where $\xi \in \Lambda^1_\RR$ is the  (real) one-form potential and $\varepsilon$ is the solution of the D-term equation \eqref{eq:globalsumlocal}, given by (cf. \eqref{eq:varepsilonDterm}) 
\begin{equation*}
\varepsilon = d \log \|\Omega\|_{\omega} + \i \left(d^{\star}\omega - d^c\log \|\Omega\|_{\omega} \right) - \frac{1}{2} S^{h}_{g}.
\end{equation*}
\end{remark}

\begin{remark}\label{rmk:twist}
For our applications, we will need to fix an initial pair $(\omega_0,h_0)$ 
and a background pair $(\omega',h')$, both satisfying the Bianchi identity \eqref{eq:BIGHM}. We will not require that the associated holomorphic string algebroids $ \mathcal{Q}_{2\i\partial \omega_0,A^{h_0}}$ and $ \mathcal{Q}_{2\i\partial \omega',A^{h'}}$ are isomorphic, but the weaker condition of being isomorphic as holomorphic vector bundles. In practice, this boils down to the cohomological condition  
\begin{equation}\label{eq:GHermitianexpweak}
[\omega' - \omega_0 + R(h',h_0)] \in \Ker \partial^D \subset H^{1,1}_A(M),
\end{equation}
where 
$$
\partial^D \colon H^{1,1}_A(M) \to H^{2,1}_{\overline{\partial}}(M) 
$$
is the natural map to Dolbeault cohomology induced by the $\partial$ operator on forms acting on representatives (see \cite[Remark 3.13]{GFGM} and \cite[Lemma 3.23]{garciafern2018canonical}). 
\end{remark}

\section{Dimensional reduction} \label{s:dimredpcf}

In this section we show that solutions to generalized Ricci flow on string algebroids are equivalent to solutions of generalized Ricci flow satisfying a certain symmetry ansatz on an exact Courant algebroid. This implies that all of the analytic theory developed for generalized Ricci flow on exact Courant algebroids applies immediately to the string algebroid case, and we detail this in \S \ref{s:geometrization}. We then study the dimensional reduction of pluriclosed flow. 

\subsection{Dimensional reduction of the Bismut Ricci tensor}\label{sec:dimensionalred}

Let $M$ be a smooth manifold and $p \colon P \to M$ a smooth principal $K$-bundle, with quadratic Lie algebra $(\mathfrak{k},\langle\cdot,\cdot\rangle_{\mathfrak{k}})$. We compute the dimensional reduction, from the total space of $P$ to $M$, of several geometric quantities that will be useful in the sequel. We use throughout the notations introduced in Section \ref{s:gpb}. For the dimensional reduction calculations, we will make an extensive use of standard reference \cite[Chapter 9]{Besse}.

We start by studying the geometry on the fibres of the principal bundle. By compactness, we can extend $-\langle\cdot,\cdot\rangle_{\mathfrak{k}}$ to a (possibly indefinite) bi-invariant metric $g_K$ on $K$. Given left-invariant fields $U,V,W$ on $K$, the Cartan $3$-form $H_K\in \Lambda^3_K$ is defined by
$$
H_K(U,V,W) = g_K([U,V],W),
$$
and the Levi-Civita connection of $g_K$ satisfies
\begin{equation}\label{eq:LeviCK}
\nabla^{g_K}_U V = \tfrac{1}{2}[U,V].
\end{equation}
As a consequence, we have the following (see e.g. \cite[Proposition 3.53]{GRFBook}):

\begin{lemma}
    Let $\nabla^+=\nabla^{g_K}+\tfrac{1}{2}g_K^{-1}H_K$ be the connection with skew-symmetric torsion $H_K$. Then, for left-invariant vector fields $U, V$, we have:
    $$
    \nabla^+_U V =[U,V].
    $$
    In particular, $\nabla^+$ is flat.
\end{lemma}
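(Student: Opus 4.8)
The plan is to verify the pointwise identity $\nabla^+_U V = [U,V]$ by direct substitution and then deduce flatness from the Jacobi identity. First I would unwind the definition $\nabla^+ = \nabla^{g_K} + \tfrac{1}{2} g_K^{-1} H_K$ on a pair of left-invariant fields $U, V$. The Levi-Civita term contributes $\tfrac{1}{2}[U,V]$ by \eqref{eq:LeviCK}. For the torsion term, I note that $\tfrac{1}{2} g_K^{-1} H_K(U,V,\cdot)$ is, by definition, the $g_K$-metric dual of the one-form $W \mapsto \tfrac{1}{2} H_K(U,V,W) = \tfrac{1}{2} g_K([U,V],W)$; since $g_K$ is non-degenerate (even though possibly indefinite), this dual is exactly $\tfrac{1}{2}[U,V]$. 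Adding the two contributions gives $\nabla^+_U V = [U,V]$, as claimed.

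Second I would compute the curvature $R^+(U,V)W = \nabla^+_U \nabla^+_V W - \nabla^+_V \nabla^+_U W - \nabla^+_{[U,V]} W$ on left-invariant fields. The key observation is that the bracket of two left-invariant fields is again left-invariant, so the formula just established may be iterated: $\nabla^+_V W = [V,W]$ is left-invariant, whence $\nabla^+_U \nabla^+_V W = [U,[V,W]]$, and similarly for the remaining terms. This reduces the curvature to $R^+(U,V)W = [U,[V,W]] - [V,[U,W]] - [[U,V],W]$, which vanishes identically by the Jacobi identity. Since $R^+$ is tensorial and the left-invariant fields frame $TK$ at every point, flatness of $\nabla^+$ follows on all of $K$.

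I expect no serious obstacle here, as the computation is entirely formal once \eqref{eq:LeviCK} is in hand. The only points requiring a little care are the bookkeeping of the musical isomorphism $g_K^{-1}$ in the indefinite setting and the sign and normalization conventions in the definition of $H_K$, but both are routine.
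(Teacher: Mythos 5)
Your proof is correct and is exactly the standard argument the paper has in mind (it states the lemma as an immediate consequence of \eqref{eq:LeviCK} and the definition of $H_K$, citing \cite[Proposition 3.53]{GRFBook} rather than writing out the computation): the Levi-Civita term and the torsion term each contribute $\tfrac{1}{2}[U,V]$, and flatness then follows from the Jacobi identity together with tensoriality of the curvature. Your attention to the non-degeneracy (rather than definiteness) of $g_K$ in the musical isomorphism is the right level of care for the indefinite setting.
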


We turn next to the geometry on the total space of the bundle. Given a connection $A$ on $P$, the \emph{Chern-Simons $3$-form} $CS(A) \in \Lambda^3_P$ is given by:
\begin{align}\label{f:CS}
CS(A)=-\tfrac{1}{6} \langle A\wedge [A\wedge A]\rangle_{\mathfrak{k}} + \langle F_A \wedge A\rangle_{\mathfrak{k}}.
\end{align}
Observe that $CS(A)$ restricts to $H_K$ on the fibres of $P$ and provides a potential for the representative of the first Pontryagin class determined by $A$:
\begin{align}\label{f:dCS}
dCS(A)=\langle F_A \wedge F_A \rangle_{\mathfrak{k}}.
\end{align}

\begin{defn} \label{d:invariantdata} Let $A_0$ be a background principal connection on $P$. Given $(g,H,A)$ a Riemannian metric on $M$, $3$-form on $M$, and principal connection on $P$ as above, we set $a = A - A_0$ and define
\begin{align}\label{f:gtot}
\gtot =&\ p^* g + g_K(A, A),\\
\label{f:btot}
\overline{b}= &\ p^*b-\langle a\wedge A\rangle_{\mathfrak{k}},\\
\label{f:Htot}
\Htot =&\ p^* H - CS(A).
\end{align}
\end{defn}

The following result is straightforward, by construction.

\begin{lemma} The data $(\gtot, \Htot)$ is $K$-invariant.  Furthermore
\begin{align} \label{f:anomalyterm}
    d \Htot =p^* ( d H - \IP{ F_A \wedge F_A}_{\mathfrak{k}}).
\end{align}
\end{lemma}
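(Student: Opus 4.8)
The plan is to verify the two assertions separately, both of which reduce to standard facts about principal connections together with the Chern--Simons transgression identity \eqref{f:dCS}.

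For the $K$-invariance of $(\gtot,\Htot)$, I would argue term by term in the definitions \eqref{f:gtot} and \eqref{f:Htot}. The pullbacks $p^*g$ and $p^*H$ are manifestly $K$-invariant because $p\circ R_k = p$ for every $k\in K$, so $R_k^* p^*(\cdot) = (p\circ R_k)^*(\cdot) = p^*(\cdot)$. For the remaining terms $g_K(A,A)$ and $CS(A)$, I would invoke the equivariance of the connection, $R_k^*A = \Ad_{k^{-1}}A$ and $R_k^*F_A = \Ad_{k^{-1}}F_A$, together with the $\Ad$-invariance of the pairings $g_K$ and $\langle\cdot,\cdot\rangle_{\mathfrak{k}}$ and the $\Ad$-equivariance of the bracket $[\cdot,\cdot]$. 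Each wedge-paired expression appearing in $g_K(A,A)$ and in the two summands of $CS(A)$ is then fixed by $R_k^*$, whence both are $K$-invariant, and therefore so are $\gtot$ and $\Htot$.

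For the curvature identity I would simply differentiate the definition \eqref{f:Htot}:
\[
d\Htot = p^*dH - dCS(A).
\]
By the transgression formula \eqref{f:dCS} one has $dCS(A) = \langle F_A\wedge F_A\rangle_{\mathfrak{k}}$ as a $4$-form on the total space $P$. The key remaining observation is that this $4$-form is basic: since $F_A$ is horizontal and equivariant and $\langle\cdot,\cdot\rangle_{\mathfrak{k}}$ is $\Ad$-invariant, the paired form $\langle F_A\wedge F_A\rangle_{\mathfrak{k}}$ is horizontal and $K$-invariant, hence descends to $M$ and agrees there with the Chern--Weil representative built from $F_A\in\Lambda^2(\ad P)$. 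In other words $dCS(A) = p^*\langle F_A\wedge F_A\rangle_{\mathfrak{k}}$, and substituting yields
\[
d\Htot = p^*dH - p^*\langle F_A\wedge F_A\rangle_{\mathfrak{k}} = p^*\left(dH - \langle F_A\wedge F_A\rangle_{\mathfrak{k}}\right),
\]
as claimed.

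There is essentially no serious obstacle here; the content is entirely \eqref{f:dCS} combined with the fact that the Pontryagin $4$-form on $P$ is pulled back from the base. The only point demanding a modicum of care is the notational identification of the curvature $F_A$ as a $\mathfrak{k}$-valued $2$-form on $P$ (as it enters $CS(A)$ and \eqref{f:dCS}) with its avatar as an $\ad P$-valued $2$-form on $M$ (as it enters the heterotic Bianchi term on the right-hand side); keeping track of this identification is precisely what makes the two occurrences of $\langle F_A\wedge F_A\rangle_{\mathfrak{k}}$ coincide after applying $p^*$.
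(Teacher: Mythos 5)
Your proof is correct and is exactly the routine verification the paper has in mind: the paper omits the argument entirely (``straightforward, by construction''), and your two steps---equivariance of $A$ and $F_A$ plus $\Ad$-invariance of the pairings for the $K$-invariance, and the transgression identity \eqref{f:dCS} together with the observation that $\langle F_A\wedge F_A\rangle_{\mathfrak{k}}$ is basic for \eqref{f:anomalyterm}---supply precisely the missing details, including the one genuinely delicate point, namely the identification of the Pontryagin $4$-form on $P$ with the pullback of its Chern--Weil avatar on $M$.
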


We fix $(\gtot, \Htot)$ as above and adopt the following notation. Consider the orthogonal splitting
$$
TP \cong_A p^*T \oplus VP,
$$
where $p^*T$ is identified with the horizontal distribution of $A$. We denote by $X,Y,Z$ horizontal lifts with respect to $A$ of vector fields on $M$, while $U,V,W$ denote canonical vector fields. Furthermore, $\{v_i\}$ will denote an orthonormal frame of $p^*T$. Though the following results are valid for any signature, the proof will be given for simplicity in the definite case, hence also $\{U_j\}$ will denote an orthonormal frame of $VP$. 

By \eqref{eq:LeviCK} it is easy to see that $H_K$ is parallel with respect to $\nabla^{g_K}$. In particular, it holds:
\begin{align*} \label{f:HKeqs}
dH_K = d^\star_{g_K} H_K = 0.
\end{align*}
The Levi-Civita connection $\overline{\nabla}=\nabla^{\gtot}$ is decomposed \cite[Definition 9.25]{Besse} as:
\begin{equation}\label{f:conndecomposition}
\begin{split}
\overline{\nabla}_U V & = \nabla^{g_K}_U V+T_U V\\
\overline{\nabla}_U X & =T_U X + A^{\perp} \overline{\nabla}_U X\\
\overline{\nabla}_X U & = A \overline{\nabla}_X U + I\!I_X U \\
\overline{\nabla}_X Y & = I\!I_X Y + A^{\perp} \nabla_X Y,
\end{split}
\end{equation}
where we have introduced the tensors \cite[Definitions (9.17), (9.20)]{Besse}:
\begin{equation*}
\begin{split}
T & = A^{\perp} \overline{\nabla}_{A} A + A \overline{\nabla}_{A} A^{\perp}\\
I\!I
 & = A^{\perp} D_{A^{\perp}} A + A D_{A^{\perp}} A^{\perp}.
\end{split}
\end{equation*}
Moreover one has:
\begin{align}\label{f:AtensorF}
I\!I_X Y =\tfrac{1}{2}A([X,Y])=-\tfrac{1}{2}F_A(X,Y)
\end{align}
under the identification $VP \cong P\times \mathfrak{k}$.

Now consider the connection with skew-symmetric torsion (see \eqref{eq:nablapm})
\begin{align}\label{def:BismutconngH}
\overline{\nabla}^+=\overline{\nabla} + \tfrac{1}{2}\gtot^{-1}\Htot.
\end{align}
Given a smooth function $\phi \in C^{\infty}(M)$, we set $\bar{\phi} = p^* \phi$ and compute the \emph{weighted Ricci tensor} of $\bar{\N}^+$, defined by:
\begin{equation}\label{eq:LC-wBRic}
    \begin{split}
\Rc^{\bar{H},\bar{\phi}}= \Rc^{\bar{g}}-\tfrac{1}{4}\bar{H}^2 -\tfrac{1}{2}d^{\star}_{\bar{g}} \bar{H} + \overline{\nabla}^+ (d\bar{\phi}).
    \end{split}
\end{equation}
We next give an explicit formula for this tensor in the ansatz above.  The statement below uses the conventions for the symmetric tensors $H^2$ and $F_A^2$ given by \eqref{f:sqtens}.

\begin{prop}\label{prop:BismutRic} With the previous notations, one has
\begin{align}\label{f:RcgHcomponents}
\begin{split}
\Rc^{\Htot,\overline{\phi}}(X,Y) & = \left( \Rc^g -\tfrac{1}{4}H^2 - F_A^2 -\tfrac{1}{2}d^{\star}_gH + \nabla^+(d\phi) \right)(X,Y), \\
\Rc^{\Htot,\overline{\phi}}(U,X) & = -\langle i_X(d^{\star}_A F_A - F_A \ \lrcorner \ H+ i_{\nabla \phi} F_A) , U \rangle_{\mathfrak{k}},\\
\Rc^{\Htot,\overline{\phi}}(X,U) & = 0,\\
\Rc^{\Htot,\overline{\phi}}(U,V) & =0.
\end{split}
\end{align}
\end{prop}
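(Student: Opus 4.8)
The plan is to perform the dimensional reduction directly, computing each of the three building blocks of the weighted tensor \eqref{eq:LC-wBRic} separately in the adapted orthonormal frame $\{v_i,U_j\}$ and then assembling the four component identities in \eqref{f:RcgHcomponents}. The computation rests entirely on the fact, already recorded above, that $CS(A)$ restricts to $H_K$ on the fibres and couples horizontal and vertical directions only through $F_A$; concretely, evaluating $\Htot = p^*H - CS(A)$ on the adapted frame yields
\begin{align*}
\Htot(X,Y,Z) &= H(X,Y,Z), & \Htot(X,Y,U) &= -\langle F_A(X,Y),U\rangle_{\mathfrak{k}},\\
\Htot(X,U,V) &= 0, & \Htot(U,V,W) &= -H_K(U,V,W).
\end{align*}
Together with the identity $I\!I_X Y = -\tfrac12 F_A(X,Y)$ from \eqref{f:AtensorF}, the vanishing $dH_K = d^{\star}_{g_K}H_K = 0$, the Bianchi identity $d_AF_A = 0$, and the anomaly relation \eqref{f:anomalyterm}, this is the complete input needed.

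First I would reduce the two torsion terms. Feeding the component expansion above into the definition \eqref{f:sqtens} of $\Htot^2$ splits the horizontal square into the pullback part $H^2$ and a curvature part proportional to $F_A^2$, produces no symmetric off-diagonal $(X,U)$ term, and leaves a purely vertical $H_K$-contribution. Similarly $d^{\star}_{\gtot}\Htot$ decomposes, using $d^{\star}_{g_K}H_K = 0$, into $d^{\star}_g H$ on the horizontal block and the cross term $-F_A \lrcorner H$ (coming from the interaction of the $p^*H$ and $CS(A)$ pieces of $\Htot$) in the mixed block. Next I would compute the Levi-Civita Ricci tensor $\Rc^{\gtot}$ through the O'Neill submersion formulas of \cite[Chapter 9]{Besse}, rewriting the O'Neill tensor $I\!I$ via \eqref{f:AtensorF} so that its contributions are expressed through $F_A$; this supplies the remaining copy of $F_A^2$ in the horizontal block, the curvature divergence $d_A^{\star}F_A$ in the mixed block, and the fibre Ricci in the vertical block. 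Finally the Hessian $\overline{\nabla}^+(d\bar{\phi})$ is handled using the connection decomposition \eqref{f:conndecomposition} together with the fact that $\bar{\phi} = p^*\phi$ is fibrewise constant, so that $d\bar{\phi}$ is the horizontal lift of $d\phi$; this contributes $\nabla^+(d\phi)$ to the horizontal block and $i_{\nabla\phi}F_A$ to the mixed block, with no vertical part.

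Assembling the pieces, the horizontal-horizontal component is immediate and reproduces $\Rc^g - \tfrac14 H^2 - F_A^2 - \tfrac12 d^{\star}_g H + \nabla^+(d\phi)$. The delicate part is the mixed block, where the symmetric contributions of $\Rc^{\gtot}$, $-\tfrac14\Htot^2$ and the Hessian must cancel exactly against the symmetric part of $-\tfrac12 d^{\star}_{\gtot}\Htot$ so that the $(X,U)$ entry vanishes, while the skew contributions reinforce in the $(U,X)$ entry to yield $-\langle i_X(d_A^{\star}F_A - F_A \lrcorner H + i_{\nabla\phi}F_A),U\rangle_{\mathfrak{k}}$. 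In the vertical block the nonzero $\nabla^{g_K}$-Ricci of the fibres is precisely compensated by $-\tfrac14\Htot^2$ and $-\tfrac12 d^{\star}_{\gtot}\Htot$, which is the reduction-level shadow of the flatness of $\nabla^+$ on $K$, giving $\Rc^{\Htot,\overline{\phi}}(U,V) = 0$.

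The main obstacle is the bookkeeping in the mixed $(U,X)$ and vertical $(U,V)$ blocks, where three independent sources (the O'Neill curvature terms, the codifferential $d^{\star}_{\gtot}\Htot$, and the Hessian) must be combined with the correct numerical coefficients. The asymmetry between $\Rc^{\Htot,\overline{\phi}}(X,U) = 0$ and $\Rc^{\Htot,\overline{\phi}}(U,X)\neq 0$ is the signature that the computation is organized correctly: it reflects that the weighted \emph{Bismut} Ricci tensor is not symmetric, its skew part being governed by $d^{\star}_{\gtot}\Htot$. As a conceptual check one can compare the resulting formulas with the string algebroid generalized Ricci tensor of Proposition \ref{prop:Ricciexp} (with $\varphi_+ = d\phi$), which exhibits exactly the same horizontal and mixed structure; this is the Riemannian manifestation of the equivalence between generalized Ricci flow on $E$ and on the exact Courant algebroid of $P$ that the proposition is ultimately used to establish.
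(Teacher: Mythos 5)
Your plan is, in structure, precisely the paper's proof: compute $\Rc^{\gtot}$ from the submersion formulas of \cite[Chapter 9]{Besse} rewritten through $I\!I_XY=-\tfrac{1}{2}F_A(X,Y)$ as in \eqref{f:AtensorF}, expand $\Htot^2$ and $d^{\star}_{\gtot}\Htot$ in the adapted frame from the block components of $\Htot=p^*H-CS(A)$, reduce the Hessian via the decomposition \eqref{f:conndecomposition} using that $\bar{\phi}=p^*\phi$ is fibrewise constant, and kill the vertical block by the flatness of the fibre Bismut connection, i.e. $\Rc^{g_K}-\tfrac{1}{4}H_K^2-\tfrac{1}{2}d^{\star}_{g_K}H_K=0$. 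So the route is the same and the assembly you describe is the paper's.

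There is, however, one concretely false intermediate claim which you must repair before the mixed block closes with the right coefficients: $\Htot^2$ is \emph{not} block-diagonal. From $\Htot(v_j,v_k,U)=-\langle F_A(v_j,v_k),U\rangle_{\mathfrak{k}}$ and $\Htot(v_j,v_k,X)=H(v_j,v_k,X)$ one gets
\begin{equation*}
\Htot^2(U,X)=-\sum_{j,k}H(v_j,v_k,X)\,\langle F_A(v_j,v_k),U\rangle_{\mathfrak{k}}=-2\,\langle (F_A\lrcorner H)(X),U\rangle_{\mathfrak{k}},
\end{equation*}
so $-\tfrac{1}{4}\Htot^2$ contributes $\tfrac{1}{2}\langle (F_A\lrcorner H)(X),U\rangle_{\mathfrak{k}}$, exactly half of the $F_A\lrcorner H$ term in the second line of \eqref{f:RcgHcomponents}. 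The other half sits inside $-\tfrac{1}{2}d^{\star}_{\gtot}\Htot(U,X)$, where it arises from the contraction $-\Htot(v_j,X,I\!I_{v_j}U)$, i.e. from $p^*H$ hitting the O'Neill tensor, not from the interaction of the $p^*H$ and $CS(A)$ pieces that you invoke. If you follow your attribution literally (no mixed component of $\Htot^2$, all of $F_A\lrcorner H$ from the codifferential) you land on $\tfrac{1}{2}F_A\lrcorner H$ in the mixed equation, which is wrong. A smaller wording issue: $d^{\star}_{\gtot}\Htot$ is a two-form and has no symmetric part; the mechanism you correctly anticipate --- cancellation in the $(X,U)$ slot and reinforcement in $(U,X)$ --- is the pairing of the symmetric tensors $\Rc^{\gtot}$ and $\Htot^2$ against the genuinely antisymmetric pieces, namely $d^{\star}_{\gtot}\Htot$ and the torsion half of $\overline{\nabla}^+(d\bar{\phi})$, whose $(X,U)$ entry vanishes while its $(U,X)$ entry gives $-\langle U,F_A(\nabla\phi,X)\rangle_{\mathfrak{k}}$, as in the paper.
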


\begin{proof} We first observe as a direct consequence of \cite[Proposition 9.36]{Besse} that
\begin{gather} \label{f:LCRcdecomp}
\begin{split}
\Rc^{\gtot}(X,Y) =&\ (\Rc^g-\tfrac{1}{2}F_A^2)(X,Y),\\
\Rc^{\gtot}(U,X) =&\ -\tfrac{1}{2}\langle d_A^{\star}F_A(X),U \rangle_{\mathfrak{k}}, \\
\Rc^{\gtot}(U,V) =&\ {\Rc}^{g_K}(U,V)+\tfrac{1}{4}\textstyle \sum_{j,k} \langle F_A(v_j,v_k),U\rangle_{\mathfrak{k}} \langle F_A(v_j,v_k),V\rangle_{\mathfrak{k}}.
\end{split}
\end{gather}
Building on this, we first compute the components of $\bar{H}^2$:
\begin{equation*}
    \begin{split}
\Htot^2(X,Y) = & \textstyle \sum_{j,k} \Htot(v_j,v_k,X)\Htot(v_j,v_k,Y) + 2\Htot(v_j,U_k,X)\Htot(v_j,U_k,Y)\\
 = & \ H^2(X,Y)+2 \textstyle \sum_{j,k} \langle F_A(v_j,X),U_k \rangle_{\mathfrak{k}} \langle F_A(v_j,Y),U_k \rangle_{\mathfrak{k}}\\
 = & \ (H^2 + 2F_A^2)(X,Y),\\
\Htot^2(U,X) = & \ \textstyle \sum_{j,k} \Htot(v_j,v_k,U)\Htot(v_j,v_k,X) = - \textstyle \sum_{j,k} H(v_j,v_k,X)\langle F_A(v_j,v_k),U \rangle,\\
 \Htot^2(U,V) = & \textstyle \sum_{j,k} \Htot(v_j,v_k,U)\Htot(v_j,v_k,V)+\Htot(U_j,U_k,U)\Htot(U_j,U_k,V)\\
 = & \textstyle \sum_{j,k} \langle F_A(v_j,v_k),U\rangle_{\mathfrak{k}} \langle F_A(v_j,v_k),V \rangle_{\mathfrak{k}} + \textstyle \sum_{j,k} \langle U_j,[U_k,U] \rangle_{\mathfrak{k}} \langle U_j,[U_k,V] \rangle_{\mathfrak{k}},
\end{split}
\end{equation*}
Now we turn to the calculation of $- \tfrac{1}{2} d^{\star}_{\bar{g}} \bar{H}$. For its horizontal component, we have:
\begin{equation*}
\begin{split}
d^{\star}_{\bar{g}}\Htot(X,Y) & = \textstyle \sum_j - (\overline{\nabla}_{v_j}\Htot)(v_j,X,Y)+ \textstyle \sum_k -(\overline{\nabla}_{ U_k}\Htot)(U_k,X,Y).
\end{split}
\end{equation*}
A computation shows that the second term vanishes.  The first term is
\begin{equation*}
\begin{split}
- (\overline{\nabla}_{v_j}& \Htot)(v_j,X,Y)\\
=&\ \textstyle \sum_j - v_j(\Htot(v_j,X,Y))+\Htot(\overline{\nabla}_{v_j}v_j,X,Y) + \Htot(v_j,\overline{\nabla}_{v_j}X,Y)+\Htot(v_j,X,\overline{\nabla}_{v_j}Y)\\ 
= & \  d^{\star}_gH(X,Y)+\textstyle \sum_j \Htot(v_j,I\!I_{v_j}X,Y)+\Htot(v_j,X,I\!I_{v_j}Y)\\
 =&\ d^{\star}_g H(X,Y)-\tfrac{1}{2}\textstyle \sum_{j,k} \langle F_A(v_j,Y),F_A(v_j,X) \rangle_{\mathfrak{k}} +\tfrac{1}{2} \textstyle \sum_{j,k} \langle F_A(v_j,X),F_A(v_j,Y)\rangle_{\mathfrak{k}}\\
=&\ d^{\star}_g H(X,Y).
\end{split}
\end{equation*}
Thus observe $d^\star_{\bar{g}}\bar{H}(X,Y) = d^\star_g H(X,Y)$. Next we have
\begin{equation*}
\begin{split}
d^{\star}_{\bar{g}} \Htot (U,X) =&\ \textstyle \sum_{j} -v_j(\Htot(v_j,U,X))+ \Htot(\overline{\nabla}_{v_j}v_j,U,X) + \textstyle \sum_{j} \Htot(v_j,\overline{\nabla}_{v_j}U,X) +\\ &\ +\Htot(v_j,U,\overline{\nabla}_{v_j}X)
     + \textstyle \sum_{k} -U_k(\Htot(U_k,U,X))+ \Htot(\overline{\nabla}_{U_k}U_k,U,X) +\\
     &\ + \textstyle \sum_{j} \Htot(U_k,\overline{\nabla}_{U_k}U,X) + \Htot(U_k,U,\overline{\nabla}_{U_k}X)\\
     =&\ \textstyle \sum_{j} v_j(-\langle F_A(v_j,X),U \rangle_{\mathfrak{k}})+\langle F_A(\nabla_{v_j}v_j,X),U \rangle_{\mathfrak{k}} + \langle F_A(v_j,\nabla_{v_j}X),U \rangle_{\mathfrak{k}}\\
     &\ -H(v_j,X,I\!I_{v_j}U)\\
    = &\ \langle d^{\star}_A F_A(X),U \rangle_{\mathfrak{k}} -\tfrac{1}{2}\textstyle \sum_{j,k} H(v_j,v_k,X)\langle F_A(v_j,v_k),U \rangle_{\mathfrak{k}}.
\end{split}
\end{equation*}
Finally a further computation shows that $d^{\star}_{\bar{g}} \bar{H}(U,V) = 0$.
Regarding the components of $\overline{\nabla}^+(d\phi)$, using the explicit description of $\bar{H}$ we obtain nonvanishing terms, while the remaining components are easily seen to vanish:
\begin{align*}
    \overline{\nabla}^+(d\phi)(X,Y) = & X(d\phi(Y))- d\phi(\overline{\nabla}_X Y+\gtot^{-1}\Htot(X,Y,\cdot)\\
    = & \ X(d\phi(Y))- d\phi(\nabla_X Y+g^{-1}H(X,Y,\cdot))\\
    = & \ \nabla^+(d\phi)(X,Y),\\
    \overline{\nabla}^+(d\phi)(U,X) = & \ -d\phi(\overline{\nabla}_U X + \tfrac{1}{2}\gtot^{-1}\Htot(U,X,\cdot))\\
    = & \ -\gtot(\overline{\nabla}_U X,\nabla\phi)+\tfrac{1}{2}d\phi\left(\langle g^{-1}i_X F_A,U\rangle_{\mathfrak{k}}\right)\\
    = & \ -\langle U, F_A(\nabla \phi, X) \rangle_{\mathfrak{k}}.
    \end{align*}
Combining all the previous computations yields the first three stated formulae.  For the fourth, we also take into account:
\begin{align*}
\Rc^{\Htot,\overline{\phi}}(U,V) = & \;  (\mathrm{Rc}^{\gtot}-\tfrac{1}{2}d^\star_{\gtot}\Htot-\tfrac{1}{4}\Htot^2 + \overline{\nabla}^+(d\phi))(U,V)\\
= & \; \mathrm{Rc}^{g_K}(U,V) + \tfrac{1}{4}\textstyle \sum_{j,k}\langle F_A(v_j,v_k),U\rangle_{\mathfrak{k}}\langle F_A(v_j,v_k),V\rangle_{\mathfrak{k}}\\
& -\tfrac{1}{4}\left(\textstyle \sum_{j,k}(U_j,[U_k,U])(U_j,[U_k,V]) +  \langle F_A(v_j,v_k),U\rangle_{\mathfrak{k}}\langle F_A(v_j,v_k),V\rangle_{\mathfrak{k}}\right)\\
= & \; \left(\Rc^{g_K}-\tfrac{1}{2}d_{g_K}^\star H_K-\tfrac{1}{4}H_K^2\right)(U,V)\\
= & \; 0,
\end{align*}
where the last step follows from the fact that the fibre Bismut connection is flat.
\end{proof}

We turn next to the calculation of the relevant scalar quantities:

\begin{prop}\label{prop:Bismutscalar} Given $(\bar{g}, \bar{H})$ and $\bar{\phi} = p^* \phi \in C^{\infty}(P)$, one has
\begin{align}
\label{f:nH2}
    \brs{\bar{H}}^2 =&\ 
     \brs{H}^2 + 3\brs{F_A}^2 + \brs{[\cdot,\cdot]}^2 \\
\label{f:BaEmR}
    R^{\bar{H},\bar{\phi}} = & \ R_g - \tfrac{1}{12} \brs{H}^2 -\tfrac{1}{2}|F_A|^2+\tfrac{1}{6}|[\cdot,\cdot]|^2+2\Delta_g \phi - |\nabla \phi|.
\end{align}

\end{prop}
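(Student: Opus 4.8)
The plan is to compute both scalar quantities by decomposing all tensors along the $\bar{g}$-orthogonal splitting $TP \cong_A p^*T\oplus VP$ and summing over the adapted frame $\{v_i\}\cup\{U_j\}$, recycling the component data for $\bar{H}$ and $\Rc^{\gtot}$ already recorded in the proof of Proposition \ref{prop:BismutRic}. As there, I work in the definite case. I use that $R^{\bar{H},\bar{\phi}}$ is the dilaton-weighted scalar curvature of $(\gtot,\Htot,\bar{\phi})$, i.e. $R^{\bar{H},\bar{\phi}} = R_{\gtot} - \tfrac{1}{12}\brs{\Htot}^2 + 2\Delta_{\gtot}\bar{\phi} - \brs{\overline{\nabla}\bar{\phi}}^2$ (cf. \cite{GRFBook}), together with the tensor-norm conventions fixed before Definition \ref{def:GScalar}, in particular $\brs{F_A}^2 = -\sum_{k,l}\langle (F_A)_{kl},(F_A)_{kl}\rangle_{\mathfrak k}$.

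For \eqref{f:nH2} I would sort the contributions to $\brs{\Htot}^2 = \sum \Htot(\cdot,\cdot,\cdot)^2$ (the sum running over all \emph{ordered} triples of frame vectors) by the number of vertical legs. Writing $\Htot = p^*H - CS(A)$ with $CS(A)$ as in \eqref{f:CS}, the form $p^*H$ is purely horizontal, the term $\langle F_A\wedge A\rangle_{\mathfrak k}$ is of bidegree ``two horizontal, one vertical'', and $-\tfrac16\langle A\wedge[A\wedge A]\rangle_{\mathfrak k}$ is purely vertical, restricting on the fibres to the Cartan form $H_K$. Hence the three-vertical block contributes $\brs{H_K}^2 = \brs{[\cdot,\cdot]}^2$, the purely horizontal block contributes $\brs{H}^2$, the two-vertical block vanishes since $\Htot$ has no ``one horizontal, two vertical'' component, and the one-vertical block contributes $3\brs{F_A}^2$: here the factor $3$ counts the position of the vertical slot, using $\Htot(v_k,v_l,U_j) = -\langle F_A(v_k,v_l),U_j\rangle_{\mathfrak k}$ together with $\sum_j\langle\xi,U_j\rangle^2 = -\langle\xi,\xi\rangle_{\mathfrak k}$ for the $g_K$-orthonormal vertical frame. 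Summing these blocks gives \eqref{f:nH2}.

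For \eqref{f:BaEmR} I would first obtain $R_{\gtot}$ by tracing the components of $\Rc^{\gtot}$ in \eqref{f:LCRcdecomp}: the horizontal trace gives $R_g - \tfrac12\brs{F_A}^2$ (using $\tr F_A^2 = \brs{F_A}^2$, which follows from \eqref{f:sqtens}), while the vertical trace gives $R_{g_K} + \tfrac14\brs{F_A}^2$, with $R_{g_K} = \tfrac14\brs{[\cdot,\cdot]}^2$ for the bi-invariant metric; thus $R_{\gtot} = R_g - \tfrac14\brs{F_A}^2 + \tfrac14\brs{[\cdot,\cdot]}^2$. Next I reduce the dilaton terms. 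Since $\bar{\phi} = p^*\phi$ is constant along the fibres, $\overline{\nabla}\bar{\phi}$ is horizontal and $\brs{\overline{\nabla}\bar{\phi}}^2 = \brs{\nabla\phi}^2$. For the Laplacian, the vertical Hessian trace $\sum_j\overline{\nabla}^2\bar{\phi}(U_j,U_j) = -\sum_j(\overline{\nabla}_{U_j}U_j)\bar{\phi}$ vanishes because the fibres have zero mean curvature: the horizontal part of $\overline{\nabla}_{U_j}U_j$ is controlled by $[U_j, A^\perp X] = 0$, as horizontal lifts are $K$-invariant. For the horizontal Hessian, $I\!I_{v_i}v_i = -\tfrac12 F_A(v_i,v_i) = 0$ by \eqref{f:AtensorF}, so by \eqref{f:conndecomposition} $\overline{\nabla}_{v_i}v_i$ is the horizontal lift of $\nabla_{v_i}v_i$ and the trace gives exactly $\Delta_g\phi$. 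Hence $\Delta_{\gtot}\bar{\phi} = \Delta_g\phi$. Substituting $R_{\gtot}$, \eqref{f:nH2}, and these identities into the definition of $R^{\bar{H},\bar{\phi}}$ and collecting the coefficients of $\brs{F_A}^2$ ($-\tfrac14 - \tfrac14 = -\tfrac12$) and of $\brs{[\cdot,\cdot]}^2$ ($\tfrac14 - \tfrac{1}{12} = \tfrac16$) yields \eqref{f:BaEmR}.

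The O'Neill/Besse submersion bookkeeping is routine and largely inherited from Proposition \ref{prop:BismutRic}; the delicate part is the norm conventions, since the tensor norm counts ordered index tuples, so the factor $3$ in \eqref{f:nH2} and the precise rational coefficients in \eqref{f:BaEmR} hinge on correctly tracking multiplicities and the sign in $\brs{F_A}^2$. The only genuinely geometric input beyond Proposition \ref{prop:BismutRic} is the mean-curvature vanishing of the fibres used in the dilaton reduction, and establishing $\Delta_{\gtot}(p^*\phi) = \Delta_g\phi$ cleanly is where I expect to spend the most attention, although in the definite case it follows directly from $[U_j,A^\perp X]=0$ and $I\!I_{v_i}v_i = 0$.
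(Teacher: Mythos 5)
Your proposal is correct and follows essentially the same route as the paper's proof: you use the mutual $\gtot$-orthogonality of the three components of $\Htot$ with the same frame bookkeeping for the factor $3$ in \eqref{f:nH2}, trace the Ricci components of \eqref{f:LCRcdecomp} to get $R_{\gtot} = R_g - \tfrac14\brs{F_A}^2 + \tfrac14\brs{[\cdot,\cdot]}^2$ (your bi-invariant identity $R_{g_K}=\tfrac14\brs{[\cdot,\cdot]}^2$ is equivalent to the paper's derivation from $\Rc_{g_K}-\tfrac14 H_K^2=0$, i.e.\ Bismut-flatness of the fibres), and then reduce the dilaton terms. Your argument that $\gD_{\gtot}(p^*\phi)=\gD_g\phi$, via $[U_j,A^\perp X]=0$ and $I\!I_{v_i}v_i=0$ (total geodesy of the fibres), correctly fills in what the paper dismisses as an elementary computation.
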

\begin{proof}
    Observe that the three natural components of $\Htot = p^*H - CS(A)$ are $\gtot$-orthogonal to each other.  
    We choose a frame $\{v_i\}$ on $P$ by combining $\{e_j\}$ orthonormal in the horizontal distribution and $\{u_k\}$ orthogonal on the vertical one. With this, we have:
    \begin{align*}
        |\langle F_A\wedge A\rangle_{\mathfrak{k}}|^2 = & \ \textstyle \sum_{i_1,i_2,i_3} \left(\langle F_A\wedge A\rangle_{\mathfrak{k}}(v_{i_1},v_{i_2},v_{i_3})\right)^2 |v_{i_1}\wedge v_{i_2}\wedge v_{i_3}|^2\\        
        = & \ 3\textstyle \sum_{i,j,k} \langle F_A(e_i,e_j),u_k\rangle_{\mathfrak{k}}\langle F_A(e_i,e_j),u_k\rangle_{\mathfrak{k}}\overline{g}(u_k,u_k)\\
        = & \ 3|F_A|^2,\\
        |\tfrac{1}{6}\langle A\wedge [A\wedge A]\rangle_{\mathfrak{k}}|^2 = & \ \tfrac{1}{36} \textstyle \sum_{i,j,k} \left(\langle A\wedge[A \wedge A]\rangle_{\mathfrak{k}}(u_i,u_j,u_k)\right)^2 |u_i\wedge u_j \wedge u_k |^2\\
        = & \ -\textstyle \sum_{i,j,k} \langle u_i,[u_j,u_k]\rangle_{\mathfrak{k}}^2\langle u_i, u_i \rangle_{\mathfrak{k}} \langle u_j, u_j \rangle_{\mathfrak{k}} \langle u_k, u_k \rangle_{\mathfrak{k}}\\
        = & \ |[\cdot,\cdot]|^2.
    \end{align*}
For the second item, we compute separately the first, third and last terms in the formula
$$
R^{\Htot,\bar{\phi}}=R_{\gtot} - \tfrac{1}{12}|\Htot|^2 + 2\Delta_{\gtot} \phi - |\overline{\nabla}\phi|^2.
$$
First, from (\ref{f:LCRcdecomp}) we obtain
\begin{align*}
    R_{\gtot} = & \ \textstyle \sum_i \mathrm{Rc}_{\gtot}(v_i,v_i) + \textstyle \sum_{j} \mathrm{Rc}_{\gtot}(u_j,u_j)|u_j|^2\\
    = & \ R_{g} - \tfrac{1}{2}|F_A|^2 + R_{g_K} +\tfrac{1}{4}|F_A|^2\\
    = & \ R_{g} - \tfrac{1}{4}|F_A|^2 + R_{g_K}.
\end{align*}
Now, since $\nabla^+$ on $K$ is flat and $H_K$ is $g_K$-harmonic, in particular the Ricci curvature vanishes, i.e. $\mathrm{Rc}_{g_K} -\tfrac{1}{4}H_K^2 =0$. Hence, taking trace it follows from the above formulae
\begin{align*}
    R_{\gtot} = & \ R_{g} - \tfrac{1}{4}|F_A|^2 +\tfrac{1}{4}|H_K|_{g_K}^2 = R_{g} - \tfrac{1}{4}|F_A|^2 +\tfrac{1}{4}|[\cdot,\cdot]|^2.
\end{align*}
An elementary computation shows that $\gD_{\bar{g}} \phi = \gD_g \phi$ and $\brs{\overline{\nabla} \phi}^2 = \brs{\N \phi}^2$.  Therefore, combining the above computations and the first item, the result follows.
\end{proof}

\subsection{Dimensional reduction and flow equations}\label{s:dimredGRF}

Let $E$ be a real string algebroid over $M$ with underlying principal bundle $P$ and quadratic Lie algebra $(\mathfrak{k},\langle\cdot,\cdot\rangle_{\mathfrak{k}})$ (see Section \ref{sec:background}). In this section we show that string algebroid generalized Ricci flow \eqref{eq:GRFgbA} is equivalent to generalized Ricci flow \eqref{f:GRF} on $P$, with a suitable invariant ansatz. We will then turn to the situation of string algebroid pluriclosed flow \eqref{eq:CPFunitary} on a complex manifold, for which the setting is more involved and we will obtain a partial answer.

We start in the case of a real string algebroid $E$ over a smooth manifold $M$.  Upon fixing a generalized metric on $E$, we obtain a triple $(g_0,H_0,A_0)$, and associated $K$-invariant data $(\gtot_0,\Htot_0,\overline{b}_0)$ as in Definition \ref{d:invariantdata}.

\begin{prop} \label{p:cGRFtoGRF} Let $(g_t, b_t, A_t)$ denote the unique solution to generalized Ricci flow \eqref{eq:GRFgbA} on $E$  with initial data $(g_0,H_0,A_0)$. Then $(\bar{g}_t, \bar{b}_t)$, defined in \eqref{f:gtot} and \eqref{f:btot}, is the unique solution to generalized Ricci flow \eqref{f:GRF} on $P$ with initial data $(\bar{g}_0, \bar{b}_0)$.  
\end{prop}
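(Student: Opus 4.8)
The plan is to show that the $K$-invariant ansatz $(\bar g,\bar H,\bar b)$ of Definition \ref{d:invariantdata} is preserved by generalized Ricci flow \eqref{f:GRF} on $P$, and that the reduced equations are exactly the string algebroid flow \eqref{eq:GRFgbA}. Generalized Ricci flow on the exact Courant algebroid over $P$ is the pair of equations $\dt\bar g=-2\Rc^{\bar g}+\tfrac12\bar H^2$ and $\dt\bar b=-d^{\star}_{\bar g}\bar H$, and Proposition \ref{prop:BismutRic} (taken with $\phi=0$) computes precisely the tensor $\Rc^{\bar H,0}=\Rc^{\bar g}-\tfrac14\bar H^2-\tfrac12 d^{\star}_{\bar g}\bar H$ governing both of these: its symmetric part feeds the $\bar g$-equation and its skew part ($=-\tfrac12 d^{\star}_{\bar g}\bar H$) feeds the $\bar b$-equation. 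So the strategy is to differentiate the ansatz tensors $\bar g_t,\bar b_t$ directly, and match their components in the $A_t$-orthogonal frame against the components of $\Rc^{\bar H,0}$ supplied by Proposition \ref{prop:BismutRic}, using the flow equations \eqref{eq:GRFgbA} for $(g_t,b_t,A_t)$.

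First I would dispose of the three-form bookkeeping. In the reference splitting $a_0=0$ and $b_0=0$, so $\bar b_0=p^{*}b_0=0$ and $\bar H_0=p^{*}H_0-CS(A_0)$, and $\bar H_0$ is \emph{closed} by \eqref{f:anomalyterm} together with the Bianchi identity \eqref{eq:bianchitrans0}; this places us in the genuine exact Courant algebroid setting required by \eqref{f:GRF}. I would then verify $\bar H_t=\bar H_0+d\bar b_t$ for all $t$ by checking $\dt\bar H=d(\dt\bar b)$. Differentiating $\bar H=p^{*}H-CS(A)$, using the evolution $\dot H=d\dot b+2\langle\dot a\wedge F_A\rangle_{\mathfrak k}-d\langle a\wedge\dot a\rangle_{\mathfrak k}$ from the proof of Lemma \ref{lem:GRFH} and the Chern-Simons transgression formula for $\dt CS(A_t)$, the curvature contributions $2\langle\dot a\wedge F_A\rangle_{\mathfrak k}$ cancel and one is left with $\dt\bar H=d\bigl(p^{*}\dot b-\langle\dot a\wedge A\rangle_{\mathfrak k}-\langle a\wedge\dot a\rangle_{\mathfrak k}\bigr)=d(\dt\bar b)$. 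Hence $\bar H_t$ is throughout the $H$-field attached to the data $(\bar g_t,\bar b_t)$.

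The heart of the argument is the component matching in the \emph{time-dependent} $A_t$-orthogonal frame, with $X,Y$ horizontal lifts and $U,V$ vertical fields. Differentiating $\bar g=p^{*}g+g_K(A,A)$ and $\bar b=p^{*}b-\langle a\wedge A\rangle_{\mathfrak k}$ as tensors on $P$ and evaluating on the time-$t$ frame (so that $A(X)=A(Y)=0$ and $a(U)=\dot a(U)=0$) gives $\dt\bar g(X,Y)=\dot g(X,Y)$, $\dt\bar g(U,X)=g_K(U,\dot a(X))$, $\dt\bar g(U,V)=0$, and $\dt\bar b(X,Y)=\dot b(X,Y)-\langle a\wedge\dot a\rangle_{\mathfrak k}(X,Y)$, $\dt\bar b(U,X)=\langle\dot a(X),U\rangle_{\mathfrak k}$, $\dt\bar b(U,V)=0$. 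Substituting \eqref{eq:GRFgbA}, the decisive simplification is in the $(X,Y)$ block of $\bar b$: since $\dot a=-(d_A^{\star}F_A-F_A\lrcorner H)$, the term $-\langle a\wedge\dot a\rangle_{\mathfrak k}$ exactly cancels the anomaly term $-\langle a\wedge(d_A^{\star}F_A-F_A\lrcorner H)\rangle_{\mathfrak k}$ in $\dot b$, leaving $\dt\bar b(X,Y)=-d^{\star}_gH(X,Y)$. Comparing with Proposition \ref{prop:BismutRic} and the identities $\bar H^2(X,Y)=(H^2+2F_A^2)(X,Y)$, $\Rc^{\bar g}(X,Y)=(\Rc^g-\tfrac12 F_A^2)(X,Y)$, $d^{\star}_{\bar g}\bar H(X,Y)=d^{\star}_gH(X,Y)$, and $d^{\star}_{\bar g}\bar H(U,X)=\langle(d_A^{\star}F_A-F_A\lrcorner H)(X),U\rangle_{\mathfrak k}$ established in its proof, every component of $(\dt\bar g,\dt\bar b)$ matches the right-hand side of \eqref{f:GRF}. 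This proves $(\bar g_t,\bar b_t)$ solves generalized Ricci flow.

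Finally, $(\bar g_t,\bar b_t)$ is a solution of \eqref{f:GRF} on $P$ with initial data $(\bar g_0,\bar b_0)$, and since generalized Ricci flow admits a unique solution for prescribed initial data (being, up to gauge, a strictly parabolic system, by \cite{Streetsexpent}), it is \emph{the} unique such solution. The main obstacle I anticipate is not a hard estimate but the careful bookkeeping around the moving horizontal distribution, namely differentiating tensors whose natural adapted frame is itself time-dependent, together with pinning down the precise signs in the transgression formula so that both the closedness of $\bar H_0$ and the cancellation in $\dt\bar b(X,Y)$ come out correctly. These are exactly the points at which the definitions of $\bar b$ in \eqref{f:btot} and of the $b$-equation in \eqref{eq:GRFgbA} are engineered to cooperate.
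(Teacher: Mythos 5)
Your proposal is correct and follows essentially the same route as the paper: differentiate the invariant ansatz $(\bar g_t,\bar b_t)$ using \eqref{eq:GRFgbA}, observe the cancellation of the anomaly term $\langle a\wedge(d_A^{\star}F_A-F_A\lrcorner H)\rangle_{\mathfrak k}$ against the derivative of the $\langle a\wedge A\rangle_{\mathfrak k}$ correction in $\bar b$, and match the resulting components against Proposition \ref{prop:BismutRic} with $\phi=0$. Your explicit verification that $\bar H_t=\bar H_0+d\bar b_t$ (via $\dt\bar H=d(\dt\bar b)$ and the Chern--Simons transgression) and the closing appeal to parabolic uniqueness are points the paper leaves implicit, and they are sound.
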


\begin{proof} 
Combining \eqref{f:gtot} with the flow equations \eqref{eq:GRFgbA} gives:
\begin{equation*}
    \begin{split}
        \frac{\partial}{\partial t} \gtot = & \ p^*\frac{\partial}{\partial t} g + 2g_K\left(\frac{\partial}{\partial t} A \odot A\right)\\
        = & \ p^*(-2\Rc^g + \tfrac{1}{2}H^2 + 2F_A^2) + 2\langle (d_A^\star F_A -  F_A \ \lrcorner \ H)\odot A \rangle_{\mathfrak{k}},
    \end{split}
\end{equation*}
where $\odot$ denotes symmetric product. Comparing against Proposition \ref{prop:BismutRic}, the metric $\bar{g}$ satisfies the required equation for generalized Ricci flow.  Next we compute
\begin{equation*}
    \begin{split}
        \frac{\partial}{\partial t}\bar{b} = & \ p^*\frac{\partial}{\partial t} b - g_K\left(\frac{\partial}{\partial t} A\wedge a\right)- g_K\left(A\wedge \frac{\partial}{\partial t} a\right)\\
        = & \ p^*(-d_g^\star H + \langle (d_A^\star F_A - F_A \lrcorner \ H )\wedge a\rangle_{\mathfrak{k}})- \langle(d_A^\star F_A- F_A \ \lrcorner \ H)\wedge a\rangle_{\mathfrak{k}}\\
        & -\langle A\wedge (d_A^\star F_A - F_A \ \lrcorner \ H)\rangle_{\mathfrak{k}}\\
         = & \ p^*(-d_g^\star H) - \langle A\wedge (d_A^\star F_A- F_A \ \lrcorner \ H)\rangle_{\mathfrak{k}}.
         \end{split}
    \end{equation*}
Comparing against the computations in Proposition \ref{prop:BismutRic}, the flow equation for $\bar{b}$ follows.
\end{proof}

\begin{remark}
    The correspondence of generalized Ricci flow lines of Proposition \ref{p:cGRFtoGRF} hints at a Courant algebroid dimensional reduction point of view in the spirit of \cite{BarHek,GF14} (see also \cite{BursztynCavalcantiGualtieri}). This perspective is however not the main focus of this work and will be investigated elsewhere. 
\end{remark}
 
We turn next to the dimensional reduction of pluriclosed flow.  As it turns out, although we are able to set a correspondence of flow lines between pluriclosed flow and its reduced version (cf. Proposition \ref{prop:UPCFexp}), the flow will no longer preserve the symmetry, in contrast with the case of generalized Ricci flow. The reason for this may be partly because the natural idea of reducing pluriclosed flow on a holomorphic principal bundle is lacking a good notion of invariant metrics, unless the group is abelian.  Therefore, here we resort to a different geometric set-up. Following the previous notation, we assume that $M$ admits a complex structure $J$ and furthermore that the group $K$ is  compact and even-dimensional.  We endow $K$ with a left-invariant, integrable complex structure $J_K$, as studied in \cite{Samelson,Pit88}.  Then, the bundle $P$ is naturally endowed with a Hermitian structure (cf. \cite{PoddarThankur}):

\begin{definition}
Let $\omega$ be a Hermitian metric on $(M,J)$ and $A$ a principal connection on $P$ satisfying $F_A^{0,2}=0$. Then the total space of $P$ is a Hermitian manifold with the complex structure and Hermitian metrics given by
\begin{align}\label{Jtot}
    \Jtot & =A^\perp Jdp + J_K \circ A,\\
    \label{otot}
    \otot & = p^*\omega + \frac{1}{2}\langle \Jtot A\wedge A\rangle_{\mathfrak{k}}.
\end{align}
\end{definition}
\noindent We note that $\otot$ is the Hermitian metric defined by the total space metric \eqref{f:gtot} and $\Jtot$. The fact that $\Jtot$ is integrable is well-known. Nonetheless, we include here a short proof for the benefit of the reader. 

\begin{lemma}\label{l:Jtotintegrable}
    The complex structure $\bar{J}$ is integrable.
\end{lemma}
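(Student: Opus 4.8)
The plan is to verify integrability of $\bar J$ by computing the Nijenhuis tensor $N_{\bar J}$ on the total space $P$ and showing it vanishes. Since $\bar J$ is built from the decomposition $TP \cong_A p^*T \oplus VP$ via $\bar J = A^\perp J\,dp + J_K \circ A$, I would check $N_{\bar J}(\cdot,\cdot)$ on the three types of pairs of vector fields coming from this splitting: two horizontal lifts, one horizontal and one vertical (canonical/left-invariant) field, and two vertical fields. Recall that a complex structure is integrable if and only if its $(0,1)$-tangent bundle is involutive under the Lie bracket, so equivalently I would show that the Lie bracket of two $(0,1)$-vector fields on $P$ remains of type $(0,1)$.

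First I would set up the $(0,1)$-bundle $T^{0,1}_{\bar J}P$. Using the splitting, a $(0,1)$-vector field decomposes into a horizontal part, which is the $A$-horizontal lift of a $(0,1)$-field $\bar X$ on $M$ (since $\bar J$ agrees with $J$ on the horizontal distribution via $dp$), plus a vertical part lying in $V^{0,1}P$ (the $(0,1)$-bundle of $J_K$ acting on the fibers). So I would take test fields of the form $A^\perp \bar X + U$ and $A^\perp \bar Y + V$, where $\bar X,\bar Y \in \Gamma(T^{0,1}_J)$ and $U,V$ are left-invariant $(0,1)$-fields for $J_K$, and compute $[A^\perp \bar X + U, A^\perp \bar Y + V]$. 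The key structural facts I would invoke are: (i) the curvature identity $A([A^\perp \bar X, A^\perp \bar Y]) = -F_A(\bar X,\bar Y)$ together with $dp([A^\perp \bar X, A^\perp \bar Y]) = [\bar X,\bar Y]$, so the horizontal-horizontal bracket produces a horizontal lift of $[\bar X,\bar Y]$ (which is $(0,1)$ because $J$ is integrable, i.e. $N_J=0$) plus a vertical correction $-F_A(\bar X,\bar Y)$; (ii) the vanishing hypothesis $F_A^{0,2}=0$, which guarantees $F_A(\bar X,\bar Y)=0$ for two $(0,1)$-fields, killing the vertical obstruction from the horizontal bracket; (iii) that the horizontal lift is $K$-invariant, so brackets of horizontal lifts with left-invariant vertical fields stay within the appropriate type, and (iv) integrability of $J_K$ on $K$, so that $[U,V]$ is again a vertical $(0,1)$-field.

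Concretely I would organize the computation as follows. For the horizontal-horizontal term, $[A^\perp\bar X, A^\perp\bar Y]$ has horizontal part $A^\perp[\bar X,\bar Y]$, which is $(0,1)$ since $N_J = 0$, and vertical part $-F_A(\bar X,\bar Y) = 0$ by $F_A^{0,2}=0$. For the mixed terms $[A^\perp \bar X, V]$ and $[U, A^\perp \bar Y]$, the $K$-invariance of the horizontal lift means these brackets are purely vertical, and I would check they remain of $(0,1)$-type for $J_K$ (this uses that $V$ is left-invariant and the horizontal lift commutes appropriately with the $K$-action, so such brackets vanish or stay vertical $(0,1)$). For the vertical-vertical term $[U,V]$, integrability of $J_K$ on the fiber gives a vertical $(0,1)$-field directly. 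Assembling these, the full bracket lies in $T^{0,1}_{\bar J}P$, proving involutivity and hence integrability by the Newlander-Nirenberg theorem.

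The main obstacle, and the only genuinely nontrivial input, is the interplay between the horizontal curvature term and the hypothesis $F_A^{0,2}=0$: this is precisely what forces the vertical part of the horizontal-horizontal bracket to vanish, and it is the reason the hypothesis appears in the definition. The bookkeeping of the mixed horizontal-vertical brackets is slightly delicate because one must use that $A^\perp\bar X$ is $K$-invariant while $V$ is a fundamental/left-invariant field, but this is standard principal-bundle calculus and should reduce to showing those brackets preserve type. I expect the vertical-vertical and the $N_J=0$ horizontal contributions to be routine, so the whole proof hinges on correctly identifying that $F_A^{0,2}=0$ exactly cancels the obstruction coming from curvature.
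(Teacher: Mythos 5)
Your proposal is correct and follows essentially the same route as the paper: both verify involutivity of $T^{0,1}_{\bar J}P$ via Newlander--Nirenberg by checking the three bracket types, using $N_J=0$ together with $F_A^{0,2}=0$ to kill the horizontal and curvature contributions, $K$-invariance of horizontal lifts against canonical (fiberwise left-invariant) vertical fields for the mixed terms, and integrability of $J_K$ (i.e.\ $N_{J_K}=0$) for the vertical--vertical term. The only cosmetic difference is that the paper organizes the mixed case as a direct four-term expansion $([X,U]+\i[JX,U]+\i[X,J_KU]-[JX,J_KU])^{1,0}=0$ rather than your type-preservation argument, but the content is identical.
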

\begin{proof}
    By Newlander-Nirenberg's theorem, it is enough to show that $T^{0,1}P$ is an involutive distribution. In the following computation, vertical fields $U,V$ are canonical and horizontal fields $X,Y$ are $K$-equivariant:
    \begin{equation*}
    \begin{split}
        [U^{0,1},V^{0,1}]^{1,0} = & \  \tfrac{1}{4} N_{J_K}(U,V)^{1,0} =  0 ,\\
        [X^{0,1},U^{0,1}]^{1,0} = & \ ([X,U]+i[JX,U]+i[X,J_K U]-[JX,J_K U])^{1,0} = 0, \\
        [X^{0,1},Y^{0,1}]^{1,0} = & A^\perp([X^{0,1},Y^{0,1}]^{1,0}) -F_A(X^{0,1},Y^{0,1})^{1,0} = 0.
    \end{split}
    \end{equation*}
    We have used that $J$ and $J_K$ are integrable, and $F_A^{0,2}=0$. By taking linear combinations it follows that the Lie bracket preserves $T^{0,1}P$, as required.
\end{proof}

Now, we provide the dimensional reduction of some Hermitian quantities that will be used in the sequel. Here, we just indicate the main steps in the computations, while the details can be found in \cite{GMolina}.

\begin{proposition}\label{prop:hermred}
    With the conventions above, we have
    \begin{align}\label{eq:pctoBianchi}
        dd^c\otot & = p^*(dd^c\omega + \langle F_A \wedge F_A \rangle_{\mathfrak{k}}).
    \end{align}
    Moreover, in case $(\omega,A)$ satisfy the Bianchi identity, we also have
    \begin{align}\label{eq:rhotot}
        \overline{\rho}_B & = \rho_B +\langle \Lambda_{\omega} F_A,F_A\rangle_{\mathfrak{k}} + \langle d_A(\Lambda_{\omega} F_A)\wedge A\rangle_{\mathfrak{k}} + \tfrac{1}{2}\langle [\Lambda_{\omega}F_A, A]\wedge A\rangle_{\mathfrak{k}}.
    \end{align}
\end{proposition}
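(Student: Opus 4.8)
The strategy is to perform the dimensional reduction fiber-by-fiber using the orthogonal splitting $TP \cong_A p^*T \oplus VP$ and the explicit formulas for the metric $\otot$ and the complex structure $\Jtot$ from \eqref{Jtot} and \eqref{otot}. For the first identity \eqref{eq:pctoBianchi}, I would begin by observing that $dd^c$ is computed via the complex structure $\Jtot$, and since $\Htot = p^*H - CS(A)$ is the canonical three-form associated to $(\gtot,\Jtot)$ via $\Htot = -d^c_{\Jtot}\otot$ (this follows from the general theory of Hermitian structures, combined with Lemma \ref{l:Jtotintegrable} guaranteeing integrability), the claim becomes essentially a restatement of \eqref{f:anomalyterm}. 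More directly, one has $dd^c\otot = -d\Htot$ up to a sign convention dictated by the relation $H = -d^c\omega$ appearing throughout Section \ref{s:PCFSA}, and then \eqref{f:anomalyterm} gives $d\Htot = p^*(dH - \IP{F_A \wedge F_A}_\mathfrak{k})$. Matching signs and using $dH = -dd^c\omega$ on the base yields \eqref{eq:pctoBianchi} after accounting for the fact that the Chern--Simons term contributes exactly the $\IP{F_A \wedge F_A}_\mathfrak{k}$ piece through \eqref{f:dCS}.

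For the harder identity \eqref{eq:rhotot}, the cleanest route is to use the characterization of the Bismut--Ricci form as (up to sign) the curvature of the anticanonical bundle of $(\gtot,\Jtot)$, equivalently $\rho_B = -dd^c\log\|\Omega\|$ together with the fact that under the Bianchi identity the Bismut connection has special holonomy properties. Concretely, I would use the formula $\rho_B(\otot) = -d(d^{\star_{\otot}}\otot - d^c\log\|\bar\Omega\|_{\otot})$ from the proof of Proposition \ref{prop:zetaell}, and compute each ingredient in the ansatz. The vertical and mixed components of $d^{\star_{\otot}}\otot$ must be extracted using the connection decomposition \eqref{f:conndecomposition} and the identity $I\!I_X Y = -\tfrac{1}{2}F_A(X,Y)$ from \eqref{f:AtensorF}; the terms $\langle d_A(\Lambda_\omega F_A)\wedge A\rangle_\mathfrak{k}$ and $\tfrac{1}{2}\langle[\Lambda_\omega F_A, A]\wedge A\rangle_\mathfrak{k}$ arise precisely from the $A$-dependence introduced by the metric $\otot = p^*\omega + \tfrac{1}{2}\langle\Jtot A\wedge A\rangle_\mathfrak{k}$ when one differentiates through the fiber directions. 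The term $\langle\Lambda_\omega F_A, F_A\rangle_\mathfrak{k}$ is the purely horizontal contribution, matching the structure already seen in the $\zeta^{\bar\ell}$ computation of Proposition \ref{prop:zetaell}, where exactly this combination appears.

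\textbf{Main obstacle.} The delicate point will be bookkeeping the fiber-direction contributions to $d^c\log\|\bar\Omega\|_{\otot}$ and to $d^{\star_{\otot}}\otot$, since the holomorphic volume form on the total space combines the base holomorphic volume form with the left-invariant holomorphic volume form on $K$ (using $J_K$), and the norm $\|\bar\Omega\|_{\otot}$ picks up the fiber metric $g_K$. Disentangling which terms are genuinely $p^*$-pullbacks from the base and which are the Chern--Simons/connection corrections requires careful use of the left-invariance of $J_K$ and the flatness of the fiber Bismut connection $\nabla^+$ (established in the lemma preceding Proposition \ref{prop:BismutRic}). I expect the matching of the two nonhorizontal terms in \eqref{eq:rhotot} to be the crux, and since these are tedious but routine once the framework is set, I would cite \cite{GMolina} for the full computation as the authors indicate, verifying only that the horizontal component reproduces $\rho_B + \langle\Lambda_\omega F_A, F_A\rangle_\mathfrak{k}$ in a manner consistent with \eqref{eq:rhoBdecomp}.
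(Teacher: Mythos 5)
Your treatment of \eqref{eq:pctoBianchi} follows the same route as the paper, but be aware that the identity $-d^c_{\Jtot}\otot = p^*H - CS(A)$ (with $H=-d^c\omega$) is \emph{not} a consequence of ``the general theory of Hermitian structures'': general theory only provides \emph{some} torsion form $-d^c_{\Jtot}\otot$, and the equality with $p^*H-CS(A)$ is precisely the content to be proved. The paper establishes it by a short direct computation in the splitting $TP\cong_A p^*T\oplus VP$, using integrability of $\Jtot$ (Lemma \ref{l:Jtotintegrable}) to convert $\tfrac{1}{2}\langle[\Jtot A\wedge\Jtot A]\wedge A\rangle_{\mathfrak{k}}$ into the cubic Chern--Simons term $\tfrac{1}{6}\langle A\wedge[A\wedge A]\rangle_{\mathfrak{k}}$; the claim then follows from \eqref{f:anomalyterm} and \eqref{f:dCS}. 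So part one is correct modulo this asserted-but-not-performed computation.

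For \eqref{eq:rhotot} your route is genuinely different from the paper's, and as described it has a gap. You propose the potential formula $\rho_B(\otot)=-d\left(d^{\star_{\otot}}\otot-d^c\log\|\Omega_0\|_{\otot}\right)$ with $\Omega_0$ built by combining a holomorphic volume form on the base with a left-invariant holomorphic volume form on $(K,J_K)$. Neither ingredient exists in general: no holomorphic volume form is assumed on $(M,J)$ in this section, and for a Samelson complex structure on a compact non-abelian group the canonical bundle is typically holomorphically nontrivial (e.g.\ the Hopf surface $K=U(1)\times SU(2)$), so there is no left-invariant holomorphic volume form on the fiber. Since the potential formula is local, one could try to patch with arbitrary local holomorphic coordinates on $P$, but such coordinates cannot be adapted to the horizontal distribution when $F_A\neq 0$ (it is non-integrable), so the clean horizontal/vertical bookkeeping your plan depends on breaks down. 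The paper avoids volume forms entirely: it obtains $\overline{\rho}_B$ tensorially by combining the component formulas for the weighted Bismut--Ricci tensor of $(\gtot,\Htot)$ in Proposition \ref{prop:BismutRic} with the identity \cite[Formula (3.16)]{Ivanovstring} expressing the Bismut--Ricci form through $\Rc^{\gtot}-\tfrac{1}{4}\Htot^2-\tfrac{1}{2}d^{\star}\Htot$, which is also the computation detailed in \cite{GMolina} --- so citing that reference while sketching the potential-based method would in effect be citing a different proof. The consistency with \eqref{eq:rhoBdecomp} that you invoke only as a sanity check is, in the actual argument, the backbone of the proof.
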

\begin{proof}
The first item follows from \eqref{f:anomalyterm} together with
\begin{equation*}
    \begin{split}
        -d^c\overline{\omega} & =p^*(-d^c\omega) + \tfrac{1}{2}\langle [\overline{J} A \wedge \overline{J} A]\wedge A \rangle_{\mathfrak{k}} - \langle F_A \wedge A \rangle_{\mathfrak{k}})\\
        & = p^*H +\tfrac{1}{6}\langle A\wedge [A\wedge A]\rangle_{\mathfrak{k}} - \langle F_A\wedge A\rangle_{\mathfrak{k}}\\
        & = p^*H -CS(A),
    \end{split}
\end{equation*}
where we have used the integrability of $\overline{J}$.
The final computation of $\overline{\rho}_B$ combines the previous items with Proposition \ref{prop:BismutRic} and \cite[Formula (3.16)]{Ivanovstring}.
\end{proof}

Let $A_t$ be a one-parameter family of principal connections on $P$, and consider associated family of complex structures $\overline{J}_t$ given by \eqref{Jtot}:
    \begin{align}\label{Jtott}
    \overline{J}_t = A_t^\perp J dp + J_K \circ A_t.
    \end{align}
Then, one has that
\begin{align}\label{eq:evolJtot}
        \partial_t \overline{J} = J\partial_t A+ J_K \circ \partial_t A.
    \end{align}   
It is therefore clear that along string algebroid pluriclosed flow \eqref{eq:CPFunitary}, the complex structure on $P$ given by \eqref{Jtott} is not fixed. The main difficulty in producing a natural, fixed complex structure on $P$ is the fact that \eqref{Jtott} is not $K$-equivariant, unless $(K, J_K)$ is a complex Lie group. In our setting, where we demand that moreover $K$ is compact, this reduces to just torus bundles. In the abelian case, the next result solves this issue.

\begin{lemma}\label{l:Jtotcte}
    Assume $K=T^{2k}$ is a torus endowed with a bi-invariant complex structure, denoted by $J_K = J_T$. Moreover, let $(\omega_t,A_t)$ be a one-parameter family of pairs consisting of a Hermitian metric on $(M,J)$ and a connection on $P$. Suppose moreover that
    \begin{align}\label{eq:evolAred}
    \partial_t A_t = Jd_A(\Lambda_\omega F_A)- d_A(J_T\Lambda_\omega F_A).
    \end{align}
    Then, the complex structure $\overline{J}_t$ given by \eqref{Jtott} is constant along the flow. In particular, if $F_{A_0}^{0,2}=0$, then $A_t$ remains integrable along the flow.
\end{lemma}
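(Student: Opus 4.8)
The plan is to differentiate $\overline{J}_t$ directly and show the derivative vanishes, reducing the whole statement to a short algebraic cancellation between the two almost complex structures $J$ and $J_T$ acting on different tensor slots. I would begin from the general variation formula \eqref{eq:evolJtot}, $\partial_t \overline{J} = J\,\partial_t A + J_K \circ \partial_t A$, which should be read as an identity of $\ad P$-valued one-forms (equivalently, of endomorphisms of $TP$ that vanish on $VP$ and take values in $VP$): the first term applies $J$ to the base one-form slot, while the second applies $J_K = J_T$ to the $\ad P$-value. The first step is to record the two structural facts that make the abelian hypothesis essential. Since $K = T^{2k}$, the bundle $\ad P \cong M \times \mathfrak{k}$ is canonically trivial and $d_A = d$ on $\ad P$-valued forms, because the bracket term $[A\wedge\cdot]$ vanishes; moreover $J_T$ is a parallel fibrewise endomorphism of $\ad P$, so it commutes with $d_A$, giving $d_A(J_T\,\Lambda_\omega F_A) = J_T\, d_A(\Lambda_\omega F_A)$.

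Next I would substitute the flow equation \eqref{eq:evolAred}. Writing $\eta := d_A(\Lambda_\omega F_A)$ and using the previous observation, \eqref{eq:evolAred} reads $\partial_t A = J\eta - J_T\eta$, where $J$ acts on the form slot and $J_T$ on the value slot. Feeding this into \eqref{eq:evolJtot} and using only that (i) $J$ squares to $-\Id$ on the form slot, (ii) $J_T$ squares to $-\Id$ on the value slot, and (iii) the two actions commute as they operate on different slots, I obtain
\begin{align*}
J\,\partial_t A &= J(J\eta - J_T\eta) = -\eta - J_T J\eta,\\
J_T\circ\partial_t A &= J_T(J\eta - J_T\eta) = J_T J\eta + \eta,
\end{align*}
so that $\partial_t \overline{J} = (-\eta - J_T J\eta) + (J_T J\eta + \eta) = 0$. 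The cancellation is exact and, reassuringly, independent of the precise sign conventions chosen for the two actions, provided only that (i)--(iii) hold. Hence $\overline{J}_t = \overline{J}_0$ for all $t$.

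For the final assertion I would invoke Lemma \ref{l:Jtotintegrable}: if $F_{A_0}^{0,2} = 0$ then $\overline{J}_0$ is integrable, and since $\overline{J}_t \equiv \overline{J}_0$ it remains integrable for all $t$. Reading the Nijenhuis computation in the proof of that lemma in reverse shows that, with $J$ already integrable on $M$, integrability of $\overline{J}_t$ is equivalent to the vanishing of $F_{A_t}^{0,2}$; therefore $A_t$ stays integrable along the flow, as claimed.

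I expect the only genuine subtlety to be bookkeeping: correctly interpreting \eqref{eq:evolJtot} as acting on two distinct slots, and making sure the single $J_T$ appearing in the flow \eqref{eq:evolAred} is commuted through $d_A$ by parallelism. This is precisely where the torus hypothesis enters, since for non-abelian $K$ one would have $d_A(J_K s) \neq J_K d_A s$ in general and the cancellation would fail; the squaring and commuting properties (i)--(iii) are then elementary once the slots are kept straight.
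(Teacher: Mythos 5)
Your proof of the constancy of $\overline{J}_t$ is correct and is essentially the paper's own argument: both substitute \eqref{eq:evolAred} into the variation formula \eqref{eq:evolJtot} and cancel using exactly the three ingredients you isolate --- $J^2 = -\Id$ on the form slot, $J_T^2 = -\Id$ on the value slot, and the commutativity of the two actions --- together with $[J_T,d_A]=0$ on $\ad P$, which is where the abelian hypothesis enters. The only difference is organizational: you commute $J_T$ through $d_A$ before substituting the flow equation, while the paper does so afterwards.

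The final clause, however, is justified incorrectly. Reading the proof of Lemma~\ref{l:Jtotintegrable} in reverse does \emph{not} show that integrability of $\overline{J}_t$ is equivalent to $F_{A_t}^{0,2}=0$. The only bracket in that computation which sees the curvature is the horizontal--horizontal one, and involutivity of $T^{0,1}P$ forces merely that the vertical vector $F_A(X^{0,1},Y^{0,1})$ lie in $VP^{0,1}$, i.e.\ that the $\mathfrak{k}^{1,0}$-component of $F_A^{0,2}$ vanish; the $\mathfrak{k}^{0,1}$-valued part of $F_A^{0,2}$ is unconstrained (reality of $F_A$ ties it to $F_A^{2,0}$, not to itself). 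Concretely, for $K=T^2$ with $\mathfrak{k}^{1,0}=\CC(e_1-\i e_2)$, a connection with $F_A^{0,2}=\mu\,(e_1+\i e_2)$ for some $\dbar$-exact $\mu\neq 0$ induces an integrable $\overline{J}$ with $F_A^{0,2}\neq 0$, so constancy of $\overline{J}_t$ plus integrability of $\overline{J}_0$ does not by itself yield $F_{A_t}^{0,2}=0$. The gap is easily repaired by using the flow itself: decomposing $s:=\Lambda_\omega F_A=s^{1,0}+s^{0,1}$ into $J_T$-types and using $[J_T,d_A]=0$, equation \eqref{eq:evolAred} becomes, with the paper's conventions,
\begin{align*}
\partial_t A = -2\i\,\partial s^{1,0} + 2\i\,\dbar s^{0,1},
\end{align*}
so $(\partial_t A)^{0,1}=2\i\,\dbar s^{0,1}$ is $\dbar$-exact and $\partial_t F_A^{0,2} = \dbar\,(\partial_t A)^{0,1} = 0$; hence $F_{A_t}^{0,2}=F_{A_0}^{0,2}=0$ for all $t$. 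This is precisely the content of Remark~\ref{rmk:unitaryhol}, where the flow is identified with the Chern-connection evolution on a fixed holomorphic bundle. Alternatively, if ``integrable'' is read as integrability of the almost complex structure $\overline{J}_{A_t}$, the conclusion follows at once from constancy, and your appeal to the reverse implication is unnecessary.
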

\begin{proof}
Using \eqref{eq:evolJtot}, we have
\begin{equation*}
    \begin{split}
        \partial_t \overline{J}_t = & \ J\partial_t A_t + J_T \circ \partial_t A_t\\
        = & -d_A(\Lambda_\omega F_A)-Jd_A(J_T\Lambda_\omega F_A) + J_T \circ Jd_A(\Lambda_\omega F_A)-J_T \circ d_A(J_T \Lambda_\omega F_A) = 0,
    \end{split}
\end{equation*}
where we have used $[J_T,d_A]=0$ on $\mathrm{ad} P$, as the structure group is abelian.
\end{proof}

\begin{remark}\label{rmk:unitaryhol}
After identifying $(\mathrm{ad} P, J_T) \cong (\mathrm{ad} P^{1,0},\i)$, the flow equation \eqref{eq:evolAred} reads
$$
\partial_t A_t = -\partial_A(\i\Lambda_\omega F_A).
$$
Then, as discussed in Section \ref{sec:PCFstring}, if $A_0$ is compatible with a Hermitian structure $h_0$ on $P$, then $A_t$ is the Chern connection of $h_t$ satisfying
$$
h^{-1}\partial_t h = -S^h_g,
$$
where $S^h_g = \i \Lambda_\omega F_{A_h}$ for $A_h=A$, with the previous notations. Therefore, the flow in Lemma \ref{l:Jtotcte} is compatible with pluriclosed flow in Definition \ref{def:PCFQ}. 
\end{remark}

We are now ready to prove the main result of this section, which generalizes previously known results \cite{Streetssolitons, SRYM2} about pluriclosed flow on torus bundles over Riemann surfaces.

\begin{proposition}
    Let $T\hookrightarrow P \rightarrow (M,J)$ be a torus bundle with complex fibre isomorphic to $(T^{2k},J_T)$, where $J_T$ is any invariant complex structure. Moreover, let $(\omega_t,A_t)$ be a one-parameter family of Hermitian metrics on $(M,J)$ and principal connections on $P$ such that 
    \begin{align}\label{eq:incond}
dd^c\omega_0+\langle F_{A_0}\wedge F_{A_0}\rangle_{\mathfrak{k}}=0, \; \;  \; F_{A_0}^{0,2}=0,
    \end{align}
    and satisfying:   \begin{align}\label{eq:PCFcorr}
        \begin{split}
            \partial_t \omega_t & = -(\rho_B(\omega) - \langle F_A, \Lambda_\omega F_A \rangle_{\mathfrak{k}})^{1,1},\\
\partial_t  A_t & = \frac{1}{2}Jd_A(\Lambda_\omega F_A)-\frac{1}{2}d_A(J_T\Lambda_\omega F_A).
        \end{split}
    \end{align}
    Then, the one-parameter family $\overline{\omega}_t$ given by \eqref{otot} remains pluriclosed along the flow and satisfies
    $$
    \partial_t \overline{\omega}_t = -\overline{\rho}_B^{1,1}
    $$
\end{proposition}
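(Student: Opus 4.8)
The plan is to verify the two assertions---pluriclosedness and the evolution identity---by reducing everything to the base $M$ through the formulas of Proposition \ref{prop:hermred}, exploiting heavily that the structure group is an abelian torus, so that $[\cdot,\cdot]_{\mathfrak{k}}=0$, $d_A=d$ and $F_A=dA$ on $\ad P$-valued forms. First I would record the preliminary structural fact that the complex structure $\overline{J}_t$ defined by \eqref{Jtott} is constant along \eqref{eq:PCFcorr}. This is immediate from Lemma \ref{l:Jtotcte}: the $A$-equation in \eqref{eq:PCFcorr} is exactly one half of \eqref{eq:evolAred}, and since the expression \eqref{eq:evolJtot} for $\partial_t\overline{J}$ is $\mathbb{R}$-linear in $\partial_t A$, rescaling by $\tfrac{1}{2}$ preserves its vanishing. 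In particular, since $F_{A_0}^{0,2}=0$ by \eqref{eq:incond}, $\overline{J}_t\equiv\overline{J}_0$ is a fixed integrable complex structure and $F_{A_t}^{0,2}=0$ for all $t$ (Lemma \ref{l:Jtotintegrable}).

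The heart of the argument is the identity $\partial_t\overline{\omega}_t=-\overline{\rho}_B^{1,1}$. Since $\overline{J}$ is constant, differentiating \eqref{otot} gives $\partial_t\overline{\omega}=p^*\partial_t\omega+\tfrac{1}{2}(\langle\overline{J}\,\partial_t A\wedge A\rangle_{\mathfrak{k}}+\langle\overline{J}A\wedge\partial_t A\rangle_{\mathfrak{k}})$, and I would compare this against $-\overline{\rho}_B^{1,1}$ computed from \eqref{eq:rhotot}, which in the abelian case collapses to $\overline{\rho}_B=\rho_B+\langle\Lambda_\omega F_A,F_A\rangle_{\mathfrak{k}}+\langle d(\Lambda_\omega F_A)\wedge A\rangle_{\mathfrak{k}}$. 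Using the orthogonal splitting $TP\cong_A p^*T\oplus VP$, both sides decompose into horizontal--horizontal, horizontal--vertical and vertical--vertical pieces, and I would match them block by block. The vertical--vertical blocks vanish on both sides because the fibre carries the flat Bismut connection, as in the last line of Proposition \ref{prop:BismutRic}. The horizontal--horizontal block reduces to the $\omega$-equation of \eqref{eq:PCFcorr}, matching $p^*\partial_t\omega$ with the horizontal part of $-\overline{\rho}_B^{1,1}$. The horizontal--vertical block is where the two non-standard features of \eqref{eq:PCFcorr}---the $J_T$-term and the factor $\tfrac{1}{2}$---enter: after applying $\overline{J}$, the term $\tfrac{1}{2}Jd(\Lambda_\omega F_A)$ contributes through the $J$-twist of \eqref{Jtott} on the horizontal leg while $\tfrac{1}{2}d(J_T\Lambda_\omega F_A)$ pairs with $\overline{J}A=J_TA$ on the vertical leg, and on taking $(1,1)$-parts the two combine to reproduce exactly $-\langle d(\Lambda_\omega F_A)\wedge A\rangle_{\mathfrak{k}}^{1,1}$. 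Throughout, the signs must be tracked with the convention $g_K=-\langle\cdot,\cdot\rangle_{\mathfrak{k}}$ used to build $\overline{g}$ in \eqref{f:gtot}.

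For the pluriclosedness claim I would argue as follows. By \eqref{eq:pctoBianchi}, $dd^c\overline{\omega}_t=p^*(dd^c\omega_t+\langle F_{A_t}\wedge F_{A_t}\rangle_{\mathfrak{k}})$, so $\overline{\omega}_t$ is pluriclosed precisely when $(\omega_t,A_t)$ satisfies the Bianchi identity. A direct computation on $M$, structurally identical to Lemma \ref{lem:GRFH}, shows that $\partial_t(dd^c\omega+\langle F_A\wedge F_A\rangle_{\mathfrak{k}})=dd^c\partial_t\omega+2d\langle\partial_t A\wedge F_A\rangle_{\mathfrak{k}}$ vanishes identically along \eqref{eq:PCFcorr}; together with the initial condition \eqref{eq:incond} this preserves the Bianchi identity, hence keeps $\overline{\omega}_t$ pluriclosed, and simultaneously legitimises the use of \eqref{eq:rhotot} at every time. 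Alternatively, granting the identity of the previous paragraph wherever Bianchi holds, closedness of $\overline{\rho}_B$ forces $\partial_t(dd^c\overline{\omega})=-dd^c(\overline{\rho}_B^{1,1})=0$, since $dd^c$ annihilates the $(1,1)$-part of a closed real two-form, so $dd^c\overline{\omega}_t\equiv dd^c\overline{\omega}_0=0$ by a continuity argument.

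The main obstacle is the horizontal--vertical matching in the second step: unlike the purely base or purely fibre blocks, it genuinely intertwines the $J$-action on the horizontal distribution, the invariant complex structure $J_T$ on the fibre, and the $(1,1)$-projection, and it is precisely here that the otherwise ad hoc coefficients and the extra term $d_A(J_T\Lambda_\omega F_A)$ in \eqref{eq:PCFcorr} are forced. Getting the signs and the factor $\tfrac{1}{2}$ to land on $\langle d(\Lambda_\omega F_A)\wedge A\rangle_{\mathfrak{k}}$ will require careful bookkeeping of the conventions in \eqref{Jtott}, \eqref{otot} and \eqref{eq:rhotot}, and this is the step I expect to consume most of the effort.
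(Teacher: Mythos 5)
Your proposal is correct and follows essentially the same route as the paper's proof: constancy of $\overline{J}_t$ via Lemma~\ref{l:Jtotcte} (the factor $\tfrac{1}{2}$ being harmless by linearity, as you note), preservation of the Bianchi identity---which the paper gets by citing Proposition~\ref{p:PCFhol} via Remark~\ref{rmk:unitaryhol}, but whose content is exactly your direct computation of $\partial_t\bigl(dd^c\omega+\langle F_A\wedge F_A\rangle_{\mathfrak{k}}\bigr)$---so that pluriclosedness follows from \eqref{eq:pctoBianchi}, and then the substitution of the flow equations into $\partial_t\overline{\omega}$ and comparison with \eqref{eq:rhotot} in the abelian case, where the $J$- and $J_T$-terms recombine into $-\langle d_A(\Lambda_\omega F_A)\wedge A\rangle_{\mathfrak{k}}^{1,1}$ precisely as you describe. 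One minor caution: your dispensable ``alternative'' continuity argument for pluriclosedness is not sound as stated (vanishing of $\partial_t(dd^c\overline{\omega})$ only at times where the Bianchi identity already holds does not by itself give openness), but your primary computation renders it unnecessary.
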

\begin{proof}
    Arguing as in the proof of Proposition \ref{p:PCFhol}, the Bianchi identity is preserved along the flow by Remark \ref{rmk:unitaryhol}.
     Note also that $\overline{J}_t$ is constant by  Lemma \ref{l:Jtotcte}. Then, by \eqref{eq:pctoBianchi}, the metric $\overline{\omega}_t$ is pluriclosed. Now, given \eqref{otot}, we compute for $\overline{\omega_t}$:
    \begin{equation*}
        \begin{split}
            \partial_t \overline{\omega_t} = & \ p^*(\partial_t \omega_t)+\tfrac{1}{2}\langle \Jtot \partial_t A_t \wedge A\rangle_{\mathfrak{k}} +\tfrac{1}{2}\langle \Jtot A \wedge \partial_t A_t\rangle_{\mathfrak{k}}\\
            = & \ p^*(-\rho_B-\langle \Lambda_\omega F_A , F_A\rangle_{\mathfrak{k}})^{1,1} + \tfrac{1}{2}\langle \left(-\tfrac{1}{2}d_A(\Lambda_\omega F_A)-\tfrac{1}{2}Jd_A(J_T \Lambda_\omega F_A)\right)\wedge A\rangle_{\mathfrak{k}}\\
            & +\tfrac{1}{2}\langle \Jtot A \wedge \left(\tfrac{1}{2}Jd_A(\Lambda_\omega F_A)-\tfrac{1}{2}d(J_T\Lambda_\omega F_A)\right)\rangle_{\mathfrak{k}}\\
            = & \ p^*(-\rho_B-\langle \Lambda_\omega F_A , F_A\rangle_{\mathfrak{k}})^{1,1} - \tfrac{1}{2}\langle d_A(\Lambda_\omega F_A)\wedge A\rangle_{\mathfrak{k}} - \tfrac{1}{2}\langle Jd_A(\Lambda_\omega F_A)\wedge \Jtot A\rangle_{\mathfrak{k}}\\
            = & \ p^*(-\rho_B-\langle \Lambda_\omega F_A , F_A\rangle)^{1,1} - \langle d_A(\Lambda_\omega F_A)\wedge A\rangle^{1,1} \\
            = & \ -\overline{\rho}_B^{1,1},
        \end{split}
    \end{equation*}
    where in the last step we have used \eqref{eq:rhotot}, and the fact the structure group is abelian.
\end{proof}

\begin{remark}
Notice that we have only explicitly recovered the pluriclosed flow equation for the Hermitian metric.  For any pluriclosed flow on the total space $P$ there is an accompanying flow of $(2,0)$-forms $\bar{\gb}_t$, which can also be described using the dimension reduced data, although we do not do this explicitly here.
\end{remark}

One can try to extend this picture to the case of non-abelian structure group by formally considering the same evolution equation as in \eqref{eq:evolAred}. Note however that in this case the second term breaks the equivariance of the connection and the resulting flowing connection is no longer principal. Therefore, it does not fit with the dimensional reduction ansatz of this section. Nevertheless, in the next result, we argue that there is in any case a pluriclosed flow line induced by a solution of string algebroid pluriclosed flow \eqref{eq:CPFunitary}.

\begin{proposition}\label{prop:redPCF}
    Let $K$ be a real Lie group of even dimension with left-invariant complex structure $J_K$, and $p \colon P\rightarrow (M,J)$ be a smooth principal $K$-bundle, as above. Let $(\omega_t,b_t,A_t)$ satisfy string algebroid pluriclosed flow  \eqref{eq:CPFunitary}. Then, the solution to generalized Ricci flow on $P$ with initial condition $(\overline{g}_0,\overline{b}_0)$ is given by $\left(\overline{\phi}_t^*\overline{g}_t, \overline{\phi}_t^*\overline{b}_t-p^*B_t-\langle \overline{\phi}_t^*A_0 \wedge A_0\rangle_{\mathfrak{k}}\right)$, where 
      \begin{equation*}
        \begin{split}
        \frac{d}{dt} B_t = & \ \phi_t^*\left(-d\left(\theta+i_{\theta^\sharp}b-\langle a, i_{\theta^\sharp}a\rangle_{\mathfrak{k}} \right)+i_{\theta^\sharp} H_0 +2\langle i_{\theta^\sharp} a, F_{A_0}\rangle_{\mathfrak{k}}\right)
        \end{split}
        \end{equation*}
        and $\overline{\phi}_t, \phi_t$ are the families of diffeomorphisms integrating $A_t^\perp\theta_t^\sharp$ and $\theta_t^\sharp$ respectively. In consequence, they are gauge-equivalent to a one-parameter family solving pluriclosed flow on $(P,\Jtot)$, where
        $$
        \Jtot = A_0^\perp J dp + J_K \circ A_0.
        $$
\end{proposition}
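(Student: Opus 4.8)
The plan is to bootstrap from the two structural results already available: the identification of string algebroid pluriclosed flow with a gauge-fixed generalized Ricci flow on $E$ (Proposition~\ref{prop:UPCFRiemannian}), and the dimensional reduction of generalized Ricci flow on $E$ to generalized Ricci flow on the total space $P$ (Proposition~\ref{p:cGRFtoGRF}). First I would invoke Proposition~\ref{prop:UPCFRiemannian} to record that a solution $(\omega_t,b_t,A_t)$ of \eqref{eq:CPFunitary} yields a family of generalized metrics $G_t$ on $E$ solving the $\zeta_t$-gauge-fixed generalized Ricci flow \eqref{eq:RFgaugefixedabs}, with infinitesimal automorphism $\zeta_t\equiv(-\theta_{\omega_t}^\sharp,\,i_{\theta_{\omega_t}^\sharp}a_t,\,B_t^{0})$, where $B_t^{0}$ is the two-form appearing in that proposition; a direct comparison shows $B_t^{0}$ equals the bracketed expression defining $\frac{d}{dt}B_t$ in the statement. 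The underlying $K$-invariant vector field on $P$ is $V_t=i_{\theta_{\omega_t}^\sharp}a_t+A_{0}^\perp(-\theta_{\omega_t}^\sharp)=-A_t^\perp\theta_{\omega_t}^\sharp$, so the diffeomorphism $\bar\phi_t$ integrating $A_t^\perp\theta_{\omega_t}^\sharp$ (projecting to $\phi_t$ integrating $\theta_{\omega_t}^\sharp$) is the inverse flow, as befits the passage from pushforward gauge-fixing to the pullback form in the statement.

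Next I would integrate $\zeta_t$ to a one-parameter family $\Phi_t\in\Aut(E)$ with $\Phi_0=\Id$, so that by definition $\widetilde G_t=(\Phi_t)_*G_t$ solves the unfixed generalized Ricci flow \eqref{eq:CourantGRF}; equivalently, the associated triple $(\widetilde g_t,\widetilde b_t,\widetilde A_t)$ solves \eqref{eq:GRFgbA}. Writing $\Phi_t=f_{\bar\phi_t}\circ e^{(\widetilde B_t,a^{\bar\phi_t})}$ as in the proof of Proposition~\ref{prop:GRFgaugeexp}, with $a^{\bar\phi_t}=\bar\phi_t^*A_0-A_0$, the two-form part $\widetilde B_t$ is the time-ordered accumulation of the two-form component of $\zeta_t$, transported by the evolving base diffeomorphism; this is exactly where the pullback $\phi_t^*$ and the integral defining $B_t$ originate. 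Applying Proposition~\ref{p:cGRFtoGRF} to $(\widetilde g_t,\widetilde b_t,\widetilde A_t)$ shows that the reduced data $(\bar{\widetilde g}_t,\bar{\widetilde b}_t)$ solves generalized Ricci flow \eqref{f:GRF} on $P$ with initial data $(\bar g_0,\bar b_0)$, since $\Phi_0=\Id$. It then remains to unwind $(\bar{\widetilde g}_t,\bar{\widetilde b}_t)$ in terms of the original data: since dimensional reduction intertwines the action of $\Phi_t$ (covering the equivariant $\bar\phi_t$) with the pullback of the total-space metric and $b$-field, one obtains $\bar{\widetilde g}_t=\bar\phi_t^*\bar g_t$ and $\bar{\widetilde b}_t=\bar\phi_t^*\bar b_t-p^*B_t-\langle\bar\phi_t^*A_0\wedge A_0\rangle_{\mathfrak{k}}$.

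I expect the main obstacle to be precisely this last bookkeeping step. The subtlety is that the defining formula $\bar b=p^*b-\langle a\wedge A\rangle_{\mathfrak{k}}$ in Definition~\ref{d:invariantdata} uses the fixed reference connection $A_0$, which is not preserved by $\bar\phi_t$; the resulting transgression error is the Chern--Simons-type term $\langle\bar\phi_t^*A_0\wedge A_0\rangle_{\mathfrak{k}}$, and one must check that the remaining two-form gauge assembles into $p^*B_t$ with $B_t$ satisfying the stated ODE. I would carry this out by differentiating in $t$ and matching the flow of the two-form component of $\zeta_t$ under $\phi_t^*$, using the explicit action formulas \eqref{eq:gauge_pars}, the transformation of $\widetilde B_t$, and the Chern--Simons identity \eqref{f:dCS}; the verification that $\frac{d}{dt}B_t=\phi_t^*B_t^{0}$ then identifies $B_t$ with the stated integral.

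For the concluding assertion, I would argue by gauge-equivalence of generalized Ricci flow and pluriclosed flow \cite{PCFReg}. By Lemma~\ref{l:Jtotintegrable} the complex structure $\bar J=A_0^\perp Jdp+J_K\circ A_0$ is integrable, and by Proposition~\ref{prop:hermred} together with the Bianchi identity preserved along the flow (Proposition~\ref{prop:UPCFexp}), the initial metric $\bar\omega_0$ on $(P,\bar J)$ defined by \eqref{otot} satisfies $dd^c\bar\omega_0=p^*(dd^c\omega_0+\langle F_{A_0}\wedge F_{A_0}\rangle_{\mathfrak{k}})=0$, hence is pluriclosed. Let $\hat\omega_t$ denote the pluriclosed flow on $(P,\bar J)$ with this initial datum; by \cite{PCFReg} the associated gauge-fixed generalized Ricci flow solves \eqref{f:GRF} with the same initial data $(\bar g_0,\bar b_0)$. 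Uniqueness of solutions to generalized Ricci flow \cite{Streetsexpent} then identifies it, up to the gauge already present, with the reduced solution constructed above, so that $(\bar{\widetilde g}_t,\bar{\widetilde b}_t)$ is gauge-equivalent to the pluriclosed flow $\hat\omega_t$ on $(P,\bar J)$, as claimed.
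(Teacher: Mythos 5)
Your proposal is correct and follows essentially the same route as the paper's proof: Proposition \ref{prop:UPCFRiemannian} to exhibit the $\zeta_t$-gauge-fixed generalized Ricci flow, integration of $\zeta_t$ (via the explicit gauge action \eqref{eq:gauge_pars} and \eqref{f:Bstrgauge}) to pass to an honest solution of \eqref{eq:GRFgbA}, Proposition \ref{p:cGRFtoGRF} to reduce to generalized Ricci flow on $P$, the same unwinding $\bar{g}'_t=\bar\phi_t^*\bar g_t$, $\bar b'_t=\bar\phi_t^*\bar b_t-p^*B_t-\langle\bar\phi_t^*A_0\wedge A_0\rangle_{\mathfrak{k}}$, and finally \cite[Theorem 6.5]{PCFReg} applied to the pluriclosed initial datum $\bar\omega_0$ on $(P,\Jtot)$. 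The bookkeeping subtlety you flag — that the fixed reference connection $A_0$ in \eqref{f:btot} is not preserved by $\bar\phi_t$, producing the Chern--Simons-type correction $\langle\bar\phi_t^*A_0\wedge A_0\rangle_{\mathfrak{k}}$ — is exactly the computation the paper carries out directly from \eqref{eq:gauge_pars}.
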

\begin{proof}
Assume that $(\omega_t,b_t,A_t)$ satisfy \eqref{eq:CPFunitary}. In particular, we have that the Bianchi identity \eqref{eq:BIGHM} is satisfied
along the flow by a straightforward calculation (cf. the proof of Proposition \ref{p:PCFhol}). Now, by Proposition \ref{prop:UPCFRiemannian}, the family $(g_t,b_t,A_t)$ satisfies \eqref{eq:UPCFGRF}. Following the gauge fixing of Proposition \ref{p:GFGRFinGG+}, the triple $(g'_t,b'_t,A'_t)$ given by (cf. \eqref{eq:gauge_pars}):
\begin{equation}
    g_t = \phi{_t}_*g'_t, \qquad b_t = \phi{_t}_*(b'_t + B_t - \langle a'^{\overline{\phi}}\wedge a'\rangle_{\mathfrak{k}} ), \qquad A_t = \overline{\phi}{_t}_*  A'_t
\end{equation}
satisfy generalized Ricci flow \eqref{eq:GRFgbA}, where $B_t$ is obtained from \eqref{f:Bstrgauge}. Then, using \eqref{f:gtot}, we compute
\begin{align*}
    \overline{g}'_t
    = &\ p^*g'_t - \langle A', A'\rangle_{\mathfrak{k}}
    = p^*\phi_t^*g_t - \overline{\phi}_t^*\langle A, A\rangle_{\mathfrak{k}} = \overline{\phi}_t^*\overline{g}_t
    \end{align*}
and similarly using \eqref{f:btot}:
\begin{align*}
    \overline{b}'_t = &\ \phi_t^*b_t - p^*B_t + \langle a{'}_t^{\overline{\phi}}\wedge a'_t \rangle_{\mathfrak{k}} - \langle a'_t \wedge A'_t \rangle_{\mathfrak{k}}\\
    = &\ \overline{\phi}_t^*\overline{b}_t -p^*B_t - \langle \overline{\phi}_t^*A_0 \wedge A_0\rangle_{\mathfrak{k}}.
\end{align*}
Then, by Proposition \ref{p:cGRFtoGRF}, the pair
$(\overline{g}'_t,\overline{b}'_t)$ satisfy generalized Ricci flow on $P$. For the final statement, first observe that at time $t=0$, the metric $\overline{g}_0$ is Hermitian for $\Jtot$ as defined in the statement.  Furthermore, using \eqref{eq:pctoBianchi} and the fact that the Bianchi identity \eqref{eq:BIGHM} holds, the associated Hermitian form $\overline{\omega}_0$ is pluriclosed.  Thus by \cite[Theorem 6.5]{PCFReg}, the data $(\overline{g}'_t,\overline{b}'_t)$ is gauge-equivalent to a solution to pluriclosed flow on $(P, \overline{J})$.
\end{proof}

\section{Uniformly parabolic pluriclosed flows on string algebroids} \label{s:higherreg}

In this section we prove Theorem \ref{t:EKthmintro} (cf. Theorem \ref{t:EKthm} below).  After introducing the relevant notation, in the next subsection we establish the key a priori $C^{\ga}$ estimate for the reduction $h$ by an adaptation of the method of Evans-Krylov.  Next, using this estimate we show a Liouville theorem for entire ancient solutions.  Then, by a standard blowup argument we use this to establish Theorem \ref{t:EKthmintro}.  We end by using Theorem \ref{t:EKthmintro} to establish a long-time existence criteria for the flow in the presence of uniform bounds on $g$.

\subsection{Statement of Theorem \ref{t:EKthm}}

We start by introducing some relevant notation for the statement of the result.

\begin{defn} \label{d:Phiquantity} Fix a complex manifold $(M,J)$ and a holomorphic vector bundle $V \to (M,J)$.  Given Hermitian metrics $h,\til{h}$ on $V$, let
\begin{align*}
    \gU(h,\til{h}) = \N_h - \N_{\til{h}},
\end{align*}
denote the difference of the Chern connections associated to $h$ and $\til{h}$. Given now Hermitian metrics $g,\til{g}$ on $(M,J)$ define
\begin{align*}
    \Phi_k(\gw,\til{\gw},h,\til{h}) := \sum_{j=0}^k \left( \brs{\N^j_g \gU(\gw,\til{\gw})}^2_{g} + \brs{\N^j_{g,h} \gU(h,\til{h})}_{g,h}^2 \right)^{\tfrac{1}{1 + j}}.
\end{align*}
The quantity $\Phi_k$ is a scale-invariant $C^k$ norm for the data $(\gw, h)$ with respect to $(\til{\omega},\til{h})$.
\end{defn}

Note that we have used the notation $\gU(\gw,\til{\gw})$ also to measure the difference of Chern connections of the Hermitian metrics $g$, $\til{g}$ on the holomorphic tangent bundle. Let $(P^c,\bar J)$ be a holomorphic principal $K^c$-bundle over $(M,J)$. We fix a faithful representation for $K^c$ and regard reductions $h \in \Gamma(P^c/K)$ as Hermitian metrics in the associated holomorphic vector bundle. Then, given reductions $h,\til{h} \in \Gamma(P^c/K)$ and Hermitian metrics $g$, $\til{g}$, we have associated quantities $ \gU(h,\til{h})$ and $\Phi_k(\gw,\til{\gw},h,\til{h})$, as before.

\begin{theorem} \label{t:EKthm} (cf. Theorem \ref{t:EKthmintro}) Let $(\gw_t, \gb_t, h_t)$ a solution to pluriclosed flow \eqref{eq:CPFhol} defined on $[0, \tau), \tau \leq 1$, such that the initial condition satisfies the Bianchi identity 
$$
d d^c \gw_0 + \IP{F_{h_0} \wedge F_{h_0}}_{\mathfrak k} = 0.  
$$  
Suppose there are background metrics $\til{\gw}$, $\til{h}$ and $\gl,\gL > 0$ so that for all $(x,t)$,
\begin{align*}
\gl^{-1} \til{\gw} \leq \gw \leq \gL \til{\gw}, \qquad \brs{\gb}_{\til{\gw}} \leq \gL, \qquad \gl^{-1} \til{h} \leq h \leq \gL \til{h}.
\end{align*}
Given $k \in N$ there exists a constant $C = C(n, k, \gl, \gL,\til{\gw},\til{h})$ such that for all $t \leq 1$ one has
\begin{align*}
\sup_{M \times [0,\tau)} t \Phi_k (\gw,\til{\gw},h,\til{h}) \leq C.
\end{align*}
\end{theorem}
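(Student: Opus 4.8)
The plan is to follow the blowup--contradiction strategy of \cite{JFS,SBIPCF}, the one genuinely new ingredient being an interior $C^{\ga}$ estimate for the reduction $h$ obtained by a parabolic Evans--Krylov argument. Since by Remark \ref{rem:omegahflow} the flow is determined by the pair $(\gw,h)$, and $\Phi_k$ only measures $(\gw,h)$, I would first establish the base case $k=0$, namely the scale-invariant bound $\sup_{M\times[0,\tau)} t\Phi_0 \leq C$, and then recover the higher $\Phi_k$ from parabolic Schauder theory once uniform ellipticity and $C^1$ control are in hand.

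The heart of the matter is the $C^{\ga}$ estimate for $h$. Writing the third equation of \eqref{eq:CPFhol} in local holomorphic coordinates as $h^{-1}\dt h = -\i\Lambda_\gw F_h$ and expanding the Chern curvature $F_h = \delb(h^{-1}\del h)$, one finds that $h$ solves a second order parabolic equation for Hermitian matrices whose leading operator is $\gw^{i\bj}\del_i\del_{\bj}$, uniformly parabolic by the hypothesis $\gl^{-1}\til{\gw} \leq \gw \leq \gL\til{\gw}$. I would then isolate the two structural facts behind Evans--Krylov: first, that $\log\det h$ satisfies \emph{exactly} a linear heat equation, since $\dt\log\det h = -\tr S^h_g$ is a constant multiple of the $\gw$-Laplacian of $\log\det h$; and second, that $h$ is a \emph{matrix subsolution}, the concavity of $\log\det$ rendering suitable directional components of $h$ subsolutions of the linearized operator. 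With $h$ playing the role of the Hessian and $\log\det h$ that of its trace, the parabolic Krylov--Safonov weak Harnack inequality \cite{Krylov} combined with this concavity yields an interior estimate $[h]_{C^{\ga}} \leq C(n,\gl,\gL)$. Crucially this argument is insensitive to the signature of $\IP{,}_{\mathfrak{k}}$, which is precisely why it supplants the maximum-principle method of Theorem \ref{t:GGEK} that is available only when the induced metric $\mathbf{G}$ is positive definite.

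Next I would prove a Liouville theorem: an entire ancient solution $(\gw_t,h_t)$ on $\CC^n \times (-\infty,0]$ satisfying the Bianchi identity and uniformly equivalent to the flat data must be flat. Applying the interior estimate on parabolic balls of radius $R$ and letting $R\to\infty$, scale-invariance forces the global $C^{\ga}$ seminorm of $h$ to vanish, so $h$ is constant, hence Chern-flat. Then $F_h\equiv 0$, the curvature terms disappear from \eqref{eq:CPFhol}, and the Bianchi identity \eqref{eq:BIGHM} degenerates to $dd^c\gw = 0$; thus $\gw_t$ is pluriclosed and solves the pluriclosed flow \eqref{f:PCF}. The rigidity of uniformly parabolic pluriclosed flow \cite{JFS,JordanStreets,SBIPCF} then forces $\gw$ to be flat as well.

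Finally I would establish the $k=0$ bound by contradiction. If $\sup t\Phi_0$ is unbounded along a sequence of solutions, a parabolic point-picking and rescaling produces in the limit a nontrivial entire ancient solution normalized so that $\Phi_0 = 1$ at the origin, contradicting the Liouville theorem. Compactness of the rescaled sequence is supplied by the $C^{\ga}$ bound on $h$ together with a conditional local estimate --- that a bound on $\Phi_0$, in the presence of uniform ellipticity, already bounds all $\Phi_k$ on interior parabolic regions via Schauder bootstrapping (the $\Phi_0$ bound makes the coefficients H\"older). This same conditional estimate immediately upgrades the $k=0$ case to all $k$. I expect the main obstacle to be the second step: verifying the matrix-subsolution property with constants independent of the possibly indefinite pairing $\IP{,}_{\mathfrak{k}}$, and tracking the scale-invariant normalization carefully enough that the $C^{\ga}$ estimate for $h$ is strictly subcritical relative to $\Phi_0$, so that it does force the blowup limit of $h$ to be flat.
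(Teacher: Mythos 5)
Your proposal follows essentially the same route as the paper's proof: an Evans--Krylov $C^{\ga}$ estimate for $h$ (Proposition \ref{p:hosc}) based on the exact heat equation for $\log\det h$, the matrix inequality $\square h \leq 0$, the concavity of $\log\det$, and the weak Harnack inequality; a Liouville theorem for entire ancient solutions (Proposition \ref{p:ancientrigidity}) in which flatness of $h$ reduces the system, via the preserved Bianchi identity, to pluriclosed flow and its known rigidity; and a point-picking blowup with Schauder bootstrapping. The only cosmetic deviations are that the paper rescales by $L_i = \Phi_k(x_i,t_i)$ directly for each $k$ rather than first settling $k=0$ and upgrading, and that the subsolution property of the directional components $h_{v \bar{v}}$ follows straight from the flow equation independently of the signature of $\IP{,}_{\mathfrak k}$ (as you correctly anticipate), with the concavity of $\log\det$ entering only through the two-point relation fed into the weak Harnack estimate.
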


\begin{remark} While we do not give a precise statement here, it is straightforward to localize these estimates following the arguments of \cite{JordanStreets}.
\end{remark}

\subsection{Evolution equations}

Given $(M, g, J)$ a Hermitian manifold, we let $\gD^C_{\gw}$ denote the Laplacian of the Chern connection of $\gw$.  Furthermore we let
\begin{align*}
\square = \left(\dt - \gD^C_{\gw_t} \right),
\end{align*}
denote the heat operator associated to the Chern connections of a family of Hermitian metrics.  First we recall the classic Weitzenb\"ock formula:

\begin{lemma} \label{l:Schwarzlemma} Given $(P^c,\bar J)$ a holomorphic principal $K^c$-bundle over a Hermitian manifold $(M,g,J)$ with reductions $h, \til{h} \in \Gamma(P^c/K)$, one has
\begin{align*}
\gD^C_{\gw} \tr_h \til{h} =&\ \brs{\gU(h,\til{h})}^2_{g,h^{-1},\til{h}} + \tr \til{h} h^{-1} \left( S_{g}^h - S_{g}^{\til{h}} \right).
\end{align*}
\end{lemma}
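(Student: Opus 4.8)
The plan is to reduce everything to a standard Bochner--Kodaira computation for the $h$-self-adjoint endomorphism comparing the two metrics, and then to read off the two terms via the curvature and via the norm of the connection difference. Concretely, let $f \in \Gamma(\End V)$ be the $h$-self-adjoint, positive endomorphism determined by $\til h(s,t) = h(fs,t)$, so that $\tr_h \til h = \tr f = \tr(h^{-1}\til h)$. I would first record the three elementary structural identities that drive the proof: (i) the difference of Chern connections is $\gU(\til h, h) := \N_{\til h} - \N_{h} = f^{-1}\del^h f$, equivalently $\del^h f = f\,\gU(\til h, h)$, where $\del^h$ is the $(1,0)$ part of the Chern connection on $\End V$; (ii) since $f$ is $h$-self-adjoint, $\dbar^h f = (\del^h f)^{\dagger}$, the $h$-Hermitian conjugate, so $\dbar^h f = \gU(\til h, h)^{\dagger} f$; and (iii) differentiating (i) gives the curvature comparison $\dbar^h \gU(\til h, h) = F_{\til h} - F_{h}$. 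These are purely formal consequences of the Chern-connection formalism and I would verify them in a local holomorphic frame normal for $h$ at the point in question.

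Next I would express the Chern Laplacian of the \emph{function} $\tr f$ in terms of covariant derivatives on $\End V$. Since $\tr$ is parallel, $\del\dbar \tr_h\til h = -\dbar\del\,\tr f = -\tr\bigl(\dbar^h \del^h f\bigr)$, whence $\gD^C_\gw \tr_h\til h = \i\Lambda_\gw \del\dbar\,\tr_h\til h = -\,\tr\bigl(\i\Lambda_\gw \dbar^h\del^h f\bigr)$. I would then expand using (i)--(iii):
\begin{align*}
\dbar^h \del^h f = \dbar^h\bigl(f\,\gU(\til h,h)\bigr) = (\dbar^h f)\,\gU(\til h,h) + f\,\dbar^h\gU(\til h,h) = \gU(\til h,h)^{\dagger} f\,\gU(\til h,h) + f\,(F_{\til h}-F_h).
\end{align*}
Applying $-\tr(\i\Lambda_\gw\,\cdot\,)$ and recalling $S^h_g = \i\Lambda_\gw F_h$, the second summand produces $\tr\bigl(f(S^h_g - S^{\til h}_g)\bigr) = \tr\bigl(\til h h^{-1}(S^h_g - S^{\til h}_g)\bigr)$ by cyclicity of the trace, which is exactly the curvature term in the statement. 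For the first summand, $-\tr\bigl(\i\Lambda_\gw\,\gU(\til h,h)^{\dagger} f\,\gU(\til h,h)\bigr)$ is a manifestly nonnegative quantity; inserting the weight $f$ converts the $h$-norm into the mixed $(g,h^{-1},\til h)$-norm, and since $\gU(h,\til h) = -\gU(\til h,h)$ has the same square, this term equals $\brs{\gU(h,\til{h})}^2_{g,h^{-1},\til{h}}$.

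The routine parts are the index manipulations behind (i)--(iii); the one place demanding genuine care---and what I regard as the main obstacle---is the sign and adjoint bookkeeping, namely the interchange $\del\dbar = -\dbar\del$ on functions together with the placement of $f$ versus $f^\dagger$ coming from self-adjointness, which must conspire to give a \emph{positive} gradient term and the ordering $S^h_g - S^{\til h}_g$ rather than its negative. To pin these down unambiguously I would sanity-check the final identity in the rank-one (line bundle) case, where $f = \til h/h$, $\gU(h,\til h) = -\del\log f$, and $\gD^C_\gw f = f\,\i\Lambda_\gw\del\dbar\log f + f^{-1}\i\Lambda_\gw\,\del f\wedge\dbar f$ reproduces precisely $\brs{\gU}^2_{g,h^{-1},\til h} + f\,(S^h_g - S^{\til h}_g)$; this fixes all conventions and confirms the stated formula. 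The computation being pointwise and tensorial, no global hypotheses on $M$ are needed.
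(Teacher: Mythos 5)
Your proof is correct. Note that the paper itself offers no proof of this lemma: it is stated as ``the classic Weitzenb\"ock formula'' and implicitly deferred to the literature (Donaldson's Hermitian--Yang--Mills paper), so your write-up supplies precisely the standard argument the authors had in mind. The three structural identities you isolate --- $\del^h f = f\,\gU(\til h,h)$ for $f = h^{-1}\til h$, the adjoint relation $\dbar^h f = (\del^h f)^{\dagger}$ coming from $h$-self-adjointness of $f$, and $\dbar^h \gU(\til h,h) = F_{\til h} - F_h$ --- are exactly the right inputs, and I verified your sign bookkeeping: with $\i\Lambda_\gw(dz^i \wedge d\bar z^j) = g^{\bj i}$, the form-reordering in $\gU^{\dagger} f \wedge \gU$ produces the sign that makes the gradient term $+\brs{\gU(h,\til h)}^2_{g,h^{-1},\til h}$ (positivity follows from $\tr(B^{\dagger} f B) \geq 0$ for $f > 0$, or directly from your index computation $h^{\bar\sigma\alpha}\til h_{\gamma\bar\tau}\,\bar B^{\bar\tau}_{\bar\sigma}B^{\gamma}_{\alpha}$, which is literally the mixed $(h^{-1},\til h)$-norm), while $-\tr(\i\Lambda_\gw f(F_{\til h}-F_h)) = \tr(f(S^h_g - S^{\til h}_g))$ gives the curvature term with the correct ordering. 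The one cosmetic point: your appeal to ``cyclicity of the trace'' to pass from $\tr(h^{-1}\til h\,X)$ to $\tr(\til h h^{-1} X)$ is not needed --- in the paper's index conventions these denote the same scalar (the endomorphisms $h^{-1}\til h$ and $\til h h^{-1}$ act on $V$ and $\bar V$ respectively and have equal trace against $S^h_g - S^{\til h}_g$), so no commutation is being invoked. Your rank-one sanity check is consistent with all conventions and is a sensible way to pin down the signs; nothing further is required.
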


\noindent Direct computations using Lemma \ref{l:Schwarzlemma} to further evolution equations:

\begin{lemma} \label{l:hSchwarz} Fix $(\gw_t, \gb_t, h_t)$ a solution to pluriclosed flow \eqref{eq:CPFhol}, and $\til{h}$ a background reduction on $(P^c,\bar J)$.  Then
\begin{align*}
\square \tr_{\til{h}} {h} =&\ - \brs{\gU(h,\til{h})}^2_{g,\til{h}^{-1}, h} - \tr h \til{h}^{-1} S_{g}^{\til{h}},\\
\square \tr_{h} {\til{h}} =&\ - \brs{\gU(h,\til{h})}^2_{g,h^{-1}, \til{h}} + \tr \til{h} h^{-1} S_{g}^{\til{h}},\\
\square \log \det \left( h \til{h}^{-1} \right) =&\ - \tr S_{g}^{\til{h}}.
\end{align*}
\end{lemma}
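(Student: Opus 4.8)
The plan is to derive all three identities by the same two-step mechanism, so that the only genuine computation (the Weitzenb\"ock/Schwarz identity) is already packaged in Lemma \ref{l:Schwarzlemma}. For each scalar quantity I first differentiate in $t$, using \emph{only} the evolution equation $h^{-1}\dt h = -S^h_g$ from \eqref{eq:CPFhol} together with the fact that the background $\til h$ is fixed; I then compute the Chern Laplacian $\gD^C_{\gw_t}$ by invoking Lemma \ref{l:Schwarzlemma} at the fixed time $t$ (with metric $g=g_t$). Forming $\square = \dt - \gD^C_{\gw_t}$, the contributions involving $S^h_g$ cancel, leaving exactly the stated expressions in $S^{\til h}_g$ and $\brs{\gU(h,\til h)}^2$.

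Concretely, for the first identity I would write $\tr_{\til h}h = \tr(\til h^{-1}h)$, so that $\dt \tr_{\til h}h = \tr(\til h^{-1}\dt h) = \tr(\til h^{-1}h\,(h^{-1}\dt h)) = -\tr(\til h^{-1}h\,S^h_g)$. For the Laplacian I apply Lemma \ref{l:Schwarzlemma} with the roles of $h$ and $\til h$ interchanged, giving
$$
\gD^C_{\gw}\tr_{\til h}h = \brs{\gU(\til h,h)}^2_{g,\til h^{-1},h} + \tr \til h^{-1}h\,(S^{\til h}_g - S^h_g),
$$
and I use that $\gU(\til h,h) = -\gU(h,\til h)$ so the gradient norms agree. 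Subtracting, the two $\tr(\til h^{-1}h\,S^h_g)$ terms cancel by the cyclic property of the trace, yielding $\square\tr_{\til h}h = -\brs{\gU(h,\til h)}^2_{g,\til h^{-1},h} - \tr h\til h^{-1}S^{\til h}_g$. The second identity is handled identically, now differentiating $\tr_h\til h = \tr(h^{-1}\til h)$, whence $\dt\tr_h\til h = -\tr(h^{-1}(\dt h)h^{-1}\til h) = \tr(h^{-1}\til h\,S^h_g)$, and applying Lemma \ref{l:Schwarzlemma} directly (no swap); again the $S^h_g$-terms cancel, leaving the claimed sign difference in front of the $S^{\til h}_g$-term.

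For the determinant identity I would reduce to the line-bundle case: since $\log\det(h\til h^{-1}) = \log\det h - \log\det\til h$ and $\til h$ is fixed, $\dt\log\det(h\til h^{-1}) = \tr(h^{-1}\dt h) = -\tr S^h_g$. On the Laplacian side I would use the standard Chern identity $\tr F_h = -\partial\dbar\log\det h$, so that $\gD^C_{\gw}\log\det h = \Lambda_\omega(\i\partial\dbar\log\det h) = -\tr(\i\Lambda_\omega F_h) = -\tr S^h_g$, and hence $\gD^C_{\gw}\log\det(h\til h^{-1}) = -\tr S^h_g + \tr S^{\til h}_g$. Forming $\square$ gives $-\tr S^{\til h}_g$, as required; this computation also serves as a clean consistency check fixing the sign convention for $\gD^C_\gw$.

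The only subtlety — and the one place where care is needed — is the bookkeeping of the trace contractions: one must interpret the symbols $\tr h\til h^{-1}X$ and $\tr\til h h^{-1}X$ consistently as the trace of the endomorphism of $V$ obtained by composing $\til h^{-1}h$ (respectively $h^{-1}\til h$) with $X$, and track the interchange $h\leftrightarrow\til h$ in Lemma \ref{l:Schwarzlemma} correctly. There is no genuine analytic difficulty, since the hard Weitzenb\"ock computation is already absorbed into Lemma \ref{l:Schwarzlemma}; the cancellation of the $S^h_g$ terms is purely formal, following from cyclicity of the trace once the contractions are matched.
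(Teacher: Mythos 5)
Your proposal is correct and is exactly the argument the paper intends: the paper gives no details beyond ``direct computations using Lemma \ref{l:Schwarzlemma},'' and your two-step scheme (differentiate in $t$ via $h^{-1}\dt h = -S^h_g$, compute $\gD^C_{\gw_t}$ via the Weitzenb\"ock formula with the roles of $h$ and $\til{h}$ appropriately swapped, and cancel the $S^h_g$-terms) fills in precisely those computations, with the signs and contractions checking out in all three identities. Your independent treatment of the determinant identity via $\tr F_h = -\partial\dbar\log\det h$ is the standard way to obtain the third line and is consistent with the paper's conventions.
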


Next we want to derive a key evolution equation for the square norm of the Chern connection of $h$ along string algebroid pluriclosed flow \eqref{eq:CPFhol}.  To derive this we first understand the evolution of the base metric in terms of the Chern curvature as opposed to the Bismut curvature.  While not necessary for the proof of Theorem \ref{t:EKthm}, a key point in the next section is that, in the case $\IP{,}_{\mathfrak{k}}$ is definite, the flow is a supersolution of the usual pluriclosed flow.

\begin{lemma} \label{prop:UPCFexp2} Let $(\omega_t,h_t)$ be a solution of the pluriclosed flow \eqref{eq:CPFhol}.  Then one has
\begin{gather} \label{f:redGRF}
\begin{split}
    \dt \gw =&\ - \Lambda_{\gw} F_g + (T^2 + F_h^2)(J \cdot, \cdot),
\end{split}
\end{gather}
where $T$ is the Chern torsion and $T^2(X,Y) = \IP{i_X T, i_Y T}$.
\end{lemma}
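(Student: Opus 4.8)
The plan is to start from the definition \eqref{eq:CPFhol} of pluriclosed flow and re-express its right-hand side through Chern-geometric data. Substituting $S^h_g = \i\Lambda_\gw F_h$ and using the $\CC$-bilinearity of the pairing, $\i\IP{\i\Lambda_\gw F_h, F_h}_{\mathfrak k} = -\IP{\Lambda_\gw F_h, F_h}_{\mathfrak k}$, the evolution of $\gw$ becomes
\begin{equation*}
\dt \gw = -\rho_B(\gw)^{1,1} - \IP{\Lambda_\gw F_h, F_h}_{\mathfrak k}.
\end{equation*}
Comparing with the desired formula, the lemma is therefore equivalent to the identity of $(1,1)$-forms
\begin{equation*}
\rho_B(\gw)^{1,1} = \Lambda_\gw F_g - (T^2 + F_h^2)(J\cdot,\cdot) - \IP{\Lambda_\gw F_h, F_h}_{\mathfrak k}. \tag{$\star$}
\end{equation*}

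To prove $(\star)$ I would feed in the decomposition \eqref{eq:rhoBdecomp} of the Bismut--Ricci form, which applies here because the flow preserves both the Bianchi identity and $F_h^{0,2}=0$ (Proposition \ref{p:PCFhol}), with $H = -d^c\gw$. Its first line gives, as symmetric two-tensors,
\begin{equation*}
\rho_B(\gw)^{1,1}(\cdot,J\cdot) = \Rc - \tfrac14 H^2 - F_h^2 + \IP{\Lambda_\gw F_h, F_h(J\cdot,\cdot)}_{\mathfrak k} + \tfrac12 L_{\theta_\gw^\sharp} g.
\end{equation*}
I then evaluate $(\star)$ on the pair $(\cdot,J\cdot)$. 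Since $T^2$ and $F_h^2$ are $J$-invariant symmetric tensors, the form $(T^2+F_h^2)(J\cdot,\cdot)$ contributes $-(T^2+F_h^2)$; and since $F_h$ is of type $(1,1)$ one has $F_h(JX,Y) = -F_h(X,JY)$, whence $\IP{\Lambda_\gw F_h, F_h(J\cdot,\cdot)}_{\mathfrak k} = -\IP{\Lambda_\gw F_h, F_h}_{\mathfrak k}(\cdot,J\cdot)$. The two $F_h$-trace terms and the two copies of $F_h^2$ then cancel between $(\star)$ and \eqref{eq:rhoBdecomp}, and $(\star)$ collapses to the purely metric identity
\begin{equation*}
\Lambda_\gw F_g(\cdot,J\cdot) - T^2 = \Rc - \tfrac14 H^2 + \tfrac12 L_{\theta_\gw^\sharp} g. \tag{$\dagger$}
\end{equation*}

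The content thus reduces to $(\dagger)$, a bundle-free relation between the second Chern--Ricci form $\Lambda_\gw F_g$ of $(T^{1,0}M,g)$, the Chern torsion $T$, the Riemannian Ricci tensor, the norm of $H = -d^c\gw$, and the Lee form $\theta_\gw$. This is a classical Hermitian-geometry identity obtained by comparing the Levi--Civita and Chern connections, and I would verify it by a direct computation in holomorphic normal coordinates: the second-order terms of $\Lambda_\gw F_g$ and of $\Rc$ both reduce to the Chern Laplacian of $g$, while the remaining first-order-squared quantities regroup into $T^2$, $\tfrac14 H^2$ and $\tfrac12 L_{\theta_\gw^\sharp}g$. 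This matching of the various quadratic torsion contractions, together with the careful tracking of the $J$-conventions, is the main obstacle; once $(\dagger)$ is established, reversing the reduction proves the lemma. I note finally that $F_h^2(X,Y) = -\IP{i_X F_h, i_Y F_h}_{\mathfrak k}$ is a nonnegative tensor when $\IP{,}_{\mathfrak k}$ is negative-definite, while $-\Lambda_\gw F_g + T^2(J\cdot,\cdot)$ is exactly the operator of ordinary pluriclosed flow; hence \eqref{f:redGRF} displays \eqref{eq:CPFhol} as a supersolution of pluriclosed flow in the definite case, as anticipated before the statement.
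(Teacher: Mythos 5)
Your reduction is algebraically sound as far as it goes: substituting $S^h_g = \i\Lambda_\gw F_h$, composing with $(\cdot,J\cdot)$, and using the $J$-invariance of $T^2$, $F_h^2$ and the type $(1,1)$ property of $F_h$, the bundle terms do cancel exactly as you claim, and the hypotheses needed to invoke \eqref{eq:rhoBdecomp} (the Bianchi identity and $F_h^{0,2}=0$) are indeed available along the flow by Proposition \ref{p:PCFhol}. This is a genuinely different route from the paper's. The paper does not pass through the Riemannian decomposition \eqref{eq:rhoBdecomp} at all; it quotes the Hermitian identity of Ivanov--Papadopoulos \cite[Proposition 3.3]{Ivanovstring},
\begin{equation*}
-\Lambda_\gw F_g(X,Y) = -\rho_B^{1,1}(X,Y) + T^2(X,JY) + \tfrac14\textstyle\sum_i dd^c\gw(X,Y,e_i,Je_i),
\end{equation*}
and then uses the Bianchi identity once to convert the $dd^c\gw$-trace into $\i\IP{S^h_g,F_h}_{\mathfrak k}(X,Y) + F_h^2(X,JY)$, after which the lemma is immediate. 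Your route buys a conceptually appealing separation into a bundle part (which cancels) and a purely metric identity $(\dagger)$, at the cost of relying on \eqref{eq:rhoBdecomp}, whose proof in \cite{GFGM} comes from the same circle of identities.

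The genuine weakness is that $(\dagger)$ is asserted, not proved: you defer precisely the computation that carries the content of the lemma, and your appeal to a ``classical'' identity is riskier than it looks. The version of $(\dagger)$ standardly quoted in the pluriclosed flow literature (underlying the gauge equivalence of pluriclosed flow with generalized Ricci flow) is stated for \emph{pluriclosed} metrics, whereas along this flow $dd^c\gw = -\IP{F_h\wedge F_h}_{\mathfrak k}$ is nonzero in general, so that version cannot simply be cited; you need the statement for arbitrary Hermitian metrics, in which the $dd^c\gw$-traces arising in the Chern--Riemannian comparison cancel internally. The identity is in fact true in this generality, and the cleanest way to close your argument without a page of normal-coordinate work is to note that $(\dagger)$ is exactly the difference of the Ivanov--Papadopoulos identity displayed above and the decomposition \eqref{eq:rhoBdecomp} you already invoked: under the Bianchi identity one computes, as in the paper's proof,
\begin{equation*}
\tfrac14\textstyle\sum_i dd^c\gw(X,JY,e_i,Je_i) = \IP{\Lambda_\gw F_h, F_h(J\cdot,\cdot)}_{\mathfrak k}(X,Y) - F_h^2(X,Y),
\end{equation*}
and subtracting the two identities yields $(\dagger)$ on the nose. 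As written, your proposal is a correct skeleton whose load-bearing step is outsourced to an unexecuted computation; with the citation to \cite{Ivanovstring} (or the coordinate computation actually carried out, minding that it must be done without the pluriclosed assumption) it becomes a complete, if slightly longer, proof of the lemma.
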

\begin{proof}
    By Definition \ref{def:PCFQ}, we only need to prove that
    $$
    -\rho_B^{1,1}+i\langle S_g^h,F_h\rangle_{\mathfrak{k}} = - \Lambda_{\gw} F_g + (T^2+F_h^2)(J \cdot,\cdot),
    $$
    where $T$ is the Chern torsion, given by $
    g(T(X,Y),Z) = -d\omega(JX,Y,Z),
    $ as the terms $T^2$ and $F_h^2$ are type $(1,1)$. With our notations, 
 by \cite[Proposition 3.3]{Ivanovstring}, we have
    $$
    - \Lambda_{\gw} F_g = -\rho_B^{1,1}(X,Y) + T^2(X,JY)+\textstyle \sum_i \tfrac{1}{4}dd^c\omega(X,Y,e_i,Je_i),
    $$
    where $T^2$ is computed using the tensor norm of $g$.  Using now the Bianchi identity
    $$
    dd^c\omega+\langle F_h\wedge F_h\rangle_{\mathfrak{k}}= 0
    $$
    it follows that
    \begin{equation*}
        \begin{split}
            \textstyle \sum_i \tfrac{1}{4}dd^c\omega(X,Y,e_i,Je_i) = & -\tfrac{1}{4}\textstyle \sum_i \langle F_h\wedge F_h\rangle_{\mathfrak{k}}(e_i,Je_i,X,Y)\\
            = & -\textstyle \sum_i \tfrac{1}{2}\langle i_{e_i}F_h\wedge F_h\rangle_{\mathfrak{k}}(Je_i,X,Y)\\
            = & \ i\langle S_g^h,F_h\rangle_{\mathfrak{k}}(X,Y) +F_h^2(X,JY)
        \end{split}
    \end{equation*}
    which combined with the previous identity yields the result.
\end{proof}

\begin{lemma} \label{l:hconnectionflow} Let $(\omega_t,h_t)$ be a solution of the pluriclosed flow \eqref{eq:CPFhol}.  Then one has
\begin{align*}
\square \brs{\gU(h,\til{h})}^2_{g,h^{-1},h} =&\ - \brs{\N \gU}^2_{g,h^{-1},h} - \brs{\bar{\N} \gU + \bar{T} \cdot \gU}^2_{g,h^{-1},h} - \IP{ F_h^2,  \gU^2}_{g}\\
&\ + T \star \gU \star \til{F} + \gU^2 \star \til{F} + \gU \star \til{\N} \til{F}.
\end{align*}
where
\begin{align*}
(\bar{T} \cdot \gU)_{\bi j \ga}^{\gb} =&\ g^{\bk l} T_{\bar{i} \bk j} \gU_{l \ga}^{\gb} , \qquad \gU^2_{i \bj} = \gU_{i \ga}^{\gb} \gU_{\bj \bgg}^{\bar{\gd}} h^{\bgg \ga} h_{\gb \bar{\gd}}.
\end{align*}
and $\star$ denotes algebraic contraction of tensors.

\begin{proof} To begin we observe
\begin{align*}
\dt \gU_{i \ga}^{\gb} =&\ - \N_i (S_g^h)_{\ga}^{\gb}.
\end{align*}
Also we have
\begin{align*}
\gD \gU_{i \ga}^{\gb} =&\ g^{\bl k} \N_k \N_{\bl} \gU(h,\til{h})_{i \ga}^{\gb}\\
=&\ g^{\bl k} \N_k \left( F_{\bl i \ga}^{\gb} - \til{F}_{\bl i \ga}^{\gb} \right)\\
=&\ - \N_i (S_g^h)_{\ga}^{\gb} - g^{\bl k} T_{i k}^p F_{p \bl \ga}^{\gb} - g^{\bl k} \N_k \til{F}_{\bl i \ga}^{\gb}.
\end{align*}
Also we have the general commutation formula
\begin{align*}
\bar{\gD} \bar{\gU}_{\bj \bga}^{\bgb} =&\ \gD \bar{\gU}_{\bj \bga}^{\bgb} - (S_g^g)_{\bj}^{\bp} \bar{\gU}_{\bp \bga}^{\bgb} - (S_{g}^h)_{\bga}^{\bgg} \bar{\gU}_{\bj \bgg}^{\bgb} + (S_{g}^h)_{\bgg}^{\bgb} \bar{\gU}_{\bj \bga}^{\bgg}.
\end{align*}
With these in place and using Lemma \ref{prop:UPCFexp2}, the computation is a direct adaptation of \cite[Proposition 5.7]{JFS}.
\end{proof}
\end{lemma}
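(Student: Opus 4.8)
The plan is to compute $\square \brs{\gU(h,\til{h})}^2_{g,h^{-1},h}$ by evaluating the time derivative and the Chern Laplacian of the norm separately and then combining them, following the strategy of \cite[Proposition 5.7]{JFS}. First I would record the evolution of the connection difference itself: since $\til{h}$ is fixed, only the Chern connection of $h$ moves, and the flow $h^{-1}\dt h = -S^h_g$ gives $\dt A^h = -\partial^h S^h_g$, hence $\dt \gU_{i\ga}^\gb = -\N_i(S^h_g)_\ga^\gb$. Next I would compute the rough Laplacian $\gD\gU$. The key algebraic input is that $\N_{\bl}\gU_{i\ga}^\gb = F_{\bl i\ga}^\gb - \til{F}_{\bl i\ga}^\gb$, i.e. $\bar{\N}\gU$ is the difference of the $(1,1)$ curvatures; applying $g^{\bl k}\N_k$, commuting covariant derivatives (which introduces the Chern torsion term $-g^{\bl k}T_{ik}^p F_{p\bl\ga}^\gb$), and using that $\Lambda_\gw F_h$ is essentially $S^h_g$, yields $\gD\gU_{i\ga}^\gb = -\N_i(S^h_g)_\ga^\gb - g^{\bl k}T_{ik}^p F_{p\bl\ga}^\gb - g^{\bl k}\N_k\til{F}_{\bl i\ga}^\gb$, together with the conjugate commutation formula relating $\bar{\gD}\bar{\gU}$ and $\gD\bar{\gU}$ through the second Ricci curvatures $S^g_g$ and $S^h_g$.

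With these in hand I would expand $\square\brs{\gU}^2 = \dt\brs{\gU}^2 - \gD^C\brs{\gU}^2$. The time derivative hits the evolving metrics $g$ and $h$ used to build the norm; here I would substitute the flow of $\gw$ in Chern form from Lemma \ref{prop:UPCFexp2}, namely $\dt\gw = -\Lambda_\gw F_g + (T^2 + F_h^2)(J\cdot,\cdot)$, together with $h^{-1}\dt h = -S^h_g$. The Chern Laplacian of the norm is computed by a Weitzenb\"ock identity, producing the two positive Hessian terms together with the cross terms $\IP{\gD\gU,\bar{\gU}} + \IP{\gU,\bar{\gD}\bar{\gU}}$ and torsion corrections. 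The crucial cancellation is that the $-\N_i(S^h_g)_\ga^\gb$ contribution arising from $\dt\gU$ exactly matches the corresponding term in $\gD\gU$, eliminating the indefinite first-order $S^h_g$ term; the torsion terms from the commutator combine with $\bar{\N}\gU$ to complete the square $-\brs{\bar{\N}\gU + \bar{T}\cdot\gU}^2$, while the $F_h^2$ piece of $\dt\gw$ assembles with the curvature contractions into the single term $-\IP{F_h^2,\gU^2}_g$. All remaining terms involve the fixed background curvature $\til{F}$ and its derivative $\til{\N}\til{F}$, and are collected schematically as $T\star\gU\star\til{F} + \gU^2\star\til{F} + \gU\star\til{\N}\til{F}$.

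The main obstacle I anticipate is the bookkeeping of the torsion of the Chern connection: unlike the K\"ahler case, the commutators $[\N,\bar{\N}]$ and $[\gD,\bar{\gD}]$ produce genuine torsion terms, and it is precisely their correct grouping into the completed square $\bar{T}\cdot\gU$ and into the algebraic error terms that makes the formula clean. A second delicate point is verifying that every appearance of the full curvature $F_h$, which is indefinite when $\IP{,}_{\mathfrak{k}}$ has mixed signature, either cancels against the $\dt\gU$ term or is absorbed into the manifestly controlled term $-\IP{F_h^2,\gU^2}_g$, so that no uncontrolled indefinite quantity survives. Since the calculation is structurally identical to \cite[Proposition 5.7]{JFS} once the three preliminary identities above are in place, the remaining work is a careful but routine index computation, which I would organize by isolating the good negative terms first and collecting the background error terms last.
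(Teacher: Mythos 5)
Your proposal follows exactly the paper's argument: the same three preliminary identities (the evolution $\dt \gU_{i\ga}^{\gb} = -\N_i(S_g^h)_{\ga}^{\gb}$, the formula for $\gD\gU$ via $\N_{\bl}\gU_{i\ga}^{\gb} = F_{\bl i \ga}^{\gb} - \til{F}_{\bl i \ga}^{\gb}$ with the torsion commutator term, and the conjugate commutation formula involving $S_g^g$ and $S_g^h$), combined with Lemma \ref{prop:UPCFexp2} and the adaptation of \cite[Proposition 5.7]{JFS}. Your additional remarks on the cancellation of the $\N_i(S_g^h)$ terms, the completion of the square $-\brs{\bar{\N}\gU + \bar{T}\cdot\gU}^2$, and the absorption of the indefinite curvature into $-\IP{F_h^2,\gU^2}_g$ correctly fill in the details the paper leaves to the cited reference.
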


\subsection{$C^{\ga}$-estimate for $h$}

In this subsection we prove a key a priori regularity property for $h$ under the assumption of uniform ellipticity.  We note that Donaldson's original proof of this regularity for Hermitian Yang-Mills flow crucially exploits the semilinear nature of the equation, and in particular the fact that the linearized operator is fixed in time, and comparing the metrics of a single flow line with a small time-shift (cf. \cite{DonaldsonHYM} Corollary 15).  This is no longer possible in our setting as the linearized operator will be the Chern Laplacian of $g$, which is also varying in time.  For this reason we must resort to a more delicate argument, adapting the method of Evans-Krylov to this setting.  The proof is similar in spirit to other results in the theory of pluriclosed flow already in \cite[Theorem 4.3]{StreetsWarren}, \cite[Proposition 5.16]{SBIPCF}.

\begin{defn} Given $(w,s) \in \mathbb C^n \times \mathbb R$, let
\begin{align*}
 Q(R) := &\ \{ (z,t) \in \mathbb C^n \times \mathbb R |\ t \leq s, \quad
\max \{ \brs{z - w}, \brs{t - s}^{\frac{1}{2}} \} < R \}\\
 \Theta(R) :=&\ Q((w,s - 4 R^2),R).
\end{align*}
\end{defn}

Since we are concerned with a local situation, we regard $h$ as a Hermitian-symmetric valued function defined on an open of $\CC^n$.

\begin{prop} \label{p:hosc} Suppose $(\gw_t, \gb_t, h_t)$ is a solution to (\ref{eq:CPFhol}) defined on $Q(R)$ such that
\begin{align*}
\gl \til{\gw} \leq \gw \leq \gL \til{\gw}, \qquad \gl \til{h} \leq h \leq \gL \til{h},
\end{align*}
for background metrics $\til{\gw}, \til{h}$.
Then there exist positive constants $\ga, C$
depending only on $n,\gl,\gL$ such that for all $\rho < R$,
\begin{align*}
\osc_{Q(\rho)} h \leq C(n,\gl,\gL) \left(
\frac{\rho}{R} \right)^{\ga} \osc_{Q(R)} h.
\end{align*}

\begin{proof} As the result is local, we can make the simplifying assumption without loss of generality that the background metrics $\til{\gw}$ and $\til{h}$ are constant with respect to chosen local coordinates and frames, and in particular are flat.  For notational simplicity we assume our solution is in fact defined on $Q(4R)$.  First note that the operator $\log \det$ is $(\gl,\gL)$-elliptic on a
convex set containing the range of $h$.  It follows that for any two points $(x,t_1),(y,t_2) \in Q(4R)$ there exists a matrix
$a^{ij}$, $\gl \gd_i^j \leq a^{i{\bj}} \leq \gL \gd_i^j$ such that
\begin{gather} \label{f:hEK20}
\begin{split}
\log \det h(x,t_1) - \log \det h(y,t_2) =&\ a^{i{\bj}}((x,t_1),(y,t_2)) \left( h_{i \bj}(x,t_1) - h_{i
\bj}(y,t_2) \right).
\end{split}
\end{gather}
It follows from (\cite{Lieberman} Lemma 14.5) that we can choose unit vectors
$v_{k}$ and functions $f_{k} = f_{i}((x,t_1),(y,t_2))$, such
that
\begin{align*}
a^{i\bj} = \sum_{k = 1}^N f_{k} v_{k}^i
\bar{v_{k}^j},
\end{align*}
and moreover $\gl_* \leq f_{k} \leq \gL_*$, where $\gl_*, \gL_*$ only depend
on $\gl,\gL$.  Now let $f_0 := 1$, $h_0 := - \log \det h$, $h_{k} := h_{v_{k} \bar{v_{k}}}$.  Thus in particular (\ref{f:hEK20}) is now expressed as
\begin{align} \label{wrel}
 \sum_{k=0}^N f_{k} \left(h_{k} (y,t_2) - h_{k}(x,t_1) \right) =&\ 0.
\end{align}

Now let
\begin{align*}
M_{s k} = \sup_{Q(sR)} h_{k}, \qquad m_{s k} = \inf_{Q(sR)} h_{k}, \qquad P(sR) =&\ \sum_{k=0}^N M_{s k} - m_{s k}.
\end{align*}
By direct calculations similar to Lemma \ref{l:hSchwarz} in the case $\til{h}$ is flat, one derives that
\begin{align*}
  \square \log \det h =&\ 0,\quad \square h \leq 0,
\end{align*}
where the last inequality holds in the sense of bilinear forms.  In particular, it follows that for every $0 \leq k \leq N$ the function $M_{2 k} - h_{k}$ is a
supersolution to a uniformly parabolic equation.  Thus by the weak Harnack estimate \cite[Theorem 7.37]{Lieberman} we conclude
\begin{align} \label{f:hEK30}
\left( R^{-n-2} \int_{\Theta(R)} (M_{2k} - h_{k})^p \right)^{\frac{1}{p}}
\leq C \left( M_{2k} - M_{k} \right).
\end{align}
Now observe that we can rearrange (\ref{wrel}) to yield for every $(y,t_2) \in
Q(2R)$,
\begin{align*}
 h_{k}(y,t_2) - m_{2k} \leq C \sum_{l \neq k} M_{2 l} -
h_{l}(y,t_2).
\end{align*}
Integrating this over $\Theta(R)$, applying Minkowski's inequality and
(\ref{f:hEK30})
yields
\begin{gather} \label{f:hEK40}
 \begin{split}
 \left( R^{-n-2} \int_{\Theta(R)} \left( h_{k} - m_{2 k} \right)^p
\right)^{\frac{1}{p}} \leq&\ \left( C R^{-n-2} \int_{\Theta(R)} \left( \sum_{l \neq k} M_{2 l} - h_{l} \right)^p \right)^{\frac{1}{p}}\\
\leq&\ C \sum_{l \neq k} \left( R^{-n-2} \int_{\Theta(R)} \left( M_{2 l} -
h_{l} \right)^p \right)^{\frac{1}{p}}\\
\leq&\ C \sum_{l \neq k} M_{2 l} - M_{l}.
 \end{split}
\end{gather}
Now we use (\ref{f:hEK30}) and (\ref{f:hEK40}) and Minkowski's inequality to yield
\begin{align*}
 M_{2l} - m_{2 l} =&\ \left( R^{-n-2} \int_{\Theta(R)} \left(M_{2 l} -
m_{2 l} \right)^p \right)^{\frac{1}{p}}\\
 =&\ \left( R^{-n-2} \int_{\Theta(R)} \left( M_{2l} - h_{l} + (h_{l} -
m_{2 l}) \right)^p \right)^{\frac{1}{p}}\\
\leq&\ C \sum_{k} M_{2 k} - M_{k}\\
\leq&\ C \sum_{k} M_{2 k} - M_{k} + m_{k} - m_{2 k}\\
=&\ C \left( P(2R) - P(R) \right).
\end{align*}
Summing over $l$ and rearranging yields for some constant $0 < \mu < 1$ the
inequality
\begin{align*}
 P(R) \leq \mu P(2R).
\end{align*}
A standard iteration argument now yields the statement of the theorem.
 \end{proof}
\end{prop}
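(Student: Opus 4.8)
The plan is to prove this oscillation decay by adapting the parabolic Evans--Krylov method, treating the reduction $h$ as the analogue of the Hessian of a solution to a concave fully nonlinear equation. First I would exploit the local and scale-invariant character of the statement to reduce to the case where the background metrics $\til{\gw}$ and $\til{h}$ are constant (flat) in suitable local coordinates and frames, so that the curvature terms involving $\til{h}$ drop out of the evolution equations of Lemma \ref{l:hSchwarz}. The two-sided bounds $\gl \til{\gw} \le \gw \le \gL \til{\gw}$ and $\gl \til{h} \le h \le \gL \til{h}$ are preserved under parabolic rescaling, which is what forces the final constants to depend only on $n,\gl,\gL$.

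The crucial structural step is to extract two differential facts from the flow. Writing $\square = \partial_t - \gD^C_{\gw_t}$, the third line of Lemma \ref{l:hSchwarz} with $\til{h}$ flat gives that $\log \det h$ satisfies the heat equation $\square \log \det h = 0$, while a direct computation in the same spirit shows that $h$ is a matrix supersolution, $\square h \le 0$ in the sense of bilinear forms. The concavity of $\log \det$ then lets me write, for any two spacetime points, the difference of $\log \det h$ as a positive combination $a^{i\bj}(h_{i\bj}(x,t_1) - h_{i\bj}(y,t_2))$ with $\gl\gd \le a \le \gL\gd$. Decomposing $a^{i\bj} = \sum_k f_k v_k^i \overline{v_k^j}$ into rank-one pieces with $f_k$ bounded away from $0$ and $\infty$, and setting $h_k := h_{v_k\overline{v_k}}$, $h_0 := -\log \det h$, $f_0 := 1$, produces the key linear relation $\sum_{k} f_k (h_k(y,t_2) - h_k(x,t_1)) = 0$ tying the directional components together.

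Next I would run the oscillation-decay machine. Since each $M_{2k} - h_k$ is a nonnegative supersolution of a uniformly parabolic equation on a time-shifted cylinder $\Theta(R)$, the weak Harnack inequality bounds its $L^p$ average by $C(M_{2k} - M_k)$. Feeding the linear relation into this estimate controls $h_k - m_{2k}$ in $L^p$ by $\sum_{l \ne k}(M_{2l} - M_l)$ via Minkowski's inequality, and summing over directions yields $P(R) \le \mu P(2R)$ with $\mu < 1$ for the total oscillation $P(sR) = \sum_k (M_{sk} - m_{sk})$. A standard iteration of this decay then gives the claimed Hölder oscillation bound, with $\ga$ and $C$ depending only on $n,\gl,\gL$.

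The main obstacle, and the point where this departs from Donaldson's original treatment of Hermitian--Yang--Mills flow, is that the linearized operator $\gD^C_{\gw_t}$ is itself time-dependent along the flow. This rules out Donaldson's device of comparing a single flow line against a small time translation to exploit a fixed linearization; instead one is forced to treat $h$ as a genuine Hessian-type unknown and adapt Evans--Krylov directly. The delicate part will be establishing cleanly the two sign facts $\square \log \det h = 0$ and $\square h \le 0$ in the flat-background reduction, and then setting up the weak Harnack inequality on the correctly offset cylinders $\Theta(R)$ so that the supersolution property genuinely sees the infimum region.
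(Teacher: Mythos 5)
Your proposal follows the paper's proof essentially step for step: flat-background reduction, the two sign facts $\square \log \det h = 0$ and $\square h \leq 0$ from Lemma \ref{l:hSchwarz}, the rank-one decomposition of the concavity matrix $a^{i\bj}$, the weak Harnack inequality on the offset cylinders $\Theta(R)$, the resulting decay $P(R) \leq \mu P(2R)$, and the standard iteration. Your closing remark about why Donaldson's time-translation trick fails here (the time-dependence of $\gD^C_{\gw_t}$) is precisely the motivation the paper itself gives for resorting to the Evans--Krylov scheme, so there is nothing to correct.
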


\subsection{Flatness of ancient solutions}

We next present our Liouville type theorem for entire ancient solutions.

\begin{prop} \label{p:ancientrigidity} Suppose $(\gw_t, \gb_t, h_t)$ is a solution to (\ref{eq:CPFhol}) defined on $\mathbb C^n \times (-\infty, 0]$ which further satisfies
\begin{align*}
\gl \gw_F \leq \gw \leq \gL \gw_F, \qquad \brs{\gb}_{\gw_F} \leq \gL, \qquad \gl h_F \leq h \leq \gL h_F,
\end{align*}
where $\gw_F$ and $h_F$ are flat metrics.  Then $(\gw_t, \gb_t, h_t) \equiv (\gw_0, \gb_0, h_0)$, where $\gw_0$ and $h_0$ are flat metrics.
\begin{proof} We first claim that $h_t \equiv h_0$, where $h_0$ is flat.  Suppose there exists $\ga > 0$ and a point $(x', t') \in \mathbb C^n \times (-\infty]$ such that $\brs{h}_{C^{\ga}}(x',t') \neq 0$.  For large $L > 0$ define
\begin{align*}
{\gw}^L(x,t) =&\ \gw(x' + L^{-1} x, t' + L^{-2} t)\\
\gb^L(x,t) =&\ \gb(x' + L^{-1} x, t' + L^{-2} t)\\
{h}^L(x,t) =&\ h(x' + L^{-1} x, t' + L^{-2} t).
\end{align*}
Note that $(\gw^L, \gb^L, h^L)$ is a solution to (\ref{eq:CPFhol}), relative to the rescaled inner product $L^{2} \IP{,}_{\mathfrak k}$.  Furthermore, by construction, $\brs{h^L}_{C^{\ga}}(0,0) = L^{\ga}$.  On the other hand, the oscillation of $h^L$ on $Q(1)$ is bounded independently of $L$.  It thus follows from Proposition \ref{p:hosc} that there exists $\ga = \ga(n,\gl,\gL)$ such that $\brs{h^L}_{C^{\ga}}(0,0)$ has a uniform upper bound, which yields a contradiction for sufficiently large $L$.  Thus $\brs{h}_{C^{\ga}} \equiv 0$ and it follows that $h_t \equiv h_0$ where $h_0$ is a flat metric.  Now that $F_{h} \equiv 0$, note that the anomaly cancellation equation implies that the metrics $\gw_t$ are pluriclosed, and moreover $(\gw_t, \gb_t)$ is a solution to pluriclosed flow.  By the Evans-Krylov type estimate (cf. \cite{SBIPCF} Corollary 5.20) or the Calabi-type estimate (\cite{JordanStreets} Theorem 4.2) for pluriclosed flow, it follows that $(\gw_t, \gb_t) \equiv (\gw_0, \gb_0)$, where $\gw_0$ is flat.
\end{proof}
\end{prop}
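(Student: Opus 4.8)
The plan is to decouple the argument into three stages: first establish that the reduction $h$ is flat and time-independent, then use the Bianchi identity to show that $(\gw_t,\gb_t)$ evolves by ordinary pluriclosed flow, and finally invoke the known rigidity theory for pluriclosed flow to conclude.

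First I would show $h\equiv h_0$ with $h_0$ flat by a blow-up argument anchored on the parabolic $C^{\ga}$ estimate of Proposition \ref{p:hosc}. Suppose to the contrary that the parabolic H\"older seminorm of $h$ is nonzero at some $(x',t')$. Performing the parabolic rescaling $\gw^L(x,t)=\gw(x'+L^{-1}x,t'+L^{-2}t)$, and likewise for $\gb^L,h^L$, produces a new solution of \eqref{eq:CPFhol} relative to the rescaled pairing $L^2\IP{,}_{\mathfrak k}$, for which the two-sided bounds $\gl\gw_F\le\gw\le\gL\gw_F$ and $\gl h_F\le h\le\gL h_F$ hold with the \emph{same} constants. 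Because the background $\til h=h_F$ is flat, Lemma \ref{l:hSchwarz} reduces to the structural identities $\square\log\det h=0$ and $\square h\le 0$ (as bilinear forms) that underlie Proposition \ref{p:hosc}, and these are invariant under the rescaling. Thus Proposition \ref{p:hosc} bounds the $C^{\ga}$ seminorm of $h^L$ at the origin by the globally bounded oscillation of $h$, with a constant independent of $L$. Since that seminorm scales like $L^{\ga}$, letting $L\to\infty$ yields a contradiction. Hence $h$ is spatially constant, i.e. flat, and constant in time.

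With $F_h\equiv 0$ one has $S_g^h=\i\Lambda_\gw F_h=0$, so every curvature-coupling term in \eqref{eq:CPFhol} drops out and the system collapses to $\dt\gw=-\rho_B(\gw)^{1,1}$ and $\dt\gb=-\rho_B(\gw)^{2,0}$, which is exactly pluriclosed flow \eqref{f:PCF}. Moreover the Bianchi identity, preserved along the flow as in the proof of Proposition \ref{p:PCFhol}, now reads $dd^c\gw=-\IP{F_h\wedge F_h}_{\mathfrak k}=0$, so each $\gw_t$ is pluriclosed; thus $(\gw_t,\gb_t)$ is a genuine entire ancient pluriclosed flow line trapped between flat metrics. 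At this point I would quote the existing rigidity for such solutions --- the Evans--Krylov estimate of \cite{SBIPCF} or the Calabi-type estimate of \cite{JordanStreets} --- to deduce $(\gw_t,\gb_t)\equiv(\gw_0,\gb_0)$ with $\gw_0$ flat, completing the proof.

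The hard part is the first stage. The subtlety is to confirm that Proposition \ref{p:hosc} is genuinely scale-invariant under the \emph{simultaneous} parabolic rescaling of coordinates and rescaling of the pairing $\IP{,}_{\mathfrak k}$, especially because this pairing may be indefinite. What makes it go through is that the Evans--Krylov machinery behind Proposition \ref{p:hosc} relies only on the scalar heat equation satisfied by $\log\det h$ together with the matrix subsolution property of $h$, both of which are insensitive to the signature and to the overall normalization of $\IP{,}_{\mathfrak k}$; verifying that the rescaled data stays within the uniform hypotheses of Proposition \ref{p:hosc} is the step demanding the most care.
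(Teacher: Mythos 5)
Your proposal is correct and follows essentially the same route as the paper's own proof: the same parabolic rescaling with pairing $L^2\IP{,}_{\mathfrak k}$, the same contradiction via the oscillation estimate of Proposition \ref{p:hosc} to flatten $h$, the same observation that the Bianchi identity then forces $\gw_t$ to be pluriclosed and evolve by pluriclosed flow, and the same appeal to \cite{SBIPCF} or \cite{JordanStreets} for the final rigidity. Your extra remarks on why Proposition \ref{p:hosc} survives the simultaneous rescaling --- namely that the Evans--Krylov input only uses $\square \log\det h = 0$ and $\square h \leq 0$, which are insensitive to the normalization and signature of the pairing --- make explicit a point the paper leaves implicit, but do not change the argument.
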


\subsection{Proof of Theorem \ref{t:EKthm}}

We complete next the proof of the main result of this section:

\begin{proof}[Proof of Theorem \ref{t:EKthm}] For notational convenience set $\Phi_k := \Phi_k(\gw,\til{\gw},h,\til{h})$.  If the statement was false for some $k$, we can choose a sequence of times $(x_i, t_i)$ such that
\begin{align} \label{f:EKthm5}
    \limsup_{i \to \infty} t_i \Phi_k (x_i,t_i) = \infty.
\end{align}
By a standard point-picking argument we can replace $(x_i,t_i)$ with a new sequence (still denoted $(x_i,t_i)$) such that
\begin{align} \label{f:EKthm10}
    t_i \Phi_k(x_i,t_i) =&\ \sup_{M \times [0,t_i]} t \Phi_k(x,t).
\end{align}
Since $M$ is compact we may further assume that $\lim x_i = x$.  Pick local normal coordinates for $\til{\gw}$ on $B_{\gd}(x,\til{\gw})$, 
$$
\gd < \min\{ \tfrac{1}{2} \inj(x,\til{\gw}), \brs{\til{\Rm}}_{C^0}^{-\tfrac{1}{2}} \}.
$$
In these coordinates, the given solution $(\gw_t,\gb_t,  h_t)$ is defined on $B_{\gd}(0) \times [0,\tau)$.
We let $L_i := \Phi_k(x_i,t_i)$, and define
\begin{align*}
    \gw^i(x,t) =&\ \gw(x_i + L_i^{-1} x, t_i + L_i^{-2} t),\\
    \gb^i(x,t) =&\ \gb(x_i + L_i^{-1} x, t_i + L_i^{-2} t),\\
    h^i(x,t) =&\ h(x_i + L_i^{-1} x, t_i + L_i^{-2} t).
\end{align*}
Furthermore, let $\Phi_k^i := \Phi_k(\gw^i,\til{\gw}^i,h^i,\til{h}^i)$, where $(\til{\gw}_i, \til{h}_i)$ are scaled as above.  The data $(\gw^i, \gb^i, h^i)$ is a solution to (\ref{eq:CPFhol}) relative to the rescaled inner product $L_i^2 \IP{,}_{\mathfrak k}$, defined on $$
(-t_i L_i^2, 0] \times B_{\tfrac{1}{2} \gd L_i}(0).  
$$
Using (\ref{f:EKthm5}) and (\ref{f:EKthm10}) it follows for any compact subset of $(-\infty,0] \times \mathbb C^n$, the solution $(\gw^i,\gb^i, h^i)$ is defined on this set for arbitrarily large $i$ and moreover satisfies that $\Phi_k^i \leq 1$ on this domain, while $\Phi_k^i(0,0) = 1$.  By Schauder estimates, it follows that on all compact subsets of $(-\infty, 0] \times \mathbb C^n$, for all $l > k$ one has that $\Phi^i_l$ is uniformly bounded.  It follows that the sequence $(\gw^i,\gb^i, h^i)$ converges to a solution satisfying the hypotheses of Proposition \ref{p:ancientrigidity}, and also satisfies $\Phi_k^{\infty}(0,0) = 1$, which is a contradiction.
\end{proof}

\subsection{Long-time existence criteria}

\begin{prop} \label{p:hboundglower} Fix $(\gw_t, \gb_t, h_t)$ a solution to pluriclosed flow \eqref{eq:CPFhol} on $[0, T]$, and background metrics $\til{\gw}, \til{h}$.  Suppose there exists $\mu > 0$ such that for all $(x,t) \in M \times [0,T]$,
\begin{align*}
    \mu \til{\gw} \leq \gw_t, \qquad \brs{\gb}_{\til{\gw}} \leq \gL.
\end{align*}
Then there exists $\gL > 0$ depending on $\mu, h_0, \til{h}$ and $T$ such that
\begin{align*}
    \gL^{-1} \til{h} \leq h \leq \gL \til{h}.
\end{align*}
\begin{proof} Applying the assumed lower bound for $\gw$ we obtain the elementary estimate
\begin{align*}
    \tr {h}^{-1} \til{h} S_{g,\til{h}} \leq&\ C \mu^{-1} \tr h^{-1} \til{h}
\end{align*}
for some $C$ depending on $\til{h}$.  Thus from Lemma \ref{l:hSchwarz} we obtain
\begin{align*}
\square \tr_{h} {\til{h}} \leq&\ C \mu^{-1} \tr_h \til{h}.
\end{align*}
From the maximum principle we conclude
\begin{align*}
    \sup_{M \times \{t\}} \tr_h \til{h} \leq&\ e^{C \mu^{-1} t} \sup_{M \times \{0\}} \tr_h \til{h}.
\end{align*}
Similarly, the lower bound on $\gw$ implies that $\tr S_{g,\til{h}} \geq - C \mu^{-1}$ for a constant depending on $\til{h}$.  It then follows from Lemma \ref{l:hSchwarz} that
\begin{align*}
\square \log \det \left( h \til{h}^{-1} \right) \geq&\ - C \mu^{-1}.
\end{align*}
From the maximum principle we conclude
\begin{align*}
    \inf_{M \times \{t\}} \log \det (h \til{h}^{-1}) \geq&\ \inf_{M \times \{0\}} \log \det (h \til{h}^{-1}) - C \mu^{-1} t.
\end{align*}
Thus we have shown a uniform upper bound for $\tr_h \til{h}$ and a uniform lower bound for $\det h \til{h}^{-1}$ on $[0,T]$, and the existence of the claimed constant $\gL$ follows.
\end{proof}
\end{prop}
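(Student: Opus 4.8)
The plan is to deduce the two-sided bound $\gL^{-1}\til{h}\le h\le \gL\til{h}$ entirely from the three scalar evolution equations of Lemma \ref{l:hSchwarz}, exploiting that the \emph{background} reduction $\til{h}$ is fixed. The essential observation is that the second Ricci curvature of $\til{h}$, namely $S_g^{\til{h}}=\i\Lambda_{\gw}F_{\til{h}}$, is controlled by the hypothesis $\gw\ge\mu\til{\gw}$ alone: since $\Lambda_{\gw}$ is the $\gw$-trace and $\gw\ge\mu\til{\gw}$ gives $\gw^{-1}\le\mu^{-1}\til{\gw}^{-1}$, the \emph{fixed} curvature $F_{\til{h}}$ satisfies pointwise $\brs{S_g^{\til{h}}}\le C\mu^{-1}$ and $\brs{\tr S_g^{\til{h}}}\le C\mu^{-1}$, with $C$ depending only on $\til{\gw},\til{h}$. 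This is the only place ellipticity enters, and there is no circularity because $\til{h}$ does not evolve; note also that the $(2,0)$-form $\gb$ never appears, as the evolution of $h$ and of the Schwarz quantities involves only $(\gw,h)$.

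First I would bound $h$ from below. Discarding the nonpositive term $-\brs{\gU(h,\til{h})}^2_{g,h^{-1},\til{h}}$ in the second identity of Lemma \ref{l:hSchwarz} and estimating the remaining term by the curvature bound above gives
\begin{align*}
\square\tr_h\til{h}\le C\mu^{-1}\,\tr_h\til{h}.
\end{align*}
Since each $\gw_t$ is a genuine metric, $\square=\dt-\gD^C_{\gw_t}$ is parabolic and the maximum principle applies to the spatial supremum; a Gr\"onwall argument then yields $\sup_M\tr_h\til{h}(t)\le e^{C\mu^{-1}t}\sup_M\tr_h\til{h}(0)$, a uniform upper bound on $[0,T]$ depending on $\mu,T,h_0,\til{h}$. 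This bounds every eigenvalue of $h\til{h}^{-1}$ from below.

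Next I would control the determinant. The third identity of Lemma \ref{l:hSchwarz} is the exact equation $\square\log\det(h\til{h}^{-1})=-\tr S_g^{\til{h}}$ whose right-hand side is two-sided bounded by $C\mu^{-1}$. Applying the maximum principle in \emph{both} directions to the scalar $\log\det(h\til{h}^{-1})$ gives
\begin{align*}
\brs{\log\det(h\til{h}^{-1})(t)-\log\det(h\til{h}^{-1})(0)}\le C\mu^{-1}t,
\end{align*}
and in particular a uniform upper bound for $\det(h\til{h}^{-1})$ on $[0,T]$. Finally I would combine the two estimates by elementary linear algebra: writing $\lambda_1,\dots,\lambda_r$ for the eigenvalues of $h\til{h}^{-1}$, the trace estimate gives $\lambda_i\ge c>0$, while $\lambda_k=\det(h\til{h}^{-1})\big/\prod_{j\ne k}\lambda_j\le C c^{-(r-1)}$ then bounds each eigenvalue from above, producing the desired $\gL^{-1}\til{h}\le h\le\gL\til{h}$.

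The analytic content is a routine maximum-principle argument, so the hard part is conceptual rather than technical: recognizing that the only destabilizing contribution in the Weitzenb\"ock/Schwarz inequalities of Lemma \ref{l:hSchwarz} is the fixed background curvature $S_g^{\til{h}}$, which the single lower ellipticity bound on $\gw$ already tames, and then observing that one must use the determinant estimate in \emph{both} directions rather than only the lower bound, so that the one-sided trace control and the upper determinant bound together force a genuine two-sided estimate.
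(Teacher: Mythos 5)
Your proof is correct and follows essentially the same route as the paper: the inequality $\square \tr_h \til{h} \leq C\mu^{-1}\tr_h\til{h}$ from Lemma \ref{l:hSchwarz} plus the maximum principle, the exact heat equation for $\log\det(h\til{h}^{-1})$, and elementary eigenvalue algebra to combine the two.

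One point deserves comment, because your write-up is actually more careful than the printed argument. You correctly insist that the determinant estimate be used in the \emph{upper} direction. The paper asserts $\square\log\det(h\til{h}^{-1})\geq -C\mu^{-1}$ and concludes a \emph{lower} bound for $\det(h\til{h}^{-1})$; but via the identity $\square\log\det(h\til{h}^{-1}) = -\tr S_g^{\til{h}}$ of Lemma \ref{l:hSchwarz}, the curvature bound it states, $\tr S_{g,\til{h}}\geq -C\mu^{-1}$, in fact yields $\square\log\det(h\til{h}^{-1})\leq C\mu^{-1}$ and hence an \emph{upper} bound on the determinant --- which is what the eigenvalue argument needs: writing $\lambda_i$ for the eigenvalues of $h\til{h}^{-1}$, the trace bound gives $\lambda_i\geq c>0$, and only then does $\lambda_k = \det(h\til{h}^{-1})\big/\prod_{j\neq k}\lambda_j \leq C c^{-(r-1)}$ close the estimate. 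The combination literally stated in the paper (trace bound plus lower determinant bound) would not exclude, say, $h\til{h}^{-1}=\mathrm{diag}(t,1,\dots,1)$ with $t\to\infty$. Since, as you observe, the hypothesis $\gw\geq\mu\til{\gw}$ gives the two-sided bound $\brs{\tr S_g^{\til{h}}}\leq C\mu^{-1}$ (the fixed curvature $F_{\til{h}}$ is traced against $g^{-1}\leq \mu^{-1}\til{g}^{-1}$), the argument goes through in either direction; the paper's displays contain a harmless chain of sign slips that your version repairs. Your side remarks --- that $\gb$ never enters, and that no circularity arises because $\til{h}$ is a fixed background --- are also accurate.
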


\begin{corollary} \label{c:flowregularitycor} Fix $(\gw_t, \beta_t, h_t)$ a solution to pluriclosed flow \eqref{eq:CPFhol} on $[0, T)$, and a background metric $\til{\gw}$.  Suppose there exists $\gL > 0$ such that for all $(x,t) \in M \times [0,T)$,
\begin{align*}
    \gL^{-1} \til{\gw} \leq \omega \leq \gL \til{\gw}, \qquad \brs{\gb}_{\til{\gw}} \leq \gL.
\end{align*}
Then the flow extends smoothly past time $T$.
\begin{proof} By Proposition \ref{p:hboundglower}, there exist uniform ellipticity bounds for $h$ with respect to some background metric $\til{h}$ on $[0, T)$.  By Theorem \ref{t:EKthm} there exists a uniform bound on $\Phi_k(\gw, \til{\gw}, h, \til{h})$ on $[T/2, T)$.  It is a standard argument to then integrate the flow equation in time to produce smooth limiting data $(\gw_T, \gb_T, h_T)$.  There exists a short-time solution with initial data $(\gw_T, \gb_T, h_T)$, thus extending the flow past time $T$ as claimed.
\end{proof}
\end{corollary}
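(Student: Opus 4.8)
The plan is to chain together the two main estimates of this section---the ellipticity bound for the reduction in Proposition \ref{p:hboundglower} and the scale-invariant derivative estimates of Theorem \ref{t:EKthm}---to obtain uniform $C^{\infty}$ control of $(\gw_t,\gb_t,h_t)$ as $t \to T$, and then to invoke short-time existence and uniqueness to restart the flow. As throughout this section, I take the initial data to satisfy the Bianchi identity \eqref{eq:BIGHM}, which by Proposition \ref{p:PCFhol} and Remark \ref{rem:BIdentity} is then preserved for all $t \in [0,T)$. First I would feed the hypotheses into Proposition \ref{p:hboundglower}: the assumed bound $\gL^{-1}\til{\gw} \leq \gw$ supplies the lower bound $\mu = \gL^{-1}$, and together with $\brs{\gb}_{\til{\gw}} \leq \gL$ this yields, for a constant $\gL'$ depending on $\gL$, $h_0$, $\til{h}$ and $T$, a two-sided bound $\gL'^{-1}\til{h} \leq h \leq \gL'\til{h}$ on $[0,T)$ for a fixed background reduction $\til{h}$.

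With this in hand, the solution satisfies all the ellipticity hypotheses of Theorem \ref{t:EKthm}; the only point requiring care is the restriction $\tau \leq 1$ in that statement. To accommodate arbitrary $T$, I would restart at time $s := \max\{0, T-1\}$, setting $\hat{\gw}_t = \gw_{s+t}$, $\hat{\gb}_t = \gb_{s+t}$, $\hat{h}_t = h_{s+t}$ for $t \in [0, T-s)$, and noting $T - s \leq 1$. The restarted data obeys the same two-sided bounds, and its initial slice satisfies the Bianchi identity by the preservation noted above. Theorem \ref{t:EKthm} then gives $\sup_{M \times [0,T-s)} t\, \Phi_k(\hat{\gw},\til{\gw},\hat{h},\til{h}) \leq C$ for every $k$; restricting to $t \geq \tfrac{1}{2}(T-s)$ converts this into uniform bounds on $\Phi_k$ on the one-sided neighborhood $[\tfrac{1}{2}(T+s), T)$ of $T$.

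These $\Phi_k$ bounds, together with the two-sided metric bounds on $\gw$ and $h$, give uniform $C^{\infty}$ control of $\gw_t$ and $h_t$ up to $t = T$. To also control $\gb_t$, which $\Phi_k$ does not see directly, I would integrate its evolution equation $\dt \gb = -\rho_B(\gw)^{2,0} - \tfrac{\i}{2}\langle \alpha \wedge \partial^h S^h_g \rangle_{\mathfrak{k}}$: the right-hand side and all its spatial derivatives are smooth expressions in $\gw$, $h$ and their derivatives, hence uniformly bounded, so starting from the assumed bound $\brs{\gb}_{\til{\gw}} \leq \gL$ one obtains uniform $C^{\infty}$ bounds on $\gb_t$ as well. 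Integrating the flow equations \eqref{eq:CPFhol} in $t$ then yields smooth limiting data $(\gw_T, \gb_T, h_T)$, a genuine generalized Hermitian metric satisfying the Bianchi identity by continuity. Finally I would apply short-time existence and uniqueness (Proposition \ref{p:STE}) with this initial data to produce a solution on $[T, T+\epsilon)$ agreeing with the original, thereby extending it smoothly past $T$.

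The conceptual content of the corollary is carried entirely by Proposition \ref{p:hboundglower} and Theorem \ref{t:EKthm}, so the anticipated difficulties are bookkeeping rather than analytic: the genuinely delicate inputs---the Evans--Krylov $C^{\alpha}$ estimate for $h$ and the blow-up argument behind Theorem \ref{t:EKthm}---are already in place. The main points to get right are the interplay of the window restriction $\tau \leq 1$ with the restart, the persistence of the Bianchi identity at the restart time, and the separate treatment of $\gb$ via integration of its evolution; none of these constitutes a serious obstacle.
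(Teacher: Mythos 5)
Your proposal is correct and follows essentially the same route as the paper's proof: Proposition \ref{p:hboundglower} for the two-sided bounds on $h$, Theorem \ref{t:EKthm} for the higher-order estimates, integration in time to obtain smooth limiting data $(\gw_T,\gb_T,h_T)$, and short-time existence (Proposition \ref{p:STE}) to restart. Your extra bookkeeping---the time-shift to respect the window $\tau \leq 1$, the persistence of the Bianchi identity, and the separate control of $\gb$ by integrating its evolution equation---simply makes explicit what the paper compresses into ``a standard argument.''
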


\section{Global existence results}\label{sec:Global}

In this section we turn to analyzing the existence problem for string algebroid pluriclosed flow \eqref{eq:CPFhol}. We first prove that it is a well-posed evolution equation, for arbitrary choice of pairing $\IP{,}_\mathfrak{k}$. Assuming negative-definite pairing, using the flow of Hermitian metrics \eqref{eq:DFlow} on the string algebroid, we prove the higher regularity result of Theorem \ref{t:GGEKintro} for solutions with uniform bounds on $\mathbf{G}$.  Further building on this we establish Theorem \ref{t:flatspaceconvintro}, yielding global existence and convergence of the flow for arbitrary initial data on some special backgrounds.

\subsection{Short-time existence and higher regularity}

We first prove that the string algebroid pluriclosed flow \eqref{eq:CPFhol} is a well-posed evolution equation. We should stress that the next result applies for arbitrary signature of the pairing $\IP{,}_\mathfrak{k}$, as it follows from the proof.

\begin{prop} \label{p:STE} 
Fix $(P^c,\bar J)$ a holomorphic principal $K^c$-bundle over a compact complex manifold $(M,J)$. Given data $(\gw_0, h_0)$ there exists $\ge > 0$ and a unique solution to string algebroid pluriclosed flow \eqref{eq:CPFhol} with initial data $(\gw_0,h_0)$ on $[0,\ge)$.
\begin{proof} 
We provide a brief sketch which is an straightforward adaptation of previous arguments.  Fixing a background metric $\til{\gw}$, it suffices to prove short time existence for the system of equations for pairs $(\xi_t,h_t)$ in Lemma \ref{lem:oneformred}.
The equation for $\xi$ is degenerate parabolic due to the gauge redundancy $\delb (\xi + \del f) + \del (\bar{\xi} + \delb f) = \delb \xi + \del \bar{\xi}$ for a real function $f$.  After addition of $\tfrac{1}{2} \del \operatorname{Re} \N^{\star}_{g_0} \xi$ in the evolution of $\xi$, this becomes a strictly parabolic system for $\xi$ and $h$ coupled to an ODE describing $\hat{\omega}$ (cf. the proof of \cite[Lemma 3.5]{SBIPCF}) and the short-time existence follows from standard methods. Note that this modification of the flow for $\xi$ does not change the flow for $\omega$, and hence Lemma \ref{lem:oneformred} still applies.
\end{proof}
\end{prop}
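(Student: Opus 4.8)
The statement to prove is Proposition \ref{p:STE}, establishing short-time existence and uniqueness for string algebroid pluriclosed flow \eqref{eq:CPFhol}. Let me think about how I would approach this.

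The key reduction is already set up in the paper via Lemma \ref{lem:oneformred}: instead of directly solving the flow for $(\omega_t, h_t)$, I would solve the equivalent one-form reduced system for the pair $(\xi_t, h_t)$, where $\xi_t$ is a $(1,0)$-form potential and $h_t$ a reduction, together with the ODE for $\hat\omega$. The advantage is that this recasts the problem as a system where the parabolic structure is more transparent. The flow for $h$ is $h^{-1}\dot h = -S^h_g = -\i\Lambda_\omega F_h$, which is a Hermitian-Yang-Mills type equation, strictly parabolic in $h$ (the principal symbol is the Chern Laplacian of $g$). The flow for $\xi$ is $\dt\xi = \delb^\star_\omega\omega - \tfrac{\i}{2}\del\log(\omega^n/\til\omega^n)$.

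The main obstacle — and the reason the reduction matters — is that the $\xi$-equation is only \emph{degenerate} parabolic as written, owing to the gauge redundancy: replacing $\xi$ by $\xi + \del f$ for a real function $f$ leaves $\omega = \hat\omega + \delb\xi + \del\bar\xi$ unchanged, since $\delb\del f + \del\delb f = 0$. This means the linearization has a nontrivial kernel (the pure-gauge directions) and is not strictly parabolic, so standard parabolic theory does not directly apply. The fix, following the DeTurck-type trick used in \cite[Lemma 3.5]{SBIPCF}, is to add a gauge-breaking term $\tfrac{1}{2}\del\operatorname{Re}\N^\star_{g_0}\xi$ to the evolution of $\xi$. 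I would verify that this term restores strict parabolicity by computing that the modified linearized operator for $\xi$ has positive-definite principal symbol, while crucially leaving the induced flow for $\omega$ unchanged — the latter point is what allows one to invoke Lemma \ref{lem:oneformred} to conclude that the solution of the modified system genuinely yields a solution of \eqref{eq:CPFhol}.

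With the gauge-fixed system in hand, the plan is to observe that it forms a strictly parabolic coupled system: a strictly parabolic equation for $\xi$, a strictly parabolic equation for $h$, and an ODE for $\hat\omega$ whose right-hand side depends smoothly on the other unknowns. Short-time existence and uniqueness then follow from standard theory for quasilinear parabolic systems (e.g. the implicit function theorem in parabolic H\"older spaces, or the theory of analytic semigroups). I would emphasize that this argument is insensitive to the signature of $\IP{,}_\mathfrak{k}$, since the parabolicity comes entirely from the Chern Laplacian of $g$ on the base and not from the fibre pairing; this is precisely why the proposition holds for arbitrary (possibly indefinite) pairing. Finally, I would note that the gauge redundancy also means uniqueness must be interpreted modulo the $\del f$ ambiguity at the level of $\xi$, but this ambiguity disappears at the level of $(\omega_t, h_t)$, so the solution to \eqref{eq:CPFhol} itself is unique.
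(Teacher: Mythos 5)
Your proposal is correct and follows essentially the same route as the paper's proof: reduction to the $(\xi_t,h_t)$ system of Lemma \ref{lem:oneformred}, identification of the gauge degeneracy $\xi \mapsto \xi + \del f$, restoration of strict parabolicity via the DeTurck-type term $\tfrac{1}{2}\del\operatorname{Re}\N^{\star}_{g_0}\xi$ as in \cite[Lemma 3.5]{SBIPCF}, and the observation that this modification leaves the induced flow for $\omega$ unchanged. Your added remarks — that the parabolicity comes from the Chern Laplacian of $g$ and is therefore insensitive to the signature of $\IP{,}_{\mathfrak k}$, and that uniqueness holds at the level of $(\omega_t,h_t)$ despite the potential ambiguity in $\xi$ — correctly flesh out points the paper leaves implicit.
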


%
%
%
%

We turn next to the proof of Theorem \ref{t:GGEKintro} (cf. Theorem \ref{t:GGEK}) in the case of a negative-definite pairing $\IP{,}_\mathfrak{k} < 0$, a higher regularity result related to Theorem \ref{t:EKthmintro}.  Whereas Theorem \ref{t:EKthmintro} relied on a separate analysis of $h$ using ideas related to the Evans-Krylov theorem, the proof of Theorem \ref{t:GGEKintro} is comparatively simpler, relying on a direct maximum principle estimate for the Chern connection associated to the Hermitian metric $\mathbf{G}$, which is positive-definite by our assumption on $\IP{,}_\mathfrak{k}$.  It is thus a generalization of Yau's $C^3$ estimate for the complex Monge-Amp\`ere equation \cite{YauCC}, as well as the higher regularity for pluriclosed flow established in \cite{JFS,JordanStreets,SBIPCF}.  We emphasize that this argument will only hold for $\IP{,}_\mathfrak{k} < 0$.  

\begin{defn} \label{d:GGPhiquantity} Fix a holomorphic string algebroid $\mathcal{Q} \to (M,J)$ and a Hermitian metric $g$ on $M$. Given Hermitian metrics $\mathbf{G},\mathbf{\til{G}}$ on $\mathcal{Q}$, let
\begin{align*}
    \gU(\mathbf{G},\til{\mathbf{G}}) = \N_{\mathbf{G}} - \N_{\til{\mathbf{G}}},
\end{align*}
denote the difference of the Chern connections associated to $\mathbf{G}$ and $\til{\mathbf{G}}$.  Now define 
\begin{align*}
    \Phi_k(\mathbf{G},\til{\mathbf{G}}) := \sum_{j=0}^k \left( \brs{\N^j_{g,\mathbf{G}} \gU(\mathbf{G},\til{\mathbf{G}})}^2_{g,\mathbf{G}} \right)^{\tfrac{1}{1 + j}},
\end{align*}
where $\N$ denotes the relevant Chern connection.  The quantity $\Phi_k$ is a scale-invariant $C^{k+1}$ norm of $\mathbf{G}$.
\end{defn}

We fix now $(\gw_t,\beta_t,h_t)$ a solution to pluriclosed flow \eqref{eq:CPFhol} with initial condition $(\gw_0,0,h_0)$ satisfying the Bianchi identity \eqref{eq:BIGHM}. By Lemma \ref{p:PCFhol}, it induces a family of Hermitian metrics $\mathbf{G}_t = \mathbf{G}(\gw_t,\beta_t,h_t)$ on the holomorphic string algebroid $\mathcal{Q} = \mathcal{Q}_{2\i\partial \omega_0,A^{h_0}}$ satisfying the coupled HYM flow equation \eqref{eq:DFlow}.

\begin{lemma} \label{l:GGfloweqns} 

Fix $\mathbf{\til{G}}$ a background Hermitian metric on $\mathcal{Q}$. With the setup above, let $\gU := \gU(\mathbf{G}_t, \til{\mathbf{G}})$ denote the difference of Chern connections associated to $\mathbf{G}_t$ and $\til{\mathbf{G}}$.  Then
\begin{align*}
\square \tr_{\til{\mathbf{G}}} {\mathbf{G}} =&\ - \brs{\gU}^2_{g,\til{\mathbf{G}}^{-1}, \mathbf{G}} - \tr \mathbf{G} \til{\mathbf{G}}^{-1} S_g^{\til{\mathbf{G}}},\\
\square \tr_{\mathbf{G}} {\til{\mathbf{G}}} =&\ - \brs{\gU}^2_{g,\mathbf{G}^{-1}, \til{\mathbf{G}}} + \tr \til{\mathbf{G}} \mathbf{G}^{-1} S_g^{\til{\mathbf{G}}},\\
\square \log \det \left( \mathbf{G} \til{\mathbf{G}}^{-1} \right) =&\ - \tr S_g^{\til{\mathbf{G}}},\\
\square \brs{\gU(\mathbf{G},\til{\mathbf{G}})}^2_{g,\mathbf{G}^{-1},\mathbf{G}} =&\ - \brs{\N \gU}^2_{g,\mathbf{G}^{-1},\mathbf{G}} - \brs{\bar{\N} \gU + \bar{T} \cdot \gU}^2_{g,\mathbf{G}^{-1},\mathbf{G}} - \IP{ F^2,  \gU^2}_{g}\\
&\ + T \star \gU \star F_{\til{\mathbf{G}}} + \gU^2 \star F_{\til{\mathbf{G}}} + \gU \star \til{\N} F_{\til{\mathbf{G}}},
\end{align*}
where $F_{\til{\mathbf{G}}}$ is the Chern curvature of $\til{\mathbf{G}}$.
\begin{proof} These claims follow directly from Lemmas \ref{l:hSchwarz} and \ref{l:hconnectionflow}, noting that $\mathbf{G}_t$ itself satisfies the HYM flow with respect to the time-dependent metric $g_t$.
\end{proof}
\end{lemma}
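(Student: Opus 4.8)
The plan is to derive all four evolution equations in Lemma \ref{l:GGfloweqns} as direct consequences of the lemmas already established for the classical data, by exploiting the key observation that $\mathbf{G}_t$ is itself a solution of the Hermitian-Yang-Mills flow \eqref{eq:DFlow} on the holomorphic bundle $\mathcal{Q}$, driven by the time-dependent base metric $g_t$. The point is that once we record equation \eqref{eq:DFlow}, namely $\mathbf{G}^{-1} \dt \mathbf{G} = - S_g^{\mathbf{G}}$, the metric $\mathbf{G}$ on $\mathcal{Q}$ plays exactly the role that $h$ plays on $V$ in the earlier Schwarz-lemma and connection-flow calculations of Section \ref{s:higherreg}. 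Thus I expect this to be essentially a transcription argument, with the main conceptual content being the verification that the hypotheses of Lemmas \ref{l:hSchwarz} and \ref{l:hconnectionflow} are met in this new setting.

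First I would establish the first three equations. By Proposition \ref{p:PCFhol}, the family $\mathbf{G}_t = \mathbf{G}(\gw_t,\beta_t,h_t)$ satisfies \eqref{eq:DFlow}, which is formally identical to the HYM flow $h^{-1}\dt h = -S^h_g$ appearing in Lemma \ref{l:hSchwarz}, now with $\mathbf{G}$ in place of $h$ and with the same underlying base Laplacian $\gD^C_{\gw_t}$ defining the heat operator $\square$. Applying the Weitzenb\"ock identity of Lemma \ref{l:Schwarzlemma} verbatim with $(\mathbf{G},\til{\mathbf{G}})$ replacing $(h,\til h)$, and then repeating the short computations that produced the three displayed identities in Lemma \ref{l:hSchwarz}, yields the first three equations immediately. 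The only thing to check here is that the manipulations in Lemma \ref{l:hSchwarz} used nothing about $h$ beyond its satisfying the HYM flow and being a Hermitian metric on a holomorphic bundle; since $\mathbf{G}$ is a genuine Hermitian metric on the holomorphic vector bundle $\mathcal{Q}$ (positive-definite precisely because $\IP{,}_\mathfrak{k} < 0$, by Remark \ref{rem:signature}), this transcription is legitimate.

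The fourth equation, for $\square \brs{\gU(\mathbf{G},\til{\mathbf{G}})}^2$, is the substantive one and follows by the same principle from Lemma \ref{l:hconnectionflow}. The derivation of Lemma \ref{l:hconnectionflow} rested on two inputs: the evolution of the base Hermitian form written in Chern-curvature form (Lemma \ref{prop:UPCFexp2}, giving $\dt \gw = -\Lambda_\gw F_g + (T^2 + F_h^2)(J\cdot,\cdot)$), and the commutation formulae for the Chern Laplacian acting on $\gU$. In the present setting the base flow equation is unchanged — it is still the same $(\gw_t,h_t)$ solving \eqref{eq:CPFhol}, so Lemma \ref{prop:UPCFexp2} applies as is — and the curvature term $F_h^2$ is now absorbed into the single curvature $F$ of $\mathbf{G}$. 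I would therefore run the computation of Lemma \ref{l:hconnectionflow} with $\gU(\mathbf{G},\til{\mathbf{G}})$ and the curvature $F = F_{\mathbf{G}}$, obtaining the stated Bochner-type formula with the reaction term $-\IP{F^2,\gU^2}_g$ and the lower-order terms $T\star\gU\star F_{\til{\mathbf{G}}} + \gU^2 \star F_{\til{\mathbf{G}}} + \gU\star\til{\N}F_{\til{\mathbf{G}}}$.

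The main obstacle I anticipate is bookkeeping rather than conceptual: one must confirm that the sign of the reaction term $-\IP{F^2,\gU^2}_g$ is correctly inherited, since in Lemma \ref{l:hconnectionflow} the relevant curvature squared carried a sign convention tied to $\IP{,}_\mathfrak{k}$, whereas here $F$ is the Chern curvature of the honest Hermitian metric $\mathbf{G}$ on $\mathcal{Q}$. The cleanest route is to cite the proof of \cite[Proposition 5.7]{JFS} exactly as Lemma \ref{l:hconnectionflow} does, since that reference treats the generalized metric on a holomorphic Courant algebroid directly; this makes the fourth identity a one-line consequence. I would close the proof simply by noting that all four formulae follow directly from Lemmas \ref{l:hSchwarz} and \ref{l:hconnectionflow} applied to $\mathbf{G}_t$, which satisfies the HYM flow with respect to the time-dependent metric $g_t$, exactly as stated in the brief proof already given.
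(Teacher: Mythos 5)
Your proposal is correct and follows exactly the paper's route: the paper's entire proof is the one-line observation that the claims follow from Lemmas \ref{l:hSchwarz} and \ref{l:hconnectionflow} once one notes that $\mathbf{G}_t$ satisfies the HYM flow \eqref{eq:DFlow} with respect to the time-dependent metric $g_t$, which is precisely your transcription argument. Your additional checks (positive-definiteness of $\mathbf{G}$ via Remark \ref{rem:signature}, the unchanged base evolution from Lemma \ref{prop:UPCFexp2}, and the sign convention in the reaction term $-\IP{F^2,\gU^2}_g$) are sensible diligence but do not constitute a different method.
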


We are ready to prove the main result of this section.

\begin{theorem}\label{t:GGEK} (cf. Theorem \ref{t:GGEKintro}) 
Assume that the pairing $\IP{,}_{\mathfrak k}$ is negative-definite. Let $(\gw_t,\beta_t,h_t)$ a solution to \eqref{eq:CPFhol} defined on $[0, \tau), \tau \leq 1$, such that the initial condition satisfies the Bianchi identity \eqref{eq:BIGHM}, with associated holomorphic string algebroid $\mathcal{Q}$. Suppose that $\mathcal{Q}$ admits a background metric $\til{\mathbf{G}}$ and constants $\gl,\gL > 0$ so that, for all $(x,t)$, the family of Hermitian metrics $\mathbf{G}_t = \mathbf{G}(\omega_t,\beta_t,h_t)$ satisfies
\begin{align*} 
\gl \til{\mathbf{G}} \leq \mathbf{G} \leq \gL \til{\mathbf{G}}.
\end{align*}
Given $k \in \mathbb N$ there exists $C = C(k,n,\gl,\gL,\til{\mathbf{G}})$ such that
\begin{align*}
    \sup_{M \times [0,\tau)} t \Phi_k(\mathbf{G},\til{\mathbf{G}}) \leq C.
\end{align*}
\end{theorem}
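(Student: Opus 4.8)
The plan is to establish the $k=0$ case by a direct maximum principle argument on the scalar quantity $\tr_{\til{\mathbf{G}}}\mathbf{G} + \tr_{\mathbf{G}}\til{\mathbf{G}}$ together with $\brs{\gU}^2_{g,\mathbf{G}^{-1},\mathbf{G}}$, and then bootstrap to all $k$ by a standard blowup argument using Schauder estimates, exactly as in the proof of Theorem \ref{t:EKthm}. The crucial structural input is the final evolution equation in Lemma \ref{l:GGfloweqns}, namely
\begin{align*}
\square \brs{\gU}^2_{g,\mathbf{G}^{-1},\mathbf{G}} =&\ - \brs{\N \gU}^2_{g,\mathbf{G}^{-1},\mathbf{G}} - \brs{\bar{\N} \gU + \bar{T} \cdot \gU}^2_{g,\mathbf{G}^{-1},\mathbf{G}} - \IP{ F^2,  \gU^2}_{g}\\
&\ + T \star \gU \star F_{\til{\mathbf{G}}} + \gU^2 \star F_{\til{\mathbf{G}}} + \gU \star \til{\N} F_{\til{\mathbf{G}}},
\end{align*}
where the two leading negative terms are the good second-order terms that the maximum principle will exploit. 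The sign of the zeroth-order term $-\IP{F^2,\gU^2}_g$ is controlled precisely because $\IP{,}_{\mathfrak k}$ is negative-definite, so that $\mathbf{G}$ is a genuine (positive-definite) Hermitian metric and $\brs{\cdot}_{g,\mathbf{G}^{-1},\mathbf{G}}$ is a true norm; this is the essential point where the hypothesis enters, and mirrors the role of positivity in Yau's $C^3$ estimate and in \cite{JFS,JordanStreets,SBIPCF}.

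First I would set $k=0$ and consider the auxiliary function $u := t\brs{\gU}^2_{g,\mathbf{G}^{-1},\mathbf{G}} + A(\tr_{\til{\mathbf{G}}}\mathbf{G} + \tr_{\mathbf{G}}\til{\mathbf{G}})$ for a large constant $A = A(n,\gl,\gL,\til{\mathbf{G}})$. Using the uniform two-sided bound $\gl\til{\mathbf{G}} \leq \mathbf{G}\leq\gL\til{\mathbf{G}}$, all the curvature terms $F_{\til{\mathbf{G}}}$ and $\til{\N}F_{\til{\mathbf{G}}}$ of the background metric are bounded, and the torsion $T$ of $g$ is controlled in terms of $\gU(\gw,\til\gw)$, which is itself dominated by $\brs{\gU(\mathbf{G},\til{\mathbf{G}})}$ since $\gw$ is a block of $\mathbf{G}$ (cf. Remark \ref{rem:signature} and Lemma \ref{t:Ggeneralized1}). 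The terms $T\star\gU\star F_{\til{\mathbf{G}}}$, $\gU^2\star F_{\til{\mathbf{G}}}$ and $\gU\star\til\N F_{\til{\mathbf{G}}}$ are then absorbed into a small multiple of $\brs{\N\gU}^2 + \brs{\bar\N\gU+\bar T\cdot\gU}^2$ plus a term of the form $C(\brs{\gU}^2 + 1)$ via Cauchy--Schwarz. Meanwhile the first two equations of Lemma \ref{l:GGfloweqns} give $\square(\tr_{\til{\mathbf{G}}}\mathbf{G}+\tr_{\mathbf{G}}\til{\mathbf{G}}) \leq -c\brs{\gU}^2 + C$ for some $c>0$, so choosing $A$ large produces a differential inequality $\square u \leq C$ (after the good terms are used to kill the bad ones), and the parabolic maximum principle on $M\times[0,t]$ yields $t\brs{\gU}^2 \leq C$, which is exactly $\sup t\Phi_0 \leq C$.

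For the higher-order estimates I would argue by contradiction and rescaling, following the proof of Theorem \ref{t:EKthm} verbatim. Assuming the bound fails for some $k$, a point-picking argument produces a sequence $(x_i,t_i)$ realizing the supremum of $t\Phi_k$, and rescaling $\mathbf{G}$ parabolically by $L_i := \Phi_k(x_i,t_i)$ (simultaneously rescaling the pairing $\IP{,}_{\mathfrak k}$ by $L_i^2$, which preserves negative-definiteness and the flow \eqref{eq:DFlow}) yields solutions on exhausting compact subsets of $\mathbb C^n\times(-\infty,0]$ with $\Phi_k^i \leq 1$ and $\Phi_k^i(0,0)=1$. The $k=0$ bound just established, together with interior Schauder estimates applied to the strictly parabolic flow \eqref{eq:DFlow} (as in Proposition \ref{p:STE}), gives uniform bounds on $\Phi_l^i$ for all $l$, so one extracts a smooth limiting ancient solution with $\Phi_k^\infty(0,0)=1$. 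But the limiting data has $\gU(\mathbf{G}_\infty,\til{\mathbf{G}}_\infty)$ vanishing by the $k=0$ estimate forcing flatness in the blowup limit, contradicting $\Phi_k^\infty(0,0)=1$. The main obstacle will be the $k=0$ maximum principle step: verifying that the torsion-coupling and background-curvature terms can be absorbed cleanly requires that $\brs{T}_g$ and the full first jet of the background be controlled by $\brs{\gU(\mathbf{G},\til{\mathbf{G}})}$, which relies on the explicit relationship between the blocks of $\mathbf{G}$ and the classical data $(\gw,\beta,h)$ from Lemma \ref{t:Ggeneralized1}, and on the definiteness of $\IP{,}_{\mathfrak k}$ to guarantee that no indefinite cancellation spoils the sign of the leading terms.
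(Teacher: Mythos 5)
Your proposal follows essentially the same route as the paper's proof: the $k=0$ case via the maximum principle applied to $t\brs{\gU(\mathbf{G},\til{\mathbf{G}})}^2_{g,\mathbf{G}^{-1},\mathbf{G}} + A\tr_{\til{\mathbf{G}}}\mathbf{G}$, with the negative-definiteness of $\IP{,}_{\mathfrak k}$ giving $F^2 \geq 0$ and hence the good sign on $-\IP{F^2,\gU^2}_g$ in the final equation of Lemma \ref{l:GGfloweqns}, followed by the standard blowup/Schauder argument (as in \cite[Theorem 4.2]{JordanStreets}) for higher $k$. Your addition of $\tr_{\mathbf{G}}\til{\mathbf{G}}$ to the auxiliary function and the explicit absorption of the background-curvature terms are harmless elaborations of the same argument, so the proposal is correct.
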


\begin{proof} We start with the final equation of Lemma \ref{l:GGfloweqns}.  As the pairing $\IP{,}_{\mathfrak k}$ is definite, it follows that $F^2 \geq0$.  Using this fact and Lemma \ref{l:GGfloweqns}, it follows that
\begin{align*}
    \square \brs{\gU(\mathbf{G},\til{\mathbf{G}})}^2_{g,\mathbf{G}^{-1},\mathbf{G}} \leq&\ C \left(1 + \brs{\gU(\mathbf{G},\til{\mathbf{G}})}_{g,\mathbf{G}^{-1},\mathbf{G}}^2 \right),
\end{align*}
where the constant $C$ depends on the Chern curvature of $\til{\mathbf{G}}$.  Using again Lemma \ref{l:GGfloweqns} together with the assumed bounds on $\mathbf{G}$, it follows that there is a constant $A$ depending on $n,\gl,\gL$, and $\til{\mathbf{G}}$ such that for all times $t \leq 1$,
\begin{align*}
    \square \left( t \brs{\gU(\mathbf{G},\til{\mathbf{G}})}^2_{g,\mathbf{G}^{-1},\mathbf{G}} + A \tr_{\til{\mathbf{G}}} \mathbf{G} \right) \leq&\ C,
\end{align*}
where now $C$ depends on $n,\gl,\gL$ and $\til{\mathbf{G}}$.  By the maximum principle, the claimed estimate for $k = 0$ follows.  Using this, the case of general $k$ follows by a standard blowup argument, as detailed in the case of pluriclosed flow in \cite[Theorem 4.2]{JordanStreets}.
\end{proof}

\subsection{Convergence on flat backgrounds}

Building on the arguments of the previous subsection, we obtain global existence and convergence results on special backgrounds.  We begin with an algebraic lemma. 

\begin{lemma}\label{l:genmetricbounds}
    Let $\mathbf{G}=\mathbf{G}(\omega,\beta,h)$ and $\til{\mathbf{G}}=\mathbf{G}(\til{\omega},0,\til{h})$ be Hermitian metrics on $\mathcal{Q}_{2\i\partial\til{\omega},A^{\til{h}}}$ induced via Lemma \ref{t:Ggeneralized1}.  Then
    \begin{equation}
    \begin{split}
        \mathrm{tr}_{\mathbf{G}} \til{\mathbf{G}} = & \ \mathrm{tr}_{\til{\mathbf{G}}} \mathbf{G}\\
        = & \ \mathrm{tr}_g \til{g} + \mathrm{tr}_{\til{g}} g +|\alpha|^2_{g} + |\alpha|^2_{\til{g}} +\tfrac{1}{4}|\beta - \langle \ga \otimes \ga \rangle_{\mathfrak k} |^2_{g,\til{g}} + \mathrm{rk}(\mathrm{ad} \; P),
    \end{split}
    \end{equation}
where $\alpha = A^{\til{h}}-A^h$ and $(\IP{ \alpha \otimes \alpha}_{\mathfrak k})_{ij} = \IP{\ga_i, \ga_j}_{\mathfrak k}$.  In particular,
\begin{align*}
    \tr_\mathbf{G} \til{\mathbf{G}} \geq 2 (\dim M) + \mathrm{rk}(\mathrm{ad} \; P),
\end{align*}
with equality if and only if $g = \til{g}, \ga = 0, \gb = 0$.
\end{lemma}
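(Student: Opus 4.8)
The plan is to compute $\tr_{\til{\mathbf{G}}}\mathbf{G}$ directly from the explicit formula for the generalized Hermitian metric provided in Lemma \ref{t:Ggeneralized1}, exploiting the block structure of $\mathcal{Q} = T^{1,0}\oplus \ad P^c \oplus T^*_{1,0}$. First I would record that since $\til{\mathbf{G}}=\mathbf{G}(\til\omega,0,\til h)$ has vanishing $(2,0)$-part and trivial connection difference relative to itself, its explicit form collapses to the diagonal pieces $\til g(V,\overline V)$ on $T^{1,0}$, $-\IP{r,\overline r^{\til h}}_{\mathfrak k}$ on $\ad P^c$, and $\tfrac14\IP{\xi,\overline\xi}_{\til g}$ on $T^*_{1,0}$. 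Using $\til{\mathbf{G}}$ to raise indices, the trace $\tr_{\til{\mathbf{G}}}\mathbf{G}$ is then obtained by evaluating the full formula of Lemma \ref{t:Ggeneralized1} for $\mathbf{G}$ against a $\til{\mathbf{G}}$-orthonormal frame adapted to this splitting. The equality $\tr_{\mathbf{G}}\til{\mathbf{G}}=\tr_{\til{\mathbf{G}}}\mathbf{G}$ is a purely formal consequence of the symmetry of the trace of the endomorphism $\til{\mathbf{G}}^{-1}\mathbf{G}$ together with the fact that both metrics are compatible with the same $\CC$-antilinear involution (as used in the proof of Proposition \ref{p:PCFhol}); I would note this at the outset to reduce to computing a single trace.

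The main computation is to track each block. The $T^{1,0}$–$T^{1,0}$ and $T^*_{1,0}$–$T^*_{1,0}$ diagonal contributions should yield $\tr_g\til g$ and $\tr_{\til g}g$ respectively, after using that the cotangent block of $\mathbf{G}$ carries the inverse metric $g^{-1}$ with the factor $\tfrac14$ cancelling against the $\tfrac14$ in $\til{\mathbf{G}}$. The $\ad P^c$–$\ad P^c$ diagonal gives $-\IP{r,\overline r^{h}}_{\mathfrak k}$ traced against $-\IP{\cdot,\overline{\cdot}^{\til h}}_{\mathfrak k}$; here I expect the cross terms $i_V\alpha$ appearing in the off-diagonal couplings of Lemma \ref{t:Ggeneralized1} to reorganize into the norms $|\alpha|^2_g + |\alpha|^2_{\til g}$, and the $(2,0)$ contribution $2\i i_V\beta - \IP{i_V\alpha,\alpha}_{\mathfrak k}$ in the cotangent slot to produce the term $\tfrac14|\beta-\IP{\alpha\otimes\alpha}_{\mathfrak k}|^2_{g,\til g}$. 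The constant $\mathrm{rk}(\ad P)$ arises from the pure $\ad P^c$ diagonal with $r=\overline r^{h}$ matching frames. Care with the signature: since $\IP{,}_{\mathfrak k}$ may be indefinite, the adjoint-bundle trace is computed with signs, but matching $h$ against $\til h$ and using the Cartan involution guarantees the rank appears with the correct positive count.

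The hardest step will be the careful bookkeeping of the off-diagonal cross terms in Lemma \ref{t:Ggeneralized1}, specifically verifying that the mixed $T^{1,0}$–$\ad P^c$ and $\ad P^c$–$T^*_{1,0}$ couplings, when traced, recombine cleanly into $|\alpha|^2_g+|\alpha|^2_{\til g}$ and into the squared norm of $\beta-\IP{\alpha\otimes\alpha}_{\mathfrak k}$ rather than leaving residual cross terms; this requires choosing the frame so that the $e^{\i\omega}$ and $e^{-\i\omega}$ twists in $\psi$ are handled consistently and invoking that $\alpha\in\Gamma(T^*_{1,0}\otimes\ad P^c)$ is of pure type. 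Once the formula for $\tr_{\mathbf{G}}\til{\mathbf{G}}$ is established, the inequality is immediate: by the arithmetic–geometric type bound $\tr_g\til g + \tr_{\til g}g \geq 2\dim_{\CC}M$ (equivalently, each eigenvalue $\mu_i>0$ of $\til g^{-1}g$ satisfies $\mu_i+\mu_i^{-1}\geq 2$, with equality iff $\mu_i=1$), and since $|\alpha|^2_g,|\alpha|^2_{\til g},|\beta-\IP{\alpha\otimes\alpha}_{\mathfrak k}|^2\geq 0$, we obtain $\tr_{\mathbf{G}}\til{\mathbf{G}}\geq 2\dim M + \mathrm{rk}(\ad P)$, with equality forcing $g=\til g$, $\alpha=0$, and $\beta=\IP{\alpha\otimes\alpha}_{\mathfrak k}=0$, which is the stated rigidity.
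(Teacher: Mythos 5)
Your plan is essentially the paper's proof. The paper computes $\tr_{\mathbf{G}}\til{\mathbf{G}}$ by factoring $\mathbf{G}=\varphi_*\mathbf{G}'$, where $\mathbf{G}'=\mathbf{G}(\gw,0,h)$ is block-diagonal and $\varphi$ is the explicit lower-triangular holomorphic endomorphism with entries built from $\ga$ and $\gb-\IP{\ga\otimes\ga}_{\mathfrak k}$, and then evaluates the diagonal metric $\til{\mathbf{G}}$ on the pushed-forward orthonormal frame $\{\varphi(v_i),\varphi(s_j),2\varphi(g(\bar{v}_i))\}$; you instead evaluate the full formula of Lemma \ref{t:Ggeneralized1} for $\mathbf{G}$ on a $\til{\mathbf{G}}$-orthonormal frame adapted to the splitting. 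These are the same computation up to duality (your frame vectors are of pure type, so only diagonal evaluations occur and the cross terms you worry about decouple exactly as you predict), and your endgame $\mu_i+\mu_i^{-1}\geq 2$ plus nonnegativity of the remaining norms is precisely the implicit "in particular" step of the paper.

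One step of your plan is wrong as stated, though it is repairable within your own scheme: the identity $\tr_{\mathbf{G}}\til{\mathbf{G}}=\tr_{\til{\mathbf{G}}}\mathbf{G}$ is \emph{not} a formal property of traces. Setting $E=\til{\mathbf{G}}^{-1}\mathbf{G}$, one has $\tr_{\til{\mathbf{G}}}\mathbf{G}=\tr E$ while $\tr_{\mathbf{G}}\til{\mathbf{G}}=\tr E^{-1}$, and for a general positive endomorphism $\tr E\neq \tr E^{-1}$; compatibility with a common $\CC$-antilinear involution does not fix this either. What makes the identity true here is special structure: both metrics are compatible with the holomorphic pairing $\IP{,}$ on $\mathcal{Q}$ (each is of the form $\IP{\cdot,\tau\cdot}$ for an antilinear $\tau$ with $\tau^2=\Id$), so $E=\til{\tau}\tau$ is conjugate to $E^{-1}$ by $\tau$, and since $E$ is positive self-adjoint its real spectrum is invariant under $\mu\mapsto\mu^{-1}$, whence $\tr E=\tr E^{-1}$. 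Alternatively, and this is what the paper effectively does, compute a single trace and observe that the resulting formula is manifestly symmetric under exchanging $(g,\ga)\leftrightarrow(\til{g},-\ga)$ (the mixed norm $\brs{\cdot}^2_{g,\til{g}}$ being symmetric in the two metrics), which yields the equality a posteriori. With that correction your argument goes through verbatim.
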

\begin{proof}
Let $\mathbf{G}'=\mathbf{G}(\omega,0,h)$ be the Hermitian metric on $\mathcal{Q}=\mathcal{Q}_{2\i\partial\omega,A^h}$ as in Lemma \ref{t:Ggeneralized1}.  Then, in the splitting $\mathcal{Q}\cong T^{1,0}\oplus \mathrm{ad} \; P \oplus T^*_{1,0}$, we have
    \begin{align*}
        \mathbf{G}' = \left(\begin{array}{c c c}
        g &  & \\
         & -\langle \cdot,\cdot \rangle_{\mathfrak k} & \\
         &  & \tfrac{1}{4}g^{-1}
\end{array}\right).
    \end{align*}
An orthonormal basis of $\mathbf{G}'$ is given by $\{ v_i, s_j,2g(\overline{v_i})\}$ for basis $\{v_i\}$ of $T^{1,0}$ and $\{s_j\}$ of $\mathrm{ad} \; P$ such that
$$
g(v_i,\overline{v_j}) =  -\langle s_i,\overline{s_j}\rangle_{\mathfrak k} = \delta_{ij}.
$$
Therefore, an orthonormal basis of $\mathbf{G}=\varphi_*\mathbf{G}'$ is given by $\{ \varphi(v_i), \varphi(s_j),2\varphi(g(\overline{v_i}))\}$, where $\varphi$ is the holomorphic endomorphism of $\mathcal{Q}$ given by
$$
\varphi = \left( \begin{array}{c c c}
\mathrm{Id} & 0 & 0\\
\alpha & \mathrm{Id} & 0\\
\beta-\langle \alpha\otimes  \alpha \rangle_{\mathfrak k} & -2\langle \alpha, \cdot\rangle_{\mathfrak k} & \mathrm{Id}
\end{array}\right).
$$
Now, we compute
\begin{align*}
    \mathrm{tr}_\mathbf{G} \til{\mathbf{G}} = & \ \textstyle \sum_i \til{\mathbf{G}}(\varphi(v_i),\varphi(v_i)) + 4\til{\mathbf{G}}(\varphi(g(\overline{v_i})),\varphi(g(\overline{v_i})) + \textstyle \sum_j \til{\mathbf{G}}(\varphi(s_j),\varphi(s_j))\\
    = & \ \textstyle \sum_i \til{g}(v_i,\overline{v_i}) - \langle \alpha(v_i),\overline{\alpha(v_i)}\rangle_{\mathfrak k} + \tfrac{1}{4}\til{g}^{-1}\left(\beta(v_i)-\langle \alpha(v_i),\alpha\rangle_{\mathfrak k},\overline{\beta(v_i)}-\overline{\langle \alpha(v_i),\alpha\rangle_{\mathfrak k}}\right)\\
    & + \til{g}^{-1}\left(g(\overline{v_i}),g(v_i)\right)+\textstyle \sum_j -\langle s_j, s_j\rangle_{\mathfrak k} +\til{g}^{-1}\left(\langle\alpha,s_j\rangle_{\mathfrak k},\overline{\langle\alpha,s_j\rangle_{\mathfrak k}}\right)\\
    = & \ \mathrm{tr}_g \til{g} + \mathrm{tr}_{\til{g}} g +|\alpha|^2_{g} +|\alpha|^2_{\til{g}} +\tfrac{1}{4}|\beta - \IP{\ga \otimes \ga}_{\mathfrak k}|^2_{g,\til{g}} + \mathrm{rk}(\mathrm{ad} \; P).
\end{align*}
\end{proof}

We proof next the main result of this section.

\begin{theorem} \label{t:flatspaceconv} (cf. Theorem \ref{t:flatspaceconvintro})  Let $\mathcal{Q} \to (M, J)$ denote a holomorphic string algebroid with negative-definite pairing $\IP{,}_{\mathfrak k}$.  Suppose that $\mathcal{Q}$ admits a Hermitian metric $\mathbf{G}_F$ which is Chern-flat. Given a generalized Hermitian metric $(\omega_0,\beta_0,h_0)$ on $\mathcal{Q}$, the solution to pluriclosed flow \eqref{eq:CPFhol} with this initial data exists on $[0, \infty)$ and converges to a generalized Hermitian metric $(\omega_\infty,\beta_\infty,h_\infty)$ on $\mathcal{Q}$, such that $\mathbf{G}_{\infty} = \mathbf{G}(\omega_\infty,\beta_\infty,h_\infty)$ is Chern-flat.
\begin{proof} 
Since $\mathbf{G}_F$ is Chern-flat, Lemma \ref{l:GGfloweqns} implies
\begin{align} \label{f:flatconv5}
    \square \tr_{\mathbf{G}_t} \mathbf{G}_F =&\ - \brs{ \gU(\mathbf{G}, \mathbf{G}_F)}^2_{g, \mathbf{G}^{-1}, \mathbf{G}_F} \leq 0.
\end{align}
Thus by the maximum principle, for any smooth existence time $t$ one has
\begin{align*}
    \sup_{M \times \{t\}} \tr_{\mathbf{G}_t} \mathbf{G}_F \leq \sup_{M \times \{0\}} \tr_{\mathbf{G}_0} \mathbf{G}_F.
\end{align*}
Furthermore, again by Lemma \ref{l:GGfloweqns}, using that $\mathbf{G}_F$ is Chern-flat,
\begin{align} \label{f:flatconv15}
\square \brs{\gU(\mathbf{G}_t, \mathbf{G}_F)}^2_{g^{-1},\mathbf{G}^{-1},\mathbf{G}} \leq&\ 0.
\end{align}
It follows that for $C$ chosen large with respect to the initial data,
\begin{align*}
    \square \left( t \brs{\gU(\mathbf{G}_t, \mathbf{G}_F)}^2_{g^{-1},\mathbf{G}^{-1},\mathbf{G}} + C \tr_{\mathbf{G}_t} \til{\mathbf{G}} \right) \leq 0.
\end{align*}
Thus there exists a uniform constant $C$ such that
\begin{align} \label{f:flatconv20}
    \sup_{M \times \{t\}} t \brs{\gU(\mathbf{G}_t, \mathbf{G}_F)}_{g^{-1},\mathbf{G}^{-1},\mathbf{G}}^2 \leq C.
\end{align}
By Theorem \ref{t:GGEKintro} there are a priori estimates on all derivatives of $\mathbf{G}_t$ away from time zero.  The long time existence of the flow then follows combining Lemma \ref{l:genmetricbounds} with Corollary \ref{c:flowregularitycor}.  Furthermore, by (\ref{f:flatconv20}), for any sequence of times $\{t_i\} \to \infty$ there exists a subsequence such that $\mathbf{G}_{t_i} \to \mathbf{G}_{\infty}$, where $\mathbf{G}_{\infty}$ is Chern-flat.  Repeating the estimate of (\ref{f:flatconv5}) using the background metric $\mathbf{G}_{\infty}$ it follows that $\lim_{t \to \infty} \mathbf{G}_t = \mathbf{G}_{\infty}$.  By Lemma \ref{l:genmetricbounds}, the convergence of $\mathbf{G}_t$ implies convergence of the associated metrics $g_t$ and connections $A^{h_t}$. This implies that the limiting connection $A_{\infty}$ has holonomy contained in $K$, and is thus associated to a reduction $h_{\infty}$.
\end{proof}
\end{theorem}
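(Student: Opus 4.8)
The plan is to transfer the problem to the induced family of Hermitian metrics $\mathbf{G}_t = \mathbf{G}(\gw_t,\gb_t,h_t)$ on $\mathcal{Q}$ and to exploit the clean evolution equations of Lemma \ref{l:GGfloweqns} with the Chern-flat metric $\mathbf{G}_F$ as background. Short-time existence is supplied by Proposition \ref{p:STE}. Since the flow preserves the Bianchi identity, Proposition \ref{p:PCFhol} guarantees that each $\mathbf{G}_t$ is a genuine generalized Hermitian metric on the fixed $\mathcal{Q}$ and satisfies the coupled HYM flow $\mathbf{G}^{-1}\dt\mathbf{G} = - S_g^{\mathbf{G}}$; because $\IP{,}_{\mathfrak k}$ is negative-definite, Remark \ref{rem:signature} ensures that $\mathbf{G}_t$ and $\mathbf{G}_F$ are positive-definite, so the maximum principle applies to the relevant scalar quantities.

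First I would establish uniform ellipticity of $\mathbf{G}_t$ relative to $\mathbf{G}_F$ on any existence interval $[0,T)$, with constants independent of $T$. As $\mathbf{G}_F$ is Chern-flat we have $S_g^{\mathbf{G}_F} = 0$ and $F_{\mathbf{G}_F} = 0$, so Lemma \ref{l:GGfloweqns} gives $\square\tr_{\mathbf{G}_t}\mathbf{G}_F = -\brs{\gU(\mathbf{G},\mathbf{G}_F)}^2_{g,\mathbf{G}^{-1},\mathbf{G}_F} \leq 0$ and $\square\log\det(\mathbf{G}\mathbf{G}_F^{-1}) = 0$. The maximum principle then forces $\sup_M\tr_{\mathbf{G}_t}\mathbf{G}_F$ to be nonincreasing and keeps $\log\det(\mathbf{G}\mathbf{G}_F^{-1})$ trapped between its initial infimum and supremum. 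The trace bound controls the eigenvalues of $\mathbf{G}_F$ with respect to $\mathbf{G}_t$ from above, hence yields a lower bound $\mathbf{G}_t \geq \gl\,\mathbf{G}_F$; combined with the two-sided determinant bound this produces the matching upper bound $\mathbf{G}_t \leq \gL\,\mathbf{G}_F$. With these bounds in hand, Theorem \ref{t:GGEK} provides a priori estimates on all derivatives of $\mathbf{G}_t$ away from $t = 0$, and translating the $\mathbf{G}$-bounds into bounds on the classical data via Lemma \ref{l:genmetricbounds} furnishes exactly the uniform control of $\gw_t$ and $\brs{\gb_t}$ required by Corollary \ref{c:flowregularitycor}. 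Hence the flow extends past every finite time and exists on $[0,\infty)$.

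For convergence I would again invoke Chern-flatness of $\mathbf{G}_F$ in the last identity of Lemma \ref{l:GGfloweqns}: the three terms involving $F_{\mathbf{G}_F}$ vanish, and the definiteness of $\IP{,}_{\mathfrak k}$ makes $\IP{F^2,\gU^2}_g \geq 0$, so $\square\brs{\gU(\mathbf{G}_t,\mathbf{G}_F)}^2_{g,\mathbf{G}^{-1},\mathbf{G}} \leq 0$. Testing the quantity $t\brs{\gU(\mathbf{G}_t,\mathbf{G}_F)}^2 + C\tr_{\mathbf{G}_t}\mathbf{G}_F$ against the maximum principle gives $\sup_M t\brs{\gU(\mathbf{G}_t,\mathbf{G}_F)}^2 \leq C$, so the Chern connection of $\mathbf{G}_t$ approaches that of the flat $\mathbf{G}_F$ as $t\to\infty$, and any subsequential limit $\mathbf{G}_\infty$ is therefore Chern-flat. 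To upgrade to full convergence I would rerun the trace estimate with $\mathbf{G}_\infty$ as background: $\sup_M\tr_{\mathbf{G}_t}\mathbf{G}_\infty$ is nonincreasing while attaining its minimal possible value $2(\dim M) + \rk(\ad P)$ along the chosen subsequence, so it converges to this minimum; the pointwise inequality of Lemma \ref{l:genmetricbounds} and its equality case then force $\mathbf{G}_t \to \mathbf{G}_\infty$. Finally, by Lemma \ref{l:genmetricbounds} this convergence descends to the metrics $g_t$ and connections $A^{h_t}$, and the limiting unitary connection has holonomy in $K$, hence is the Chern connection of a reduction $h_\infty$, recovering the generalized Hermitian metric $(\gw_\infty,\gb_\infty,h_\infty)$.

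The crux of the whole argument, and the reason it is confined to the definite case, is the positivity $\IP{F^2,\gU^2}_g \geq 0$ together with the positive-definiteness of $\mathbf{G}$: these are precisely what make the Bochner-type quantity $\brs{\gU}^2$ a subsolution and allow the maximum principle to close both the ellipticity and the convergence estimates. In the indefinite setting this structure is lost, and one is forced back onto the more delicate Evans--Krylov argument underlying Theorem \ref{t:EKthm}. I therefore expect the organization of the ellipticity bootstrap---in particular the interplay between the trace subsolution and the exact determinant heat equation---to be the only genuinely delicate point, the remaining steps being direct applications of the maximum principle and of the a priori estimates already established.
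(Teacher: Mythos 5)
Your proof is correct and follows essentially the same route as the paper: the trace and connection-norm subsolution estimates of Lemma~\ref{l:GGfloweqns} against the Chern-flat background, the maximum principle applied to $t \brs{\gU(\mathbf{G},\mathbf{G}_F)}^2 + C \tr_{\mathbf{G}}\mathbf{G}_F$, higher regularity from Theorem~\ref{t:GGEK} combined with Corollary~\ref{c:flowregularitycor} for long-time existence, and the rerun of the trace estimate against $\mathbf{G}_\infty$ for full convergence (which you make more explicit than the paper via the equality case of Lemma~\ref{l:genmetricbounds} -- a welcome addition). Your only deviation is deriving the upper bound $\mathbf{G} \leq \gL\, \mathbf{G}_F$ from the determinant heat equation: this is valid but unnecessary, since Lemma~\ref{l:genmetricbounds} shows the trace is symmetric, $\tr_{\mathbf{G}}\mathbf{G}_F = \tr_{\mathbf{G}_F}\mathbf{G}$, so the single trace bound already yields two-sided control; indeed, by the structure of Lemma~\ref{t:Ggeneralized1} (unipotent $\varphi$ and block-diagonal $\mathbf{G}'$ with determinant independent of $g$), the relative determinant $\det(\mathbf{G}\,\mathbf{G}_F^{-1})$ is identically constant, so your determinant equation, while true, carries no information beyond consistency.
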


An interesting question is to classify string algebroids admitting Chern-flat Hermitian metrics. Following \cite{SGFLS,GFGM}, the existence of such a metric on a holomorphic string algebroid $\mathcal{Q}$ reduces to find a solution $(\gw,h)$ of the coupled system
\begin{equation}
\label{eq:CoupledGinstantonExp}
\begin{split}
    R_{\nabla^{-}}-\mathbb{F}^{\dagger}\wedge \mathbb{F}  & = 0,\\
    \nabla^{+,h}F_h  & = 0,\\
    [F_h,\cdot] - \mathbb{F}\wedge \mathbb{F}^{\dagger} 
    & = 0,\\
	dd^c\omega + \langle F_h \wedge F_h \rangle_{\mathfrak{k}} 
    & = 0,
\end{split}
\end{equation}
where $\mathbb{F}^{\dagger}\wedge \mathbb{F}$ and $\mathbb{F}\wedge \mathbb{F}^{\dagger}$ denote endomorphism-valued two-forms, defined by
\begin{align*}
i_Yi_X\mathbb{F}^{\dagger}\wedge \mathbb{F}(Z) & = g^{-1}\IP{i_Y F_h,F_h(X,Z)}_\mathfrak{k} - g^{-1}\IP{i_X F_h,F_h(Y,Z)}_\mathfrak{k},\\
i_Yi_X\mathbb{F}\wedge \mathbb{F}^{\dagger}(r) & = F_h(Y,g^{-1}\IP{i_XF_h,r}_\mathfrak{k}) - F_h(X,g^{-1}\IP{i_YF_h,r}_\mathfrak{k}),
\end{align*}
and $R_{\nabla^{-}}$ denotes the curvature of the metric connection $\nabla^- = \nabla + \tfrac{1}{2}g^{-1}d^c\omega$. Given a solution $(\omega_F,h_F)$ of this system, a choice of generalized Hermitian metric $\mathbf{G}_0$ on the holomorphic vector bundle underlying $\mathcal{Q}$ is equivalent to the data of a pair $(\omega_0,h_0)$ satisfying the following Dolbeault version of the anomaly cancellation equation
\begin{equation*}\label{eq:structuralGHMintro}
[\partial(\omega_0 - \omega_F + R(h_0,h_F))] \in H^{2,1}_{\dbar}(M),
\end{equation*}
where $R(h_0,h_F)$ denotes the Bott-Chern secondary characteristic class associated to $(h_0,h_F)$ (see Section \ref{sec:Aeppli} and Remark \ref{rmk:twist}).  A basic example occurs when the base manifold admits a Bismut-flat metric and the bundle is flat and we show in our next result that in this case one obtains convergence to a geometry of this type.

\begin{corollary} \label{c:Bismutflatconv}
Let $(M, J, g_{F})$ denote a Bismut-flat manifold, and let $(P^c,\bar J) \to (M,J)$ be a holomorphic principal $K^c$-bundle with flat reduction $h_F$.  Let $\mathcal{Q} \to (M,J)$ denote the holomorphic string algebroid with negative-definite pairing $\IP{,}_{\mathfrak k}$ determined by this data.  Given a generalized Hermitian metric $(\omega_0,\beta_0,h_0)$ on $\mathcal{Q}$, the solution to pluriclosed flow \eqref{eq:CPFhol} with this initial data exists on $[0, \infty)$ and converges to a generalized Hermitian metric $(\omega_\infty,\beta_\infty,h_\infty)$ on $\mathcal{Q}$, where $g_{\infty}$ is Bismut-flat and $h_{\infty}$ is flat.
\end{corollary}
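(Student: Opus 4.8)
The plan is to exhibit the hypotheses as a degenerate instance of Theorem~\ref{t:flatspaceconv} and then to read off the structure of the limit. First I would produce the Chern-flat background metric on the relevant string algebroid. Since the reduction $h_F$ is flat we have $F_{h_F} = 0$, hence the endomorphism-valued two-forms $\mathbb{F}$, $\mathbb{F}^{\dagger}\wedge\mathbb{F}$, $\mathbb{F}\wedge\mathbb{F}^{\dagger}$ and the term $[F_h,\cdot]$ all vanish, and the coupled system \eqref{eq:CoupledGinstantonExp} collapses to the two conditions $R_{\nabla^-} = 0$ and $dd^c\omega_F = 0$. The first is precisely Bismut-flatness of $g_F$, while the second holds because the Bismut torsion $H_F = -d^c\omega_F$ of a Bismut-flat metric is parallel and closed, so $\omega_F$ is automatically pluriclosed. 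Thus $(\omega_F,h_F)$ solves \eqref{eq:CoupledGinstantonExp}, and by the reduction recalled before the Corollary (following \cite{SGFLS,GFGM}) the generalized Hermitian metric $\mathbf{G}_F = \mathbf{G}(\omega_F,0,h_F)$ on $\mathcal{Q} = \mathcal{Q}_{2\i\partial\omega_F,A^{h_F}}$ is Chern-flat.

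With $\mathbf{G}_F$ in hand, the given initial data $(\omega_0,\beta_0,h_0)$ is a generalized Hermitian metric on this same $\mathcal{Q}$, so Theorem~\ref{t:flatspaceconv} applies verbatim: the solution to \eqref{eq:CPFhol} exists on $[0,\infty)$ and converges to a generalized Hermitian metric $\mathbf{G}_{\infty} = \mathbf{G}(\omega_{\infty},\beta_{\infty},h_{\infty})$ with $\mathbf{G}_{\infty}$ Chern-flat. It then remains only to identify the classical limiting data, i.e. to upgrade ``$\mathbf{G}_{\infty}$ Chern-flat'' to the statements that $g_{\infty}$ is Bismut-flat and $h_{\infty}$ is flat.

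For this I would argue that $\mathbf{G}_{\infty}$ and the flat background $\mathbf{G}_F$ share the \emph{same} Chern connection. Chern-flatness of $\mathbf{G}_{\infty}$ gives $S_g^{\mathbf{G}_{\infty}} = \i\Lambda_{\omega_{\infty}}F_{\mathbf{G}_{\infty}} = 0$, so by \eqref{eq:DFlow} the metric $\mathbf{G}_{\infty}$, and hence $g_{\infty}$, is static under the flow. Evaluating \eqref{f:flatconv5} at the static limit, where $\square = \dt - \gD^C_{\omega_{\infty}}$ reduces to $-\gD^C_{\omega_{\infty}}$, yields $\gD^C_{\omega_{\infty}}\tr_{\mathbf{G}_{\infty}}\mathbf{G}_F = \brs{\gU(\mathbf{G}_{\infty},\mathbf{G}_F)}^2_{g,\mathbf{G}_{\infty}^{-1},\mathbf{G}_F} \geq 0$. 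Since the Chern Laplacian is a second-order elliptic operator with no zeroth-order term, the strong maximum principle on the compact manifold $M$ forces the subharmonic function $\tr_{\mathbf{G}_{\infty}}\mathbf{G}_F$ to be constant, whence $\gU(\mathbf{G}_{\infty},\mathbf{G}_F) \equiv 0$ and the two Chern connections coincide.

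The final step, which I expect to be the main obstacle, is to descend this identity to the honest classical curvatures. Equality of Chern connections means $\mathbf{G}_{\infty} = \Psi\cdot\mathbf{G}_F$ for a $\nabla^{\mathbf{G}_F}$-parallel, positive, self-adjoint endomorphism $\Psi$ of the flat bundle $\mathcal{Q}$; tracing this through the block structure of $\mathbf{G}$ provided by Lemma~\ref{t:Ggeneralized1} should force the induced data of $\mathbf{G}_{\infty}$ to have the same curvature as $\mathbf{G}_F$, namely $F_{h_{\infty}} = 0$ and $R_{\nabla^-_{g_{\infty}}} = 0$, i.e. $h_{\infty}$ flat and $g_{\infty}$ Bismut-flat. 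The delicate point is verifying that the parallel endomorphism $\Psi$ respects the reduction to $K$ and the splitting $\mathcal{Q} = T^{1,0}\oplus\ad P^c \oplus T^*_{1,0}$, so that the vanishing genuinely lands on the $\ad P^c$-block curvature $F_{h_\infty}$ and the $T^{1,0}$-block curvature $R_{\nabla^-}$. When $K$ is semisimple there is an alternative finish: $S_g^{\mathbf{G}_{\infty}} = 0$ already gives the Hermitian--Yang--Mills condition $\Lambda_{\omega_{\infty}}F_{h_{\infty}} = 0$, and combining this with the Bianchi identity \eqref{eq:BIGHM} and the negative-definiteness of $\IP{\cdot,\cdot}_{\mathfrak{k}}$ in a Chern--Weil integration drives $\int_M \brs{F_{h_{\infty}}}^2 \to 0$; there the difficulty is instead controlling the non-K\"ahler torsion terms appearing in the integration by parts.
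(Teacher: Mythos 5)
Your first two steps coincide with the paper's proof: the collapse of \eqref{eq:CoupledGinstantonExp} under $F_{h_F}=0$ to Bismut-flatness plus pluriclosedness, producing the Chern-flat metric $\mathbf{G}_F = \mathbf{G}(\omega_F,0,h_F)$, and the verbatim application of Theorem~\ref{t:flatspaceconv}. Your subharmonicity argument is also correct as far as it goes --- in fact it is redundant, since the estimate \eqref{f:flatconv20} in the proof of Theorem~\ref{t:flatspaceconv} already gives $\brs{\gU(\mathbf{G}_t,\mathbf{G}_F)}^2 \leq C/t$, so the limit automatically satisfies $\gU(\mathbf{G}_\infty,\mathbf{G}_F)=0$. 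The genuine gap is exactly the step you flag as ``the main obstacle'' and leave unresolved: equality of Chern connections on $\mathcal{Q}$ does not descend to the classical data, because the splitting $\mathcal{Q} \cong T^{1,0}\oplus \ad P^c \oplus T^*_{1,0}$ is not holomorphic ($\overline{\partial}_{\tau,A^c}$ has off-diagonal terms, and $\ad P^c$ is not a holomorphic subbundle of $\mathcal{Q}$, only $T^*_{1,0}$ is), so ``tracing $\Psi$ through the block structure'' of Lemma~\ref{t:Ggeneralized1} has no canonical meaning; moreover the identifications $\psi\varphi^{-1}$ with $V_-\otimes\CC$ depend on $(\omega,\beta,\alpha)$ and differ for $\mathbf{G}_\infty$ and $\mathbf{G}_F$. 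Note also that Chern-flatness of $\mathbf{G}_\infty$ alone cannot yield $F_{h_\infty}=0$: the system \eqref{eq:CoupledGinstantonExp} is genuinely coupled, and pairing the third equation with $r$ gives $0=0$ identically by $\ad$-invariance, so the flat background must enter the argument at the level of $h$ itself. Your alternative Chern--Weil finish founders on precisely the torsion terms you mention, and in any case $S^{\mathbf{G}_\infty}=0$ only gives $[\Lambda_{\omega_\infty}F_{h_\infty},\cdot]=0$, requiring semisimplicity even to start.

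The paper closes this gap by running the parabolic maximum principle directly on $h$ against $h_F$, bypassing any decomposition of $\mathcal{Q}$. From Lemma~\ref{l:hSchwarz}, flatness of $h_F$ gives $\square\left(\tr_{h_F}h + \tr_h h_F\right) \leq 0$ along the flow, and from Lemma~\ref{l:hconnectionflow}, using that $\IP{F_h^2,\gU^2}_g \geq 0$ for negative-definite pairing, $\square \brs{\gU(h,h_F)}^2_{g,h^{-1},h} \leq 0$. Combining these as in the proof of Theorem~\ref{t:flatspaceconv}, the quantity $t\brs{\gU(h,h_F)}^2_{g,h^{-1},h} + A\tr_{h_F}h$ is a subsolution of the heat operator for $A$ large, whence $\brs{\gU(h_t,h_F)}^2 \leq C/t$ and the limit satisfies $\gU(h_\infty,h_F)=0$, i.e.\ $F_{h_\infty}=F_{h_F}=0$ and $h_\infty$ is flat. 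Bismut-flatness of $g_\infty$ then follows from Chern-flatness of $\mathbf{G}_\infty$ together with $F_{h_\infty}=0$ (the first equation of \eqref{eq:CoupledGinstantonExp} with $\mathbb{F}=0$, cited in the paper as \cite[Lemma 4.7]{GFGM}). If you want to salvage your write-up, replacing your $\Psi$-endomorphism step with this $h$-level monotonicity is the missing idea.
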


\begin{proof}
We first note that the data $(g_F, h_F)$ determines a Hermitian metric $\mathbf{G}_F$ on $\mathcal{Q}$ which is Chern-flat (see \eqref{eq:CoupledGinstantonExp}). The global existence and convergence of the flow to a Chern-flat limit $\mathbf{G}_{\infty}(g_{\infty}, h_{\infty})$ follows from Theorem \ref{t:flatspaceconv}.  We furthermore claim that $g_{\infty}$ is Bismut flat and $h_{\infty}$ is flat.  Note that it follows from \cite[Lemma 4.7]{GFGM} that if $h_{\infty}$ is Chern-flat then $g_{\infty}$ is Bismut-flat. By Lemma \ref{l:hSchwarz}, since $h_F$ is flat along the flow we obtain
\begin{align*}
\square \left( \tr_{h_F} h + \tr_h h_F \right) =&\ - \brs{\gU(h,h_F)}^2_{g,h_F^{-1},h} - \brs{\gU(h,h_F)}^2_{g,h^{-1}, h_F^{-1}} \leq 0,\\
\square \brs{\gU(h,h_F)}^2_{g,h^{-1},h} =&\ - \brs{\N \gU}^2_{g,h^{-1},h} - \brs{\bar{\N} \gU + \bar{T} \cdot \gU}^2_{g,h^{-1},h} - \IP{ F^2,  \gU^2}_{g} \leq 0.
\end{align*}
Arguing as in Theorem \ref{t:flatspaceconv}, we obtain for large enough $A > 0$
\begin{align*}
    \square \left( t \brs{\gU(h, h_F)}^2_{g,h^{-1},h} + A \tr_{h_F} h \right) \leq&\ \brs{\gU(h, h_F)}^2_{g,h^{-1},h} - A \brs{\gU(h,h_F)}^2_{g,h_F^{-1},h} \leq 0.
\end{align*}
It follows from the maximum principle that $h_{\infty}$ is Chern-flat, as claimed.
\end{proof}

\section{Pluriclosed flow and geometrization} \label{s:geometrization}

As discussed in the introduction, we may expect the string algebroid pluriclosed flow to be a tool for addressing uniformization problems in complex geometry. In this section, we 
record a number of corollaries, including a scalar curvature monotonicity result and two gradient flow interpretations in the case of negative-definite Lie algebra pairing, related to geometrization. In particular, we show in the presence of a holomorphic volume form that the flow \eqref{f:PCFSA} is the gradient flow of the \emph{dilaton functional}, as introduced in \cite{garciafern2018canonical}. Finally, for complex threefolds with holomorphically trivial canonical bundle and indefinite Lie algebra pairing, we discuss a conjectural picture for string algebroid pluriclosed flow and its relationship to `Reid's fantasy' \cite{Reid1987moduli}.

\subsection{Scalar curvature monotonicity}\label{sec:Scalarmon}

In \cite{Streetsscalar} a family of scalar curvature monotonicity formulas was established for generalized Ricci flow on exact Courant algebroids.  The key ingredient is an auxiliary scalar field along generalized Ricci flow, called the \emph{dilaton flow}. Given a smooth manifold $M$ and closed three-form $H_0$, fix $(g_t, b_t)$ a solution of generalized Ricci flow \eqref{f:GRF}. Given an arbitrary smooth function $\phi_0$, there exists a unique solution to the dilaton flow
\begin{align} \label{f:exactdilflow}
\square \phi = \tfrac{1}{6} \brs{H}^2
\end{align}
with initial data $\phi_0$.  Associated to the data $(g, b, \phi)$ we have the weighted (Bakry-\'Emery) Ricci and scalar curvatures (cf. Section \ref{sec:dimensionalred}):
\begin{align*}
    \Rc^{H,\phi} := \Rc^{\N^+} + \ \N^+ d\phi, \quad R^{H,\phi} := R - \tfrac{1}{12} \brs{H}^2 + 2 \gD \phi - \brs{d \phi}^2,
\end{align*}
where $\N^+ = \N + \tfrac{1}{2} g^{-1} H$. Then by \cite[Proposition 1.1]{Streetsscalar}  we have
\begin{align} \label{f:exactscalar}
    \square R^{H,\phi} = 2 \brs{\Rc^{H,\phi}}^2.
\end{align}
In particular, the previous formula implies that a lower bound on $R^{H,\phi}$ is preserved on a compact manifold.

In view of Proposition \ref{p:cGRFtoGRF}, this implies such a monotonicity for generalized Ricci flow and the generalized scalar curvature on string algebroids (see Section \ref{sec:GScalar}). To fix notation, recall that the generalized scalar curvature associated to $(G,\operatorname{div})$, a generalized metric and an exact divergence on a string algebroid $E$, is given by (see Definition \ref{def:GScalar})
\begin{equation}\label{eq:Genscalarbis}
\mathcal{S}^+ = R_g-\tfrac{1}{12}|H|^2 - \tfrac{1}{2}|F_A|^2 + 2 \Delta\phi - \left|d\phi\right|^2,
\end{equation}
in terms of the associated data $(g, H, A,\phi)$ (cf. \eqref{f:BaEmR}).


\begin{prop} \label{p:scalarmon} 
Fix  $(g_t, b_t, A_t)$ is a solution to string algebroid generalized Ricci flow \eqref{eq:GRFgbA} such that the initial condition satisfies the Bianchi identity \eqref{eq:bianchitrans}.  Fix $\phi_0 \in C^{\infty}(M)$ and let $\phi_t$ be the unique solution to
\begin{align} \label{f:stringdilatonflow}
\square \phi =&\ \tfrac{1}{6} \left( \brs{H}^2 + 3 \brs{F_A}^2 +\brs{[ \cdot, \cdot]}^2 \right).
\end{align}
Then
\begin{align*}
    \square \mathcal{S}^+ =&\ 2 \brs{\Rc^{H,\phi}
 - F_A^2 }^2 + 2 \brs{d^{\star}_A F_A - F_A \lrcorner H + i_{d\phi^\sharp} F_A}^2.
\end{align*}


\begin{proof} By Proposition \ref{p:cGRFtoGRF}, the data $(g_t, b_t, A_t)$ induces a solution $(\bar{g}_t, \bar{b}_t)$ of generalized Ricci flow on an exact Courant algebroid $\bar{E}$ over the principal bundle $p: P \to M$.  We claim that the one-parameter family of functions $\bar{\phi}_t = p^* \phi_t$ is a solution to the dilaton flow (\ref{f:exactdilflow}) on $P$ with initial data $p^*\phi_0$.  Since $X p^* f = 0$ for any vertical vector field $X$ and $f \in C^{\infty}(M)$, it follows that $\gD_{\bar{g}_t} \bar{\phi}_t = p^*\gD_{g_t} \phi_t$ for all $t$. The equation (\ref{f:exactdilflow}) for $\bar{\phi}_t$ follows from this and Proposition \ref{prop:Bismutscalar}.  Using this, equation \eqref{f:exactscalar}, and the form of the Ricci curvature on $P$ as expressed in Proposition \ref{prop:BismutRic}, the statement follows from  \cite[Proposition 1.1]{Streetsscalar}.
\end{proof}
\end{prop}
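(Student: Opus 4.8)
The plan is to deduce this monotonicity from the corresponding formula \eqref{f:exactscalar} for generalized Ricci flow on an exact Courant algebroid, transported through the dimensional reduction of Proposition \ref{p:cGRFtoGRF}. First I would use Proposition \ref{p:cGRFtoGRF} to produce from $(g_t,b_t,A_t)$ the associated solution $(\bar{g}_t,\bar{b}_t)$ of generalized Ricci flow \eqref{f:GRF} on the total space $P$, with $\bar{g}_t$ and $\bar{H}_t$ given by \eqref{f:gtot} and \eqref{f:Htot}. The next step is to check that $\bar{\phi}_t := p^*\phi_t$ solves the exact dilaton flow \eqref{f:exactdilflow} on $P$: since vertical vector fields annihilate pullbacks of functions on $M$, one has $\gD_{\bar{g}_t}\bar{\phi}_t = p^*\gD_{g_t}\phi_t$, and combining this with the norm identity \eqref{f:nH2} of Proposition \ref{prop:Bismutscalar}, namely $\brs{\bar{H}}^2 = \brs{H}^2 + 3\brs{F_A}^2 + \brs{[\cdot,\cdot]}^2$, the equation $\square\bar{\phi} = \tfrac{1}{6}\brs{\bar{H}}^2$ on $P$ becomes precisely the pullback of the string dilaton flow \eqref{f:stringdilatonflow}.

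Having set up this correspondence, I would invoke the exact-case monotonicity \cite[Proposition 1.1]{Streetsscalar}, i.e. \eqref{f:exactscalar}, which yields $\square R^{\bar{H},\bar{\phi}} = 2\brs{\Rc^{\bar{H},\bar{\phi}}}^2$ on $P$. The two remaining tasks are to identify the left and right hand sides with their claimed reductions on $M$. For the left side, formula \eqref{f:BaEmR} of Proposition \ref{prop:Bismutscalar} expresses $R^{\bar{H},\bar{\phi}}$ as $p^*\mathcal{S}^+$ plus the constant $\tfrac{1}{6}\brs{[\cdot,\cdot]}^2$; since this constant is annihilated by $\square$ and both quantities are basic, one has $\square R^{\bar{H},\bar{\phi}} = p^*(\square\,\mathcal{S}^+)$, so that by injectivity of $p^*$ the left side descends to $\square\,\mathcal{S}^+$ on $M$.

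For the right side I would expand $\brs{\Rc^{\bar{H},\bar{\phi}}}^2$ in the horizontal/vertical splitting of $TP$ using the component formulae \eqref{f:RcgHcomponents} of Proposition \ref{prop:BismutRic}. The horizontal-horizontal block is exactly $(\Rc^{H,\phi} - F_A^2)(X,Y)$, contributing $\brs{\Rc^{H,\phi}-F_A^2}^2$; the mixed block contributes $\sum_{U,X}\IP{i_X(d_A^{\star}F_A - F_A \lrcorner H + i_{\nabla\phi}F_A),U}_{\mathfrak k}^2 = \brs{d_A^{\star}F_A - F_A \lrcorner H + i_{d\phi^\sharp}F_A}^2$, while the conjugate mixed and the vertical-vertical blocks vanish by \eqref{f:RcgHcomponents}. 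The overall coefficient $2$ in both terms is then inherited directly from \eqref{f:exactscalar}, and descending to $M$ produces the stated identity.

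The main obstacle I anticipate is the careful bookkeeping of tensor norms in this last step: one must verify that the minus-sign convention for $F_A^2$ in \eqref{f:sqtens}, the signed norm $\brs{F_A}^2$, and the orthonormal-frame conventions on $P$ (where vertical directions carry the possibly indefinite pairing $\IP{,}_{\mathfrak k}$) all conspire to reproduce exactly the two squared terms with the correct coefficients. The asymmetry $\Rc^{\bar{H},\bar{\phi}}(U,X)\neq 0 = \Rc^{\bar{H},\bar{\phi}}(X,U)$, coming from the skew $d^{\star}_g H$ piece of the Bismut--Ricci tensor, is what makes the mixed contribution appear \emph{once} in $\brs{\Rc^{\bar{H},\bar{\phi}}}^2$ rather than twice, so that the final coefficient is $2$ and not $4$; this count must be tracked precisely. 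As in Propositions \ref{prop:BismutRic} and \ref{prop:Bismutscalar}, I would present the norm computation in the definite case, the general signature following by the same algebra.
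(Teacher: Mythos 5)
Your proposal is correct and follows essentially the same route as the paper's proof: dimensional reduction via Proposition \ref{p:cGRFtoGRF}, verification that $\bar{\phi}_t = p^*\phi_t$ solves the exact dilaton flow \eqref{f:exactdilflow} using Proposition \ref{prop:Bismutscalar}, and then descent of \eqref{f:exactscalar} through the component formulae of Proposition \ref{prop:BismutRic}. Your explicit bookkeeping of the blocks of $\Rc^{\bar{H},\bar{\phi}}$ — in particular that the asymmetry $\Rc^{\bar{H},\bar{\phi}}(X,U)=0\neq\Rc^{\bar{H},\bar{\phi}}(U,X)$ makes the mixed term count once, yielding the coefficient $2$ — correctly fills in the identification the paper leaves implicit.
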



\begin{remark}
Alternatively, the generalized scalar curvature monotonicity in our setting follows from the recent work \cite{SCSV}, which applies to a general class of Ricci flows on transitive Courant algebroids.
\end{remark}

\subsection{Generalized Ricci flow on string algebroids as a gradient flow}

Recall that the generalized Ricci flow on exact Courant algebroids \eqref{f:GRF} is the gradient flow of a Perelman-type functional \cite{OSW}. More precisely, we fix a closed three-form $H_0$ on a smooth compact manifold $M$ and, given a pair $(g, b)$, we set $H = H_0 + db$ and define
\begin{align*}
    \mathcal F(g, b, f) =&\ \int_M \left( R_g - \tfrac{1}{12} \brs{H}^2 + \brs{\N f}^2 \right) e^{-f} dV_g,\\
    \gl(g, b) =&\ \inf_{\{ f \in C^\infty(M)\ |\ \int_M e^{-f} dV_g = 1 \}} \FF(g,b,f).
\end{align*}
It was shown in \cite{OSW} that generalized Ricci flow (\ref{f:GRF}) is the gradient flow of $\gl$. Applying now Proposition \ref{p:cGRFtoGRF}, we obtain that the string algebroid generalized Ricci flow \eqref{eq:GRFgbA} is a symmetry reduction of \eqref{f:GRF}. We claim that this implies that \eqref{eq:GRFgbA} is also a gradient flow. Given $E$ a string algebroid over $M$, with underlying principal bundle $P$, fix a background generalized metric, which determines a three-form $H_0$ and principal connection $A_0$ satisfying the Bianchi identity \eqref{eq:bianchitrans0}. For the data $(g,b,A)$ defining a generalized metric as in \S \ref{s:GRFSA} and $f \in C^{\infty}(M)$ we define
\begin{equation}\label{eq:FFlambdastring}
\begin{split}
\FF(g,b,A,f) =&\ \int_M \left( R_g - \tfrac{1}{12} \brs{H}^2 - \tfrac{1}{2} \brs{F_A}^2 + \brs{\N f}^2 \right) e^{-f} dV_g\\
\gl(g,b,A) =&\ \inf_{\{ f \in C^\infty(M) \ |\ \int_M e^{-f} dV_g = 1 \}} \FF(g,b,A,f),
\end{split}
\end{equation}
where $H$ is given by \eqref{eq:Htdef}.

\begin{prop} \label{p:gradient} Fix  $(g_t, b_t, A_t)$ is a solution to string algebroid generalized Ricci flow \eqref{eq:GRFgbA} such that the initial condition satisfies the Bianchi identity \eqref{eq:bianchitrans}. Then, the functional $\gl$ in \eqref{eq:FFlambdastring} is monotone nondecreasing and constant if and only if $(g_t, b_t, A_t)$ is a gradient soliton, that is, there exists $f \in C^{\infty}(M)$ such that
\begin{equation}\label{eq:GRsoliton}
\begin{split}
\operatorname{Rc} - \frac{1}{4}H^2 - F_A^2 + \frac{1}{2} L_{df^\sharp}g & = 0,\\
d^\star H +  i_{df^\sharp} H & = 0,\\ d_A^\star F_A - F_A \lrcorner \ H + i_{df^\sharp} F_A & = 0.
\end{split}
\end{equation}
Furthermore, $e^{-f}$ is an eigenvector for the minimum eigenvalue of the Schr\"odinger operator
$$
- 4 \Delta +  R - \tfrac{1}{12} |H|^2 - \tfrac{1}{2} \brs{F_A}^2 + \tfrac{1}{6} \brs{ [\cdot,\cdot]}^2.
$$

\begin{proof} 
Consider the flow line $(\bar{g}_t, \bar{b}_t)$ of generalized Ricci flow on the total space of the principal bundle $P \to M$, induced by the data $(g_t, b_t, A_t)$ via Proposition \ref{p:cGRFtoGRF}. The corresponding functional 
\begin{align*}
    \gl(\bar{g}, \bar{b}) =&\ \inf_{\{ \bar{f} \in C^\infty(P)\ |\ \int_P e^{-\bar{f}} dV_{\bar{g}} = 1 \}} \FF(\bar{g},\bar{b},\bar{f}).
\end{align*}
is monotone nondecreasing along $(\bar{g}_t, \bar{b}_t)$ by \cite[Corollary 6.10]{GRFBook}, and its value is attained by the unique normalized eigenvector $e^{-\bar{f}}$ for the minimum eigenvalue of the Schr\"odinger operator (see \cite[Lemma 6.3]{GRFBook})
$$
- 4 \Delta_{\bar{g}} +  R_{\bar{g}} - \tfrac{1}{12} |\bar{H}|^2_{\bar{g}}.
$$
Since this operator is $K$-invariant and the lowest eigenspace is $1$-dimensional, it follows that $\bar{f} \in C^\infty(P)^K \cong C^\infty(M)$. The statement follows now combining Proposition \ref{prop:BismutRic}, Proposition \ref{prop:Bismutscalar} and \cite[Corollary 6.10]{GRFBook}.
\end{proof}
\end{prop}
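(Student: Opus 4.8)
The plan is to deduce everything from the dimensional reduction principle of Proposition \ref{p:cGRFtoGRF}, which lifts the string algebroid generalized Ricci flow \eqref{eq:GRFgbA} to an honest generalized Ricci flow \eqref{f:GRF} for the pair $(\bar{g}_t, \bar{b}_t)$ on the total space $P$, for which the gradient-flow structure is already known. First I would invoke the Perelman-type theory on exact Courant algebroids: by \cite{OSW} (see also \cite[Corollary 6.10]{GRFBook}), the functional $\gl(\bar{g},\bar{b})$ is monotone nondecreasing along $(\bar{g}_t,\bar{b}_t)$, and its infimum is attained by the unique normalized minimizer $e^{-\bar{f}}$ of the Schr\"odinger operator $-4\gD_{\bar{g}} + R_{\bar{g}} - \tfrac{1}{12}\brs{\bar{H}}^2_{\bar{g}}$ (cf. \cite[Lemma 6.3]{GRFBook}).

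The next step is to descend both the functional and its minimizer from $P$ to $M$. Since the ansatz data $(\bar{g},\bar{H})$ is $K$-invariant and the Schr\"odinger operator above is therefore $K$-invariant, its lowest eigenspace---being one-dimensional---must consist of $K$-invariant functions, so $\bar{f} \in C^\infty(P)^K \cong C^\infty(M)$. Writing $\bar{f} = p^*f$, I would then match $\FF(\bar{g},\bar{b},\bar{f})$ with $\FF(g,b,A,f)$ in \eqref{eq:FFlambdastring}: the scalar curvature reduction formula \eqref{f:BaEmR} of Proposition \ref{prop:Bismutscalar} converts $R_{\bar{g}} - \tfrac{1}{12}\brs{\bar{H}}^2$ into $R_g - \tfrac{1}{12}\brs{H}^2 - \tfrac{1}{2}\brs{F_A}^2 + \tfrac{1}{6}\brs{[\cdot,\cdot]}^2$, while $\brs{\bar{\N}\bar{f}}^2 = \brs{\N f}^2$ and the volume element factors through the fixed fibre volume. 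The constant fibre term $\tfrac{1}{6}\brs{[\cdot,\cdot]}^2$ is what produces the stated Schr\"odinger operator on $M$, and since it is constant it does not affect the variational characterization of the eigenfunction. This simultaneously establishes the monotonicity of $\gl$ on $M$ and the final eigenvector claim.

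For the soliton characterization, I would use that $\gl$ is constant along the flow if and only if $(\bar{g}_t,\bar{b}_t)$ is a steady gradient soliton on $P$, i.e. the weighted Bismut-Ricci tensor $\Rc^{\bar{H},\bar{f}}$ vanishes. Feeding this into the component formulae of Proposition \ref{prop:BismutRic}: the horizontal-horizontal component $\Rc^{\bar{H},\bar\phi}(X,Y) = 0$ gives, after absorbing the $\N^+(d\phi)$ term into $\tfrac{1}{2}L_{df^\sharp}g$ and separating symmetric from skew parts, the first two equations of \eqref{eq:GRsoliton}, while the mixed component $\Rc^{\bar{H},\bar\phi}(U,X) = 0$ yields exactly $d_A^\star F_A - F_A \lrcorner H + i_{df^\sharp}F_A = 0$, and the purely vertical component vanishes automatically by the flatness of the fibre Bismut connection.

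The main obstacle will be the bookkeeping in the descent argument: one must verify carefully that the minimizing $\bar{f}$ genuinely descends (which hinges on the one-dimensionality of the lowest eigenspace together with strict $K$-invariance of the operator), and that the translation of the single steady-soliton equation $\Rc^{\bar{H},\bar{f}} = 0$ on $P$ into the three coupled equations \eqref{eq:GRsoliton} on $M$ correctly matches the weighted-Ricci corrections---in particular reconciling the $\N^+(d\phi)$ term of \eqref{eq:LC-wBRic} with the Lie-derivative term $\tfrac{1}{2}L_{df^\sharp}g$ and confirming that the skew-symmetric part reproduces precisely the $b$-field and Yang-Mills equations, with no spurious contributions from the Chern-Simons correction in $\Htot$.
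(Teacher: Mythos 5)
Your proposal follows exactly the paper's route: lift the flow to $(\bar{g}_t,\bar{b}_t)$ on $P$ via Proposition \ref{p:cGRFtoGRF}, invoke the exact-Courant-algebroid monotonicity and eigenvector characterization from \cite[Lemma 6.3, Corollary 6.10]{GRFBook}, descend the minimizer $\bar{f}$ using $K$-invariance of the Schr\"odinger operator together with one-dimensionality of its lowest eigenspace, and translate back to $M$ via Propositions \ref{prop:BismutRic} and \ref{prop:Bismutscalar}. Your expanded bookkeeping---matching $\FF(\bar g,\bar b,\bar f)$ with \eqref{eq:FFlambdastring} through \eqref{f:BaEmR}, and reading off the three equations of \eqref{eq:GRsoliton} from the symmetric, skew, and mixed components of $\Rc^{\bar H,\bar f}=0$---is precisely what the paper compresses into its final sentence, and it is carried out correctly.
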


The formal computations underlying the evolution of $\gl$ follow independent of the signature of $\IP{,}_{\mathfrak k}$, but we we will only get a monotonicity formula in the case that it is definite.  For emphasis we note that there exist nontrivial gradient solitons on exact Courant algebroids over compact manifolds $M$ (\cite{Streetssolitons, SU1}).  These are actually solitons for the generalized Ricci flow on string algebroids, and in fact were originally constructed using a special case of the dimension reduction principle established here.

\subsection{Dilaton functional and pluriclosed flow}\label{sec:dilaton}

Fix $(P^c,\bar J)$ a holomorphic principal $K^c$-bundle over a compact complex manifold $(M,J)$. We assume that $(M,J)$ admits a holomorphic volume form $\Omega$ and that $P^c$ has vanishing first Pontyagin class with respect to a fixed pairing $\IP{,}_\mathfrak{k}$ in the Lie algebra of a maximal compact $K \subset K^c$. Given a Hermitian metric $g$ on $(M,J)$, we define the norm $\|\Omega\|_ \omega$ by
$$
\|\Omega\|_ \omega^2\frac{\omega^n}{n!} = (-1)^{\frac{n(n-1)}{2}} (\i)^n \Omega \wedge \overline \Omega.
$$
On the set of pairs $(\omega,h)$ satisfying the Bianchi identity \eqref{eq:BIGHM}, consider the \emph{dilaton functional}
$$
\cM(\omega,h) =  \int_M \|\Omega\|_ \omega \frac{\omega^n}{n!}.
$$
Restricted to an Aeppli class of such pairs $(\omega,h)$ (see Section \ref{sec:Aeppli}), it was proved in \cite{garciafern2018canonical} that the critical points of $\cM$ are solutions of the Hull-Strominger system \eqref{f:HS}. Assuming that $\IP{,}_\mathfrak{k}$ is negative-definite, in this section we prove that the one-form reduction of the pluriclosed flow in Lemma \ref{lem:oneformred} is the gradient flow of $\mathcal{M}$ restricted to a suitable space of potentials for a fixed Aeppli class.

Let $\mathcal{Q}$ be a holomorphic string algebroid over $(M,J)$ with underlying principal bundle $(P^c,\bar J)$ and pairing $\IP{,}_\mathfrak{k}$. We assume that the space of generalized Hermitian metrics $B_{\mathcal{Q}}^+$ is non-empty (see Proposition \ref{propo:Chernclassic}).  Without loss of generality, we can therefore take $\mathcal{Q} = \mathcal{Q}_{2\i\partial \omega_0,A^{h_0}}$ for a pair $(\omega_0,h_0)$ satisfying the Bianchi identity \eqref{eq:BIGHM}. Consider the space of \emph{Aeppli potentials}
$$
\mathcal{K}^+ = \{(\xi,h) \; | \; \omega_{\xi,h}:= \omega_0 + \dbar \xi + \partial \bar \xi - \tilde R(h,h_0) > 0\} \subset \Lambda^{1,0} \times \Gamma(P^c/K),
$$
where, for the unique $s_h \in \Gamma(\ad P_{h_0})$ such that $h = h_0 e^{\i s_h}$, we set
$$
\tilde R(h,h_0) := \i \int_0^1 \IP{h_t^{-1}\dot h_t ,F_{h_t}}_{\mathfrak{k}} dt \in \Lambda^{1,1}_\RR, \qquad h_t := h_0 e^{\i t s_h}.
$$

\begin{lemma}\label{lem:potential}
Let $B_0^+ \subset B_{\mathcal{Q}}^+$ denote the space of generalized Hermitian metrics on $\mathcal{Q}$ with vanishing Aeppli class (see Definition \ref{def:Aeppli}). Then, there is a well-defined map
$$
\mathcal{K}^+ \to B_0^+ \colon (\xi,h) \mapsto \mathbf{h}(\xi,h):=(\omega_{\xi,h},\partial \xi + B^{2,0},h).
$$
where, for $\alpha_t = A^{h_t} - A^{h_0}$,
$$
B^{2,0} := \frac{\i}{2}\int_0^1 \IP{\alpha_t \wedge \dot \alpha_t} dt \in \Lambda^{2,0}.
$$
\end{lemma}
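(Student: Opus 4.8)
The plan is to check two things: first, that $\mathbf{h}(\xi,h) = (\omega_{\xi,h},\partial\xi + B^{2,0},h)$ genuinely defines a generalized Hermitian metric on $\mathcal{Q}$, i.e. that it satisfies the structural equation \eqref{eq:structuralGHM} and that $\omega_{\xi,h}$ is positive; and second, that its Aeppli class vanishes, so that the image lands in $B_0^+$. Positivity is built into the definition of $\mathcal{K}^+$, and the Bianchi identity \eqref{eq:BIGHM} (hence the well-definedness of $\mathfrak{a}$) is automatic for any element of $B_{\mathcal{Q}}^+$ by Remark \ref{rem:BIdentity}. So the real content is the structural equation together with the vanishing of the Aeppli class. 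Throughout I use that, in the chosen normal form $\mathcal{Q} = \mathcal{Q}_{2\i\partial\omega_0,A^{h_0}}$, Proposition \ref{propo:Chernclassic} reads with $\tau = 2\i\partial\omega_0$ and $A_0 = A^{h_0}$, so $\alpha = A^h - A^{h_0}$.

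For the structural equation I would introduce the one-parameter family $h_s := h_0 e^{\i s s_h}$, $s\in[0,1]$, joining $h_0$ to $h$ along the same one-parameter subgroup used in the statement, and set $\alpha_s := A^{h_s} - A^{h_0}$, $\tilde R(h_s,h_0) := \i\int_0^s \langle h_u^{-1}\dot h_u, F_{h_u}\rangle_{\mathfrak{k}}\,du$, and $B^{2,0}(s) := \tfrac{\i}{2}\int_0^s \langle\alpha_u\wedge\dot\alpha_u\rangle_{\mathfrak{k}}\,du$, so that at $s=1$ these reproduce exactly the data in the statement (there is no path ambiguity since the path is fixed). Writing $\omega_{\xi,h_s} = \omega_0 + \dbar\xi + \partial\bar\xi - \tilde R(h_s,h_0)$, $\beta_s = \partial\xi + B^{2,0}(s)$, and abbreviating by $C_s$ the curvature combination $2\langle\alpha_s\wedge F_{A^{h_0}}\rangle_{\mathfrak{k}} + \langle\alpha_s\wedge d_{A^{h_0}}\alpha_s\rangle_{\mathfrak{k}} + \tfrac13\langle\alpha_s\wedge[\alpha_s\wedge\alpha_s]\rangle_{\mathfrak{k}}$, the goal reduces to the $s$-family identity
\[
2\i\partial\omega_{\xi,h_s} = 2\i\partial\omega_0 - 2\i\, d\beta_s + C_s, \qquad s\in[0,1],
\]
whose value at $s=1$ is precisely \eqref{eq:structuralGHM}.

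I would establish this by differentiating in $s$ and checking the base case $s=0$. At $s=0$ both sides collapse to $2\i\partial\omega_0 + 2\i\partial\dbar\xi$, using $\partial\partial\bar\xi = 0$ and $\dbar\partial\xi = -\partial\dbar\xi$. For the derivative, property (2) of Proposition \ref{prop:Donaldson} gives $\tfrac{d}{ds}\tilde R(h_s,h_0) = \i\langle h_s^{-1}\dot h_s, F_{h_s}\rangle_{\mathfrak{k}}$, so the left side differentiates to $2\partial\langle h_s^{-1}\dot h_s, F_{h_s}\rangle_{\mathfrak{k}}$. On the right, the identity $\dot C_s = 2\langle\dot\alpha_s\wedge F_{h_s}\rangle_{\mathfrak{k}} + d\langle\dot\alpha_s\wedge\alpha_s\rangle_{\mathfrak{k}}$ from the proof of Proposition \ref{p:PCFhol} (via \cite[Lemma 3.23]{garciafern2018canonical}), combined with $\tfrac{d}{ds}(-2\i\, d\beta_s) = d\langle\alpha_s\wedge\dot\alpha_s\rangle_{\mathfrak{k}}$ and the antisymmetry $\langle\dot\alpha_s\wedge\alpha_s\rangle_{\mathfrak{k}} = -\langle\alpha_s\wedge\dot\alpha_s\rangle_{\mathfrak{k}}$, makes the two exact terms cancel and leaves $2\langle\dot\alpha_s\wedge F_{h_s}\rangle_{\mathfrak{k}}$. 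Matching derivatives thus reduces to
\[
\partial\langle h_s^{-1}\dot h_s, F_{h_s}\rangle_{\mathfrak{k}} = \langle \dot\alpha_s \wedge F_{h_s}\rangle_{\mathfrak{k}};
\]
setting $u = h_s^{-1}\dot h_s$, so that $\dot\alpha_s = \partial^{h_s} u$, this is the Leibniz rule for $\partial^{h_s}$ on the $(2,1)$-component together with the Bianchi identity $\partial^{h_s} F_{h_s} = 0$, which holds because $F_{h_s}$ is of type $(1,1)$.

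Finally, for the Aeppli class, since $\tilde R(h,h_0)$ is a representative of the Bott-Chern class $R(h,h_0)$, Definition \ref{def:Aeppli} yields $\mathfrak{a}(\mathbf{h}(\xi,h)) = [\omega_{\xi,h} - \omega_0 + \tilde R(h,h_0)] = [\dbar\xi + \partial\bar\xi]$, which vanishes in $H^{1,1}_A(M,\RR)$ because $\dbar\xi + \partial\bar\xi \in \operatorname{Im}(\partial\oplus\dbar)$. I expect the only real obstacle to be bookkeeping: setting up the $s$-family so that $s=1$ reproduces the stated $\tilde R$ and $B^{2,0}$, and tracking the signs in the $\langle\cdot\wedge\cdot\rangle_{\mathfrak{k}}$ pairing so that the exact terms cancel; once these are in place, the differentiation argument and the Leibniz--Bianchi step are routine.
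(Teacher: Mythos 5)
Your proof is correct. The paper's own proof is essentially a two-line computation: it invokes \cite[Lemma 4.2]{garciafern2020gauge} for the key identity $-2\i\partial\tilde R(h,h_0) = -2\i\,dB^{2,0} + 2\langle\alpha\wedge F_{h_0}\rangle_{\mathfrak{k}} + \langle\alpha\wedge d_{A^{h_0}}\alpha\rangle_{\mathfrak{k}} + \tfrac13\langle\alpha\wedge[\alpha\wedge\alpha]\rangle_{\mathfrak{k}}$, and then concludes via Proposition \ref{propo:Chernclassic} and Definition \ref{def:Aeppli}. You follow the same overall strategy (verify the structural equation \eqref{eq:structuralGHM}, then read off the Aeppli class), but instead of citing the external lemma you re-derive the identity from scratch by a homotopy argument: differentiating the $s$-family identity along $h_s = h_0 e^{\i s s_h}$, using Proposition \ref{prop:Donaldson}(2) for $\tfrac{d}{ds}\tilde R$, the variation formula $\dot C_s = 2\langle\dot\alpha_s\wedge F_{h_s}\rangle_{\mathfrak{k}} + d\langle\dot\alpha_s\wedge\alpha_s\rangle_{\mathfrak{k}}$ (which the paper itself quotes in the proof of Proposition \ref{p:PCFhol}), and reducing the match of derivatives to $\partial\langle h_s^{-1}\dot h_s, F_{h_s}\rangle_{\mathfrak{k}} = \langle\partial^{h_s}(h_s^{-1}\dot h_s)\wedge F_{h_s}\rangle_{\mathfrak{k}}$, i.e.\ the Leibniz rule plus the $(2,1)$-component $\partial^{h_s}F_{h_s}=0$ of the Chern--Bianchi identity. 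All the individual steps check out: the base case at $s=0$ uses $\partial\partial\bar\xi=0$ and $d\partial\xi = -\partial\dbar\xi$ correctly; the cancellation of the two exact terms via $\langle\dot\alpha\wedge\alpha\rangle_{\mathfrak{k}} = -\langle\alpha\wedge\dot\alpha\rangle_{\mathfrak{k}}$ is right for $\ad P^c$-valued one-forms with a symmetric pairing; and your observation that the fixed path $h_s = h_0e^{\i s s_h}$ removes any ambiguity in $\tilde R$ and $B^{2,0}$ addresses the well-definedness point. What your route buys is a self-contained verification within this paper's toolkit (notably reusing the same differentiation mechanics the paper deploys for Proposition \ref{p:PCFhol}, there in the time variable $t$), at the cost of length; the paper's citation is shorter but opaque without the external reference. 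One cosmetic remark: $\partial^{h_s}F_{h_s}=0$ is a consequence of the full Bianchi identity $d_{A^{h_s}}F_{h_s}=0$ combined with $F_{h_s}$ being of type $(1,1)$, not of the type condition alone, so you may want to phrase that step as ``the $(2,1)$-part of $d_{A^{h_s}}F_{h_s}=0$'' rather than attributing it to the type of $F_{h_s}$.
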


\begin{proof}
By \cite[Lemma 4.2]{garciafern2020gauge}, we have
\begin{align*}
2\i\partial \omega_{\xi,h} & = 2\i\partial \omega_0 - 2 \i d(\partial \xi) - 2 \i \partial \tilde R(h,h_0) \\
& = 2\i\partial \omega_0 - 2 \i d(\partial \xi + B^{2,0}) + 2 \langle \alpha \wedge F_{h_0} \rangle_{\mathfrak{k}} + \langle \alpha \wedge d_{h_0}\alpha \rangle_{\mathfrak{k}} + \frac{1}{3}\langle \alpha \wedge [\alpha \wedge \alpha] \rangle_{\mathfrak{k}},
\end{align*}
for $A^h = A^{h_0} + \alpha$. The proof follows from Proposition \ref{propo:Chernclassic} and Definition \ref{def:Aeppli}.
\end{proof}

The tangent space at $(\xi,h)\in \mathcal{K}^+$ to the space of Aeppli potentials can be identified with a subspace of the kernel of the anchor map $\pi \colon \mathcal{Q} \to T^{1,0}$
$$
T_{(\xi,h)} \mathcal{K}^+ \cong \Lambda^{1,0} \oplus \i \ad P_h \subset \Ker \pi
$$
via 
$$
(\dot \xi, \dot h) \mapsto \gamma(\dot \xi, \dot h):= \sqrt{2} \dot \xi + \langle \alpha,h^{-1}\dot h \rangle_{\mathfrak{k}} + \tfrac{1}{2}h^{-1}\dot h.
$$
Consequently, using Definition \ref{d:generalizedmetriHerm}, $\mathcal{K}^+$ inherits an $L^2$-Hermitian metric associated to the natural measure $\|\Omega\|_{\omega} \frac{\omega^n}{n!}$. The next result is a direct consequence of Lemma \ref{t:Ggeneralized1}.

\begin{lemma}\label{lem:L2metric}
The hermitian metric on $\mathcal{K}^+$ is given by 
\begin{equation*}
\begin{split}
\IP{(\dot \xi, \dot h),(\dot \xi, \dot h)} & := \int_M\mathbf{G}_{\mathbf{h}(\xi,h)}(\gamma(\dot \xi,\dot h),\gamma(\dot \xi,\dot h)) \|\Omega\|_{\omega} \frac{\omega^n}{n!}\\
& = \frac{1}{2}\int_M \IP{\dot \xi,\overline{\dot \xi}}_g \|\Omega\|_{\omega} \frac{\omega^n}{n!} - \frac{1}{4}\int_M\IP{h^{-1}\dot h,\overline{h^{-1}\dot h}^h}_\mathfrak{k} \|\Omega\|_{\omega} \frac{\omega^n}{n!},
\end{split}
\end{equation*}
for $(\dot \xi, \dot h) \in T_{(\xi,h)} \mathcal{K}^+$, where $\omega := \omega_{\xi,h}$ and $\alpha = A^h - A^{h_0}$.
\end{lemma}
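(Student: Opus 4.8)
The plan is to reduce the claim to a fibrewise algebraic identity, since by definition the $L^2$-product $\IP{(\dot\xi,\dot h),(\dot\xi,\dot h)}$ is the integral against $\|\Omega\|_\omega \tfrac{\omega^n}{n!}$ of the pointwise value $\mathbf{G}_{\mathbf{h}(\xi,h)}(\gamma(\dot\xi,\dot h),\gamma(\dot\xi,\dot h))$. First I would record that, under the splitting $\mathcal{Q} \cong T^{1,0} \oplus \ad P^c \oplus T^*_{1,0}$ attached to the model $\mathcal{Q} = \mathcal{Q}_{2\i\partial\omega_0,A^{h_0}}$, the tangent vector $\gamma = \gamma(\dot\xi,\dot h) \in \Ker\pi$ is the section $q = V + r + \xi$ with vanishing anchor $V = 0$, adjoint part $r = \tfrac12 h^{-1}\dot h \in \Gamma(\ad P^c)$, and cotangent part $\xi = \sqrt2\,\dot\xi + \IP{\alpha, h^{-1}\dot h}_{\mathfrak k} \in \Gamma(T^*_{1,0})$, where $\alpha = A^h - A^{h_0}$. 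This is just the decomposition of the defining formula for $\gamma$ into its three homogeneous pieces.

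Next I would substitute $\gamma$ into the explicit expression for $\mathbf{G} = \mathbf{G}_{\mathbf{h}(\xi,h)}$ furnished by Lemma \ref{t:Ggeneralized1}, taking $\omega = \omega_{\xi,h}$ and $\beta = \partial\xi + B^{2,0}$. Setting $V=0$ annihilates the term $g(V,\overline{V})$ together with every summand containing $i_V\alpha$ or $i_V\beta$, so that only
\[
\mathbf{G}(\gamma,\gamma) = -\IP{r,\, \overline{r}^h}_{\mathfrak k} + \tfrac14 \IP{\xi - 2\IP{\alpha, r}_{\mathfrak k},\, \overline{\xi} - 2\overline{\IP{\alpha, r}_{\mathfrak k}}}_g
\]
survives. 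The essential observation—and precisely the reason the correction $\IP{\alpha, h^{-1}\dot h}_{\mathfrak k}$ is built into the definition of $\gamma$—is the cancellation $2\IP{\alpha, r}_{\mathfrak k} = \IP{\alpha, h^{-1}\dot h}_{\mathfrak k}$, which is exactly the $\alpha$-summand of the cotangent part $\xi$. Hence $\xi - 2\IP{\alpha,r}_{\mathfrak k} = \sqrt2\,\dot\xi$, and the cross terms coupling $\dot\xi$ to $\dot h$ drop out, making the metric block-diagonal in $(\dot\xi,\dot h)$.

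Finally I would collect the two remaining contributions: the adjoint term gives $-\IP{\tfrac12 h^{-1}\dot h,\, \tfrac12 \overline{h^{-1}\dot h}^h}_{\mathfrak k} = -\tfrac14 \IP{h^{-1}\dot h,\, \overline{h^{-1}\dot h}^h}_{\mathfrak k}$, while the cotangent term gives $\tfrac14 \IP{\sqrt2\,\dot\xi,\, \sqrt2\,\overline{\dot\xi}}_g = \tfrac12 \IP{\dot\xi,\, \overline{\dot\xi}}_g$. Integrating the sum against $\|\Omega\|_\omega\tfrac{\omega^n}{n!}$ produces the stated formula. The argument presents no substantive obstacle, as it is entirely fibrewise and algebraic; the only point demanding care is the bookkeeping of the $\CC$-antilinear involution $\overline{(\cdot)}^h$ on $\ad P^c \cong \ad P_h \oplus \i\,\ad P_h$ and the precise numerical factors (the $\sqrt2$ in $\gamma$ and the overall $\tfrac14$ coming from $\psi$ in Lemma \ref{t:Ggeneralized1}), which I would track carefully to confirm the coefficients $\tfrac12$ and $-\tfrac14$.
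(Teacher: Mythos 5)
Your proposal is correct and is precisely the computation the paper intends: the paper offers no written proof, asserting the lemma as a direct consequence of Lemma \ref{t:Ggeneralized1}, and your argument---setting $V=0$, identifying $r=\tfrac{1}{2}h^{-1}\dot h$ so that the $\IP{\alpha,h^{-1}\dot h}_{\mathfrak k}$ summand in $\gamma$ cancels the $-2\IP{\alpha,r}_{\mathfrak k}$ correction and block-diagonalizes the metric---is exactly the substitution that justifies that assertion, with the coefficients $\tfrac{1}{2}$ and $-\tfrac{1}{4}$ tracked correctly.
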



Applying Lemma \ref{lem:potential}, we can now regard the dilaton functional above as a map on the space $\mathcal{K}^+$ of Aeppli potentials 
$$
\cM \colon \mathcal{K}^+ \to \RR \colon (\xi,h) \mapsto \int_M \|\Omega\|_{\omega_{\xi,h}} \frac{\omega_{\xi,h}^n}{n!}.
$$
We calculate next the gradient flow of $\cM$ with respect to the Hermitian metric in Lemma \ref{lem:L2metric}. Following Lemma \ref{lem:oneformred}, we consider the Hermitian metric on the canonical bundle 
determined by the volume form $(-1)^{\frac{n(n-1)}{2}} (\i)^n \Omega \wedge \overline \Omega$. The corresponding representative of $c_1(M,J) \in H^{1,1}_A(M,\RR)$ identically vanishes, and hence the one-form reduction of pluriclosed flow in Lemma \ref{lem:oneformred} simplifies to the following expression
\begin{equation}\label{eq:oneformPCFCY}
\begin{split}
\dt \xi =&\ \delb^{\star}_{\gw} \gw + \i \del \log \|\Omega\|_\omega,\\
h^{-1}\dt h =&\ - S^h_g,\\
\dt \hat{\omega} =&\ \i \IP{S^h_g, F_h}_{\mathfrak k}, \qquad \gw = \hat{\gw} + \delb \xi + \del \overline{\xi}.
\end{split}
\end{equation}

\begin{proposition}\label{prop:dilatonfuncmonotone}
The gradient flow of the dilaton functional $\cM \colon \mathcal{K}^+ \to \RR$ in the space of Aeppli potentials is given by \eqref{eq:oneformPCFCY}.   Furthermore, if $(\xi_t,h_t)$ is a solution of \eqref{eq:oneformPCFCY}, one has
\begin{equation}\label{eq:dtM}
\begin{split}
\dt \cM (\xi_t,h_t) & = \frac{1}{2}\int_M \|\Omega\|_\omega \|\theta_\omega + d \log \|\Omega\|_\omega\|^2_\omega  \frac{\omega^{n}}{n!} - \frac{1}{2}\int_M \langle \Lambda_\omega F_A, \Lambda_\omega F_A \rangle_{\mathfrak{k}}\|\Omega\|_ \omega \frac{\omega^{n}}{n!}.
\end{split}
\end{equation}
Consequently, the dilaton functional is non-decreasing along the flow provided that $\langle , \rangle_{\mathfrak{k}}$ is negative semi-definite.
\end{proposition}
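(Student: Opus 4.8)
The plan is to compute the first variation of $\cM$ along curves in $\mathcal{K}^+$, identify its gradient with respect to the $L^2$-Hermitian metric of Lemma \ref{lem:L2metric}, recognize the resulting flow as \eqref{eq:oneformPCFCY}, and finally substitute that flow into the first-variation formula to obtain \eqref{eq:dtM}. Throughout I use that $\mu_\Omega := (-1)^{\frac{n(n-1)}{2}}(\i)^n \Omega \wedge \overline{\Omega} = \|\Omega\|_\omega^2 \tfrac{\omega^n}{n!}$ is independent of $\omega$, so that $\dt \log \|\Omega\|_\omega = -\tfrac{1}{2}\Lambda_\omega \dot{\omega}$ and hence
\[
\dt \cM = \frac{1}{2}\int_M \|\Omega\|_\omega \, (\Lambda_\omega \dot{\omega}) \, \frac{\omega^n}{n!}.
\]
This first variation is essentially the one computed in \cite{garciafern2018canonical}. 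For a path $(\xi_t,h_t)$ in $\mathcal{K}^+$, differentiating $\omega = \omega_{\xi,h}$ and invoking the transgression formula of Proposition \ref{prop:Donaldson}(2) gives $\dot{\omega} = \dbar \dot{\xi} + \partial \overline{\dot{\xi}} - \i \IP{h^{-1}\dot{h}, F_h}_{\mathfrak{k}}$.

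First I would treat the reduction term. Since $\i \Lambda_\omega \IP{h^{-1}\dot{h}, F_h}_{\mathfrak{k}} = \IP{h^{-1}\dot h, \i \Lambda_\omega F_h}_{\mathfrak{k}} = \IP{h^{-1}\dot h, S^h_g}_{\mathfrak{k}}$, this contributes $-\tfrac{1}{2}\int_M \|\Omega\|_\omega \IP{h^{-1}\dot h, S^h_g}_{\mathfrak{k}} \tfrac{\omega^n}{n!}$; comparing against the $\ad P_h$-summand of Lemma \ref{lem:L2metric} and using the reality $\overline{S^h_g}^h = - S^h_g$ in $\ad P^c = \ad P_h \oplus \i\,\ad P_h$ identifies the corresponding gradient direction as $h^{-1}\dot h = - S^h_g$. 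For the remaining term I would integrate by parts against the weight $\|\Omega\|_\omega$, rewriting $\Lambda_\omega \dbar \dot\xi \, \tfrac{\omega^n}{n!} = \dbar \dot\xi \wedge \tfrac{\omega^{n-1}}{(n-1)!}$ and transferring $\dbar$ onto $\|\Omega\|_\omega \tfrac{\omega^{n-1}}{(n-1)!}$. The derivative of the weight produces $\dbar \log \|\Omega\|_\omega$ and the non-closedness of $\omega^{n-1}$ produces the Lee form $\theta_\omega$; these assemble so that $d\cM$ becomes the $L^2$-pairing of $(\dot\xi,\dot h)$ against the direction whose $(1,0)$-component is $\dbar^\star_\omega \omega + \i \partial \log \|\Omega\|_\omega$. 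Together with the reduction term this exhibits the gradient flow of $\cM$ as the right-hand side of \eqref{eq:oneformPCFCY}, the equation for $\hat\omega$ being the induced evolution of the fixed Aeppli-class representative, which proves the first assertion.

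For \eqref{eq:dtM} I would substitute the flow directly into the first-variation formula, which avoids any factor ambiguity coming from the polarization of the metric of Lemma \ref{lem:L2metric}. With $h^{-1}\dot h = -S^h_g$ the reduction term becomes $\tfrac{1}{2}\int_M \|\Omega\|_\omega \IP{S^h_g, S^h_g}_{\mathfrak{k}} \tfrac{\omega^n}{n!}$, and since $S^h_g = \i \Lambda_\omega F_h$ we have $\IP{S^h_g,S^h_g}_{\mathfrak{k}} = -\IP{\Lambda_\omega F_A, \Lambda_\omega F_A}_{\mathfrak{k}}$, yielding the second term of \eqref{eq:dtM}. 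With $\dot\xi = \dbar^\star_\omega\omega + \i\partial\log\|\Omega\|_\omega$, the same integration by parts reduces the $\xi$-term to $\tfrac{1}{2}\int_M \|\Omega\|_\omega \, |\dbar^\star_\omega\omega + \i\partial\log\|\Omega\|_\omega|^2_g \, \tfrac{\omega^n}{n!}$; since $\dbar^\star_\omega \omega + \i \partial \log \|\Omega\|_\omega$ is the $(1,0)$-component of the real one-form $\theta_\omega + d\log\|\Omega\|_\omega$, the defect of the conformally balanced condition appearing in the second line of \eqref{f:HS}, its pointwise norm equals $\|\theta_\omega + d\log\|\Omega\|_\omega\|^2_\omega$, giving the first term of \eqref{eq:dtM}. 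The first term is manifestly nonnegative, while the second is nonnegative precisely when $\IP{,}_{\mathfrak{k}}$ is negative semi-definite, so that $\IP{\Lambda_\omega F_A,\Lambda_\omega F_A}_{\mathfrak{k}} \leq 0$ pointwise; hence $\dt\cM \geq 0$, as claimed.

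The main obstacle is the weighted integration by parts: one must carefully track the Lee-form contribution arising from $\dbar\, \omega^{n-1} \neq 0$ in the non-K\"ahler setting and reconcile the conventions relating $\dbar^\star_\omega\omega$, $d^\star\omega$, $d^c \log\|\Omega\|_\omega$ and $\theta_\omega$, so that exactly the combination $\theta_\omega + d\log\|\Omega\|_\omega$ emerges with the correct constant. By contrast, the $L^2$-matching of the reduction term, which rests only on the sign $\overline{S^h_g}^h = -S^h_g$, and the final monotonicity argument are routine.
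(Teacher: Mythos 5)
Your proposal is correct and takes essentially the same route as the paper: there, the first variation of $\cM$ is computed (citing \cite[Lemma 4.5]{garciafern2018canonical} together with Proposition \ref{prop:Donaldson}), integrated by parts against $\|\Omega\|_\omega\,\omega^{n-1}$ to produce the combination $d\log\|\Omega\|_\omega+\theta_\omega$, rewritten as an $L^2$-pairing, and matched against Lemma \ref{lem:L2metric} to identify the gradient, after which \eqref{eq:dtM} follows by substituting \eqref{eq:oneformPCFCY} exactly as you do. The one point to watch is the bookkeeping you yourself flagged: the paper keeps the real pairing $\IP{\dot\xi+\bar{\dot\xi},\,d^{\star}\omega-d^c\log\|\Omega\|_\omega}_g$ throughout (using $(n-1)!\,\eta = J\eta\wedge\omega^{n-1}$), which sidesteps the potential half-factor in your passage from the $(1,0)$-form $\dbar^{\star}_{\gw}\gw+\i\del\log\|\Omega\|_\omega$ to the real one-form $\theta_\omega+d\log\|\Omega\|_\omega$ via $\brs{\eta}^2=2\brs{\eta^{1,0}}^2$.
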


\begin{proof}
Applying \cite[Lemma 4.5]{garciafern2018canonical} and integration by parts it follows that the variation of $\cM$ at $(\xi,h) \in \mathcal{K}^+$ is
\begin{align*}
\delta \cM(\dot \xi, \dot h) 
& = \frac{1}{2(n-1)!} \int_M \(d(\dot\xi + \bar{\dot\xi}) - \i \IP{h^{-1}\dot h ,F_h}_\mathfrak{k}\) \wedge \|\Omega\|_ \omega \omega^{n-1}\\
& = \frac{1}{2}\int_M (\dot\xi + \bar{\dot\xi}) \wedge (d\log \|\Omega\|_ \omega + \theta_\omega) \wedge  \|\Omega\|_ \omega \frac{\omega^{n-1}}{(n-1)!} - \frac{1}{2} \int_M \IP{h^{-1}\dot h ,S_g^h}_\mathfrak{k} \|\Omega\|_ \omega \frac{\omega^{n}}{n!} \\
& = \frac{1}{2}\int_M  \IP{\dot\xi + \bar{\dot\xi}, d^{\star}\omega - d^c\log \|\Omega\|_ \omega}_g \|\Omega\|_ \omega \frac{\omega^n}{n!} - \frac{1}{2} \int_M \IP{h^{-1}\dot h ,\overline{S_g^h}^h}_\mathfrak{k} \|\Omega\|_ \omega \frac{\omega^{n}}{n!},
\end{align*}
for $\omega = \omega_{\xi,h}$, where we have used that $(n-1)!\eta = J \eta \wedge  \omega^{n-1}$ for $\eta \in \Lambda^1$ arbitrary. The statement follows now combining \eqref{eq:oneformPCFCY} with Lemma \ref{lem:L2metric}.
\end{proof}


%
%
\subsection{Long-time behaviour}

In this section we present some conjectures regarding the long-time behaviour of the string algebroid pluriclosed flow \eqref{eq:CPFhol} on compact complex manifolds which admit a holomorphic volume form. We provide evidence in the case that the pairing $\langle , \rangle_{\mathfrak{k}}$ is negative semi-definite, and speculate on a similar behaviour for the case of general pairing.

We fix a compact complex manifold $(M,J)$. 
To begin, our long-time existence results in Section \ref{sec:Global} require a solution of the Bianchi identity equation \eqref{eq:BIGHM}, thus a natural first question is to identify holomorphic principal bundles which admit such a solution. Fix $(P^c,\bar J)$ a holomorphic principal $K^c$-bundle over $(M,J)$, a maximal compact $K \subset K^c$, and a pairing $\IP{,}_\mathfrak{k}$ in the Lie algebra of $K$. We assume that $P^c$ has vanishing first Pontyagin class in Bott-Chern cohomology with respect to $\IP{,}_\mathfrak{k}$, which is a necessary condition for solving \eqref{eq:BIGHM}. Given this, for any reduction $h$ of $P^c$ to $K$, there exists a potential $\tau \in \Lambda^{1,1}_\RR$ for the representative of the first Pontryagian class associated to $h$, that is,
\begin{equation}\label{eq:BIextra}
dd^c\tau = - \IP{F_h \wedge F_h}_\mathfrak{k}.
\end{equation}

\begin{question} 
Given a holomorphic principal bundle over $(M,J)$ with vanishing first Pontryagin class in Bott-Chern cohomology, does it admit a solution $(\tau,h)$ of \eqref{eq:BIextra} such that $\tau$ is a positive-definite Hermitian form?
\end{question}

For emphasis, note that this question is often posed in concert with other geometric conditions, as part of solving the full Hull-Strominger system \eqref{f:HS}.  Here we are asking to solve \emph{only} the Bianchi identity equation, to obtain initial data for the string algebroid pluriclosed flow (cf. \cite{GFGM}).  As described in Section \ref{s:dimredpcf}, in some cases a solution of the Bianchi identity corresponds precisely to an invariant Hermitian structure on the total space of $P_h$ being pluriclosed, and hence the problem is obstructed by the existence of a positive $dd^c$-exact $(1,1)$ current by the Hahn-Banach Theorem (cf. \cite{egidi2001special}).  More generally one expects to be able to give a more precise obstruction in terms of the geometry of the associated holomorphic string algebroid \cite{GFGM,GFGM2}. Pluriclosed manifolds automatically admit solutions for any bundle with vanishing first Pontryagin class.  

Given starting data for the flow satisfying the Bianchi identity equation, we can make formal conjectures on the existence and convergence behavior of string algebroid pluriclosed flow.  We first observe a basic lemma regarding the case of the dimension reduced ansatz of Section \ref{s:dimredpcf}.
\begin{lemma}
    Let $(M,J)$ be a compact complex manifold satisfying $c_1(M)=0 \in H^2(M,\RR)$, and $p \colon P\rightarrow M$ a principal $K$-bundle with $K$ even-dimensional, as in Section \ref{s:dimredpcf}. Then, $c_1(P,\overline{J})=0\in H^2(P,\RR)$, where $\overline{J}$ is given by \eqref{Jtot}.
\end{lemma}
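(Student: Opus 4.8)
The plan is to show that the first Chern class of $(P,\overline{J})$ vanishes by relating it to the first Chern class of $(M,J)$ via the bundle projection. The key observation is that the canonical bundle of $(P,\overline{J})$ splits, up to the relevant cohomological identifications, into a contribution pulled back from the base and a contribution from the fibres, and that both pieces will be seen to vanish in real cohomology. First I would compute the first Chern class using the Bismut Ricci form. Recall from Proposition \ref{prop:hermred} (specifically \eqref{eq:rhotot}) that, when $(\omega,A)$ satisfies the Bianchi identity, the Bismut Ricci form on the total space satisfies
$$
\overline{\rho}_B = \rho_B + \langle \Lambda_{\omega} F_A, F_A\rangle_{\mathfrak{k}} + \langle d_A(\Lambda_{\omega}F_A)\wedge A\rangle_{\mathfrak{k}} + \tfrac{1}{2}\langle[\Lambda_{\omega}F_A,A]\wedge A\rangle_{\mathfrak{k}}.
$$
Since the Bismut Ricci form represents $2\pi c_1$ (up to normalization) in Bott-Chern cohomology, and hence also in de Rham cohomology, it suffices to analyze $[\overline{\rho}_B] \in H^2(P,\RR)$.

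Next I would choose a convenient Hermitian metric. By hypothesis $c_1(M)=0\in H^2(M,\RR)$, so on $(M,J)$ we may select a Hermitian metric $\omega$ together with a connection $A$ satisfying the Bianchi identity such that the resulting geometry simplifies the curvature terms. The cleanest route is to observe that $[\overline{\rho}_B]$ decomposes under the pullback $p^*$ and the fibre directions. The term $\rho_B$ is pulled back from $M$ (more precisely, it contributes $p^*\rho_B$, which is exact in $H^2(P,\RR)$ since $p^*c_1(M)=0$ by functoriality of Chern classes and the hypothesis $c_1(M)=0$). The remaining terms involve the curvature $F_A$ and the connection form $A$; the crucial point is that terms of the form $\langle d_A(\Lambda_\omega F_A)\wedge A\rangle_{\mathfrak{k}}$ and $\langle[\Lambda_\omega F_A,A]\wedge A\rangle_{\mathfrak{k}}$ are, upon closer inspection, either exact or reduce to horizontal forms that are themselves pulled back from $M$. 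I would verify that the purely horizontal part $\langle \Lambda_\omega F_A, F_A\rangle_{\mathfrak{k}}$ represents a class pulled back from the base, and hence its contribution to $H^2(P,\RR)$ factors through $p^*H^2(M,\RR)$.

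The main obstacle I anticipate is handling the mixed and vertical contributions coming from the connection form $A$, since $A$ itself is not a well-defined global form descending to $M$, and the terms $\langle d_A(\Lambda_\omega F_A)\wedge A\rangle_{\mathfrak{k}}$ are genuinely forms on the total space with legs in the fibre directions. The resolution is that these terms are exact: writing $s = \Lambda_\omega F_A$, one has $d\langle s\wedge A\rangle_{\mathfrak{k}} = \langle d_A s\wedge A\rangle_{\mathfrak{k}} + \langle s\wedge F_A\rangle_{\mathfrak{k}} - \tfrac{1}{2}\langle s\wedge[A\wedge A]\rangle_{\mathfrak{k}}$, so the transgression argument identifies the troublesome terms with the exterior derivative of $\langle s\wedge A\rangle_{\mathfrak{k}}$ modulo horizontal pieces already accounted for. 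Thus modulo exact forms, $[\overline{\rho}_B]$ equals a pullback of a class on $M$ proportional to $c_1(M)$, which vanishes by assumption. I would then conclude that $c_1(P,\overline{J}) = 0 \in H^2(P,\RR)$, completing the proof.
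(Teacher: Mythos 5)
Your argument has a genuine gap at its foundation: the decomposition of $\overline{\rho}_B$ that you quote from Proposition \ref{prop:hermred}, namely \eqref{eq:rhotot}, is only valid \emph{in case $(\omega,A)$ satisfies the Bianchi identity} $dd^c\omega + \langle F_A\wedge F_A\rangle_{\mathfrak{k}} = 0$. The lemma assumes no such thing --- its hypotheses are only $c_1(M)=0$ in $H^2(M,\RR)$ and a connection $A$ with $F_A^{0,2}=0$ so that $\overline{J}$ in \eqref{Jtot} is integrable. You write that you would ``select a Hermitian metric $\omega$ together with a connection $A$ satisfying the Bianchi identity,'' but the existence of such a pair with $\omega$ positive is precisely the hard, obstructed problem the paper highlights as an open question in \S\ref{s:geometrization}: solvability of \eqref{eq:BIGHM} requires at least the vanishing of the first Pontryagin class in Bott--Chern cohomology and is obstructed, for instance, by positive $dd^c$-exact $(1,1)$-currents. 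So your proof establishes the conclusion only under an extra unjustified hypothesis, while the statement is meant to be an unconditional, soft topological fact. (A smaller inaccuracy: $\rho_B$ represents $c_1$ up to factor in de Rham and Aeppli cohomology, but not in Bott--Chern cohomology, since $\rho_B$ and the Chern--Ricci form differ by an exact form that need not be $\partial\bar\partial$-exact; this is harmless here because only $H^2(P,\RR)$ is at stake.)

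To your credit, the differential-geometric core of your computation is correct when a Bianchi pair does exist: with $s=\Lambda_\omega F_A$, one checks using ad-invariance of $\langle\cdot,\cdot\rangle_{\mathfrak{k}}$ that
\begin{equation*}
d\langle s, A\rangle_{\mathfrak{k}} \;=\; \langle d_A s\wedge A\rangle_{\mathfrak{k}} + \langle s, F_A\rangle_{\mathfrak{k}} + \tfrac{1}{2}\langle s, [A\wedge A]\rangle_{\mathfrak{k}},
\end{equation*}
and since $\tfrac{1}{2}\langle [s,A]\wedge A\rangle_{\mathfrak{k}} = \tfrac{1}{2}\langle s,[A\wedge A]\rangle_{\mathfrak{k}}$, formula \eqref{eq:rhotot} collapses to $\overline{\rho}_B = p^*\rho_B + d\langle \Lambda_\omega F_A, A\rangle_{\mathfrak{k}}$, with $\langle \Lambda_\omega F_A, A\rangle_{\mathfrak{k}}$ a globally defined one-form on $P$; your transgression idea is exactly right. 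But the paper's proof avoids all analysis: the complex structure $\overline{J}$ preserves the short exact sequence of complex vector bundles $0\to VP\to TP\to p^*TM\to 0$ (with $J_K$ on $VP$ and $J$ on the quotient), so by the Whitney formula $c_1(P,\overline{J}) = c_1(VP,J_K) + p^*c_1(M)$; then the left-invariance of $J_K$ gives a global trivialization $(VP,\overline{J})\cong P\times \mathfrak{k}^{1,0}$ as a complex vector bundle, whence $c_1(VP)=0$ and $c_1(P,\overline{J})=p^*c_1(M)=0$. You should either adopt this topological route, or, if you want to stay with curvature representatives, compute the Chern--Ricci form of the metric \eqref{otot} for an \emph{arbitrary} pair $(\omega,A)$ with $F_A^{0,2}=0$, which would again exhibit $c_1(P,\overline{J})$ as $p^*c_1(M)$ without invoking the Bianchi identity.
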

\begin{proof}
    By the exact sequence on $P$
    $$
    0 \longrightarrow VP \longrightarrow TP \longrightarrow p^*TX \longrightarrow 0
    $$
    we have that:
    $$
    c_1(P,\overline{J}) = c_1(TP) = c_1(VP) + p^*c_1(X) = c_1(VP).
    $$
    Now, there is a global smooth trivialization of the complex bundle
    $$
    (VP,\overline{J}) \cong VP^{1,0}\cong P \times \mathfrak{g}^{1,0},
    $$
    therefore $c_1(VP,\overline{J})=0$, and the result follows.
\end{proof}

\noindent Assume now that $(M,J)$ admits a holomorphic volume form $\Omega$. Using the previous lemma, the general `cone conjecture' for long-time existence of pluriclosed flow (cf. \cite[Conjecture 5.2]{PCFReg}) combined with Proposition \ref{prop:redPCF} predicts that all solutions to string algebroid pluriclosed flow with $\langle , \rangle_{\mathfrak{k}}$ negative-definite, in this setting, should be global, i.e. should exist on $[0, \infty)$.  This conjecture is in part motivated by the fact that the equation can be formally reduced to a strictly parabolic PDE for a $(1,0)$-form against a controlled background for all times in this setting (cf. Lemma \ref{lem:oneformred} and Proposition \ref{p:STE}).  More generally one expects global solutions for all possible compact gauge groups and pairings. 
We state this conjecture for emphasis:

\begin{conjecture} \label{c:PCFglobal} Given a solution of the Bianchi identity \eqref{eq:BIGHM} on a compact complex manifold with holomorphic volume form, the solution to string algebroid pluriclosed flow \eqref{eq:CPFhol} with this initial data exists on $[0, \infty)$.
\end{conjecture}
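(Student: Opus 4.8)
The plan is to reduce Conjecture \ref{c:PCFglobal} to an a priori estimate on finite time intervals and then to argue that the cohomological obstruction driving the metric to the boundary of the positive cone is absent in the Calabi--Yau setting. By Proposition \ref{p:STE} the flow exists on a maximal interval $[0,T)$, and by Corollary \ref{c:flowregularitycor} it suffices to prove that, on any such finite $T$, there are a background metric $\til{\gw}$ and a constant $\gL$ so that $\gL^{-1}\til{\gw} \leq \gw_t \leq \gL\til{\gw}$ and $\brs{\gb_t}_{\til{\gw}} \leq \gL$ uniformly for $t \in [0,T)$; long-time existence then follows by the standard continuation argument. Thus the whole problem is the derivation of uniform two-sided bounds on the evolving Hermitian metric together with a bound on the $(2,0)$-component $\gb$, which I would attack through the one-form reduction of Lemma \ref{lem:oneformred}.

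The crucial structural input in the Calabi--Yau case is cohomological. Since $(M,J)$ carries a holomorphic volume form $\Omega$, the Aeppli representative of $c_1(M,J)$ vanishes, so by Proposition \ref{lem:PCFAeppli} the Aeppli class $\mathfrak{a}_t$ of the solution is constant along the flow. Consequently the flow never leaves the fixed class $\mathfrak{a}_0$, and there is no moving cohomological target forcing a finite-time singularity; this is precisely the feature that makes global existence plausible. Using Lemma \ref{lem:oneformred} in the simplified form \eqref{eq:oneformPCFCY}, the flow becomes a strictly parabolic system for the potential $\xi_t \in \Lambda^{1,0}$ and the reduction $h_t$ evolving against a \emph{fixed} background volume $\|\Omega\|^2_{\omega}\tfrac{\omega^n}{n!}$, so the question is reduced to interior estimates for this reduced system.

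The heart of the matter is then a $C^0$ estimate for $\gw_t$, namely a uniform upper bound on $\tr_{\til{\gw}}\gw$ together with a positive lower bound on the volume, the analogue of Yau's zeroth-order estimate for the complex Monge--Amp\`ere equation. In the negative semi-definite regime I would exploit the monotone dilaton functional of Proposition \ref{prop:dilatonfuncmonotone}, whose derivative \eqref{eq:dtM} furnishes integral control on $\theta_\omega + d\log\|\Omega\|_\omega$ and on $\Lambda_\omega F_A$, as a substitute for the total-volume conservation available for K\"ahler--Ricci flow; from such integral bounds one would attempt a Moser iteration or a parabolic Ko\l odziej/ABP-type argument to extract the pointwise bound. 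A favorable feature here is Proposition \ref{p:hboundglower}: a lower bound on $\gw$ alone already yields two-sided ellipticity bounds on $h$, so the gauge field need not be controlled independently once the metric is pinned, and the coupling in the Bianchi identity can be treated perturbatively.

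The step I expect to be the genuine obstacle is exactly this $C^0$/ellipticity estimate, which is why the statement is offered only as a conjecture. Already with no auxiliary bundle the flow reduces to ordinary pluriclosed flow, where the same bound is the content of the still-open cone conjecture of \cite{PCFReg}; our statement is therefore at least as hard, and a complete proof would presumably arrive hand in hand with progress there. The additional difficulty introduced by the gauge data is the absence of any scalar reduction or convexity structure for the full system: the evolution of $\gw$ in \eqref{f:redGRF} contains the indefinite terms $T^2 + F_h^2$, and in the general mixed-signature case the dilaton functional is no longer monotone, so even the integral control is lost. I would regard transferring any solution of the pure cone conjecture to the present setting---via the dimensional reduction of Proposition \ref{prop:redPCF}, which realizes the string algebroid flow as gauge-fixed pluriclosed flow on the total space $(P,\Jtot)$---as the most promising route to at least the negative-definite case.
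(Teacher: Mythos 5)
You are being asked to assess a statement that the paper itself labels a conjecture: there is no proof in the paper to compare against, and to your credit your proposal is honest about being a strategy rather than a proof. The reduction you outline is precisely the one the paper has in mind as motivation: short-time existence via Proposition \ref{p:STE}, continuation via Corollary \ref{c:flowregularitycor} (with $h$ then pinned by Proposition \ref{p:hboundglower} and higher regularity by Theorem \ref{t:EKthm}), constancy of the Aeppli class from Proposition \ref{lem:PCFAeppli} once $c_1(M,J)$ vanishes in Aeppli cohomology, the strictly parabolic one-form reduction \eqref{eq:oneformPCFCY} of Lemma \ref{lem:oneformred}, dilaton monotonicity (Proposition \ref{prop:dilatonfuncmonotone}) in the semi-definite case, and transfer of the cone conjecture of \cite{PCFReg} through the dimensional reduction of Proposition \ref{prop:redPCF}. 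You also locate the genuine obstruction correctly: the uniform two-sided bound on $\gw_t$ together with the bound on $\gb_t$ is, already in the bundle-free case, exactly the content of the open cone conjecture, and constancy of the Aeppli class does not by itself prevent degeneration of representatives within the class. So the proposal is a faithful reconstruction of the paper's heuristics, with the unavoidable gap named rather than hidden.

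Two corrections. First, your claim that the evolution \eqref{f:redGRF} contains ``indefinite terms $T^2 + F_h^2$'' is wrong in the regime you care most about: $T^2 \geq 0$ always, and with the sign convention \eqref{f:sqtens} one has $F_h^2 \geq 0$ whenever $\IP{,}_{\mathfrak k}$ is negative-definite, so the flow is then a \emph{supersolution} of ordinary pluriclosed flow. Far from being an obstacle, this sign is what the paper exploits in its one concrete piece of evidence for the conjecture, Theorem \ref{t:Chernflatglobal}, which you do not cite: if $(M,J)$ admits a metric $g'$ with nonpositive Chern curvature operator, then $\square \tr_g g' \leq 0$ yields the a priori lower bound on $g_t$, Lemma \ref{l:GGfloweqns} then gives exponential control of $\mathbf{G}_t$, and Theorem \ref{t:GGEK} upgrades this to all orders, proving global existence on such backgrounds. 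That theorem is the template confirming your prediction that, in the negative-definite case, the sole missing ingredient is the $C^0$ estimate for $g$. Second, Proposition \ref{p:hboundglower} as stated assumes the bound $\brs{\gb}_{\til{\gw}} \leq \gL$ in addition to the lower bound on $\gw$, so your remark that a lower bound on $\gw$ ``alone'' pins $h$ should carry that hypothesis along; this is harmless for your continuation scheme, since Corollary \ref{c:flowregularitycor} requires the $\gb$ bound in any case.
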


When $\langle , \rangle_{\mathfrak{k}}$ is negative-definite, this conjecture can be verified for a large class of backgrounds: those admitting a background metric with nonpositive Chern curvature operator (cf. \cite{LeeStreets, yang2019real}).

\begin{theorem} \label{t:Chernflatglobal} 
Let $\mathcal{Q} \to (M, J)$ denote a holomorphic string algebroid with negative-definite pairing $\IP{,}_{\mathfrak k}$. Assume further that $(M,J)$ admits a Hermitian metric $g'$ with non-positive Chern curvature operator. Given $(\omega_0,\beta_0,h_0)$ a generalized Hermitian metric on $\mathcal{Q}$, the solution to string algebroid pluriclosed flow with this initial data exists on $[0,\infty)$.
\begin{proof} 
Using the evolution equation for $g_t$ expressed in Lemma \ref{prop:UPCFexp2} combined with a standard computation (cf. \cite[Lemma 6.2]{SBIPCF}), one obtains
\begin{align*}
    \square \tr_g g' =&\ - \brs{\gU(g,g')}^2_{g^{-1},g^{-1},g'} - \IP{g', T^2 + F_h^2}_g + F_{g'} (g, g) \leq 0,
\end{align*}
where the last line follows since $g'$ has non-positive Chern curvature operator and the terms $T^2$ and $F_h^2$ are nonnegative.  By the maximum principle there is a uniform a priori upper bound for $\tr_{g_t} g'$, thus an a priori lower bound for $g_t$.  Using Lemma \ref{l:GGfloweqns}, we then obtain, for any choice of background metric $\til{\mathbf{G}}$ on $\mathcal{Q}$,
\begin{align*}
    \square \tr_\mathbf{G} \til{\mathbf{G}} =&\ - \brs{\gU(\mathbf{G},\til{\mathbf{G}})}^2_{g,\mathbf{G}^{-1},\til{\mathbf{G}}} + \tr \til{\mathbf{G}} \mathbf{G}^{-1} S_g^{\til{\mathbf{G}}} \leq C \tr_\mathbf{G} \til{\mathbf{G}},
\end{align*}
where $\mathbf{G} = \mathbf{G}(\omega_t,\beta_t,h_t)$ and the last inequality follows using the a priori lower bound for $g$.  It follows that there is an exponentially growing upper bound for $\tr_\mathbf{G} \til{\mathbf{G}}$.  Returning to Lemma \ref{l:GGfloweqns} a similar argument gives an upper bound for $\tr_{\til{\mathbf{G}}} \mathbf{G}$.  The regularity of the flow then follows from Theorem \ref{t:GGEK}.
\end{proof}
\end{theorem}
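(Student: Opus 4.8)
The plan is to promote the short-time solution furnished by Proposition \ref{p:STE} to a global one by establishing, on every finite time interval, uniform two-sided bounds on the generalized Hermitian metric $\mathbf{G}_t = \mathbf{G}(\omega_t,\beta_t,h_t)$, and then invoking the higher regularity of Theorem \ref{t:GGEK} together with the extension mechanism behind Corollary \ref{c:flowregularitycor}. Concretely, the flow exists on a maximal interval $[0,T_{\max})$, and it suffices to rule out $T_{\max}<\infty$ by producing bounds $\gl\,\til{\mathbf{G}} \leq \mathbf{G} \leq \gL\,\til{\mathbf{G}}$ on $[0,T_{\max})$, with $\gl,\gL$ allowed to depend on $T_{\max}$. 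Indeed, by Lemma \ref{l:genmetricbounds} such bounds control $\omega$, $\beta$ and $h$, and the interior regularity estimates then allow one to take a smooth limit at $T_{\max}$ and restart the flow, contradicting maximality.

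The first and crucial step is an a priori lower bound for the base metric $g_t$. Here I would use the reduced form of the $\omega$-evolution given in Lemma \ref{prop:UPCFexp2}, namely $\dt \gw = -\Lambda_\gw F_g + (T^2 + F_h^2)(J\cdot,\cdot)$, and compute the evolution of $\tr_g g'$ under the Chern heat operator $\square = \dt - \gD^C_{\gw_t}$. A standard Schwarz/Aubin--Yau computation, as in \cite[Lemma 6.2]{SBIPCF}, should yield
\begin{align*}
\square \tr_g g' = -\brs{\gU(g,g')}^2_{g^{-1},g^{-1},g'} - \IP{g', T^2 + F_h^2}_g + F_{g'}(g,g).
\end{align*}
The hypothesis that $g'$ has non-positive Chern curvature operator makes $F_{g'}(g,g)\leq 0$, while the negative-definiteness of $\IP{,}_{\mathfrak k}$ makes both $T^2$ and $F_h^2$ nonnegative tensors, so the whole right-hand side is $\leq 0$. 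The maximum principle then gives $\sup_M \tr_{g_t} g' \leq \sup_M \tr_{g_0} g'$, which is exactly a uniform lower bound $g_t \geq c\, g'$.

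With the lower bound on $g$ in hand, the remaining bounds follow more mechanically from Lemma \ref{l:GGfloweqns}. The identity $\square \tr_\mathbf{G} \til{\mathbf{G}} = - \brs{\gU(\mathbf{G},\til{\mathbf{G}})}^2_{g,\mathbf{G}^{-1},\til{\mathbf{G}}} + \tr \til{\mathbf{G}} \mathbf{G}^{-1} S_g^{\til{\mathbf{G}}}$ has a zeroth order term $S_g^{\til{\mathbf{G}}} = \i\Lambda_\gw F_{\til{\mathbf{G}}}$ which is controlled once $g^{-1}$ is bounded above, since $F_{\til{\mathbf{G}}}$ is fixed; using $\tr\til{\mathbf{G}}\mathbf{G}^{-1}=\tr_\mathbf{G}\til{\mathbf{G}}$ one obtains $\square \tr_\mathbf{G}\til{\mathbf{G}} \leq C\,\tr_\mathbf{G}\til{\mathbf{G}}$ and hence, by the maximum principle, an exponentially growing (thus finite on finite intervals) upper bound for $\tr_\mathbf{G}\til{\mathbf{G}}$. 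The symmetric computation for $\square \tr_{\til{\mathbf{G}}}\mathbf{G}$ gives the complementary upper bound, and together these yield the two-sided control $\gl\til{\mathbf{G}} \leq \mathbf{G} \leq \gL\til{\mathbf{G}}$. Finally Theorem \ref{t:GGEK} supplies bounds on all derivatives of $\mathbf{G}$ away from $t=0$, which close the argument.

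I expect the genuine obstacle to be the lower bound on $g$ in the second paragraph: everything downstream is a routine maximum-principle and bootstrap argument once that is secured, but the sign of $\square \tr_g g'$ rests on a delicate interaction between the reduced evolution equation of Lemma \ref{prop:UPCFexp2}, the torsion and curvature terms $T^2 + F_h^2$, and the curvature of the comparison metric $g'$. It is precisely here that both structural hypotheses, namely negative-definiteness of $\IP{,}_{\mathfrak k}$ and non-positivity of the Chern curvature operator of $g'$, are indispensable, and without either the differential inequality can fail to have a favorable sign.
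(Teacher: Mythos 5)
Your proposal is correct and follows essentially the same route as the paper's proof: the Schwarz-lemma computation for $\square \tr_g g'$ via Lemma \ref{prop:UPCFexp2} and \cite[Lemma 6.2]{SBIPCF}, the sign analysis using negative-definiteness of $\IP{,}_{\mathfrak k}$ and non-positive Chern curvature of $g'$, the exponential bounds on $\tr_\mathbf{G}\til{\mathbf{G}}$ and $\tr_{\til{\mathbf{G}}}\mathbf{G}$ from Lemma \ref{l:GGfloweqns}, and the conclusion via Theorem \ref{t:GGEK}. Your only addition is to spell out explicitly the continuation mechanism through Lemma \ref{l:genmetricbounds} and Corollary \ref{c:flowregularitycor}, which the paper leaves implicit in its final sentence.
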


Despite the simple expectation of Conjecture \ref{c:PCFglobal}, there should be a delicate behavior of the flow at infinity. Our next result shows that, when $\langle , \rangle_{\mathfrak{k}}$ is negative semi-definite, we can only expect convergence on K\"ahler backgrounds with trivial canonical bundle, and it is natural to expect that on these backgrounds the flow converges.  Simple homogeneous examples, for instance on tori, show that one can obtain different blowdown limits at infinity depending on the choice of bundle.  In the case of line bundles over Riemann surfaces a complete description of the flow is given in \cite{SRYM2}, where it is shown that the topology of the bundle qualitatively affects the behavior of the flow. 

\begin{prop}\label{p:rigidity} 
Assume that $\langle , \rangle_{\mathfrak{k}}$ is negative semi-definite. Given $(\omega,h)$ a fixed point of pluriclosed flow \eqref{eq:CPFhol} satisfying the Bianchi identity \eqref{eq:BIGHM} on a compact complex manifold with holomorphic volume form, the associated metric $\gw$ is K\"ahler Ricci-flat and $h$ is flat.
\begin{proof} 

Using the equivalent form of the pluriclosed flow expressed in Lemma \ref{prop:UPCFexp2}, we see that a fixed point satisfies
\begin{align*}
    \Lambda_\omega F_g - (T^2 + F_A^2)(\cdot,J\cdot) =&\ 0.
\end{align*}
Taking the trace, we see that $s_C - \brs{T}^2 - \brs{F_A}^2 \equiv 0$, where $s_C$ denotes the Chern scalar curvature.  In particular, $s_C \geq 0$.  Since $c_1(M)$ vanishes in Bott-Chern cohomology, it follows from \cite{Gauduchonfibres} (cf. also \cite{GRFBook} Proposition 8.30) that $s_C \equiv 0$.  From this it easily follows that $T \equiv 0$ and $F_A \equiv 0$.  Returning to the displayed equation above it is clear now that $\gw$ is K\"ahler with vanishing Ricci curvature, as claimed.
\end{proof}
\end{prop}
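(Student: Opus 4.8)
The plan is to convert the stationarity condition into a pointwise sign-definite scalar identity and then use triviality of the canonical bundle to force every nonnegative term to vanish. First I would record the fixed-point equation in Chern-geometric form. Regarding \eqref{eq:CPFhol} as an evolution for the pair $(\gw,h)$ as in Remark \ref{rem:omegahflow}, a fixed point has $\dt\gw=0$, so Lemma \ref{prop:UPCFexp2} — whose derivation already incorporates the Bianchi identity \eqref{eq:BIGHM} — gives
\[
\Lambda_{\gw} F_g = (T^2 + F_h^2)(J\cdot,\cdot),
\]
where $F_g$ is the Chern curvature of $g$ on $T^{1,0}$ and $T$ is the Chern torsion. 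The whole argument hinges on the fact that the right-hand side is a \emph{nonnegative} symmetric tensor: $T^2$ is manifestly nonnegative, while $F_h^2(X,Y)=-\IP{i_XF_h,i_YF_h}_{\mathfrak k}$ is nonnegative precisely because $\IP{,}_{\mathfrak k}$ is negative semi-definite. Tracing the identity against $\gw$, the trace of $\Lambda_\gw F_g$ is the Chern scalar curvature $s_C$, and the traces of $T^2,F_h^2$ are $\brs{T}^2,\brs{F_h}^2$ (with the convention $\brs{F_h}^2=-\sum_{i,j}\IP{(F_h)_{ij},(F_h)_{ij}}_{\mathfrak k}\ge 0$), whence $s_C=\brs{T}^2+\brs{F_h}^2\ge 0$.

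The next step is to upgrade $s_C\ge 0$ to $s_C\equiv 0$, and here the holomorphic volume form $\Omega$ enters. It trivializes $K_M$, so $c_1(M,J)=0$ in Bott–Chern cohomology and the Chern–Ricci form is $dd^c$-exact, with $\rho_C(\gw)=-\tfrac{1}{2}dd^c\log\|\Omega\|_{\gw}^2$. Tracing, $s_C=\Lambda_\gw\rho_C$ equals, up to a fixed constant, $\gD^C_{\gw}\log\|\Omega\|_{\gw}$, a second-order elliptic expression with no zeroth-order term; a function whose Chern Laplacian has a fixed sign on a compact manifold is constant, so $\log\|\Omega\|_{\gw}$ is constant, $\rho_C\equiv 0$, and therefore $s_C\equiv 0$ (this is precisely Gauduchon's sign theorem for the total Chern scalar curvature, cf. \cite{Gauduchonfibres} and \cite{GRFBook} Proposition 8.30). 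Pointwise nonnegativity then forces $\brs{T}^2\equiv 0$ and $\brs{F_h}^2\equiv 0$, so $T\equiv 0$ and $\gw$ is Kähler, while $F_h$ is valued in the radical of $\IP{,}_{\mathfrak k}$; when the pairing is negative definite this gives $F_h\equiv 0$, i.e. $h$ is flat. Substituting $T=F_h=0$ back into the fixed-point identity yields $\Lambda_\gw F_g=0$, i.e. the vanishing of the second Chern–Ricci form, which coincides with the Ricci form in the Kähler case, so $\gw$ is Ricci-flat.

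The hard part is the scalar step: extracting $s_C\equiv 0$ from the mere pointwise bound $s_C\ge 0$. This is where compactness, the triviality of $K_M$, and the maximum principle (packaged as Gauduchon's total-scalar-curvature result) are all indispensable — absent a holomorphic volume form one only controls an integral against a Gauduchon volume, and the clean vanishing would fail. A secondary point needing care is the passage from $\brs{F_h}^2\equiv 0$ to genuine flatness in the purely degenerate semi-definite case, where $F_h$ a priori only lands in the radical of $\IP{,}_{\mathfrak k}$; there one must additionally exploit the fixed-point relation $\Lambda_\gw F_h=0$ together with the now-Kähler Bianchi identity $\IP{F_h\wedge F_h}_{\mathfrak k}=0$ to rule out a nonzero harmonic radical-valued curvature.
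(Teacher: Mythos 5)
Your proof is correct and follows essentially the same route as the paper: rewrite the fixed-point equation via Lemma \ref{prop:UPCFexp2} as $\Lambda_\gw F_g = (T^2+F_h^2)(J\cdot,\cdot)$, trace to get $s_C = \brs{T}^2 + \brs{F_h}^2 \geq 0$, invoke Gauduchon's theorem (using $c_1 = 0$ in Bott--Chern cohomology) to force $s_C \equiv 0$, and conclude $T \equiv 0$, $F_h \equiv 0$, hence $\gw$ K\"ahler Ricci-flat. The one point where you go beyond the paper --- your closing worry that in the purely degenerate semi-definite case $\brs{F_h}^2 \equiv 0$ only places $F_h$ in the radical of $\IP{,}_{\mathfrak k}$ --- is a legitimate reading of the stated hypothesis but is moot in context: the string algebroid framework of \secref{sec:background} carries a standing nondegeneracy assumption on $\IP{,}_{\mathfrak k}$, so ``negative semi-definite'' here forces negative definite and the paper's direct conclusion $F_h \equiv 0$ is justified; note also that your proposed repair would not work in a genuinely degenerate setting, since $\IP{F_h \wedge F_h}_{\mathfrak k} = 0$ is automatic for radical-valued curvature and $\Lambda_\gw F_h = 0$ alone does not exclude a nonzero harmonic $(1,1)$ curvature form (e.g.\ a degree-zero line bundle with $c_1 \neq 0 \in H^{1,1}$).
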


\subsection{Geometrization of Reid's Fantasy}

A key example of a uniformization problem in low-dimensional complex geometry is `Reid's fantasy' \cite{Reid1987moduli}, which suggests that the moduli space of complex threefolds with holomorphically trivial canonical bundle may be connected, after allowing for passage through conifold transitions.  As noted by Reid, a natural strategy is to associate canonical geometric structures to such manifolds to better understand the moduli space.  Several works \cite{Collins2024stability,FuYau2,FuYau1,FuLiYau2012} have proposed using the Hull-Strominger system of equations as a tool for this geometrization problem.  Pluriclosed flow on string algebroids presents a new approach to the `geometrization of Reid's fantasy' through the Hull-Strominger system which has certain key conceptual differences from these proposals. 

Prior PDE approaches ask for a conformally balanced metric on $(M,J)$, and seek to construct a solution of the Bianchi identity equation \eqref{eq:BIGHM}. 
Following closely the variational analysis in \cite{garciafern2018canonical} (see Section \ref{sec:dilaton}), our approach is to ask for initial data satisfying the Bianchi identity equation, and then flow while preserving this condition to produce a solution of the coupled Hermitian-Einstein system, which, under natural topological assumptions, is equivalent to the Hull-Strominger system \eqref{f:HS}. Another key difference is that we allow for arbitrary choices of the bundle $P^c$ with vanishing first Pontryagin class, whereas the approaches above demand that $P^c$ is the bundle of split frames of $T^{1,0} \oplus V$, for some auxiliary  holomorphic vector bundle $V$. Furthermore, in e.g. \cite{FuYau2,LiYau2005,PPZ0} the Bianchi identity equation is strongly coupled to the choice of metric on the base, as one chooses the Chern connection of this metric as the connection on $T^{1,0}$, whereas in our setup this connection is a free parameter, as proposed e.g. in \cite{GFGM,garciafern2018canonical,garciafern2020gauge}.

In the remainder of this section we present a speculative discussion of the use of string algebroid pluriclosed flow \eqref{eq:CPFhol} in the geometrization of Reid's fantasy. Our overall expectation is that conifold transitions arise naturally as singularities at infinity for the flow on a Calabi-Yau threefold $(M,J)$, for a suitable choice of holomorphic bundle. Recall that the basic setup of a conifold transition is given by a disjoint union of holomorphic rational curves $\bigsqcup_{j=1}^k C_j \subset M$ which are rigid, that is $C_j \cong \mathbb{CP}^1$ and has normal bundle
$$
N_{C_j} \cong \mathcal{O}(-1)\oplus \mathcal{O}(-1),
$$
and a contraction map $(M,J) \to X_0$ to a singular Calabi-Yau threefold with an ordinary double point replacing each $C_j$. From work of Friedman \cite{friedman1983global}, Kawamata \cite{kawamata1992unobstructed}, and Tian \cite{tian1998smoothing}, $X_0$ admits a deformation to a smooth Calabi-Yau threefold $X_t$, provided that there exist positive integers $n_1^-, \ldots, n_s^-,m_{s+1}^+, \ldots m_k^+ \in \mathbb{N}$, such that
$$
\sum_{j=1}^s n_j^-[C_j] - \sum_{j=s+1}^k m_j^+[C_j] = 0
$$
in homology. With this setup, we propose to choose a collection of holomorphic vector bundles $V_1, \ldots, V_k$ with vanishing first Chern class and second Chern class Poincare dual to $[C_j] \in H_2(M,\ZZ)$. Choosing now $P^c$ to be the bundle of split frames of $\bigoplus_{j=1}^kV_k$, with Lie algebra pairing
$$
\IP{,}_{\mathfrak{k}} = - \sum_{j=1}^s n_j^- \tr_{V_j} + \sum_{j=s+1}^k m_j^+ \tr_{V_j},
$$
given that $(M,J)$ is K\"ahler, we can find a solution of the Bianchi Identity \eqref{eq:BIGHM} with $\omega$ positive-definite, and hence initial data to run the string algebroid pluriclosed flow. A natural expectation with this choice of bundle and pairing is that the blowdown limits of the flow at infinity will produce a canonical metric on a singular space related to the conifold transition. More specifically, we speculate the string algebroid pluriclosed flow to contract the curves $C_1, \ldots, C_s$, while preserving $C_{s+1}, \ldots, C_k$, in relation to the choice of pairing. The blowdown limit is then expected to produce a solution of the Hull-Strominger system \ref{f:HS} on the singular space given by the contraction of the first $s$ curves, possibly with distributional sources located at $C_{s+1}, \ldots, C_k$ (cf. \cite[Section 6]{garcia-fernandez2012}), which can serve as the model limit for the attendant conifold transition.


\end{document}